\documentclass[11pt]{article}
\usepackage{amsmath, amssymb, theorem, latexsym, epsfig}
 \usepackage{ifpdf}
\numberwithin{equation}{section}

\theoremstyle{plain}
\theorembodyfont{\itshape}
\newtheorem{theorem}{Theorem}[section]
\newtheorem{proposition}[theorem]{Proposition}
\newtheorem{lemma}[theorem]{Lemma}
\newtheorem{corollary}[theorem]{Corollary}

\theorembodyfont{\rmfamily}
\newtheorem{definition}[theorem]{Definition}

\newtheorem{example}[theorem]{Example}
\newtheorem{remark}[theorem]{Remark}

\newtheorem{conjecture}[theorem]{Conjecture}
\newenvironment{proof}{{\noindent \textbf{Proof}\,\,}}{\hspace*{\fill}$\Box$\medskip}

\def\dist{\operatorname{dist}}
\def\cc{\mathbb C}
\def\la{\lambda}

\def\oc{\overline{\cc}}

\def\cp{\mathbb{CP}}
\def\wt#1{\widetilde#1}
\def\rr{\mathbb R}
\def\var{\varepsilon}

\def\mcr{\mathcal R}

\def\nn{\mathbb N}

 \def\zz{\mathbb Z}
 \def\rp{\mathbb{RP}}

\def\La{\Lambda}

\def\mca{\mathcal A}
  
 \def\la{\lambda}
\def\mcc{\mathcal C}
\def\mcs{\mathcal S}

\def\mcq{\mathcal Q}

\def\mcr{\mathcal R}
\def\mcp{\mathcal P}
\def\mcn{\mathcal N}

\def\Xi{\mathcal Z}

\def\gdll{\Gamma_{d,\la}}

\def\gbll{\Gamma_{b1,\la}}

\def\mcq{\mathcal Q}
\def\gblct{\Gamma_{b1,\frac43}}
\def\gblctl{\Gamma_{b1,\frac43,1}}
\def\gblctd{\Gamma_{b1,\frac43,2}}
\def\gbllt{\Gamma_{b1,\frac13}}

\title{On exotic rationally integrable planar dual  billiards I. Complex geometry and type of dynamics}
\author{Alexey Glutsyuk\thanks{Higher School of Modern Mathematics MIPT, Moscow, Russia}
\thanks{HSE University, Moscow, Russia}
\thanks{CNRS, UMR 5669 (UMPA, ENS de Lyon), Lyon, France}\thanks{The results of the project "Symmetry. Information. Chaos" (HSE-BR-2025-84), carried out within the framework of the Basic Research Program at HSE University, are presented in this work.}}
 \begin{document}
 \maketitle
\begin{abstract} A  planar {\it dual billiard} is a planar curve $\gamma$ equipped with a family 
$(\sigma_P)|_{P\in\gamma}$ 
of projective involutions of the projective lines $L_P$ tangent to $\gamma$ at $P$ that fix $P$.  A dual billiard is called {\it rationally integrable,} 
 if there exists a rational function $R(x,y)$ of two variables (called first integral) whose restriction to each tangent line $L_P$ 
 is $\sigma_P$-invariant. In the previous author's paper it was shown that rationally integrable dual billiards 
 exist only on conics punctured at $k$ points, $0\leq k\leq 4$. Their classification given there  includes standard examples with quadratic integrals,   defined by conical pencils, and an infinite family of exotic examples with minimal degree of integrals being any even number greater than two. 
  In the present paper we study a question stated by Dmitry Treschev on dynamics and conservation laws in the exotic examples. We study the  dynamics acting on the two-dimensional phase space: the complex algebraic surface 
 consisting of pairs $(Q,P)$, where $Q\in\cp^2\setminus\gamma$, $P\in\gamma$ and the line $QP$ 
  is tangent to $\gamma$ at $P$. We show that the phase space is fibered by invariant algebraic curves 
  along which $Q$ lies in a level curve of the integral of the dual billiard. For each example we find the type of a generic level curve of the integral and of the corresponding fiber.  We show that a generic level curve is rational  in most of examples and elliptic  in two examples. We present formulas for an invariant area form on the phase space and for invariant holomorphic differentials on invariant curves. This yields a formula for holomorphic differentials on the elliplic level curves of the integral.  \end{abstract}
\tableofcontents
\section{Introduction} 
\subsection{Brief description of main results, historical remarks and plan of the paper}
Consider a strictly convex closed planar curve $C$. The billiard reflection acts on the 
oriented lines intersecting $C$. Namely, let us reflect from $C$ an oriented line $L$ at its last intersection point with $C$ 
with respect to orientation, according to the standard reflection law: the angle of incidence is equal to the angle of reflection; the reflected line is directed inside the convex domain 
bounded by $C$.   Recall that a {\it caustic} of the billiard on $C$ is a curve 
$\alpha$ such that each tangent line to $\alpha$ is reflected from the curve $C$ to 
a tangent line to $\alpha$. The billiard on $C$ is called 
{\it Birkhoff integrable,} if  a topological annulus adjacent to $C$ from the convex side is foliated by closed caustics, and $C$ is a leaf of this foliation. 
 It is well-known that each elliptic billiard is integrable:  ellipses confocal to the boundary are caustics, 
 see \cite[section 4]{tab}. 
 The {\bf Birkhoff Conjecture}  states the converse: {\it the only Birkhoff caustic-integrable 
 convex bounded planar 
 billiards  are  ellipses.}  It was studied by many mathematicians including H.Poritsky, 
 M.Bialy, S.Bolotin, V.Kaloshin, A.Sorrentino,  A.Mironov, I.Koval, the author and others. 
 
 Let us mention some of well-known results on the Birkhoff Conjecture.  In 1950 H.Poritsky \cite{poritsky}, and later 
E.Amiran \cite{amiran} in 1988 proved it under the additional assumption that 
 the billiard in each closed caustic near the boundary has the same closed caustics, as the initial billiard. 
 In 1993 M.Bialy \cite{bialy} proved that if the phase cylinder of the billiard in a domain $\Omega$ is 
 foliated  by non-contractible continuous closed curves which are invariant under the billiard map,  then the boundary 
 $\partial\Omega$ is a circle. See also \cite{wojt} for another proof. V.Kaloshin and A.Sorrentino \cite{kalsor} proved that {\it every  integrable deformation of 
 an ellipse is an ellipse,} under the condition that the family of caustics extends up to a caustic tangent to triangular orbits.  A generalization of the result of Kaloshin and Sorrentino, without extension condition but for a generic ellipse, was recently proved by I.Koval \cite{koval}. M.Bialy and A.E.Mironov \cite{bm6} 
 proved the Birkhoff Conjecture for centrally-symmetric 
 billiards having a family of closed caustics that extends up to a caustic tangent to four-periodic orbits.  D.V.Treschev \cite{treshchev, tres2, tres3}  stated and investigated an analogue  of the Birkhoff Conjecture  for so-called locally integrable local billiards with reflection from pairs of germs of analytic curves and their higher-dimensional analogues.

 S.V.Bolotin \cite{bolotin, bolotin2} suggested the following {\bf polynomial version of the Birkhoff Conjecture.} 
 {\it Let  the billiard flow in the domain bounded by a closed curve $C$ have a non-trivial first integral polynomial in the velocity:} in this case the billiard is called {\it polynomially integrable.} Then {\it $C$ is an ellipse.} The solution of Bolotin's Conjecture is a joint result of M.Bialy, A.Mironov and the author \cite{bm, bm2, gl2}. 
  For other results on the Birkhoff Conjecture see the papers \cite{bialy1, bialy2, bm4, dr, marco}, the survey papers \cite{KS18, bm5}, recent papers \cite{grat, gl4}  and references therein. 
  
   A.P.Veselov \cite{veselov} proved complete integrability of billiards bounded by confocal quadrics in any dimension, with quadratic integrals in involution, and analogous results on billiards in space forms of constant curvature \cite{veselov2}. He proved that their dynamics  corresponds to a shift of the Jacobi variety of an appropriate hyperelliptic curve \cite[p.99, theorems 3, 2]{veselov2}. 
See also \cite{kozlov} for a survey on other results on polynomially integrable billiards.

 S.Tabachnikov introduced projective billiards \cite{tabpr} generalizing billiards on space forms of constant curvature. He 
 suggested a generalized version of the Birkhoff Conjecture for projective billiards and for their projective-dual objects: the dual billiards \cite{tab08}, see the next definition.  

Let $\gamma\subset\rp^2$ be a planar curve. For every point $P\in\gamma$ by $L_P$ we denote the projective tangent line to $\gamma$ at $P$.  

\begin{definition} A  planar {\it dual billiard} is a planar curve $\gamma$ equipped with a family 
$(\sigma_P)|_{P\in\gamma}$ 
of projective involutions of the tangent lines $L_P$ that fix $P$.  
\end{definition}

 \begin{conjecture} \label{conjtd} {\bf (S.Tabachnikov \cite{tab08}).}  Let $\gamma$ be a strictly convex planar curve equipped with an {\bf integrable} dual billiard structure. This means that there exist a topological annulus adjacent to $\gamma$ from the concave side that is foliated by strictly convex closed curves-leaves such that for every $P\in\gamma$ the involution $\sigma_P:L_P\to L_P$ 
 permutes the intersection points of the line $L_P$ with each leaf. See Fig. 1. Then 
the underlying curve $\gamma$ and the leaves  are conics forming a pencil.
\end{conjecture} 
 \begin{figure}[ht]
  \begin{center}
  \vspace{-0.5cm}
   \epsfig{file=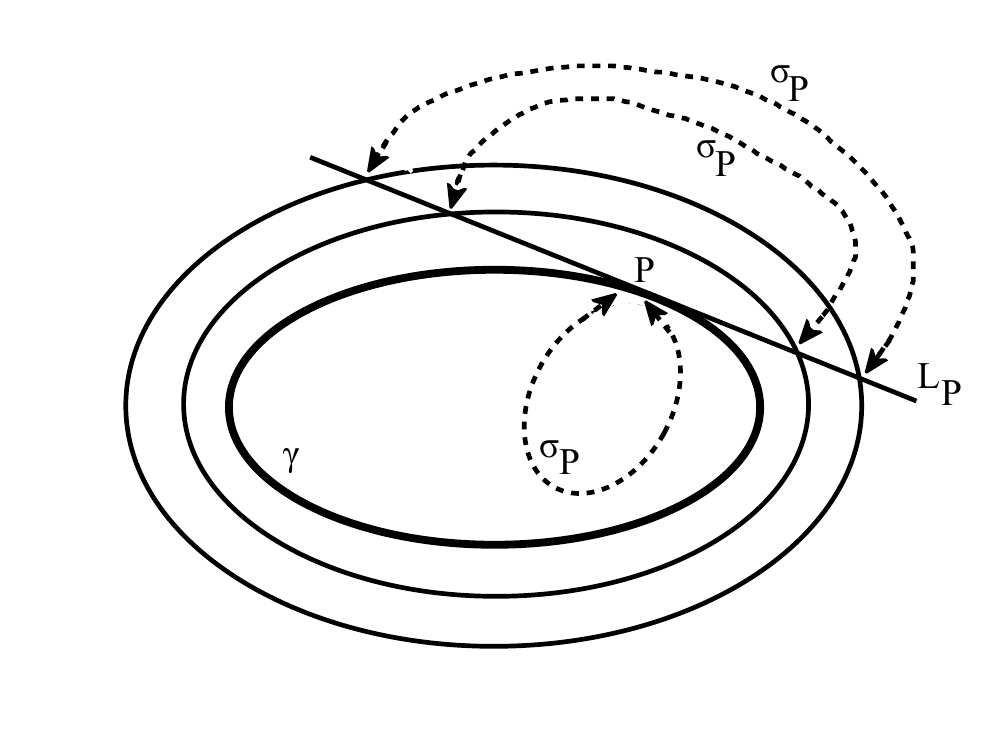, width=20em}
   \vspace{-0.7cm}
   \caption{An integrable dual  billiard structure. Figure taken from the author's paper \cite[Fig. 1]{grat}.}
   \vspace{-0.7cm}
        \label{fig:5}
  \end{center}
\end{figure}

A basic example of a dual billiard is an outer billiard, where the involutions are central symmetries with respect to the tangency points. It was shown in \cite{gs} that if a dual billiard admits a polynomial first integral, i.e., a polynomial whose restriction to 
each tangent line is invariant under the corresponding involution, then the underlying curve is a conic. 

A partial positive answer to Tabachnikov's Conjecture \ref{conjtd} was proved by the author in \cite{grat}, under the condition that 
the foliation admits a non-constant rational first integral. This condition implies in particular that the dual billiard is rationally integrable, see the following definition.

\begin{definition} A dual billiard is called {\it rationally integrable,} 
 if there exists a rational function $R(x,y)$ of two variables, called first integral, whose restriction to each tangent line $L_P$ 
 is $\sigma_P$-invariant. 
 \end{definition}
 \begin{example} \label{exint} see \cite[section 1]{grat}.  Let $\gamma, \alpha\subset\rp^2$ be two distinct conics, 
 $\gamma$ be smooth. Then $\alpha$ defines the dual billiard on $\gamma\setminus\alpha$, with $\sigma_P:L_P\to L_P$ fixing $P$ and permuting the intersection points of the line $L_P$ with $\alpha$. It is called a {\it conical pencil type} dual billiard. It is integrable. Indeed, consider the pencil of conics containing $\gamma$ and $\alpha$. It is defined by two  symmetric $3\times 3$-matrices $A$ (non-degenerate) and $B$ and consists of conics 
$$\mcc_\lambda=\{<(A-\lambda B)x,x>=0\}\subset\rp^2_{[x_1:x_2:x_3]}, \ \ x=(x_1,x_2,x_3), \ \ \mcc_0=\gamma,$$
$$ < , > \text{ is the Euclidean scalar product in } \ \rr^3.$$
For every $P\in\gamma\setminus\alpha$ and every $\lambda$ the involution $\sigma_P:L_P\to L_P$ extended to the complex line $L_P$  permutes the points of  its intersection with the complex conic $\mcc_\lambda$  (Desargues' Theorem, see  \cite{berger87}). Therefore, a quadratic rational function whose level curves are conics of the pencil is an integral of the dual billiard. 
\end{example}

 In the previous author's paper \cite{grat} 
 it was shown that rationally integrable dual billiards 
 exist only on conical arcs and extend to a {\it singular holomorphic dual billiard} on a complex conic $\gamma$:  a holomorphic family of complex projective involutions $\sigma_P$ fixing $P$ of complex projective tangent lines $L_P$ to $\gamma$, defined for all $P\in\gamma$ 
  except for a finite number of points: singularities of dual billiard.  The number of singularities is at most four. The author presented the classification of rationally integrable singular holomorphic dual billiards in the real domain up to projective transformation. 
It includes the above standard conical pencil type examples with quadratic integrals, and an infinite family of exotic examples with minimal degree of integrals being any even number greater than two. 
 
  In the present paper we study a question stated by Dmitry Treschev on dynamics and conservation laws in the exotic examples. We study complex geometry and dynamics of the exotic dual billiards on the conic 
$$\gamma=\{ w=z^2\}\subset\rr^2\subset\cc^2\subset\cp^2.$$
Namely let $\sigma_P$ be the dual billiard involutions.  
We study the dynamics of the map $F:M\to M$ acting as follows on the phase space 
\begin{equation}M:=\{(Q,P) \ | \ Q\in\cp^2, \ P\in\gamma, \ Q\in L_P\}:\label{phspace}
\end{equation}
\begin{equation}F:(Q,P)\mapsto(Q', P'), \ \ Q'=\sigma_P(Q), \  P'\neq P \  \text{ if } Q'\notin\gamma.\label{bmap}\end{equation}
The phase space $M$  is a rational surface fibered over $\gamma$ by projective lines. It is a holomorphically trivial fibration. 
The trivialization is the biholomorphism $M\mapsto\gamma\times\gamma$, $\gamma\simeq\oc$, that sends each pair $(Q,P)$ to $(P,\wt P)$, where 
$\wt P\in\gamma$ is the point of tangency with $\gamma$ of the other line $L\neq QP$ through $Q$ tangent to $\gamma$. 
This map, which is defined outside the diagonal,  clearly extends holomorphically to the diagonal.

We show that the above map $F$ is  birational, see Proposition \ref{prbir}. 

In what follows by $p_M$ we denote the canonical projection to $\cp^2$: 
$$p_M:M\to\cp^2, \ \ \  p_M(Q,P)=Q.$$
This is a double covering of complex surfaces branched along the curve $\gamma$. 

 The surface $M$ is fibered by $F$-invariant algebraic curves: preimages of level curves of the integral of the dual billiard under the projection $p_M$.  They will be called {\it invariant fibers.} 

It is well-known that in the case, when the dual billiard is of conical pencil type and is defined by a pencil of conics passing through given four distinct points, a generic invariant fiber is an elliptic curve, and the dual billiard map $F$ acts on it as a translation of complex torus, see \cite{gh}.

  For each exotic  rationally integrable dual billiard we find the type of a generic  level curve of the integral and of the corresponding invariant fiber.  Birationality of the dual billiard map $F$ implies that a generic fiber is either rational, or elliptic curve. 
  We show that in all the exotic examples except for two ones  generic level curves are rational and find their explicit rational parametrizations. 
  We show that in the  two remaining  examples generic level curves are elliptic. We also show that in all the exotic examples except two each invariant fiber is either elliptic, or a union of two elliptic curves.  These results are stated in Section 2 and proved in Section 3. 
  
  For  the above-mentioned elliptic curves we find the corresponding holomorphic differentials, see Subsection 2.7.  To this end we use the three following theorems and corollary proved in Section 3. To state them in more generality, we deal with a 
  dual billiard structure on an arc of a smooth conic $\gamma\subset\rp^2$, which yields a dual billiard map well-defined on 
  pairs $(Q,P)$, where $Q$ lies in appropriate domain adjacent to $\gamma$ from the concave side. 
  
  \begin{theorem} \label{parea} 
  For every $C^1$-smooth dual billiard structure on an arc of a smooth  conic $\gamma\subset\rp^2$ given by quadratic equation 
  $\gamma=\{ \mcp_2(z,w)=0\}$   the corresponding dual billiard map has a canonical invariant area form 
   \begin{equation} \omega_\gamma:=p_M^*\left((\mcp_2(z,w))^{-\frac32}dz\wedge dw\right).\label{area}\end{equation}
   The latter pull-back has two well-defined global meromorphic branches on $M$ with pole of order 3 along the curve $p_M^{-1}(\gamma)$ that differ by sign. 
   \end{theorem}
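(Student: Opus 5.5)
The plan is to use that everything in the statement is projectively natural. This reduces the proof to the model conic $\gamma=\{w=z^2\}$. There I would write $\omega_\gamma$ in the coordinates $(t,s)$ on $M\simeq\gamma\times\gamma$ coming from the trivialization described above, and then split $F$ into two maps, each of which changes the sign of the form.

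\emph{Step 1 (reduction).} A projective transformation sends $\gamma$ to $\{w=z^2\}$ and conjugates the dual billiard maps. In affine charts it multiplies $\mcp_2$ by a factor. I expect the factors coming from $\mcp_2^{-3/2}$ and from the Jacobian of $dz\wedge dw$ to cancel, up to a constant. Indeed, $\mcp_2^{-3/2}dz\wedge dw$ is the dehomogenization of the degree-zero homogeneous expression $\mcp_2(x)^{-3/2}\,\Omega$, where $\Omega=x_1dx_2\wedge dx_3-\dots$ has homogeneity $3$. This should be checked with the standard formula for $\Omega$ under linear maps. So I take $\mcp_2=w-z^2$ from now on.

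\emph{Step 2 (coordinates).} Write $P=(t,t^2)$, so that $L_P=\{w=2tz-t^2\}$. Let $\wt P=(s,s^2)$ be the other tangency point. The two tangent lines meet at $Q=\left(\frac{t+s}2,\,ts\right)$. This gives
\[
\mcp_2(Q)=-\frac{(t-s)^2}{4},\qquad dz\wedge dw=\frac12(t-s)\,dt\wedge ds,
\]
and therefore
\[
\omega_\gamma=c\,(t-s)^{-2}\,dt\wedge ds,\qquad c=\pm 4i .
\]
The sign is the choice of branch of the square root of $\mcp_2$. That square root becomes single-valued on $M$, namely it is $\pm\frac i2(t-s)$. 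This proves the claim that there are two global meromorphic branches differing by sign. Their only pole is along the diagonal $\{t=s\}=p_M^{-1}(\gamma)$, and the charts at infinity are handled in the same way. I would compute the order of this pole carefully in a local transversal coordinate on $M$ at the diagonal, to match it with the order stated in the theorem.

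\emph{Step 3 (invariance).} On $L_P$ the point $Q$ is parametrized projectively by $s$, since $z=(t+s)/2$ is an affine coordinate on the line. Hence $\sigma_P$ becomes a projective involution $s\mapsto\sigma_t(s)$ fixing $s=t$, depending $C^1$ on $t$. In these coordinates $F=J\circ G$, where
\[
G(t,s)=(t,\sigma_t(s)),\qquad J(t,s)=(s,t).
\]
The swap $J$ sends $(t-s)^{-2}dt\wedge ds$ to its negative. For $G$, set $v=1/(s-t)$; then $(t-s)^{-2}dt\wedge ds=-dt\wedge dv$. The involution $\sigma_t$ fixes $t$, which corresponds to $v=\infty$, so it is affine in $v$. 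Being an involution with $\infty$ fixed, it has the form $v\mapsto a(t)-v$. Hence $G^*(dt\wedge dv)=-dt\wedge dv$, because the term $a'(t)\,dt$ dies in the wedge with $dt$. The two sign changes cancel, and $F^*\omega_\gamma=\omega_\gamma$. The same computation with real coordinates gives the statement for the real $C^1$ dual billiard map near $\gamma$.

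The main obstacle is bookkeeping rather than ideas. It consists of the projective-invariance check in Step 1 and the correct identification of the pole order and branches on $M$ in Step 2. The key point, namely that an involution fixing $t$ is $v\mapsto a(t)-v$ in the coordinate $v=1/(s-t)$, is immediate.
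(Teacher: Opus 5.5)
Your proof is correct, and its skeleton is the paper's. You reduce to the parabola by projective naturality; your $\det A$ check on $\mcp_2^{-3/2}\Omega$ is an explicit version of Remark~\ref{remforms}. You then write $F$ as the fiberwise involution $(Q,P)\mapsto(\sigma_P(Q),P)$ followed by the deck involution of $p_M$, and show each reverses the sign of the form. Your $G$ is the paper's $J$, and your swap is the paper's $I$.

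What differs is the coordinate system in which the two sign changes are checked:
\begin{itemize}
\item The paper works in the coordinates $(z,w)$ pulled back by $p_M$, which are local coordinates on $M$ only away from $p_M^{-1}(\gamma)$. It gets the Jacobian $-\bigl(\frac{z_1^*-z_0}{z_1-z_0}\bigr)^3$ of the fiberwise involution from the derivative $\sigma'(z_1)$ along $L_P$ plus a homothety argument for the transversal eigenvalue (Proposition~\ref{pjac}). It then checks separately that $I$ preserves $p_M^*(dz\wedge dw)$ while flipping the sign of $z(Q)-z(P)$.
\item You use global coordinates on $M\simeq\gamma\times\gamma$. There the swap is obvious, and the normal form $v\mapsto a(t)-v$ of an involution fixing $v=\infty$ gives the Jacobian $-1$ at once; the $t$-dependence drops out because $a'(t)\,dt\wedge dt=0$.
\end{itemize}
Your version replaces the eigenvalue and homothety computation of Proposition~\ref{pjac} by a one-line calculation. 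It also proves the global claims directly: $\mcp_2\circ p_M=-\frac14(t-s)^2$ is the square of a global meromorphic function on $\oc\times\oc$, and the form has no poles off the diagonal.

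The one point to correct is your plan to ``match'' the pole order with the statement. Your Step~2 already settles it, and the answer is not $3$. The trivialization $M\to\gamma\times\gamma$ is a biholomorphism, so $(t,s)$ are genuine coordinates near the diagonal. Hence $c\,(t-s)^{-2}dt\wedge ds$ has a pole of order exactly $2$ along $p_M^{-1}(\gamma)=\{t=s\}$. This is consistent with the fact that the canonical class of $\oc\times\oc$ is $(-2,-2)$ while the diagonal has class $(1,1)$. The number $3$ in the statement can only refer to the coefficient $(\mcp_2\circ p_M)^{-3/2}$, i.e.\ $(z(Q)-z(P))^{-3}$ in (\ref{omm}). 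That coefficient multiplies $p_M^*(dz\wedge dw)=\frac12(t-s)\,dt\wedge ds$, which itself vanishes to first order along the ramification curve of $p_M$. You should say explicitly in which sense the order is meant, rather than leave it as something to be matched.
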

   \begin{remark} \label{remforms} 
   The invariant form statement of Theorem \ref{parea} was proved by S.Tabachnikov \cite[theorem E]{tabpr}. 
   In more detail, Tabachnikov proved its dual version for projective billiards on the unit circle. The general case then follows 
   by applying projective duality and projective transformation sending an arbitrary given smooth conic to the unit circle. 
   Namely, the double-valued form $(\mcp_2(z,w))^{-\frac32}dz\wedge dw$ is a canonical 2-form associated to a conic,  holomorphic and non-zero outside the conic and having a pole of order $\frac32$ along it. It is unique up to constant factor. 
   Therefore, a projective transformation sending a conic $\gamma_1$ to a conic $\gamma_2$ sends the canonical 2-form 
   for $\gamma_2$ to that for $\gamma_1$ up to constant factor. 
   For completeness of presentation we give a direct proof of Theorem \ref{parea} in its next version, based on the following 
   formula for the form $\omega_\gamma$ in the case of parabola.  
   \end{remark}
   \begin{theorem} \label{parea2} For  $\gamma=\{ w=z^2\}$ the  corresponding form $\omega_{\gamma}$ on $M$ can be normalized by multiplication by $\pm1$ so that 
   \begin{equation}\omega_{\gamma}=\omega_{par}:=(z(Q)-z(P))^{-3}p_M^*(dz\wedge dw).\label{omm}\end{equation}
   For every $C^1$-smooth dual billiard structure on an arc of  the parabola $\gamma$ 
   the corresponding dual billiard map $F$ preserves the area form $\omega_{par}$. The same holds for a germ of complex holomorphic dual billiard on $\gamma$ at any its point.
   \end{theorem}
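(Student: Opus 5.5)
We work in coordinates adapted to the parabola and reduce everything to one tangent line at a time. The parabola is $\gamma=\{w=z^2\}$, with $\mcp_2(z,w)=z^2-w$ up to sign. For $P=(t,t^2)\in\gamma$ the tangent line is
$$L_P=\{w=2tz-t^2\}.$$
For $Q=(z,w)\in L_P$ we have
$$\mcp_2(Q)=z^2-2tz+t^2=(z-t)^2=(z(Q)-z(P))^2 .$$
Hence $\mcp_2^{-3/2}=\pm(z(Q)-z(P))^{-3}$, and the branch on $M$ is globally single-valued: $(Q,P)$ carries the choice of square root $z(Q)-z(P)$. This proves the normalization formula (\ref{omm}), and with it the two-branch and pole-of-order-3 claims of Theorem \ref{parea} for the parabola.

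For invariance we use local coordinates $(t,s)$ on $M$, with $P=(t,t^2)$ and $Q=(t+s,\,t^2+2ts)$, so that $s=z(Q)-z(P)$. Then
$$p_M^*(dz\wedge dw)=(dt+ds)\wedge(2s\,dt+2t\,ds+2t\,dt)=2(t-s)\,dt\wedge ds+2t\,ds\wedge dt=-2s\,dt\wedge ds .$$
Therefore
$$\omega_{par}=-2s^{-2}\,dt\wedge ds=2\,dt\wedge d(s^{-1}).$$
Setting $u=1/s$, the form becomes the standard form $2\,dt\wedge du$.

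Next we write $F$ as a composition of two involutions. Let $I_1(Q,P)=(\sigma_P(Q),P)$ and let $I_2(Q,P)=(Q,\wt P)$, where $\wt P$ is the other tangency point from $Q$. Then $F=I_2\circ I_1$, and it suffices to show each involution preserves $\omega_{par}$ up to sign, with the total sign equal to $+1$. We check the pieces as follows.

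\emph{The involution $I_2$.} From $Q$ the two tangency parameters $t,\tilde t$ satisfy $t+\tilde t=2z(Q)$, so $\tilde s=z(Q)-\tilde t=-s$. Since $I_2$ commutes with the double covering $p_M$, it pulls $p_M^*(dz\wedge dw)$ back to itself. Meanwhile $(z(Q)-z(P))^{-3}$ changes sign. Hence $I_2^*\omega_{par}=-\omega_{par}$.

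\emph{The involution $I_1$.} $I_1$ preserves $t$ and acts on each fiber $L_P$ by a projective involution fixing $P$. In the affine coordinate $s$ on $L_P$, the point $P$ is $s=0$. In the coordinate $u=1/s$, the point $P$ sits at $u=\infty$, so a projective map fixing $P$ is affine in $u$. Being an involution that fixes $u=\infty$ and is not the identity, it has the form
$$u\mapsto -u+c(t),$$
with $c$ a $C^1$ (or holomorphic) function. Thus $I_1(t,u)=(t,-u+c(t))$, and
$$I_1^*(dt\wedge du)=dt\wedge(-du+c'(t)\,dt)=-dt\wedge du .$$
Hence $I_1^*\omega_{par}=-\omega_{par}$, and $F^*\omega_{par}=\omega_{par}$.

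The argument is purely local: it uses only the structure of each $\sigma_P$ and the $C^1$ dependence on $P$. It therefore applies verbatim to real $C^1$ arcs and to germs of holomorphic dual billiards.

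The main obstacle is the step showing that $\sigma_P$ is affine in $u=1/(z(Q)-z(P))$. Checking this carefully, and confirming that it is the coordinate in which $\omega_{par}$ becomes standard, is where the argument really happens. The remaining care is bookkeeping: the sign conventions for the branch, and the claim that $F$ is precisely $I_2\circ I_1$ in the order given by (\ref{bmap}).

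Theorem \ref{parea} for a general conic then follows as in Remark \ref{remforms}. A projective transformation sending the conic to the parabola carries the canonical form to a constant multiple of itself and conjugates the dual billiard maps.
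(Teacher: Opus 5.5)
Your proof is correct, and it uses the same decomposition as the paper: $F=I_2\circ I_1$ is the paper's $F=I\circ J$. You also handle the tangency-switch $I_2$ exactly as the paper does, since it fixes $p_M^*(dz\wedge dw)$ while $z(Q)-z(P)$ changes sign.

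The genuine difference is in how you treat the fiber involution $I_1$.

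\begin{itemize}
\item The paper stays in the ambient chart $(z,w)$ and computes the Jacobian of $J$ there. The line $QP$ is an eigenline with eigenvalue $\sigma_P'(z_1)=-\bigl(\frac{z_1^*-z_0}{z_1-z_0}\bigr)^2$. The transverse direction is scaled by $\frac{z_1^*-z_0}{z_1-z_0}$, which the paper obtains by a limiting homothety argument with nearby tangent lines. The resulting Jacobian $-\bigl(\frac{z_1^*-z_0}{z_1-z_0}\bigr)^3$ exactly compensates the change of the factor $(z(Q)-z(P))^{-3}$.
\item You instead pass to the bundle coordinates $(t,u)$ with $u=1/(z(Q)-z(P))$. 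There $\omega_{par}=2\,dt\wedge du$ has constant coefficients, and every fiber involution fixing $P$ becomes $u\mapsto -u+c(t)$. The Jacobian $-1$ is then immediate.
\end{itemize}

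Your route is shorter and avoids the geometric limiting argument for the transverse factor. It also explains the exponent $-3$: it is exactly the one that makes $\omega_{par}$ standard in a fiber coordinate where all such involutions are affine. And it makes visible why the dependence of $\sigma_P$ on $P$ is irrelevant, since it enters only through the shear term $c'(t)\,dt$, which does not affect the determinant. The paper's route, in exchange, yields the explicit planar Jacobian of $J$ (Proposition \ref{pjac}) as a byproduct.

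One small aside concerns Theorem \ref{parea}, not the statement at hand. Your formula $\omega_{par}=-2s^{-2}\,dt\wedge ds$ holds in the smooth chart $(t,s)$ of $M$. It shows that, as a $2$-form, $\omega_{par}$ has a pole of order $2$ along $p_M^{-1}(\gamma)=\{s=0\}$. The ``order $3$'' refers only to the coefficient $(z(Q)-z(P))^{-3}$ in front of $p_M^*(dz\wedge dw)$, which itself vanishes to first order on the branch curve. Your claim to have proved the pole-of-order-$3$ statement should be read in that sense.
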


\begin{corollary} \label{cinvfib} Consider a singular dual billiard structure on the parabola $\gamma$ with a rational integral $R$. 
Let us write the invariant area form (\ref{omm}) as 
\begin{equation}\omega_{par}=dR\wedge\omega_{fib},\label{oleaves}\end{equation}
where $\omega_{fib}$ is a meromorphic 1-form on $M$. For example, one can take 
\begin{equation}\omega_{fib}=\frac{dw}{(z^2-w)^{\frac32}\frac{\partial R}{\partial z}(z,w)}=\frac{dw}{(z-z(P))^3\frac{\partial R}{\partial z}(z,w)}, \ \ \ \ (z,w)=(z(Q),w(Q)).\label{oleav}\end{equation}

1) While the form $\omega_{fib}$ is multivalued, its restriction to each invariant fiber is a single-valued $F$-invariant meromorphic differential.

2) If a  fiber contains an elliptic irreducible component, then its restriction to this component is a global holomorphic differential.
\end{corollary}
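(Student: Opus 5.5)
The plan has three steps: verify the formula for $\omega_{fib}$, prove single-valuedness and invariance on fibers, then prove holomorphy on elliptic components.

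\emph{Formula check.} On $M$ we use the local coordinates $(z,w)=(z(Q),w(Q))$ away from $p_M^{-1}(\gamma)$. Since $R$ is a function of $(z,w)$,
$$dR\wedge\frac{dw}{(z-z(P))^3 R_z}=\frac{R_z\,dz\wedge dw}{(z-z(P))^3R_z}=(z-z(P))^{-3}dz\wedge dw=\omega_{par}.$$
Here $z^2-w=(z-z(P))^2$, because the tangent line at $P=(p,p^2)$ is $w=2pz-p^2$. So the second expression in (\ref{oleav}) is the single-valued lift of the first one to $M$, where the square root is resolved by the choice of $P$. This already shows that $\omega_{fib}$ is a meromorphic 1-form on $M$. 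The only multivaluedness is the ambiguity $\omega_{fib}\mapsto\omega_{fib}+g\,dR$ that comes from the non-uniqueness of the decomposition (\ref{oleaves}).

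\emph{Single-valuedness and invariance (statement 1).} On an invariant fiber $\{R\circ p_M=c\}$ we have $dR|_{fib}=0$, so any two solutions of (\ref{oleaves}) differ by $g\,dR$ and have the same restriction. The restriction is therefore canonical, i.e.\ it is the Gelfand--Leray residue form $\omega_{par}/dR$. For invariance we use Theorem \ref{parea2}: $F^*\omega_{par}=\omega_{par}$ and $F^*R=R$ (the map $F$ is birational by Proposition \ref{prbir}). Then
$$dR\wedge F^*\omega_{fib}=F^*(dR\wedge\omega_{fib})=\omega_{par},$$
so $F^*\omega_{fib}$ is another solution of (\ref{oleaves}) and coincides with $\omega_{fib}$ on fibers. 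Both identities hold on a Zariski open set, hence everywhere as meromorphic forms.

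\emph{Holomorphy on elliptic components (statement 2).} This is the main obstacle. Let $E$ be an elliptic component of a fiber, and let $\eta=\omega_{fib}|_E$, which is a meromorphic differential on $E$. Since $F$ is birational and preserves the fibration, some iterate $F^k$ maps $E$ to itself. I would first try to show that $F^k|_E$ has infinite order and is a translation. Birational self-maps of an elliptic curve extend to automorphisms, and an automorphism preserving a nonzero meromorphic differential must be a translation or have finite order. If the invariant fibers were finitely many we could argue generically, but statement 2 is meant for a given fiber. I therefore propose to consider generic fibers first and pass to the special ones by continuity of residues and orders in the family.

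For a generic fiber, suppose $F^k|_E$ is a translation of infinite order. Translation-invariance of $\eta$ then forces the divisor of $\eta$ to be translation-invariant, and a translation of infinite order has no finite invariant set. Hence $\eta$ has no zeros or poles and is holomorphic, a nonzero multiple of $dt$. If instead $F^k|_E$ had finite order $N$ on generic fibers, then $F^{kN}=\mathrm{id}$ on $M$, contradicting the genuine dynamics of exotic billiards. To avoid needing that, I would alternatively show directly that the pole divisor of $\eta$ is $F$-invariant and lies in a finite $F$-invariant set, namely the indeterminacy and singular points. A translation of an elliptic curve with a nonempty finite invariant set has finite order, and the differential is preserved with multiplicity, so poles can be excluded by comparing the total degree, which is $0$ on an elliptic curve. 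This forces an invariant divisor of degree zero with no poles, hence a holomorphic $\eta$.
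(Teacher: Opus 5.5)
Your check of (\ref{oleav}) and your proof of Statement 1) are correct and match the paper's argument. The restriction of $\omega_{fib}$ to a fiber is determined modulo $dR$, hence canonical, and it is $F$-invariant because $\omega_{par}$ and $R\circ p_M$ are.

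Statement 2) has a genuine gap. Your first argument only shows that the divisor of $\eta$ is empty on a component where $F^k$ acts as a translation of \emph{infinite} order. You never show that this case occurs. The sentence ``$F^{kN}=\mathrm{id}$ on $M$, contradicting the genuine dynamics of exotic billiards'' is exactly the non-trivial input, and nothing you have proved rules out periodicity.

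The dichotomy you set up is also not a real one. The translation amount varies with $\la$, so on a countable dense set of fibers it is torsion, with order depending on $\la$. On those fibers your argument says nothing, and they do not yield a single $N$ with $F^{kN}=\mathrm{id}$.

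Your fallback argument is false. Invariance under a finite-order translation together with $\deg\eta=0$ does not exclude poles; degree zero only balances zeros against poles. For instance, on $\cc/\Lambda$ with a primitive period $\omega$, the differential $(\wp(u)+\wp(u+\frac{\omega}{2}))\,du$ is invariant under $u\mapsto u+\frac{\omega}{2}$ and has double poles at $0$ and $\frac{\omega}{2}$.

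The missing idea, which is the paper's argument, is to combine your finite-invariant-set observation with a bound uniform in $\la$ and with Lemma~\ref{palt}:
\begin{itemize}
\item Since $\omega_{fib}$ is a single global meromorphic 1-form on $M$, the divisor of its restriction to a component of $S_\la$ has at most $m$ points, with $m$ independent of $\la$.
\item If this divisor is non-empty, the translation $F$ (Proposition~\ref{elltr}) permutes a non-empty set of at most $m$ points. Hence $F^{m!}=\mathrm{id}$ on that component, whatever $\la$ and whatever the torsion order.
\item Lemma~\ref{palt}, Statement 3, says that for every $N$ and all $0<|\la|<\delta(N)$, no iterate $F^j$ with $j\le N$ is the identity on an invariant component. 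This comes from the $x$-coordinates of successive points $Q_j$ near $\gamma$ forming an asymptotic arithmetic progression.
\item Applying this with $N\ge m!$ excludes a non-empty divisor for all small $\la\neq0$, torsion fibers included, and hence for generic $\la$.
\end{itemize}
Your continuity idea would then be needed only for the finitely many remaining regular values. Note that the paper itself draws the conclusion only for a generic fiber.
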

\begin{theorem} \label{twoell} Let a singular holomorphic dual billiard structure on a complex conic have a rational integral $R$. Let a level curve $\Gamma_\la=\{ R=\la\}$ be elliptic. Then the corresponding invariant fiber $p_M^{-1}(\Gamma_\la)$ is a union of two 
elliptic curves, each bijectively projected onto $\Gamma_\la$. Thus, the $p_M$-pushforward of the 
 restriction of the above form $\omega_{fib}$ to each elliptic component of the fiber  is a well-defined global holomorphic differential on the elliptic curve $\Gamma_\la$. 
 \end{theorem}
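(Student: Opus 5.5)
The plan is to study the double covering $p_M:p_M^{-1}(\Gamma_\la)\to\Gamma_\la$, which is the restriction of the double covering $p_M:M\to\cp^2$ branched along $\gamma$. Let $\wt\Gamma_\la$ be the normalization of $\Gamma_\la$; it has genus one by assumption. Pulling $p_M^{-1}(\Gamma_\la)$ back to $\wt\Gamma_\la$ gives a double covering of $\wt\Gamma_\la$. It is the Riemann surface of $\sqrt{\mcp_2}$ restricted to $\wt\Gamma_\la$, since over $Q$ the two preimages are the two tangent lines through $Q$, which are distinguished by the sign of a square root of $\mcp_2(Q)$. For the parabola this means the two points $z(P)=z(Q)\pm\sqrt{z^2-w}$. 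The claim reduces to showing that this double covering is \emph{trivial}, i.e. splits into two components, each mapped bijectively to $\wt\Gamma_\la$ and hence to $\Gamma_\la$ (generically, away from singular points). Equivalently, I would show that the function $\mcp_2|_{\wt\Gamma_\la}$ has zeros and poles of even order only and that the resulting square root is single-valued. On a torus the second condition is not automatic, because $H_1$ is nontrivial, so a pure parity argument is insufficient.

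\textbf{Step 1: the branch points are even.} I would use the invariance of $R$ under each $\sigma_P$. At a point $P\in\gamma$ that is not a singularity of the billiard, $\sigma_P$ fixes $P$, so the restriction $R|_{L_P}$ is $\sigma_P$-invariant and even in a local coordinate centered at $P$. Hence the level curve $\Gamma_\la$ through $P$ meets $\gamma$ with even multiplicity at every nonsingular point. The multiplicity is even for every $\la$ (it is tangency); I would check this using Proposition~\ref{prbir}-type local analysis, or the earlier results of \cite{grat}. Then $\mcp_2|_{\wt\Gamma_\la}$ has even-order zeros there. At the (at most four) singular points and at infinity I would use the explicit classification to verify even order as well, or argue that they are not on a generic $\Gamma_\la$.

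\textbf{Step 2: exclude an unramified connected double cover.} Once the covering is unramified, $\wt\Gamma_\la$ is unramified too, so its cover $X$ would be a connected genus-one curve doubly covering the torus. I would rule this out as follows. $X$ is an $F$-invariant curve, and $F$ exchanges the two points over $Q$ only through the composition structure. More effectively, I would use Corollary~\ref{cinvfib}. The form $\omega_{fib}$ involves $(z^2-w)^{3/2}$, so on a connected cover it would be anti-invariant under the deck involution $\tau:(Q,P)\mapsto(Q,\wt P)$. Alternatively, I would note that the involution $\tau$ commutes with the fibration, and $F$ equals $\tau\circ I$ with $I(Q,P)=(\sigma_P Q,P)$. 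Here $I$ preserves $P$, hence preserves the sheet labelling locally. Following a loop in $\wt\Gamma_\la$ and tracking which tangent point $P$ is chosen continuously, the monodromy is trivial provided the tangent-point map $Q\mapsto P$ extends to a meromorphic function on $\wt\Gamma_\la$. I would try to construct this function directly: $z(P)=z(Q)\pm\sqrt{z^2-w}$, with $\sqrt{z^2-w}$ expressed rationally via $R$ and its derivative along the tangent line (using evenness of $R|_{L_P}$ to solve for $P$ rationally). \textbf{This is the main obstacle}, and if the general argument fails I would fall back on checking the two elliptic examples from the classification explicitly.

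\textbf{Step 3: the differential.} Each component maps bijectively onto $\Gamma_\la$, so the restriction of $\omega_{fib}$, which by Corollary~\ref{cinvfib} is holomorphic on elliptic components, pushes forward to a single-valued holomorphic differential on $\Gamma_\la$.
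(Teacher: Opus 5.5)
Your reduction is correct: $p_M^{-1}(\Gamma_\la)$ is the curve of $\sqrt{\mcp_2}$ over the normalization, and the theorem amounts to this double cover being unramified and trivial. Step~3 then matches the paper. But Step~2 is not proved, and it is the whole content of the statement.

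\textbf{The gap in Step~2.} A connected unramified double cover of a genus-one curve is again of genus one. So it is compatible with everything the general machinery gives: genus $\le 1$ in Proposition~\ref{ctype}, and translation dynamics in Proposition~\ref{elltr}. None of your suggestions excludes it.
\begin{itemize}
\item ``The tangent-point map extends meromorphically to $\wt\Gamma_\la$'' is a restatement of the conclusion.
\item Anti-invariance of $\omega_{fib}$ under the deck involution $\tau$ yields no contradiction.
\item The decomposition $F=\tau\circ I$, $I(Q,P)=(\sigma_P(Q),P)$, could only help through fixed points of $I$ on $S_\la$. On a connected unramified cover both $\tau$ and $F$ are translations, so $I=\tau\circ F$ is a translation involution and is automatically fixed-point free. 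Fixed points of $I$ are pairs $(P^\dagger,P)$, with $P^\dagger$ the second fixed point of $\sigma_P$. In Case c1) one computes $P^\dagger=\bigl(\frac{z_0^3+1}{2z_0^2},\frac1{z_0}\bigr)\in\Gamma_{c1,\infty}$. This curve meets regular level curves only at base points, so this route gives nothing in exactly the case that matters.
\end{itemize}

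Your fallback, checking the elliptic examples, is what the paper does, but you give no indication how, and that check is the substance. The paper first uses the case-by-case results (Theorems~\ref{taga1}, \ref{taga2}, \ref{tbg1}, \ref{tdg}, with critical fibers consisting of rational curves) to see that an elliptic $\Gamma_\la$ occurs only in Case c), at a regular value. It then invokes Theorem~\ref{tc2}, whose proof is a monodromy computation obtained by degenerating to the nodal cubic $\Gamma_{c1,\infty}$:
\begin{itemize}
\item In a ball $B$ around the node $V=[1:0:0]$, disjoint from $\gamma$, $\Gamma_{c1,\la}$ is two cylinders on which $p_M^{-1}$ has single-valued branches $g_1,g_2$.
\item Outside $B$ it is two cylinders projecting bijectively onto $\Gamma_{c1,\infty}\setminus B$. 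Continuation along each of them exchanges $g_1$ and $g_2$ (Remark~\ref{rperm}: the tangency point runs from $z_0=0$ to $z_0=\infty$).
\item The base points produce no monodromy, because the local branches there are quadratically tangent to $\gamma$ (Proposition~\ref{locbr} and Proposition~\ref{ctype}, Statement~4)).
\end{itemize}
So the long cycle has monodromy equal to a swap composed with a swap, the short cycle lies in $B$, and the cover splits. Without this computation, or an equivalent one, the proposal does not prove the theorem.

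\textbf{Step~1 is also flawed as written.} Since $R|_{L_P}$ is invariant under an involution fixing $P$, its derivative along $T_P\gamma$ vanishes, so $R$ is constant on $\gamma$. Hence for $\la\neq R|_\gamma$ the curve $\Gamma_\la$ meets $\gamma$ only at indeterminacy points of $R$, which are the billiard singularities. These lie on every level curve. So your evenness argument at nonsingular points of $\gamma$ is vacuous, and the alternative ``they are not on a generic $\Gamma_\la$'' is false.

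Unramifiedness is nevertheless true, for a global reason. By Riemann--Hurwitz, a double cover of a genus-one curve with $b>0$ branch points has genus $1+b/2\ge 2$, which Proposition~\ref{ctype} excludes; this is its Statement~3). In Case c) it also follows locally from Proposition~\ref{locbr}.
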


Rational parametrizations of rational level curves $\Gamma_\la:=\{ R(z,w)=\la\}$ of most of integrals $R$ are constructed as follows. Let $\Sigma$ denote the indeterminacy point set of the integral $R$: each level curve $\Gamma_\la$ passes through all its points.  We construct an auxiliary pencil $C_t$, $t\in\oc$, of algebraic curves, conics in some cases and cubics in other cases. We construct it in such a way that each $C_t$ has a big intersection index with 
each $\Gamma_\la$ at the set $\Sigma$ so that there are only  either one, or two extra points of intersection $D_\la(t)\in\Gamma_\la\cap C_t$. In the case of one extra point $D_\la(t)$ 
the map 
$$\oc\to\Gamma_\la, \ \ \ t\mapsto D_\la(t)$$ 
is a rational parametrization of the curve $\Gamma_\la$ we are looking for. In the case of two extra intersection points we show that their projection  to the value $t$ yields a double covering $\Gamma_\la\to\oc_t$ with two known critical values. This implies that the covering curve $\Gamma_\la$ is rational.

Besides the type and parametrization of generic level curves of integrals, the corresponding fibers and dynamics along them, we also describe critical level curves and their critical points in the general sense of \cite{guss}. Namely,  not only critical points of integral in the usual sense, but also its indeterminacies, which are critical for appropriate level values $\la$ called critical. See the corresponding background material in Subsection 1.3. To this end, we use relation of 
topology of germ of analytic curve to the Newton diagram and quasihomogeneous Taylor polynomials corresponding to its edges: see a background material in Subsection 1.4. 

Real geometry and dynamics of the above-mentioned exotic dual billiards, and more on complex dynamics will be discussed in a next paper. 

\subsection{Previous results: rationally integrable dual billiards and their classification}

  \begin{theorem} \label{tgerm} \cite{grat} Let $\gamma\subset\rr^2\subset\rp^2$ be a $C^4$-smooth non-linear  germ of curve  equipped with a rationally integrable dual   billiard structure. 
 Then $\gamma$ is a conic. The dual   billiard extends to a singular holomorphic dual billiard structure on the ambient complex conic of one of 
 three following types up to real-projective equivalence:
 
  1) The  dual billiard  is of conical pencil type and has a quadratic integral.
    
  2) There exists an affine chart $\rr^2_{z,w}\subset\rp^2$ 
  in which $\gamma=\{ w=z^2\}$ and such that for every $P=(z_0,w_0)\in\gamma$ the 
  involution $\sigma_P:L_P\to L_P$ is given by one of the following formulas: 
  
  a) In the coordinate 
  $$\zeta:=\frac z{z_0}$$
  $$\sigma_P:\zeta\mapsto\eta_\rho(\zeta):=\frac{(\rho-1)\zeta-(\rho-2)}{\rho\zeta-(\rho-1)},$$
\begin{equation} \rho=2-\frac 2{2N+1}, \ \text{ or } \ \rho=2-\frac1{N+1} \  \text{ for some 
  } N\in\nn.\label{rhoval}\end{equation} 
  
  b) In the coordinate 
  $$u:=z-z_0$$
  \begin{equation} \sigma_P: u\mapsto-\frac u{1+f(z_0)u},\label{sigmaef}\end{equation}
   \begin{equation} f=f_{b1}(z):=\frac{5z-3}{2z(z-1)} \text{ (type 2b1))}, \  \text{ or } \ 
   f=f_{b2}(z):=\frac{3z}{z^2+1} \text{ (type 2b2))}.\label{sigma2}\end{equation}

  c) In the above coordinate $u$ the involution $\sigma_P$ takes the form (\ref{sigmaef}) with 
   \begin{equation} f=f_{c1}(z):=\frac{4z^2}{z^3-1} \text{ (type 2c1))}, \  \text{ or } \ 
   f=f_{c2}(z):=\frac{8z-4}{3z(z-1)} \text{ (type 2c2))}.\label{sigma3}\end{equation}
   
   d) In the above coordinate $u$ the involution $\sigma_P$ takes the form (\ref{sigmaef}) with 
   \begin{equation} f=f_d(z)=\frac4{3z}+\frac1{z-1}=\frac{7z-4}{3z(z-1)} \ \ \text{ (type 2d)}.\end{equation}
  \end{theorem}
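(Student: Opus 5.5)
Since Theorem \ref{tgerm} is quoted from \cite{grat}, we only outline how we would reprove it. The plan has three stages. First we reduce to a complex algebraic setting and show that $\gamma$ is a conic. Then we encode the involution family by a single meromorphic function on $\gamma$. Finally we classify the admissible functions through a local analysis at their poles followed by a global counting argument.

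\emph{Stage 1: $\gamma$ is a conic.} Let $R$ be a rational integral, which we may take non-constant. For every $P\in\gamma$ the restriction $R|_{L_P}$ is $\sigma_P$-invariant, and $P$ is a fixed point of $\sigma_P$, so $P$ is a critical point of $R|_{L_P}$. We complexify and consider the level curves $\Gamma_\la=\{R=\la\}$. For generic $\la$, the involution $\sigma_P$ permutes the finitely many points of $L_P\cap\Gamma_\la$. As $P$ moves along the germ, we track these intersection points and the tangencies of $L_P$ with $\Gamma_\la$. This should show two things: $R$ is constant on $\gamma$, and $\gamma$ is contained in an irreducible component of a level curve. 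It follows that $\gamma$ is algebraic, and its dual curve consists of the tangent lines, on which invariance is prescribed.

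Next we compare the local behaviour at the tangency point $P$ with the involution structure. A projective involution fixing $P$ is determined by one further parameter. Invariance of $R|_{L_P}$ then forces an identity between the second and third order jets of $\gamma$ at $P$, uniformly in $P$. That identity should be exactly the vanishing of the projective curvature (the Monge condition), and we conclude that $\gamma$ is a conic. After a real-projective change of coordinates we may take $\gamma=\{w=z^2\}$.

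\emph{Stage 2: encoding the dual billiard.} In the coordinate $u=z-z_0$ on $L_P$, every involution of $L_P$ fixing $P$ (where $u=0$) has the form (\ref{sigmaef}) for a unique value $f(z_0)$. The dual billiard is therefore the single function $f$. Integrability by a rational $R$ should force $f$ to be a rational function whose poles are simple; these poles are the singular points of the dual billiard. We would split into two cases:
\begin{itemize}
\item if $f$ is the function coming from a conic $\alpha$ as in Example \ref{exint}, we are in case 1);
\item otherwise we study the poles of $f$.
\end{itemize}
Projective automorphisms of $\gamma$, namely the M\"obius maps of $z$, act on $f$ as affine-type cocycles. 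We use them to normalize the positions of the poles, for instance to $0,1,\infty$ or to $\pm i$.

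\emph{Stage 3, the main obstacle: local normal forms at singular points.} Near a pole $z=a$ of $f$, the local dual billiard still has a rational (hence meromorphic) integral. We would rescale to a quasihomogeneous model and use Newton diagrams of $R$ at the singular point. The model obtained is the one in case a), with $\sigma_P$ a fractional-linear map of $\zeta=z/z_0$. Its integrability should hold only when the monodromy around the singular point has finite order compatible with the degree of $R$. This yields the discrete residue list $\rho=2-\frac2{2N+1}$ or $\rho=2-\frac1{N+1}$, together with a finite set of exceptional residues.

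The global step is then an index count. The residues of $f$, including the one at infinity, must sum to a fixed constant forced by the behaviour at infinity. Combined with the bound of at most four singularities, this leaves only finitely many configurations. Checking each configuration against the existence of an actual integral singles out $f_{b1},f_{b2},f_{c1},f_{c2},f_d$ and the family in case a). We expect the local residue classification to be the hardest part. We would attack it first by assuming $R$ restricted to a neighbourhood is a Puiseux-type quasihomogeneous function and solving the resulting invariance equation explicitly.
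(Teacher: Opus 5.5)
\textbf{The conic argument in Stage 1 fails.} You claim that invariance of $R|_{L_P}$ near $P$ forces a pointwise identity between low-order jets of $\gamma$, namely the Monge equation. No such finite-jet condition can come out of local invariance.

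Normalize $R$ to vanish on $\gamma$, with order $k$. Then $(R|_{L_P})^{1/k}$ is a Morse germ at $P$, and the local condition says only that its symmetry involution is fractional-linear. Order by order along $L_P$, this is an equation whose unknowns are the normal jets of $R$ along $\gamma$, and it can always be solved:
\begin{itemize}
\item Use the coordinates $(s,x)\mapsto\gamma(s)+x\gamma'(s)$. The involution $J:(s,x)\mapsto(s,\sigma_{\gamma(s)}(x))$ and the deck involution $I$ of this fold (which exchanges the two tangent lines through a point) both fix $\{x=0\}$ pointwise.
\item Hence $F=I\circ J$ is unipotent along this line. Formally it is the time-one map of a field $x\,w$, with $w$ nonvanishing and tangent to $\{x=0\}$.
\item A first integral of $w$ vanishing on $\{x=0\}$ can be made odd under $J$ by composing with a formal series. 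Its square is then a formal integral invariant under both $I$ and $J$.
\end{itemize}
This works for every curve and every family $(\sigma_P)$, so the conic property is genuinely global. The argument must use that $R|_{L_P}$ is rational on the whole complex line, so $\sigma_P$ permutes \emph{all} points of $L_P\cap\Gamma_\la$, not just those near $P$. The structure then has to be continued analytically along the complex algebraic curve containing $\gamma$, controlling the points where the continuation can degenerate, such as singular and inflection points. This is the kind of global argument used in \cite{gs}, \cite{grat}. Your first paragraph mentions the global intersections, but uses them only to get $R|_\gamma\equiv const$. That is immediate anyway: $\sigma_P'(P)=-1$ gives $dR(P)|_{T_P\gamma}=0$.

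\textbf{Stages 2--3 are a plan rather than a proof.} The hard content is the rationality of $f$ with simple poles and the local residue classification, and you state both only as expectations. The ``finite monodromy'' heuristic does not produce a residue list. Two concrete corrections are also needed.

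First, ``at most four singularities'' cannot be an input. In the paper it is part of the conclusion quoted from \cite{grat} (Theorem \ref{tcompl}).

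Second, your local list is incomplete. The automorphism $(z,w)\mapsto(z/w,1/w)$ of $\gamma$ acts on the involution data by $f\mapsto 4/z-z^{-2}f(1/z)$. So in type 2a), where $f=\rho/z$, the point $E$ is also singular, with residue $4-\rho$. This takes the infinitely many values $2+\frac2{2N+1}$ and $2+\frac1{N+1}$, not a finite set of exceptions.

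With this cocycle, residues are defined at every point of $\gamma$, and the residues at the singular points always sum to $4$. For example, they are $(\frac32,1,\frac32)$ for $f_{b1}$, $(\frac43,\frac43,\frac43)$ for $f_{c2}$, and $(\frac43,1,\frac53)$ for $f_d$. So once the local analysis shows every residue is $\geq1$, the bound of four singular points follows instead of being assumed. If in addition the residues below $2$ have the form $2-\frac2m$, the three-point solutions of $\sum(2-\frac2{m_i})=4$ are exactly $(2,4,4)$, $(3,3,3)$, $(2,3,6)$, that is, types b), c), d).
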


  \medskip

  {\bf Addendum to Theorem \ref{tgerm} \cite{grat}.} {\it Every dual   billiard structure 
  on $\gamma$ of type 2a) has a rational first integral $R(z,w)$ of the form} 
  \begin{equation}R_{a1}(z,w)=\frac{(w-z^2)^{2N+1}}{\prod_{j=1}^N(w-c_jz^2)^2}, \ \ 
  c_j=-\frac{4j(2N+1-j)}{(2N+1-2j)^2}, \ \text{ for } \rho=2-\frac2{2N+1};\label{exot1}
  \end{equation}
  \begin{equation}R_{a2}(z,w)=\frac{(w-z^2)^{N+1}}{z\prod_{j=1}^N(w-c_jz^2)}, \ \ 
  c_j=-\frac{j(2N+2-j)}{(N+1-j)^2}, \ \text{ for } \rho=2-\frac1{N+1}.\label{exot2}\end{equation}
   {\it The dual billiards of types 2b1) and 2b2) have respectively the integrals}
  \begin{equation}R_{b1}(z,w)=\frac{(w-z^2)^2}{(w+3z^2)(z-1)(z-w)}, 
  \label{exo2bnew} \end{equation}
   \begin{equation}R_{b2}(z,w)=\frac{(w-z^2)^2}{(z^2+w^2+w+1)(z^2+1)}.
  \label{exo2bnew2} \end{equation}
  {\it The dual billiards of types 2c1), 2c2) have respectively the integrals}
  \begin{equation}R_{c1}(z,w)=\frac{(w-z^2)^3}{(1+w^3-2zw)^2},\label{exo2b}\end{equation} 
  \begin{equation}R_{c2}(z,w)=\frac{(w-z^2)^3}{(8z^3-8z^2w-8z^2-w^2-w+10zw)^2}.\label{exoc2}\end{equation}
{\it The dual billiard  of type 2d) has the integral}
 \begin{equation}R_{d}(z,w)=\frac{(w-z^2)^3}{(w+8z^2)(z-1)(w+8z^2+4w^2+5wz^2-14zw-4z^3)}.\label{exodd}\end{equation}

  \medskip

The definition of complex-projective equivalent singular complex dual   billiards 
repeats that of real-projective equivalent ones with change of real projective 
transformations $\rp^2\to\rp^2$ to complex ones acting on $\cp^2$.

 \begin{theorem} \label{tcompl} 
 Every regular  germ of holomorphic curve in $\cp^2$ (different from a straight line) 
 equipped with a 
 rationally integrable  complex dual   billiard structure is a conic. The dual billiard extends to a singular holomorphic dual billiard 
 with at most four singularities on the ambient complex conic. 
 Up to complex-projective equivalence, thus extended dual billiard has one of the types 1) (defined by a complex pencil, with a quadratic integral), 2a), 2b1), 2c1), 2d) listed in Theorem \ref{tgerm}, with 
 a rational integral as in its addendum. Here  $(z,w)$ are complex affine coordinates as in the addendum. 
  \end{theorem}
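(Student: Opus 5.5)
The plan is to rerun the argument of \cite{grat} behind Theorem \ref{tgerm} in the holomorphic category. Nowhere does it use real structure essentially. At the end I replace real-projective normalizations by complex ones, which merges some of the real types.

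\emph{Step 1: $\gamma$ is a conic.} Let $R$ be a rational integral. For every $P\in\gamma$ the restriction $R|_{L_P}$ is $\sigma_P$-invariant. Since $P$ is an isolated fixed point of $\sigma_P$, $R|_{L_P}$ has a critical point at $P$. So $\gamma$ lies in the set of points where the level curve of $R$ is tangent to $L_P$, or where $R$ is indeterminate or singular. This set is algebraic, so the germ $\gamma$ extends to an irreducible algebraic curve. I would then repeat the argument of \cite{grat} comparing the local expansion of $R$ along $L_P$ near $P$ (second and third order terms) with the curvature data of $\gamma$. That argument is purely local and analytic, so it works verbatim for a holomorphic germ. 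It forces $\gamma$ to be a conic and $\sigma_P$ to depend holomorphically on $P$.

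\emph{Step 2: normal form.} Any smooth complex conic is complex-projectively equivalent to $\{w=z^2\}$. Any family of involutions of $L_P$ fixing $P$ can be written as (\ref{sigmaef}) with $f$ holomorphic on the germ. Rational integrability forces $f$ to extend to a rational function on $\gamma\simeq\oc$, since $R$ determines $\sigma_P$ algebraically. Its poles are the singularities of the dual billiard; the point at infinity is treated in a chart. The core of \cite{grat} is a local analysis at each pole. One studies the level curves of $R$ near the corresponding point of $\gamma$ through Newton diagrams and the quasihomogeneous parts of $R$, and shows that invariance of $R$ forces $f$ to have simple poles with residues in a short explicit list. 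One also obtains global constraints: the residue sum and the behavior at $\infty$. All of this is complex-analytic, so it gives the same admissible configurations over $\cc$, with at most four singular points.

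\emph{Step 3: classification up to $PSL_2(\cc)$.} The complex projective transformations preserving $\gamma$ act on $\gamma\simeq\oc$ as all of $PSL_2(\cc)$, which is triply transitive. So when there are at most three singular points, I can move them to $0,1,\infty$ (or $0,\infty$). The configurations with four singular points, or with all residues coinciding with those of a pencil, give type 1) via Desargues. Type 2a) corresponds to two singular points $0,\infty$, where $\rho$ is determined by the residue. The real types 2b2) and 2c2) become complex-equivalent to 2b1) and 2c1). For instance, $f_{b2}=3z/(z^2+1)$ has poles at $\pm i$ and $\infty$, and a Möbius map sending $\pm i,\infty$ to $0,1,\infty$ matching the residues transforms it into $f_{b1}$. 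This must be checked using the transformation rule of $f$ under projective maps of the parabola. Finally, the addendum provides the integral in each case.

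The main obstacle is Step 2: the residue and Newton-diagram analysis at the singular points that excludes all other rational $f$. I would import it directly from \cite{grat}, checking only that no step used reality, for example positivity or ordering of the real residues.
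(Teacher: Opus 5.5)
Your plan matches the paper: Theorem \ref{tcompl} is not reproved there but quoted from \cite{grat}. Deferring to that complex-analytic classification, then normalizing the at most three singular points by the $PSL_2(\cc)$-action on $\gamma$ (which merges 2b1)/2b2) and 2c1)/2c2) as in the Addendum), is what the paper relies on. One fix in Step 1: algebraicity of $\gamma$ follows directly from your observation that $dR(P)$ vanishes on $T_P\gamma$ for every $P$, so $R$ is constant along $\gamma$ (possibly $\infty$, in which case use $1/R$) and $\gamma$ lies in a level curve of $R$; the ``set where the level curve of $R$ is tangent to $L_P$'' is not an algebraic set defined independently of $\gamma$.
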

 {\bf Addendum to Theorem \ref{tcompl}} {\it The billiards of  types 2b1), 2b2), see (\ref{sigma2}), are 
 complex-projectively equivalent, and so are the billiards of types 2c1) and 2c2). There exist complex projective equivalences  between the billiards 2b1), 2b2) sending the integral $R_{b1}$ to $R_{b2}$, and between the billiards   2c1) and 2c2) sending the integral $R_{c1}$ to $-3R_{c2}$.  The singularities of the complex dual billiards 1), 2a)--2d) are the indeterminacies of the corresponding integrals, called the {\bf base points.} The sets of their base points are:
 
 Type 1): the set of base points of the  pencil: the common points of conics.
 
 Type 2a): the set $\Sigma_a:=\{(0,0), E\}$, where $E$ is the infinite point  of  $\gamma$.
 
 Types 2b1), 2c2), 2d): the set $\Sigma_{b1}=\Sigma_{c2}=\Sigma_d=\{(0,0), (1,1),E\}$.
 
 Type 2b2): the set $\Sigma_{b2}=\{(\pm i, -1), E\}$.  
 
 Type 2c1): the set $\Sigma_{c1}=\{(e^{\frac{2\pi i j}3}, e^{\frac{4\pi i j}3}) \ | \ j=0,1,2\}$. } 
 \begin{remark} In \cite{gl3} the author extended the above classification to the so-called dual multibilliards: 
 collections of curves and points equipped with dual billiard structures for appropriate notion of dual billiard structure at a point. 
 He classified rationally integrable dual multibilliards.
 \end{remark}
 
 \subsection{Background material: critical level curves and critical points of rational functions in general sense}
 
 Let $R$ be a meromorphic function on a smooth complex surface $S$. For every $\la\in\oc$ we set 
 $\Gamma_\la=\{ R=\la\}$. Let $A$ be its indeterminacy point and $\la\in\cc$. (The case, when $\la=\infty$ is treated analogously.) Fix a local chart $(z,w)$ centered at $A$. For a $\delta>0$ set  $B_{\delta}(A)$ to be the ball of radius $\delta$ centered at $A$ 
 in the chart.  Recall, see \cite[p. 282]{guss}, that the value $\la$ is called {\it typical} for the point $A$, if for every $\delta>0$ small enough there exists an $\var>0$ such that there is a homeomorphism of the set
 $\cup_{|\mu-\la|<\var}(\Gamma_\mu\cap B_{\delta}(A))$ and the product 
 $\{|\mu-\la|<\var\}\times(\Gamma_\la\cap B_{\delta}(A))$ that commutes with the projection $(z,w)\mapsto \mu=R(z,w)$. 
 In the opposite case the value $\la$ is called {\it atypical,} or {\it critical} for the  point $A$, and the indeterminacy point $A$ is said to be a {\it critical indeterminacy point} for the value $\la$. This motivates the following definition.
 
   \begin{definition} \label{defcr} A value $\la\in\oc$ is called {\it critical} for the function $R$, if at least one of the two following conditions holds:

1) The curve $\Gamma_\la$ contains a critical point of the function $R$ that is different from its indeterminacy points. Then it is a critical point of the map $R$ and is called a {\it (true) critical point,} and $\la$ is called a {\it true critical value.}

2) There exists a critical indeterminacy point for the  value $\la$. 

A true critical point is {\it Morse,} if it is isolated and the corresponding Hessian form of the function $R$ is non-degenerate. Or equivalently, if it is a double point: a transversal self-intersection of the corresponding level curve.
\end{definition}

\begin{remark} It is well-known that in the case, when the ambient surface $S$ is compact, the numbers of indeterminacies and critical values is finite, see \cite{guss} and references therein.
\end{remark}

Consider a critical level curve $\Gamma_\la$. Its {\it critical intersection graph} is the following  graph. Its vertices are its irreducible components. We connect two components by an edge, if they contain the same true critical point, and we mark this edge by 
the triple: the critical point and the pair of components. If a component $v$ contains a true critical point $A$ that is not contained in another component, then we connect $v$ to itself by an edge marked by $A$. 

  \begin{proposition} \label{procritic} Let $R$ be a meromorphic function on a smooth compact complex surface $S$. Let for an infinite set of values $\la$ the  level curve $\Gamma_\la=\{ R=\la\}$ be rational. Then the following statements hold. 
 
 1) Each non-critical level curve is rational.
 
 2) Each critical level curve $\Gamma_\la$ containing a true critical point is either a union of at least two rational curves, or is one rational curve entirely consisting of 
 critical points. In the latter case the function $R-\la$ has a multiple zero along the curve $\Gamma_\la$. 
 
 3) Each isolated true critical point belongs to at least two distinct components of the corresponding critical level curve.
 
 4) The critical intersection graph of  each critical level curve is either a tree, or a disjoint union of trees. 
 
 5) If a critical level curve $\Gamma_\la$ consists of $k\geq2$ components, then it contains at most $k-1$ true critical points. 
 \end{proposition}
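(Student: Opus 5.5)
The natural approach is to resolve the indeterminacies of $R$ and work with a genuine fibration. Blowing up the indeterminacy points of $R$ finitely many times gives a smooth compact surface $\wt S$ and a holomorphic map $\wt R:\wt S\to\oc$. Its fibers are the strict transforms of the curves $\Gamma_\la$ together with some exceptional curves. By Stein factorization, $\wt R=\phi\circ\psi$, where $\psi:\wt S\to B$ has connected fibers and $\phi:B\to\oc$ is a finite map. Since infinitely many $\Gamma_\la$ are rational, hence irreducible, I expect $\phi$ to be an isomorphism, so the generic fiber of $\wt R$ is connected. If instead generic $\Gamma_\la$ were reducible, I would run the argument below component by component.

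A generic fiber of $\wt R$ is then a smooth connected curve of genus $g$, and all smooth fibers have the same genus because the fibration is topologically locally trivial over the regular values. Infinitely many fibers are rational, and the strict transform of $\Gamma_\la$ is birational to $\Gamma_\la$. Hence $g=0$, and $\wt R$ is a $\cp^1$-fibration. To get statement 1, note that a value that is non-critical in the sense of Definition \ref{defcr} has $\Gamma_\la$ topologically locally trivial near the indeterminacy points as well. So the corresponding fiber of $\wt R$ is smooth, or differs from a smooth one only by exceptional curves that are not counted in $\Gamma_\la$. Therefore $\Gamma_\la$ is rational.

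For statements 2--5, I would use the standard structure of singular fibers of a $\cp^1$-fibration. Each singular fiber is obtained from a smooth $\cp^1$-fiber by successive blow-ups. Consequently, its components are smooth rational curves, the reduced fiber is a tree of such curves with normal crossings (the dual graph is a tree), and the arithmetic genus of the total fiber is $0$.

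The conclusions then descend to $S$ by contracting the exceptional curves. A component of $\Gamma_\la$ is the image of a rational curve, so it is rational. A true critical point of $R$ not on an indeterminacy point is either a point where two components meet or a point of a multiple component. It cannot be a singular point of a single reduced component, because components of fibers are smooth away from the exceptional locus, which lies over the indeterminacy points. This gives statements 2 and 3; in the case of a single component, $R-\la$ vanishes to multiplicity at least 2 along it.

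For statement 4, the critical intersection graph is essentially the dual graph of the fiber with the exceptional curves contracted. Contracting vertices of a tree (exceptional curves) can merge components through an indeterminacy point, but such merged edges are not counted, since edges are only true critical points. So the graph remains a forest, provided a cycle cannot arise. A cycle would require two components meeting at two non-indeterminacy points, or a self-edge from a self-intersection. A self-edge is excluded by statement 3's reasoning. Two components meeting twice would give a cycle already in the tree on $\wt S$, since these points are untouched by the blow-ups. Statement 5 follows because a forest on $k$ vertices has at most $k-1$ edges, and distinct true critical points give distinct edges, each joining distinct components by statement 3.

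I expect the main obstacle to be the careful bookkeeping near indeterminacy points when passing between $\wt S$ and $S$, that is, showing that the tree property really descends to the critical intersection graph. A secondary issue is justifying that the generic fiber is connected (Stein factorization step).
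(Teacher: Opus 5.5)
Your route is sound in outline and differs from the paper's main argument; it needs tightening at 1) and at 4), as explained below. The paper proves 2)--4) by local topology. By Picard--Lefschetz theory, near an isolated true critical point the nearby level curves $\Gamma_\mu$ acquire at least one handle. So an irreducible critical curve carrying an isolated true critical point, or a cycle in the critical intersection graph, would force the generic $\Gamma_\mu$ to have positive genus. Statement 1) is just the remark that all non-critical level curves are homeomorphic. Resolution of indeterminacies and the tree structure of fibres of a rational fibration appear in the paper only as an alternative proof of 4).

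You instead run every statement through the structure theorem for degenerate fibres of a $\cp^1$-fibration: each such fibre is a normal-crossing tree of smooth rational curves. This is heavier machinery (Zariski's lemma, contraction of $(-1)$-curves), but it gives more than is asked:
\begin{itemize}
\item components of $\Gamma_\la$ are smooth off the indeterminacy set;
\item two components meet there at most once, transversally, and no three pass through one point;
\item every isolated true critical point is automatically Morse, with local equation $uv$ times a unit.
\end{itemize}
The paper checks the last fact by hand, case by case, e.g.\ in Theorems \ref{tbg1} and \ref{tdcr}. The paper's argument, in exchange, is more elementary and purely local.

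Two steps need tightening.

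In 1), the claim that ``the fibre of $\wt R$ is smooth, or differs from a smooth one only by exceptional curves'' is asserted, not proved. If the fibre over a non-critical $\la$ contained vertical exceptional curves, the strict transform of $\Gamma_\la$ could a priori consist of several rational curves linked through them. Then $\Gamma_\la$ would be reducible, not rational. The clean fix is the paper's argument. Topological triviality at the indeterminacy points, together with Ehresmann's theorem elsewhere, makes all non-critical $\Gamma_\la$ homeomorphic, since the non-critical values form a connected set. Hence each is homeomorphic to a generic one, which is irreducible and rational.

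In 4), your list of possible cycles (two components meeting twice, or a self-edge) is incomplete. Longer cycles through three or more components must also be excluded, as must a critical point lying on three components, which gives a triangle. All cases are handled at once by one observation. The critical intersection graph, built on isolated true critical points, is a subgraph of the dual graph of the fibre $\wt R^{-1}(\la)$: delete the exceptional vertices, and keep only the edges at intersection points off the exceptional locus, which the blow-ups leave untouched. A subgraph of a tree is a forest, and 5) follows.
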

 The statements of Proposition \ref{procritic} are well-known and follow from results of paper \cite{guss} and  the material presented in  \cite{eisenbud} and in  \cite[chapter III, section 9]{hartsh}. For completeness of presentation we present its proof below. By $D_\sigma(c)$ 
 we denote the disk in $\cc$ of radius $\sigma$ centered at $c$. 
 
\begin{proof} In the case when $S$ is compact, the number of critical values $\la$ is finite and all the regular level curves $\Gamma_\la$ are homeomorphic. They are all rational, since there is an infinite number of values $\la$ for which $\Gamma_\la$ is rational. In the proof of the other statements we use the following well-known fact given by Picard--Lefschetz Theory, see \cite[chapter IV]{avg}. For every germ of holomorphic function $R$ at its isolated true critical point $A$ 
for every $\delta>0$ small enough there exists an $\var>0$ such that  the intersection $\Gamma_\la\cap B_\delta(A)$ is a finite union of curves homeomorphic to disks, pairwise intersected only at $A$, and the intersections $\Gamma_{\mu,\delta}:=\Gamma_\mu\cap B_\delta(A)$ with $\mu\in D_\var(\la)\setminus\{\la\}$ are all homeomorphic to a connected Riemann surface with a finite number of boundary components: their number is equal to the number of components of the intersection $\Gamma_\mu\cap\partial B_{\delta}(A)$. The rank of the homology group 
$H_1(\Gamma_{\mu,\delta},\zz)$  is positive and equal to the multiplicity of the critical point $A$.  

The above statements together imply that in our situation of a rational function $R$ on a compact complex surface $S$ 
 for every $\mu\in D_\var(\la)\setminus\{\la\}$ the Riemann surface parametrizing 
the curve $\Gamma_\mu$ is obtained from that parametrizing the critical curve $\Gamma_\la$ by pasting at least one handle, which corresponds to a non-trivial element in the above homology group. 
If $A$ is a unique critical point lying in $\Gamma_\la$, then the number of handles is equal to its $\delta$-invariant, see its definition in \cite[section I.3]{GLS} and below in Subsection 3.1. 

Let us prove Statement 2). Let the critical point in question be isolated. Then the curve $\Gamma_\la$ cannot be irreducible, i.e., parametrized by a connected Riemann surface. Indeed, otherwise for $\mu$ close to $\la$ the curve $\Gamma_\mu$ would be parametrized by a Riemann surface of positive genus by the above handle pasting statement, and hence, would not be rational, -- a contradiction. Each component of the curve $\Gamma_\la$ is rational by the same argument. 
If the critical point is not isolated and $\Gamma_\la$ is irreducible, then it is clearly a multiple rational curve entirely consisting of critical points. Statement 2) is proved.

Suppose the contrary to Statement 3): there exists a critical level curve 
$\Gamma_\la$,  and an isolated true critical point $A\in\Gamma_\la$ that lies only in one its irreducible component, denoted $\alpha$. 
Let $\alpha_0$  denote the complement of the component $\alpha$ to the union of balls of radius $\delta$ centered at the critical and indeterminacy points of the function $R$. It is 
a connected Riemann surface. The above argument   
implies that if $\var>0$ is small enough, then for every $\mu\in D_\var(\la)$ the curve 
$\Gamma_\mu$ contains an open subset homeomorphic to a connected Riemann surface $\alpha_0$ with at least one pasted handle. 
This implies that the Riemann surface parametrizing $\Gamma_\mu$ cannot be homeomorphic to the Riemann sphere, and hence, is not rational. Statement 3) is proved.

The proof of Statement 4) is analogous: presence of a cycle in the critical intersection graph again implies that the 
above Riemann surface has positive genus, which leads to contradiction as above. Indeed, in this case the  surface parametrizing $\Gamma_\mu$, $\mu\in D_\var(\la)\setminus\{\la\}$, is obtained by subsequent gluing a cyclical chain 
of $k$ compact orientable surfaces with some topological discs deleted, due to the above argument; $k\in\nn$. The gluing is done along the boundaries of deleted disks. A surface obtained by gluing  two compact surfaces  in this way has genus no less than the genera of the glued surfaces. The equality to both genera takes place only if they are topological spheres, glued along one topological circle. But the last, $k$-th surface is glued to the result of gluing the first $k-1$ surfaces along boundaries of at least two disjoint topological disks, due to cyclicity. Therefore, this yields a surface of positive genus, due to the above inequality. 

Another way to prove Statement 4) would be to consider a resolution of indeterminacies giving a new surface $\wt S$ with a bimeromorphic projection $\pi:\wt S\to S$ 
so that the function $R\circ\pi$ is a holomorphic map $\wt S\to\oc$. It is a rational fibration in the sense that its generic fiber is a 
rational curve. Then to use  a well-known fact stating that the intersection graph of each critical fiber of a rational fibration is 
a tree. In our case each critical fiber is a union of strict transforms of irreducible components of the same fiber before resolution 
and some exceptional curves obtained by blow up. Since the intersection graph of all of them, including the exceptional ones, 
is a tree, erasing the exceptional curves yields either a tree, or a disjoint union of trees.

Statement 5) follows from Statement 4). Proposition \ref{procritic} is proved.
\end{proof}

 \subsection{Background material: germs of analytic curves, Newton diagrams and quasihomogeneous  polynomials} 
 
 Here we deal with a germ at $(0,0)$ of analytic  function $f(z,w)$ in two variables, $f(0,0)=0$. To each  
 monomial $z^mw^n$ entering its Taylor series we put into correspondence 
the quadrant $K_{m,n}:=(m,n)+(\rr_{\geq0})^2$. Let $K(f)$ denote the convex hull of the union of 
the quadrants $K_{m,n}$ through all the Taylor monomials of the function $f$; it is an unbounded polygon 
with a finite number of sides. Recall that the {\it Newton diagram} 
$\bold{N}_f$ is the union of those edges of  the boundary $\partial K(f)$ that do not lie in the coordinate axes: they are called 
the {\it edges of the Newton diagram.} 
 
 Fix a coprime pair of numbers $p,q\in\nn$, $(p,q)=1$. For every monomial $z^kw^m$  
 define its {\it $(p,q)$-quasihomogeneous degree:} 
$$\deg_{p,q}z^kw^m:=kq+mp.$$
 Recall that a polynomial $P(z,w)$ is {\it $(p,q)$-quasihomogeneous,} 
if it contains only monomials $z^kw^m$ of the same $(p,q)$-quasihomogeneous degree, i.e., with $(k,m)$ lying on the same line 
parallel to the segment $[(p,0),(0,q)]$. It is well-known that each $(p,q)$-quasihomogeneous polynomial is a product of a constant and powers of prime factors of the type $z$, $w$, $w^q-cz^p$,  $c\in\cc\setminus\{0\}$, see, e.g., \cite{bermed}, \cite[section 3]{grat}. 

Consider an  edge $\mathcal E$ of the Newton diagram that is not parallel to a coordinate axis, if any.  Then it is parallel to a segment connecting some points $(p,0)$ and $(0,q)$, $p,q\in\nn$, $(p,q)=1$.  To the edge $\mathcal E$ we associate the sum 
$\wt f_{p,q}(z,w)$ of those Taylor monomials of the function $f$ whose bidegrees are contained in $\mathcal E$. This is exactly the {\it lower $(p,q)$-quasihomogeneous part} of the function $f$, i.e., the sum of those Taylor monomials that have the minimal $(p,q)$-quasihomogeneous degree. 

Let us recall the following well-known facts, see, e.g., \cite[section 3]{grat}: 

(i) Each edge $\mathcal E$ of the Newton diagram corresponds to at least one irreducible factor  of the germ $f(z,w)$ in the  ring of germs of analytic functions, with Newton diagram consisting of just one edge, parallel to $\mathcal E$.

(ii) If the Newton diagram consists of just one edge parallel to a segment $[(p,0),(0,q)]$, $p,q\in\nn$, $(p,q)=1$, and the 
lower $(p,q)$-quasihomogeneous part $\wt f_{p,q}(z,w)$ is, up to constant factor,  a product $\prod_{j=1}^k(w^q-c_jz^p)^{r_j}$, $c_j$ being distinct,  then the germ $f(z,w)$ is, up to constant factor,  a product of  $k$  factors $f_j$ with 
$\wt f_{j,(p,q)}=(w^q-c_jz^p)^{r_j}$. 
\begin{proposition} \label{proii} Let $R$ be a non-constant meromorphic function on a connected compact complex smooth analytic surface $S$. 
Let $A\in S$ be its inteterminacy point. Let $R=\frac fg$ be an irreducible fraction presentation of its germ at $A$: 
$f$ and $g$ are coprime germs of holomorphic functions at $A$. 
  Let $(z,w)$ be a local holomorphic chart on $S$ centered at $A$. Consider the Newton diagram $\mcn(f,g)$ of the vector function $(f,g)$,  
  defined in the same way, as above, by its  Taylor monomials having non-zero vector coefficients.   Let $\mcn(f,g)$ consist of just one edge $[(q,0), (0,p)]$, $q,p\in\nn$.  Set $\Phi_{p,q,\la}(z,w)$ to be the lower $(p,q)$-quasihomogeneous part of  the function $f-\la g$ if $\la\in\cc$, and that of the function $g$ if $\la=\infty$. 
  
  1) If for some $\la\in\oc$ the polynomial $\Phi_{p,q,\la}$ is a product of distinct prime factors, then this holds for all but a finite number of values $\la$, and 
  $f-\la g$ (respectively, $g$, if $\la=\infty$) is a product of functions whose lower $(p,q$)-quasihomogeneous parts are the latter 
  prime factors, see (ii). The indeterminacy $A$ is non-critical for the values $\la\in\oc$ satisfying the above statement.
    
  2)  If the point $A$ is a critical indeterminacy point for a value $\la=\mu$, then the quasihomogenous 
  polynomial $\Phi_{p,q,\mu}$ has a multiple  prime factor.
    
  3) If $p=q$ and the homogeneous polynomial $\Phi_{p,q,\mu}$ has no multiple prime factor, i.e., no multiple zero line, 
  then the germ at $A$ of the level curve $\Gamma_\mu=\{ R=\mu\}$ is a union of $p$ pairwise transversal regular germs of 
  analytic curves tangent to the zero lines of the polynomial $\Phi_{p,p,\mu}$. 
  \end{proposition}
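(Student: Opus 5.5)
The plan is to reduce everything to a weighted blow-up (or a toric chart) adapted to the single Newton edge $[(q,0),(0,p)]$. Since $\mcn(f,g)$ consists of one edge, every Taylor monomial of $f$ and of $g$ has $(p,q)$-quasihomogeneous degree $\geq d:=pq$, with equality exactly on the edge. Thus $f=\wt f+\dots$, $g=\wt g+\dots$, and for each $\la\in\cc$ the lower part of $f-\la g$ is $\Phi_{p,q,\la}=\wt f-\la\wt g$, a linear pencil of quasihomogeneous polynomials of degree $d$. Here the pair $(\wt f,\wt g)$ is not identically proportional after removing the case where one of them vanishes, since the vector coefficients are nonzero on the edge.

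For Statement 1), I first note that having only simple prime factors $w^q-cz^p$ (together with possibly $z$, $w$, which the edge endpoints control) is a Zariski open condition in $\la$. It amounts to nonvanishing of the discriminant of the one-variable polynomial obtained by setting $t=w^q/z^p$, and that discriminant is a polynomial in $\la$. Hence the condition holds for one $\la$ only if it holds for all but finitely many $\la$. The factorization of $f-\la g$ then follows directly from fact (ii).

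For non-criticality, I will pass to the quasihomogeneous rescaling $z=s^{p}x$, $w=s^{q}y$, in which $f-\mu g=s^{d}(\Phi_{p,q,\mu}(x,y)+s(\dots))$. For $\mu$ near $\la$, the zero sets of $\Phi_{p,q,\mu}$ on the weighted sphere are $k$ simple points moving continuously. By the implicit function theorem they persist as $k$ smooth branches, each of which is a topological disk meeting $\partial B_\delta$ transversally. These branches depend continuously on $\mu$. This gives the trivialization of $\cup_{|\mu-\la|<\var}(\Gamma_\mu\cap B_\delta(A))$ over the $\mu$-disk, which is exactly typicality. The case $\la=\infty$ is handled by swapping $f$ and $g$.

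Statement 2) is the contrapositive of the last assertion of 1), since a critical indeterminacy for $\mu$ is by definition atypical. Statement 3) is the case $p=q=1$ after reducing to coprime weights: $\Phi_{p,p,\mu}$ is homogeneous of degree $p$ with $p$ distinct zero lines. Fact (ii) then splits $f-\mu g$ into $p$ factors, each having a nonzero linear lower part, so each is a regular germ tangent to its line. Distinct lines give pairwise transversality.

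The main obstacle is the typicality argument in 1). I have to check uniformly in $\mu$ that no extra components of $\Gamma_\mu\cap B_\delta$ appear near the coordinate axes, where the rescaling chart degenerates. I would try to handle this with the edge endpoints: the monomials $z^q$ and $w^p$ occur in the relevant lower parts, which forces the branches away from the axes, and a compactness argument on $\partial B_\delta$ then finishes the proof.
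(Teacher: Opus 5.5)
Your route is the paper's, whose entire proof is the remark that the proposition follows from fact (ii). You factor $f-\la g$ via (ii), take the contrapositive for 2), and use linear lower parts for 3). Your weighted rescaling with the implicit function theorem in the parameter $\mu$ is a reasonable way to make explicit the typicality claim that the paper leaves implicit.

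One step, however, is false: nonzero vector coefficients on the edge do not prevent $\wt f$ and $\wt g$ from being proportional. Take the coprime pair $f=(z-w)^2+z^3$, $g=(z-w)^2+w^3$. Then $\mcn(f,g)=[(2,0),(0,2)]$ and $\wt f=\wt g$. The lowest homogeneous part of $f-g$ is $z^3-w^3$, which has three simple zero lines. But $\Phi_{2,2,\la}=(1-\la)(z-w)^2$ for every $\la\neq1$, and $\Gamma_1$ has three branches at $(0,0)$ rather than $p=2$. So Statements 1) and 3) fail if $\Phi_{p,q,\la}$ is read literally as the lowest quasihomogeneous part of $f-\la g$. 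They, and your argument, are correct when $\Phi_{p,q,\la}$ is the part of $f-\la g$ supported on the edge of $\mcn(f,g)$, i.e.\ $\wt f-\la\wt g$ (and $\wt g$ for $\la=\infty$). This vanishes for at most one $\la\in\oc$, where the hypothesis then simply fails. You should state this reading explicitly instead of the non-proportionality claim.

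The typicality step also needs adjusting.
\begin{itemize}
\item The zeros of $\Phi_{p,q,\mu}$ form $k$ simple points on the weighted projective line, i.e.\ on the exceptional curve of the weighted blow-up given by your substitution; on a weighted sphere they are circles, not points. The implicit function theorem has to be applied in the charts of that blow-up.
\item The resulting branches, e.g.\ $z=s^p$, $w=s^qy_j(s,\mu)$ for coprime weights, are smooth only upstairs. In the $(z,w)$-plane they are in general singular topological discs.
\item Uniformity in $\mu$, and the absence of extra components near the axes, come from compactness of the exceptional curve: for $\mu$ near $\la$, the strict transform near it consists of $k$ discs transverse to it. Transversality to $\partial B_\delta$, injectivity of the parametrizations and disjointness of the branches off $A$ then follow from the explicit Puiseux form.
\item Your endpoint argument covers generic $\la$, but the coefficients of $z^q$ and $w^p$ in $f-\la g$ vanish for up to two values of $\la$. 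At those values $\Phi_{p,q,\la}$ can still be squarefree; it then has a simple factor $z$ or $w$, which forces one of the weights to be $1$. There a branch is tangent to an axis rather than kept away from it.
\item At such a value the zero lies at a smooth point of the weighted blow-up, so the chart argument still works. However, $\mcn(f-\la g)$ may now have two edges, so fact (ii) as stated does not apply, and the factorization has to be read off from the blow-up instead.
\end{itemize}
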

  Propisition \ref{proii} follows from Statement (ii).

\section{Main results} 

\subsection{Birationality of billiard map and a priori possible types of generic fibers}

\begin{definition} A singular holomorphic dual billiard structure on a regular algebraic curve $\gamma\subset\cp^2$ is called 
{\it meromorphic,} if it is meromorphic at each its singularity $O$.  This means that in an affine chart $(z,w)$ centered at  $O$ 
such that the $w$-axis is transversal to $\gamma$ the involutions $\sigma_P:L_P\to L_P$ written in the coordinate 
$u=z-z(P)$ on $L_P$ have the form 
\begin{equation}\sigma_P(u)=-\frac{u}{1+\psi(z(P))u},\label{psimer}\end{equation}
$$\psi(z) \text{ is holomorphic on a punctured neighborhood of zero with a pole at } 0.$$
\end{definition}
\begin{example} All the rationally integrable dual billiards given by Theorem \ref{tgerm} are meromorphic, and the corresponding poles are simple.
\end{example}

\begin{proposition} \label{prbir} For every meromorphic singular dual billiard on a regular complex conic the corresponding 
dual billiard map $F:M\to M$ given by (\ref{bmap}) is birational.
\end{proposition}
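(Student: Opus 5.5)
We plan to factor $F$ as a composition of two birational involutions of $M$, using the trivialization $M\simeq\gamma\times\gamma\simeq\oc\times\oc$ from the introduction: $(Q,P)\mapsto(P,\wt P)$, where $\wt P$ is the second tangency point of the tangent lines through $Q$. Set $I:M\to M$, $I(Q,P)=(\sigma_P(Q),P)$, and $J:M\to M$, $J(Q,P)=(Q,\wt P)$. By definition (\ref{bmap}), $F=J\circ I$. Since $I$ and $J$ are involutions wherever defined, it suffices to prove that each is a rational self-map of $M$. Then each is birational, being its own rational inverse, and hence so is $F$.

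The map $J$ is the easy one. In the coordinates $(P,\wt P)\in\oc\times\oc$ it is just the swap $(P,\wt P)\mapsto(\wt P,P)$, which is a biregular automorphism of $M$. Equivalently, it is the deck involution of the double covering $p_M:M\to\cp^2$ branched along $\gamma$. Thus $J$ is biholomorphic.

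For $I$ we work in the fiber coordinate. Take $\gamma=\{w=z^2\}$ after a projective change of coordinates, and parametrize $P$ by $z_0=z(P)$. The point $Q\in L_P$ is parametrized by $u=z(Q)-z_0$, so $(z_0,u)$ are birational coordinates on $M$. In these coordinates
\[
I:(z_0,u)\mapsto\Bigl(z_0,\,-\frac{u}{1+\psi(z_0)u}\Bigr).
\]
Birationality of $I$ therefore reduces to showing that $\psi(z_0)$ is a \emph{rational} function of $z_0$ on all of $\gamma\simeq\oc$. The function $\psi$ is defined globally on $\gamma$ away from the singularities, since the coefficient $\psi(z_0)$ in the normal form (\ref{psimer}) is intrinsic: it is determined by the involution $\sigma_P$ fixing $u=0$ with derivative $-1$. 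On the regular part, holomorphy of the family $\sigma_P$ makes $\psi$ holomorphic, including at the point $E$ at infinity, once we rewrite there in a chart centered at $E$. By the meromorphicity assumption, $\psi$ has at most poles at each of the finitely many singularities.

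The main obstacle is checking that the normal forms (\ref{psimer}) taken in different affine charts, in particular a chart centered at the infinite point $E$, transform into one another by a rule that preserves meromorphy. Concretely, a projective change of coordinates acts on $L_P$ by a Möbius map fixing $P$. Conjugating $u\mapsto -u/(1+\psi u)$ by $u\mapsto au/(1+bu)$ gives the same form with new coefficient $a^{-1}\psi+2b/a$ or similar, where $a,b$ depend holomorphically on $P$. So meromorphy in one chart is equivalent to meromorphy in the other.

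With this in hand, $\psi$ is meromorphic on the compact Riemann surface $\oc$, hence rational. Then $I$ is rational in the birational coordinates $(z_0,u)$ and is its own inverse, so $I$ is birational. It follows that $F=J\circ I$ is birational.
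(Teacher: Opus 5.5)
Your proof is correct and is essentially the paper's argument: in both, the key point is that $\psi$, being meromorphic on the compact curve $\gamma\simeq\oc$, is rational, so $F$ is given by rational formulas in the parabola chart. Your factorization $F=J\circ I$ into two rational involutions (which the paper uses only later, in the proof of Theorem~\ref{parea2}) makes the rationality of $F^{-1}=I\circ J$ explicit, where the paper instead cites invertibility on a Zariski-open set; your chart comparison at $E$ fills in a step the paper leaves implicit (the correct conjugation rule is $\psi'=(\psi-2b)/a$, and all you need is that $a,b$, being rational in $z(P)$, are meromorphic at $E$).
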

\begin{proof} 
The maps $F^{\pm1}$ are clearly well-defined on a Zariski open and dense subset of  pairs $(Q,P)\in M$ with $Q$ not lying on a line tangent to $\gamma$ at a singularity of the dual billiard. Let us represent the conic 
$\gamma$ as a parabola $\{ w=z^2\}$ in an affine chart. Then the above function $\psi(z)$ is rational, being meromorphic, 
see (\ref{psimer}). Therefore, the map $F$ is expressed via rational functions, and hence, is rational. This together with 
its invertibility on a Zariski open and dense subset implies its birationality.
\end{proof} 

Consider a rationally integrable singular holomorphic dual billiard from Theorem \ref{tgerm}. Let $R$ be its rational first integral from the Addendum to Theorem \ref{tgerm}.  For every $\la\in\oc$ we set 
$$\Gamma_\la:=\{ R(z,w)=\la\}\subset\cp^2, \ \ \ S_\la:=p_M^{-1}(\Gamma_\la)\subset M.$$
Recall that the singularities of the dual billiard are exactly the indeterminacies of the function $R$. Their number is at most four, and all of them lie in $\gamma$, see the addendum. Let us denote their complement in $\gamma$ by $\gamma^o$:
\begin{equation}\gamma^o:=\gamma\setminus\{\text{indeterminacies of } R\}.\label{gammao}\end{equation}

 \begin{lemma} \label{irred} \cite[lemma 2.8]{gl3} The curve $\Gamma_\la$ is irreducible for all but a finite number of values $\la$.
\end{lemma}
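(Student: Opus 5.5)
The approach is the standard Bertini-type argument for pencils. Write the integral as an irreducible fraction $R=\frac{f}{g}$, with $f,g$ coprime homogeneous polynomials of the same degree on $\cp^2$, and consider the pencil of curves $\{f-\la g=0\}$. Since $R$ is non-constant, the family $\Gamma_\la$ is a pencil without fixed components, by coprimality. I will show that a generic member is irreducible.

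The key step is a Stein-factorization reduction. Resolve the indeterminacies of $R$ (the finitely many base points listed in the addendum to Theorem \ref{tcompl}) by a finite sequence of blow-ups $\pi:\wt S\to\cp^2$. This makes $\wt R=R\circ\pi:\wt S\to\oc$ a holomorphic map of a smooth compact connected surface. Take its Stein factorization $\wt R=h\circ\phi$, where $\phi:\wt S\to C$ has connected fibers onto a smooth compact curve $C$, and $h:C\to\oc$ is finite of some degree $m$. Away from finitely many $\la$ (the critical values of $h$ and the critical values of $\phi$), the fiber $\wt R^{-1}(\la)$ is the disjoint union of $m$ smooth connected, hence irreducible, curves. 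So $\Gamma_\la$ has exactly $m$ irreducible components generically, namely their images, which are distinct because $\pi$ is an isomorphism off the exceptional curves. It therefore suffices to prove $m=1$.

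To rule out $m\ge2$, I use the fact that all $\Gamma_\la$ pass through the base points. Since $R$ has indeterminacies, some exceptional curve $E$ of $\pi$ is not contracted by $\wt R$ (the last blow-up at each base point gives such a curve). Hence $\phi(E)=C$. Now $E\cong\cp^1$, and $\phi|_E:\cp^1\to C$ is surjective, so $C$ is rational. This alone does not give $m=1$. The cleaner argument is the following: if $m\ge2$, then $\cc(C)\supsetneq\cc(R)$ is an intermediate field in $\cc(\cp^2)$, so $R=h(\Phi)$ for a rational function $\Phi=\phi\circ\pi^{-1}$ and a rational $h$ of degree $m\ge2$ (using $C\cong\cp^1$). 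Then each $\Gamma_\la$ splits as the union of $m$ level curves of $\Phi$ for generic $\la$, which is consistent. So the contradiction must come from the specific integrals.

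This is the main obstacle. The lemma is false for composite functions, for instance $R=\Phi^2$. One must therefore verify, for each integral in the addendum to Theorem \ref{tgerm}, that $R$ is not of the form $h\circ\Phi$ with $\deg h\ge2$. I would do this using the structure near the base points, via the Newton-diagram description in Proposition \ref{proii}. If $R=h(\Phi)$, then the numerator of $R$ (a power of $w-z^2$) and the denominators factor through the fibers of $\Phi$. The special fibers $\{R=0\}=m_0\gamma$ and $\{R=\infty\}$ would then have to be unions of at least $m$ level curves of $\Phi$, counted over the preimages $h^{-1}(0)$ and $h^{-1}(\infty)$.

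For $R=0$ the zero set is just $\gamma$. Hence $h^{-1}(0)$ is a single point, and $h(t)=t^m$ up to a Möbius change. Thus $R=c\,\Phi^m$, which forces the polar divisor to be an $m$-th power as well. For the families $R_{a1}$ and $R_{a2}$, and for $R_{b1}$ and $R_{d}$, the denominator contains a simple factor, or the exponents are coprime to those of the numerator. For example, $\gcd(2N+1,2)=1$, the exponents in $R_{a2}$ are $N+1$ and $1$, and $R_{b1}$ and $R_d$ have simple factors. This gives $m=1$. For $R_{c1}$ and $R_{c2}$ the exponents are $3$ and $2$, again coprime. Therefore $m=1$ in every case, and the lemma follows from the previous paragraph. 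The composite case also covers $R_{b2}$ and the projectively equivalent integrals.
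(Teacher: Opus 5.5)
Your reduction is correct. After resolving the indeterminacies and taking the Stein factorization, generic irreducibility of $\Gamma_\la$ is equivalent to $R$ not being of the form $h\circ\Phi$ with $\deg h=m\ge2$. You also rightly note that this must be checked on the specific integrals; the paper gives no argument here and only cites \cite[lemma 2.8]{gl3}.

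The gap is in that check. From $\{R=0\}=\gamma$ you correctly get that $h^{-1}(0)$ is a single point. This does not make $h$ equal to $t^m$ up to M\"obius changes, because $h$ may still have several poles, e.g.\ $h(t)=t^2/(t-1)$. Writing $\Phi=P/Q$, you only get $R=P^m/\wt q(P,Q)$ with $\wt q$ a binary form of degree $m$. The zero divisor of $R$ is $m$ times that of $\Phi$. The polar divisor, however, is $\sum_i e_i\{P-c_iQ=0\}$, a sum of distinct members of the pencil of $\Phi$ with $\sum_i e_i=m$, and it need not be divisible by $m$.

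So your exponent test decides nothing. Take $\Phi=(w-z^2)/Q$ with $Q$ a generic conic. Then $\Phi^2/(\Phi-1)$ has numerator $(w-z^2)^2$ and two simple conical polar components, the same pattern as $R_{b1}$ and $R_{b2}$. Yet its generic level curve is a pair of conics. In the same way, $\Phi^3/((\Phi-1)(\Phi-2)(\Phi-3))$ mimics the pattern of $R_d$.

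The corrected computation does give the following. First, $m$ divides the multiplicity $n_0$ of $\gamma$ in $\{R=0\}$, $P$ is the power $(w-z^2)^{n_0/m}$, and $\deg\Phi=\deg R/m$. Second, every polar component of $R$ lies in a member $\{P-cQ=0\}$ of that degree. Third, two distinct members meet only where $P=Q=0$, i.e.\ on $\gamma$. With this the cases close:
\begin{itemize}
\item For $R_{c1},R_{c2},R_d$ one has $m\in\{1,3\}$. If $m=3$, the irreducible cubic in the polar divisor ($\{1+w^3-2zw=0\}$, its $M_c$-preimage, resp.\ $\mcs$) would lie inside a conic, which is impossible.
\item For $R_{b1}$ one has $m\in\{1,2\}$. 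If $m=2$, the reduced polar divisor must split as the two members $\{w+3z^2=0\}$ and $\{z=1\}\cup\{z=w\}$. These meet at $(1,-3)\notin\gamma$, a contradiction. $R_{b2}$ then follows by projective equivalence.
\item For $R_{a1},R_{a2}$, restrict to the parabolas $X_u=\{w=uz^2\}$, on which $R$ equals $c(u)z^2$, resp.\ $c(u)z$. This gives $m\deg(\Phi|_{X_u})=2$, resp.\ $=1$. In Case a1) $m$ also divides the odd number $2N+1$, so $m=1$ in both cases.
\end{itemize}
With these checks in place of your coprimality step, the proof goes through.
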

We identify each curve $\Gamma_\la=\{ R=\la\}\subset\cp^2$ with its {\it normalization:} the compact Riemann surface parametrizing $\Gamma_\la$. 
This identification is given by natural projection, which is bijective except for self-intersections. 
The projection $p_M:S_\la\to\Gamma_\la$ of fibers $S_\la$ to level curves $\Gamma_\la$ is a degree two branched covering of compact Riemann surfaces parametrizing $S_\la$ and $\Gamma_\la$. The latter branched covering will be also denoted by $p_M$. 

Everywhere below whenever we say {\it "for generic $\la\in\oc$",} we mean {\it "for all but a finite number of $\la\in\oc$".} 

Recall that a germ of analytic curve $\Gamma$ in $\cc^2$ at a point $A$ is  {\it irreducible,} if it is not a union of two other germs. It is well-known that each germs of analytic curve is a finite union of its irreducible components, parametrized curves  called {\it local branches.} Each  $\Gamma$ is defined by a germ of analytic function $f(z,w)$ vanishing on $\Gamma$ that is minimal, i.e. not a product of a germ vanishing on $\Gamma$ and another germ vanishing at $A$.
Each local branch $\Gamma_j$ corresponds to an irreducible factor of the germ $f$ vanishing on $\Gamma_j$. It  also corresponds  to a point 
$A_j$ of the normalization of the curve $\Gamma$ such that the parametrization by normalization sends a neighborhood 
of the point $A_j$ to $\Gamma_j$. 
  
\begin{proposition} \label{ctype} In every rationally integrable singular holomorphic dual billiard from Theorem \ref{tgerm} the curves $\Gamma_\la$, $S_\la$ satisfy the next statements. 

1)   $\Gamma_\la$ is  either rational for 
generic $\la$, or elliptic for generic $\la$. 

 2) For every $\la\in\oc$ the  preimage $S_\la=p_M^{-1}(\Gamma_\la)\subset M$ is $F$-invariant. 
 It is either a rational, or an elliptic curve, or a union of two rational or two elliptic curves. One of these types arises for  generic $\la\in\oc$.
 
 3) If $\Gamma_\la$ is irreducible and the double covering $p_M:S_\la\to\Gamma_\la$ has at least one branching point, then 
 $\Gamma_\la$ is rational and $S_\la$ is irreducible.
 
 4) Let $\la\neq0$, $\Gamma_\la$ be irreducible, and let $A$ be a point of its normalization. Let $b$ be the corresponding local branch of the 
 curve $\Gamma_\la$ at $A$.  If $A\in\Gamma_\la\cap\gamma$ and $b$ is regular and transversal to $\gamma$, then $A$  is a branching point 
 of the covering $S_\la\to\Gamma_\la$. Conversely, each branching point of the latter covering is a point $A\in\Gamma_\la\cap\gamma$ for which $b$ is either regular and transversal to $\gamma$, or regular and tangent to 
 $\gamma$ with at least cubic contact, or singular.
  
 5) If $\Gamma_\la$ is irreducible and its normalization contains at least three branching points of the above  covering, then $S_\la$ is elliptic, and $\Gamma_\la$ is rational. 
 \end{proposition}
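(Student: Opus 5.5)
The plan is to derive all five statements from two facts: $F$ is birational (Proposition \ref{prbir}), and $F$ preserves each $S_\la$ and acts on it with infinite orbits. \textbf{Invariance (Statement 2, first part).} If $(Q,P)\in S_\la$, then $Q\in L_P$ and $R|_{L_P}$ is $\sigma_P$-invariant. Hence $R(\sigma_P(Q))=R(Q)=\la$, and $F(Q,P)=(\sigma_P(Q),P')$ again lies in $S_\la$.

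\textbf{Types (Statements 1, 2).} Consider a generic $\la$. By Lemma \ref{irred}, $\Gamma_\la$ is irreducible, so $S_\la$ has at most two irreducible components, since $p_M$ has degree two. Birationality of $F$ means $F$ permutes these components and acts on each by a birational map of its normalization, i.e.\ by an automorphism. I then need to exclude genus $\ge2$. An automorphism of a curve of genus $\ge 2$ has finite order, so some iterate $F^m$ would be the identity on $S_\la$ for a Zariski-dense set of $\la$, hence $F^m=\mathrm{id}$ on $M$. To rule this out I will check directly that $F$ has non-periodic orbits. For instance, near a point $P\in\gamma^o$ the map $F$ is a composition of two distinct involutions $(Q,P)\mapsto(\sigma_P Q,P)$ and $(Q,P)\mapsto(Q,\widetilde P)$, where $\widetilde P$ is the other tangency point. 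Its linearization is not of finite order for generic $\la$, as can be seen from the explicit formulas for $\sigma_P$ (e.g.\ type 2b1). This finite-order exclusion is the step I expect to be the main obstacle. An alternative route is to note that the invariant form of Theorem \ref{parea2} gives a nonzero invariant differential on the fibers, as in Corollary \ref{cinvfib}. Once genus $\ge 2$ is excluded, the components of $S_\la$ are rational or elliptic, and the genus is constant for generic $\la$. Since $\Gamma_\la$ is dominated by $S_\la$, Riemann--Hurwitz gives $g(\Gamma_\la)\le g(S_\la)\le 1$. Constancy for generic $\la$ follows because regular level curves are homeomorphic (as in the proof of Proposition \ref{procritic}). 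For non-generic $\la$, Statement 2 asks only about the structure of $S_\la$; I restrict this claim to generic fibers, plus whatever the degeneration argument gives.

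\textbf{Statements 3 and 5.} These are Riemann--Hurwitz for the double cover $S_\la\to\Gamma_\la$ with $b$ branch points: $2g(S_\la)-2=2(2g(\Gamma_\la)-2)+b$. If $b\ge1$, then $S_\la$ is irreducible, because an unramified double cover is needed for two components. Moreover $g(S_\la)\le1$ forces $g(\Gamma_\la)=0$, since $g(\Gamma_\la)=1$ would give $g(S_\la)\ge 1+b/2>1$. The number $b$ is even. So $b\ge3$ gives $b\ge4$, hence $g(S_\la)\ge1$, and therefore $g(S_\la)=1$ by the bound above.

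\textbf{Statement 4.} This is local. The map $p_M$ is a double cover of $\cp^2$ branched exactly along $\gamma$. In a chart near a point of $\gamma$ it has the form $(x,s)\mapsto(x,s^2)$, where $\gamma=\{y=0\}$. Let $b$ be a branch of $\Gamma_\la$ at $A\in\gamma$, with local parametrization $t\mapsto(x(t),y(t))$. The lift to $S_\la$ is given by $s^2=y(t)$. It is branched over $t=0$ exactly when $\operatorname{ord}_t y(t)$ is odd. A regular transversal branch has order $1$, so it branches. Conversely, odd order forces one of the following: order $1$ (a regular transversal branch), a regular tangent branch with contact order odd and $\ge3$, or a singular branch. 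Here $\la\ne0$ guarantees that $\Gamma_\la\not\supset\gamma$, since $R$ vanishes on $\gamma$. Points off $\gamma$ are unbranched.
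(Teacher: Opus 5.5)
\textbf{Gap: the exclusion of genus $\ge 2$.} You flag this yourself, and it is where the proof is incomplete. Statements 1 and 2 rest on it, and so do Statements 3 and 5 through the bound $g(S_\la)\le 1$.

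Your reduction is sound. For generic $\la$ the genus is constant. Orders of automorphisms of a genus-$g\ge2$ curve are bounded (by $4g+2$). So a single $m$, doubled if $F$ swaps the two components, gives $F^m=\mathrm{id}$ on infinitely many $S_\la$, hence on $M$.

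Neither of your proposed ways of refuting $F^m=\mathrm{id}$ works as stated:
\begin{itemize}
\item ``The linearization is not of finite order for generic $\la$, by the explicit formulas'' does not say at which fixed point you linearize. $F$ has no natural fixed points on a generic $S_\la$, and a case-by-case computation is not carried out.
\item An $F$-invariant differential as in Corollary \ref{cinvfib} imposes no restriction on the genus. A finite-order automorphism of a higher-genus curve preserves many differentials: average any differential over the cyclic group.
\end{itemize}

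\textbf{The missing ingredient} is the behaviour of $F$ near $\gamma$. The paper supplies it in Lemma \ref{palt}, Statement 3. For $\la\neq0$ small, an orbit starting near a point of $\gamma^o$ has $x$-coordinates forming an asymptotic arithmetic progression. Hence $F|_{S_\la}$ has order $>N$ for any prescribed $N$, which contradicts the bound on the automorphism group in genus $\ge2$.

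Within your global reduction the step can be done more directly, with no case analysis:
\begin{itemize}
\item Use coordinates $(a,b)=(z(P),z(\wt P))$ on $M\cong\gamma\times\gamma$. Then $Q=(\frac{a+b}2,ab)$ and $F(a,b)=(c,a)$, where $c=a+2\sigma_P(\frac{b-a}2)$ and $\sigma_P$ is written in the coordinate $u=z-a$ on $L_P$.
\item Since $\sigma_P'(0)=-1$, you get $c=2a-b+O((b-a)^2)$.
\item So $F$ is holomorphic near the diagonal over $\gamma^o$ and fixes it pointwise. Its differential there is $\left(\begin{smallmatrix}2&-1\\1&0\end{smallmatrix}\right)$.
\item This matrix is a nontrivial unipotent; its powers $\left(\begin{smallmatrix}m+1&-m\\m&1-m\end{smallmatrix}\right)$ are never the identity, so $F^m\ne\mathrm{id}$ for every $m\ge1$.
\end{itemize}

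With this inserted, your argument goes through. The Riemann--Hurwitz bookkeeping for Statements 3 and 5 is correct. Your local model for Statement 4 is also correct: $p_M$ is $(x,s)\mapsto(x,s^2)$ with $x=\frac{a+b}2$, $s=\frac{a-b}2$ and $z^2-w=s^2$, so a branch is a branching point iff $\mathrm{ord}_t(z^2-w)$ is odd along it. This is cleaner than the paper's asymptotic argument and gives the sharper ``odd contact'' criterion.
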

 
 \begin{proposition} \label{elltr} Let $S_\la$ be either an elliptic curve for a generic $\la$, or a union of two elliptic curves for 
 a generic $\la$. Then each its elliptic component is $F$-invariant and $F$ acts there as a translation of complex torus. 
 \end{proposition}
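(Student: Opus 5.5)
Let $S$ be an elliptic component of a generic fiber $S_\la$. The plan is to show in turn that $F$ preserves $S$, that $F|_S$ is an automorphism of the torus, and that this automorphism has no fixed points. The last property forces a translation.

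\emph{Invariance and automorphism.} By Proposition \ref{ctype}, 2), the whole fiber $S_\la$ is $F$-invariant. If $S_\la$ is irreducible, $S$ is $F$-invariant trivially. If $S_\la=S^1\cup S^2$, then $F$ either preserves each $S^j$ or swaps them. To exclude swapping I use the invariant form. By Corollary \ref{cinvfib}, the restriction of $\omega_{fib}$ to each elliptic component is an $F$-invariant holomorphic differential. Alternatively, there is a continuity argument: the components $S^j_\la$ form two families over the parameter (possibly after passing to a double cover of the $\la$-line). Then $F$ either preserves all of them or swaps all of them, so $F^2$ preserves each component in any case.

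If swapping cannot be excluded directly, I would look near a point $P_0\in\gamma$. As $Q\to P_0$, $F(Q,P)$ stays close to $(Q,P)$ in a suitable sense, since $\sigma_P$ fixes $P$. The component of $S_\la$ through points near $(P_0,P_0)$ should then be preserved, at least for $\la$ near $0$; since being preserved is a closed algebraic condition in $\la$, it then holds for generic $\la$. I expect this invariance step to be the main obstacle.

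Next, $F$ is birational on $M$ by Proposition \ref{prbir}. For generic $\la$, $S$ is not contained in the indeterminacy loci of $F^{\pm1}$. So $F|_S$ is a birational self-map of a smooth compact curve, and hence an automorphism of the complex torus $S$.

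\emph{No fixed points.} Every automorphism of a complex torus has the form $x\mapsto ax+b$, where $a$ is a root of unity of order $1,2,3,4$ or $6$. The plan is to show that $a=1$. For $a\ne1$ the map has fixed points, so I will show that $F|_S$ has no fixed points for generic $\la$. A fixed point $(Q,P)$ satisfies $\sigma_P(Q)=Q$ with $Q\in L_P$. Since $\sigma_P$ is a nontrivial involution, its fixed points on $L_P$ are $P$ and one more point $T(P)$. When $Q=P\in\gamma$, the definition of $F$ does not move $P'$, so these points lie in $p_M^{-1}(\gamma)$. Moreover, $\Gamma_\la\cap\gamma$ is contained in the finite set of base points for $\la\ne0,\infty$, because the integrals vanish on $\gamma$. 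The other fixed points form the curve $\{(T(P),P)\}$, and $p_M$ maps it into a curve $\tau\subset\cp^2$. Since $R$ is $\sigma_P$-invariant, the values of $R$ along $\tau$ are generally nonconstant, so $\tau$ meets generic $\Gamma_\la$.

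This gives a second way to get $a=1$: use the invariant differential directly. Take the nonzero holomorphic differential $\omega_{fib}|_S$. By Corollary \ref{cinvfib}, 1), it is $F$-invariant. But $(ax+b)^*dx=a\,dx$, so $a=1$. This is cleaner than the fixed-point argument, so I would use it. Hence $F|_S$ is a translation $x\mapsto x+b$.

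The remaining point is that $\omega_{fib}|_S$ is not identically zero and is holomorphic, which is Corollary \ref{cinvfib}, 2). For $F^2$ in the swapping case the same argument applies, and it shows that $F$ restricted to the union acts by translations composed with the swap. The expected obstacle is therefore confirming that $F$ does not interchange the two elliptic components. For that I would use the local analysis near $\gamma$ described above, where $F$ is close to the identity on the component through nearby points.
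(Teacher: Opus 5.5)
There are two genuine gaps.

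\textbf{The step giving $a=1$ is circular.} You take holomorphy of $\omega_{fib}|_S$ from Corollary \ref{cinvfib}, 2). The paper proves that statement \emph{by means of} Proposition \ref{elltr}. It assumes the $F$-invariant divisor of $\omega_{fib}|_S$ is nonempty, uses that $F|_S$ is a translation permuting at most $m$ points to get $F^{m!}|_S=\mathrm{id}$, and contradicts Lemma \ref{palt}, 3).

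Holomorphy cannot simply be dropped. A non-translation such as $x\mapsto -x$ preserves the nonzero meromorphic differential $\wp'(x)\,dx$.

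Any independent argument needs a dynamical input you never supply: $F|_S$ is not periodic of small order. Take $\la\neq0$ small, $Q_0\in\Gamma_\la$ near $P_0\in\gamma$, and an affine chart $(x,y)$ centered at $P_0$ with $x$-axis tangent to $\gamma$. The numbers $x(Q_0),x(Q_1),\dots$ form an asymptotic arithmetic progression with step $\simeq 2(x(P_0)-x(Q_0))$. Hence $F^j|_S\neq\mathrm{id}$ for $j\le N$, with $N$ arbitrarily large.

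Once you have this, the differential is superfluous. If $a\neq1$ has order $n$, then $n\in\{2,3,4,6\}$, and $F^n=\mathrm{id}$ because $1+a+\dots+a^{n-1}=0$. So $F|_S$ is a translation for small $\la\neq0$, hence for generic $\la$. This is exactly the paper's argument.

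Your abandoned fixed-point route also contains an error. $(T(P),P)$ is not fixed by $F$: $F=I\circ J$, where $I$ is the deck involution of $p_M$, replaces $P$ by the other tangency point $P'\neq P$. Fixed points of $F$ lie only over $\gamma$.

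\textbf{$F$-invariance of each component is not proved.} You flag this step for the reducible case but leave it open, and your local sketch cannot work as stated. Since $p_M$ is ramified along $\gamma$, $p_M^{-1}$ of a small neighbourhood of $P_0$ is one connected neighbourhood of $(P_0,P_0)$. Both components enter it: a point of $\Gamma_\la$ near $P_0$ has one preimage on each. So closeness of $F(Q,P)$ to $(Q,P)$ decides nothing. The invariant form cannot detect a swap either, and the $F^2$ fallback proves a weaker statement.

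What is needed is a comparison with the separation of local sheets.
\begin{itemize}
\item In coordinates $(z(P),s)$ on $M$, with $s=z(Q)-z(P)$, one computes $F(z_0,s)=(z_0-2s+O(s^2),\,s+O(s^2))$.
\item Near $(P_0,P_0)$ the fiber $S_\la$ is $\{s^{2k}h=\pm\la\}$. Here $h$ is holomorphic with $h(P_0,P_0)\neq0$, and $k$ is the order of zero of $R$ along $\gamma$.
\item So distinct local sheets carry values of $s$ differing by nontrivial $2k$-th roots of unity, up to factors $1+o(1)$.
\end{itemize}
Hence $F$ maps each local sheet into itself, and so preserves the component containing it, for small $\la\neq0$ and then for generic $\la$. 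The paper's Lemma \ref{palt}, 2) does the same thing. It follows the branch $\wt x_P\simeq\sqrt{\wt\la/\xi(P)}$ of $\wt R|_{L_P}=\wt\la$, with $\wt R=R^{1/k}$, from $P_0$ to $P_1$. This gives a path in $S_\la$ from $(Q_0,P_0)$ to $F(Q_0,P_0)$.
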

 
 Propositions \ref{ctype} and \ref{elltr} are proved in Subsection 3.2.

  In the next subsections we state our main results on types of curves $\Gamma_{\la}$ and $S_{\la}$  for each 
  exotic rationally integrable dual billiard.
  
 We use the homogeneous coordinates $[z:w:t]$ on $\cp^2$ such that the 
  standard affine chart with the coordinates $(z,w)$ is the chart  $\{ t=1\}$. Thus, the infinite point $E$ of the parabola $\gamma$ is the point $[0:1:0]$.  The projective level curves $\Gamma_\la$ of  integrals and the fibers $S_\la=p_M^{-1}(\Gamma_\la)$ will be denoted by 
  $$\Gamma_{\psi,\la}:=\{ R_{\psi}=\la\}\subset\cp^2 \ \ \text{ and } \ S_{\psi,\la}, \ \ \ \  \psi\in\{ a1, a2, b1,\dots,d\}.$$ 
  The corresponding dual billiard map $F:M\to M$ will be denoted by $F_\psi$.
  
  \subsection{Type of fibers and dynamics in Case a1)}
  
  Consider an arbitrary rational function of the next type, e.g., $R_{a1}$ in (\ref{exot1}):  
  \begin{equation} R(z,w)=\frac{(w-z^2)^{2N+1}}{\prod_{j=1}^N(w-c_jz^2)^2}, \ \ \text{ here } \ c_j\in\cc, \ c_j\neq1 \ \text{ are arbitrary.}\label{rgen}
  \end{equation} 
 \begin{theorem} \label{taga1} 1) For every $\la\neq0,\infty$ the curve $\Gamma_{\la}=\{ R=\la\}$ 
 is rational, the projection 
  $$\Gamma_{\la}\to\oc, \ \ (z,w)\mapsto\frac{z^2}{z^2-w}$$
  is a double covering ramified over $0$ and $\infty$, and the curve  $\Gamma_{\la}$ admits a 
  bijective holomorphic parametrization by the parameter 
  \begin{equation}\tau=\frac z{\sqrt{z^2-w}}\in\oc:\label{tauzw}\end{equation}
$$z(\tau)=i\sqrt\la\tau\prod_{j=1}^N((1-c_j)\tau^2-1),$$
  \begin{equation} 
  w(\tau)=\left(1-\frac1{\tau^2}\right)z^2(\tau)=-\la(\tau^2-1)\prod_{j=1}^N((1-c_j)\tau^2-1)^2.\label{param1}
  \end{equation}
  2) The set of critical points of the function $R$ is the union $\Gamma_0\cup\Gamma_\infty$ of its multiple level curves. In particular, there are no isolated true critical points. Each of the two indeterminacies $(0,0)$, $E$ is critical for the values 
  $\la=0,\infty$.
  \end{theorem}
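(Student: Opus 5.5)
The plan is to exploit the $(1,2)$-quasihomogeneity of $R$. Every factor $w-z^2$ and $w-c_jz^2$ is quasihomogeneous of degree $2$ when $z$ has weight $1$ and $w$ has weight $2$. Set $s:=\frac{z^2}{z^2-w}$. Then
$$w-c_jz^2=(z^2-w)\bigl((1-c_j)s-1\bigr), \qquad w-z^2=-(z^2-w).$$
Substituting into (\ref{rgen}) gives
$$R=\frac{-(z^2-w)^{2N+1}}{(z^2-w)^{2N}\prod_j((1-c_j)s-1)^2}=\frac{-(z^2-w)}{\prod_j((1-c_j)s-1)^2}.$$
On $\Gamma_\la$ with $\la\neq0,\infty$ this yields $z^2-w=-\la\prod_j((1-c_j)s-1)^2$, and then $z^2=s(z^2-w)=-\la s\prod_j(\cdots)^2$.

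Now introduce $\tau$ with $\tau^2=s$, i.e.\ (\ref{tauzw}). Then $z=\pm i\sqrt\la\,\tau\prod_j((1-c_j)\tau^2-1)$, and the sign can be absorbed into $\tau\mapsto-\tau$. This gives exactly (\ref{param1}): $w=z^2-(z^2-w)=(1-1/\tau^2)z^2$.

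To prove this is a bijective parametrization of the projective curve, I will check three things.
\begin{itemize}
\item The map $\tau\mapsto(z(\tau),w(\tau))$ lands in $\Gamma_\la$, by direct substitution into the identity above.
\item It is generically injective. Indeed, $\tau$ is recovered from the point by $\tau=z/\sqrt{z^2-w}$, where $\sqrt{z^2-w}=\pm\sqrt{-\la}\prod_j(\cdots)$ is itself a polynomial in $\tau$, so $\tau$ is a rational function on the curve. This gives a birational inverse, hence rationality of $\Gamma_\la$.
\item Every point of $\Gamma_\la$ is hit. The affine points are covered since $s$ is defined there, and the points at infinity come from $\tau=\infty$ and possibly from the zeros of $(1-c_j)\tau^2-1$, where $z=w=0$.
\end{itemize}

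For the covering statement: the map $s:\Gamma_\la\to\oc$ pulled back to the $\tau$-sphere is $\tau\mapsto\tau^2$. This is a double covering ramified exactly over $0$ and $\infty$.

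The main obstacle is the word \emph{bijective}. Several values of $\tau$ map to the base points $(0,0)$ and $E$: the roots of $(1-c_j)\tau^2=1$ go to $(0,0)$, and $\tau=0,\pm1,\infty$ need to be checked. I must argue that these correspond to distinct local branches, i.e.\ distinct points of the normalization with which $\Gamma_\la$ is identified, rather than a failure of injectivity. Degree counting handles this. The curve has degree $4N+2$ in $\cp^2$, and $\deg_\tau$ of the parametrization in homogeneous form also equals $4N+2$ ($z$ has degree $2N+1$, $w$ has degree $4N+2$). So the parametrization has degree one onto its image, which is consistent with birationality. Hence it is the normalization map, which is bijective from $\oc$ onto the normalization.

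For part 2), write $R=f^{2N+1}/g^2$ with $f=w-z^2$ and $g=\prod_j(w-c_jz^2)$, where the factors are pairwise coprime since $c_j\neq1$ (and I assume the $c_j$ are distinct, or else handle repeated roots similarly). Then
$$dR=\frac{f^{2N}}{g^3}\bigl((2N+1)g\,df-2f\,dg\bigr).$$
Outside $\Gamma_0\cup\Gamma_\infty$, quasihomogeneity lets me reduce the bracket, via the Euler relation, to a multiple of $d$ of the weight-zero function $s$. Indeed $R=-(z^2-w)\phi(s)$, and for $\la\neq0,\infty$ the level set is smooth in $(z^2-w,s)$ coordinates away from $z=0$ and $z^2=w$. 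The derivative with respect to the weighted Euler field is $2R\neq0$, which excludes critical points in $\cc^2\setminus\{z=w=0\}$ off $\Gamma_0\cup\Gamma_\infty$. Points at infinity other than $E$ are checked in a chart by the same Euler argument. Since $\Gamma_0=\{f^{2N+1}=0\}$ and $\Gamma_\infty=\{g^2=0\}$ have multiplicities at least $2$, every point of them is critical, so there are no isolated true critical points.

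Finally, criticality of $(0,0)$ and $E$ for $\la=0,\infty$ follows from Proposition \ref{proii}, 2) in the converse direction. For $\la\neq0,\infty$ the lower quasihomogeneous part $f^{2N+1}-\la g^2$ splits into distinct factors $w-\alpha z^2$ by the computation above, so these values are typical. At $\la=0,\infty$, by contrast, the local topology jumps: the level germ degenerates to a multiple curve, giving non-local-triviality. The chart at $E$ is $(z/w,1/w)$, in which the same weighted structure persists.
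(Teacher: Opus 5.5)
Your part 1) is essentially the paper's argument. The paper restricts $R$ to the parabolas $w=uz^2$, on which it equals $c(u)z^2$. It then reads off a double cover of the $u$-line branched at the odd-order zero $u=1$ and the odd-order pole $u=\infty$ of $c(u)$. Your $s=1/(1-u)$ and the identity $R=-(z^2-w)/\prod_j((1-c_j)s-1)^2$ are the same computation. Your inverse $\tau=z/\bigl(i\sqrt\la\prod_j((1-c_j)s-1)\bigr)$ is rational in $(z,w)$ because the product is a polynomial in $s$, and it makes birationality explicit.

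There are some minor slips in part 1):
\begin{itemize}
\item $s$ is undefined at $(0,0)$. This is an affine point, hit by the roots of $(1-c_j)\tau^2=1$, not a point at infinity.
\item The only point of $\Gamma_\la$ at infinity is $E$, hit by $\tau=\infty$.
\item The degree count does not give degree one by itself. Combined with the inverse, it shows that $\Gamma_\la$ is reduced, irreducible and equal to the image.
\end{itemize}

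For the critical set you take a different and more elementary route. The paper excludes true critical points on $\Gamma_\la$, $\la\neq0,\infty$, via Proposition \ref{procritic}, 3) together with irreducibility of $\Gamma_\la$. Your identity $\mathcal E R=2R$, with $\mathcal E=z\partial_z+2w\partial_w$, gives $dR\neq0$ directly at every affine point other than $(0,0)$ where $R\neq0,\infty$, with no topology needed. One correction: $\Gamma_\infty$ is not just $\{g^2=0\}$. The numerator has degree $4N+2$ and $g^2$ has degree $4N$, so the line at infinity is a double component of $\Gamma_\infty$. Its points are therefore critical, and there is nothing to ``check by the Euler argument'' there.

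The genuine error is in your last paragraph, on the indeterminacies. With weights $(1,2)$, $f^{2N+1}$ has weighted degree $4N+2$ and $g^2$ has weighted degree $4N$. So at $(0,0)$ the lower quasihomogeneous part of $f^{2N+1}-\la g^2$ is $-\la g^2=-\la\prod_j(w-c_jz^2)^2$ for every $\la\neq0$, with every factor squared. At $E$, in the chart $(\tilde z,\tilde w)=(z/w,1/w)$, one has $R=(\tilde w-\tilde z^2)^{2N+1}/\bigl(\tilde w^2\prod_j(\tilde w-c_j\tilde z^2)^2\bigr)$. There the lower part is $(\tilde w-\tilde z^2)^{2N+1}$ for every $\la\neq\infty$. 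Hence Proposition \ref{proii}, 1) never applies here. The ``converse of Proposition \ref{proii}, 2)'' you invoke is false: in this very example it would make $(0,0)$ and $E$ critical for every $\la$. Geometrically, the $2N$ branches at $(0,0)$ come in pairs tangent to the same parabola $w=c_jz^2$.

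What works, as in the paper, is to read the germs off the parametrization. For $\la\in\cc^*$ (and distinct $c_j$), the germ at $E$ is the single branch $\tau=\infty$. The germ at $(0,0)$ consists of the $2N$ regular branches at $\tau=\pm(1-c_j)^{-1/2}$. All of these vary analytically with $\la$. For $\la=0,\infty$ the germs are non-reduced: $\gamma$ with multiplicity $2N+1$, respectively doubled parabolas and the doubled line at infinity. So every neighbourhood of the indeterminacy contains a curve of critical points on that level, and local triviality fails.

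Alternatively, your own quasihomogeneity gives automorphisms $(z,w)\mapsto(\mu z,\mu^2w)$. These fix $(0,0)$ and $E$ and send $\Gamma_\la$ to $\Gamma_{\mu^2\la}$. By finiteness of critical values, no $\la\in\cc^*$ can then be critical at these points.

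Your closing sentence about the level germ degenerating to a multiple curve is the right idea for the direction actually asserted in the statement. It is the justification via Proposition \ref{proii} that has to go.
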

  \begin{theorem} \label{tas1}
  1) Each fiber $S_{\la}=p_M^{-1}(\Gamma_{\la})\subset M$ with $\la\neq0,\infty$ is a union of two  rational curves $S_{\la\pm}$.
  
  2) Each curve $S_{\la\pm}$ is  parametrized 
  by lifted parametrization (\ref{param1}) so that to each $Q(\tau)=(z(\tau),w(\tau))\in\Gamma_{\la}$ corresponds 
  $$(Q(\tau),P_{\pm}(\tau))\in S_{\la\pm}, \ P_{\pm}=(z_{0\pm},z_{0\pm}^2),$$ 
  \begin{equation} z_{0\pm}(\tau)=\frac{\tau\pm 1}{\tau}z(\tau)=i\sqrt\la(\tau\pm 1)\prod_{j=1}^N((1-c_j)\tau^2-1).\label{parap1s}
  \end{equation}
    \end{theorem}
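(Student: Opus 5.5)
The plan is to compute, for a point $Q=(z,w)\in\Gamma_\la$, the two tangency points $P=(z_0,z_0^2)$ of the tangent lines to $\gamma$ through $Q$, and to check that they are rational functions of the parameter $\tau$ from Theorem \ref{taga1}. The tangent line to $\gamma$ at $(z_0,z_0^2)$ is $w=2z_0z-z_0^2$. So $Q\in L_P$ if and only if $z_0^2-2zz_0+w=0$, which gives
$$z_0=z\pm\sqrt{z^2-w}.$$
By (\ref{tauzw}) we have $\sqrt{z^2-w}=z/\tau$, with the branch of the square root fixed by the bijective parametrization. Hence $z_0=z(1\pm\frac1\tau)=\frac{\tau\pm1}{\tau}z(\tau)$. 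Substituting $z(\tau)$ from (\ref{param1}), the factor $\tau$ cancels, and we obtain exactly (\ref{parap1s}). In particular $z_{0\pm}(\tau)$ is a polynomial in $\tau$.

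Next I show that the maps $\tau\mapsto(Q(\tau),P_\pm(\tau))$ define two rational curves $S_{\la\pm}\subset M$, each parametrized by $\oc$, and that $S_\la=S_{\la+}\cup S_{\la-}$. By construction, for every $\tau$ the point $Q(\tau)$ lies on $L_{P_\pm(\tau)}$. The two pairs $(Q,P_+)$ and $(Q,P_-)$ exhaust $p_M^{-1}(Q)$ whenever $Q\notin\gamma$, that is, whenever $z_{0+}\neq z_{0-}$. Since $\Gamma_\la$ is irreducible (its parametrization by $\oc$ is bijective), $S_\la$ is the closure of the set of such pairs, so $S_\la=S_{\la+}\cup S_{\la-}$. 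Each $S_{\la\pm}$ is the image of $\oc$ under a rational map that is injective, because its first component $\tau\mapsto Q(\tau)$ is bijective. I also need to check the behaviour at $\tau=\infty$ and $\tau=0$: the formulas are polynomial, and via the trivialization $M\simeq\gamma\times\gamma$ the point $\tau=\infty$ maps to $(E,E)$, so the maps extend holomorphically. Hence each $S_{\la\pm}$ is rational.

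It remains to show that $S_{\la+}\neq S_{\la-}$, i.e. that $S_\la$ is really reducible. Equivalently, the double covering $S_\la\to\Gamma_\la$ is unbranched, which is consistent with Proposition \ref{ctype}, 3) and 4). For a fixed $\tau$ the two points differ, since $z_{0+}-z_{0-}=2z(\tau)/\tau$, which vanishes only where $\prod_j((1-c_j)\tau^2-1)=0$. This does not vanish identically, so generically $P_+(\tau)\neq P_-(\tau)$. The subtle point, and the main obstacle, is to rule out that the curves coincide via a reparametrization, $(Q(\tau),P_+(\tau))=(Q(\tau'),P_-(\tau'))$. Because $Q$ is bijective in $\tau$, this would force $\tau'=\tau$, which contradicts the previous computation. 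At the finitely many points where $\Gamma_\la$ meets $\gamma$ (the zeros of $z(\tau)/\tau$, together with $\tau=\infty$) the two sheets cross rather than branch. This is harmless, because the sheets are distinguished globally by the single-valued functions $z_{0\pm}(\tau)$. This proves both statements.
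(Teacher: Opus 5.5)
Your proposal is correct and uses the same argument as the paper. You solve $w=2zz_0-z_0^2$ to get $z_0=z\pm\sqrt{z^2-w}$, then use $\sqrt{z^2-w}=z/\tau$ from (\ref{tauzw}) to obtain (\ref{parap1s}); the paper stops there with ``this implies the statements'', and your extra checks (extension to $\tau=\infty$ via $M\simeq\gamma\times\gamma$, and $S_{\la+}\neq S_{\la-}$ at generic $\tau$) simply fill in that step, with injectivity of $\tau\mapsto Q(\tau)$ read on the normalization, since the $2N$ roots of $\prod_j((1-c_j)\tau^2-1)$ all map to $(0,0)$.
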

   
    \begin{theorem} \label{taf1} Let now $R=R_{a1}$, i.e., the $c_j$ in (\ref{rgen}) be  as in (\ref{exot1}). Then 
    
 1) each curve $S_{\la\pm}$ is invariant under the dual billiard map $F=F_{a1}$; 
 
  2) in the  above coordinate $\tau$ on $S_{\la\pm}$, $\la\neq0,\infty$, the map $F:S_{\la\pm}\to S_{\la\pm}$ 
  acts as 
  \begin{equation} F: \tau\mapsto \tau\pm\frac{2}{2N+1}.\label{fa1s}\end{equation} 
  \end{theorem}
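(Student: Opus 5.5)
The plan is to compute everything in the parameter $\tau$ of Theorem \ref{taga1}, after noticing that on each tangent line $L_P$ this parameter becomes a projective coordinate in which $\sigma_P$ is a reflection $\tau\mapsto c-\tau$.

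\textbf{Step 1: $\tau$ on a tangent line.} Fix $P=(z_0,z_0^2)\in\gamma^o$. The line $L_P$ is $w=2z_0z-z_0^2$, so on it $z^2-w=(z-z_0)^2$. Hence for $Q=(z,w)\in L_P$,
$$\sqrt{z^2-w}=\pm(z-z_0),\qquad \tau(Q)=\pm\frac{z}{z-z_0}=\pm\frac{\zeta}{\zeta-1},\qquad \zeta=\frac z{z_0}.$$
Set $\tau_P:=\zeta/(\zeta-1)$. This is a Möbius coordinate on $L_P$ sending $P$ (that is, $\zeta=1$) to $\tau_P=\infty$.

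\textbf{Step 2: $\sigma_P$ in the coordinate $\tau_P$.} The map $\eta_\rho$ is an involution fixing $\zeta=1$. Its fixed points solve $\rho\zeta^2-2(\rho-1)\zeta+(\rho-2)=0$, whose roots are $\zeta=1$ and $\zeta=(\rho-2)/\rho$. In the coordinate $\tau_P$ these become $\infty$ and $(2-\rho)/2$. An involution of $\oc$ with fixed points $\infty$ and $a$ is $\tau_P\mapsto 2a-\tau_P$. Therefore
$$\sigma_P:\tau_P\mapsto (2-\rho)-\tau_P=\frac2{2N+1}-\tau_P,$$
using $\rho=2-\frac2{2N+1}$.

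\textbf{Step 3: relating $\tau_P$ to $\tau$ on the two components.} By (\ref{parap1s}), $(Q,P)\in S_{\la\pm}$ means $z_0=z(\tau\pm1)/\tau$, i.e. $z_0-z=\pm z/\tau$. This gives
$$\tau_P(Q)=\frac{z}{z-z_0}=\mp\tau(Q).$$
So the two tangency points of the two tangent lines through $Q$ are distinguished exactly by the sign relating $\tau_P(Q)$ to $\tau(Q)$.

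\textbf{Step 4: tracking one step of $F$.} Start from $(Q,P)\in S_{\la+}$, so $\tau_P(Q)=-\tau$. By Step 2, $Q'=\sigma_P(Q)$ has $\tau_P(Q')=\frac2{2N+1}+\tau$. Since $R$ is an integral, $Q'\in\Gamma_\la$. Let $\tau'=\tau(Q')$. By Step 3, $(Q',P)$ lies on $S_{\la+}$ or on $S_{\la-}$, and correspondingly $\tau'=-\tau_P(Q')$ or $\tau'=\tau_P(Q')$.

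I will show that generically $(Q',P)\in S_{\la-}$, so that $\tau'=\tau+\frac2{2N+1}$.
\begin{itemize}
\item Suppose instead $(Q',P)\in S_{\la+}$. Then $\tau'=-\tau-\frac2{2N+1}$, and the relation $z_0=z(\tau)(\tau+1)/\tau=z(\tau')(\tau'+1)/\tau'$ would hold identically in $\tau$.
\item Using the explicit formula (\ref{parap1s}), $z(\tau)(\tau+1)/\tau=i\sqrt\la(\tau+1)\prod_{j}((1-c_j)\tau^2-1)$. Checking a leading-order or single-value comparison of the two sides should rule this identity out.
\end{itemize}

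By definition of $F$, the new point $P'$ is the other tangency point from $Q'$, which switches the sign in Step 3. Hence $(Q',P')\in S_{\la+}$ with the same parameter $\tau'=\tau+\frac2{2N+1}$. The symmetric computation on $S_{\la-}$ gives $\tau\mapsto\tau-\frac2{2N+1}$, which is (\ref{fa1s}).

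\textbf{Step 5: invariance.} Statement 1 follows from the computation above on a Zariski-dense set. Alternatively, $F$ is birational (Proposition \ref{prbir}), preserves $S_\la$ (Proposition \ref{ctype}), and hence permutes its two irreducible components; the computation shows that it does not swap them.

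\textbf{Main obstacle.} The delicate point is the sign bookkeeping in Steps 3 and 4: the branch of $\sqrt{z^2-w}$ is fixed globally by the parametrization of Theorem \ref{taga1}, and one must check carefully which sign occurs for $(Q',P)$. I would first test the argument numerically with $N=1$ before writing it in general.
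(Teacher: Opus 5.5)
Your proof is correct, but it settles the two non-computational points differently from the paper. The computation itself is the same in substance. The paper also evaluates $\eta_\rho$ at $\zeta_\pm(\tau)=\tau/(\tau\pm1)$ and reads off $\tau_{*\pm}=(\pm)\,\zeta_{*\pm}/(\zeta_{*\pm}-1)=(\pm)\bigl(\tau\pm\frac2{2N+1}\bigr)$. Your remark that $\sigma_P$ is the reflection $\tau_P\mapsto\frac2{2N+1}-\tau_P$ in the coordinate $\tau_P=\zeta/(\zeta-1)$ is a cleaner packaging of that M\"obius computation.

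The differences are these:
\begin{itemize}
\item The paper fixes the sign dynamically: the wrong sign would make $F|_{S_{\la\pm}}$ the involution $\tau\mapsto-\tau\mp\frac2{2N+1}$, which contradicts Lemma \ref{palt}. It gets invariance of $S_{\la\pm}$ from the small-$\la$ asymptotics (\ref{aseq01}) of that lemma plus analytic continuation in $\la$.
\item You fix the sign algebraically, by showing $(Q',P)\notin S_{\la+}$. Invariance of each $S_{\la\pm}$ then comes out of the same computation for every $\la\neq0,\infty$, with no dynamics near $\gamma$ and no continuation in $\la$.
\item The paper's route is shorter once Lemma \ref{palt} is available, and its sign argument is reused verbatim in Case a2).
\end{itemize}

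To close your hedged step, compare leading coefficients. The polynomial $z_{0+}(\tau)=i\sqrt\la(\tau+1)\prod_j((1-c_j)\tau^2-1)$ has odd degree $2N+1$ and leading coefficient $i\sqrt\la\prod_j(1-c_j)\neq0$, while $z_{0+}(-\tau-\frac2{2N+1})$ has the opposite leading coefficient.

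Avoid checking at a zero. Since $1-c_j=\bigl(\frac{2N+1}{2N+1-2j}\bigr)^2$, the zeros of $z_{0+}$ are $-1+\frac{2k}{2N+1}$, $k=0,\dots,2N$, which are symmetric about $-\frac1{2N+1}$. So in fact $z_{0+}(-\tau-\frac2{2N+1})\equiv-z_{0+}(\tau)$, and for example both sides vanish at $\tau=-1$. The same zero count gives $z_{0-}(\tau+\frac2{2N+1})\equiv z_{0+}(\tau)$, which confirms directly that $(Q',P)\in S_{\la-}$.
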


  \subsection{Type of fibers and dynamics in Case a2)}
  
  Consider an arbitrary rational function of the next type, e.g., $R_{a2}$ in (\ref{exot2}):
   \begin{equation} R(z,w)=\frac{(w-z^2)^{N+1}}{z\prod_{j=1}^N(w-c_jz^2)}, \ \ \text{ here } \ c_j\in\cc, \ c_j\neq1 \ \text{ are arbitrary.}\label{rgen2}
  \end{equation} 
  
 \begin{theorem} \label{taga2} Let $R$ be as in (\ref{rgen2}). 
 
 A. Then for every $\la\neq0,\infty$ 
 
 1) the curve $\Gamma_{\la}=\{ R=\la\}$ is rational;
 
 2) the restriction to $\Gamma_{\la}$  of the projection 
 \begin{equation}(z,w)\mapsto s:=\frac{z^2}{z^2-w}\label{zws}\end{equation} 
 is a bijection onto the Riemann sphere $\oc_s$, 
  and its inverse is the following parametrization of the curve $\Gamma_{\la}$:
    \begin{equation}z(s)=-\la s\prod_{j=1}^N(1+(c_j-1)s), \ \ w(s)=\la^2s(s-1)\prod_{j=1}^N(1+(c_j-1)s)^2.\label{param2}\end{equation}
    
    B. Let $c_j$ be distinct. Then the  critical point set of the function $R$ is the conic $\Gamma_0=\{ w=z^2\}$. 
    Its indeterminacies $(0,0)$ and $E$ are critical points for the values $\la=0$ and $\la=0,\infty$ respectively. 
    \end{theorem}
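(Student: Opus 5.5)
Part A rests on the substitution $s=\frac{z^2}{z^2-w}$. It gives $z^2-w=\frac{z^2}{s}$ and $w-c_jz^2=z^2\bigl(1-c_j-\frac1s\bigr)=-\frac{z^2}{s}\bigl(1+(c_j-1)s\bigr)$. Substituting into (\ref{rgen2}), the numerator becomes $(w-z^2)^{N+1}=(-1)^{N+1}z^{2N+2}s^{-N-1}$. The denominator becomes $z\prod_j(w-c_jz^2)=(-1)^Nz^{2N+1}s^{-N}\prod_j(1+(c_j-1)s)$. Hence
$$R=-\frac{z}{s\prod_{j=1}^N(1+(c_j-1)s)}.$$
So on $\Gamma_\la$, at points with $z\neq0$ and $s\neq 0,\infty$, we get $z=-\la s\prod_j(1+(c_j-1)s)$. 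Then $w=z^2(1-\frac1s)=z^2\frac{s-1}{s}$, which gives $w=\la^2s(s-1)\prod_j(1+(c_j-1)s)^2$. This is (\ref{param2}).

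The map $s\mapsto(z(s),w(s))$ is a rational map $\oc\to\cp^2$ whose image lies in $\Gamma_\la$, and composing it with the projection (\ref{zws}) gives the identity. Hence it is injective, with image an irreducible rational curve $C\subset\Gamma_\la$. To see $C=\Gamma_\la$ I compare degrees. Clearing denominators, $\Gamma_\la$ is the curve $\{(w-z^2)^{N+1}-\la z\prod_j(w-c_jz^2)=0\}$ of degree $2N+2$. The parametrization has degree $2N+2$ in $s$: $w(s)$ has degree $2N+2$ and $z(s)$ degree $N+1$. A generic line $w=az+b$ therefore meets $C$ in $2N+2$ points, each counted once by injectivity. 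So $\deg C=2N+2$, and $C$ coincides with $\Gamma_\la$ (no extra components, no multiplicity). This gives bijectivity of the projection restricted to $\Gamma_\la$, understood on the normalization, since the projection is the inverse of the parametrization. A small point to check is the behaviour at $s=0,\infty$ and at the base points $(0,0)$, $E$. There the projection is only defined by continuity on the normalization; the identification with the normalization stated earlier handles this.

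For Part B, write $R=f/g$ with $f=(w-z^2)^{N+1}$ and $g=z\prod_j(w-c_jz^2)$. Along $\Gamma_0=\gamma$ the function $R$ vanishes to order $N+1\geq2$, so $dR=0$ there. For critical points off $\gamma\cup\{g=0\}$ I use Part A: a true critical point with value $\la\neq0,\infty$ would be a singular point of $\Gamma_\la$. However, $\Gamma_\la$ is the bijective image of a parametrization, so such a point would have to be a cusp, i.e. a point where $z'(s)=w'(s)=0$. I intend to rule this out using $w=z^2(s-1)/s$: this gives $w'=2zz'(s-1)/s+z^2/s^2$, so $z'=w'=0$ forces $z=0$. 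Then $s\in\{0,\infty\}$ or $1+(c_j-1)s=0$, and these points lie over $(0,0)$ or $E$. A direct check with $w=az$ lines would also work. On $\Gamma_\infty=\{g=0\}$ away from the base points, the components $z=0$ and $w=c_jz^2$ are reduced and distinct (the $c_j$ are distinct), so $1/R$ has nonvanishing differential there. This is where distinctness of the $c_j$ is used.

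Finally, the criticality of the indeterminacies should be settled with Proposition \ref{proii}. At $(0,0)$ I use weights $(p,q)=(2,1)$, i.e. $\deg z=1$, $\deg w=2$. The lower quasihomogeneous part of $f-\la g$ is $(w-z^2)^{N+1}$ when $\la=0$; this has a multiple factor, so $\la=0$ is critical. For $\la\neq0$ the $-\la z\prod_j(w-c_jz^2)$ term has degree $2N+1<2N+2$ and dominates, and $g$ has distinct factors, so I expect all other values to be typical. Strictly, Proposition \ref{proii} only gives a necessary condition for criticality, so for $\la=0$ I would argue directly: $\Gamma_\mu$ is smooth, transverse-type near $(0,0)$, whereas $\Gamma_0$ is a multiple conic, and this changes the local topology. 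At $E$, in the chart $w=1$ with $\tilde z=z/w$, $\tilde t=t/w$, I would repeat the analysis. There both $f$ and $g$ degenerate in the $\tilde t$ direction, giving multiple factors for $\la=0$ and $\la=\infty$. I expect the main obstacle to be this local Newton-diagram analysis at $E$, i.e. showing that exactly $\la=0,\infty$ are critical there.
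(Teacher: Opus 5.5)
Part A is correct. So is your exclusion of true critical points with values $\lambda\neq0,\infty$. Your derivative argument ($z'=w'=0$ together with $w=z^2(s-1)/s$ forces $z=0$, hence the point $(0,0)$) is a neat, more direct substitute for the paper's appeal to Proposition \ref{procritic}.

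Two small omissions on $\Gamma_\infty$:
\begin{itemize}
\item Projectively, $\Gamma_\infty$ also contains the line at infinity, because the homogenized denominator is $zt\prod_j(wt-c_jz^2)$.
\item ``Reduced and distinct'' is not enough by itself. You must also check that the components $\{z=0\}$, $\{t=0\}$, $\{wt=c_jz^2\}$ meet pairwise only at $(0,0)$ and $E$ (they do). An intersection point elsewhere would be a true critical point.
\end{itemize}

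The genuine gap is the indeterminacy part of B.

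\emph{At $E$.} You give no argument, and the route you sketch cannot work. In the chart $(x,y)=(z/w,t/w)$ one has $R=\frac{(y-x^2)^{N+1}}{xy\prod_j(y-c_jx^2)}$. With weights $\deg x=1$, $\deg y=2$, the numerator has weight $2N+2$ and the denominator has weight $2N+3$. Hence the lower quasihomogeneous part of $f-\lambda g$ is $(y-x^2)^{N+1}$ for every finite $\lambda$. It has a multiple factor for all finite $\lambda$, so it cannot separate $\lambda=0$ from the typical values. Meanwhile $g=xy\prod_j(y-c_jx^2)$ has no multiple factor at all, so your prediction of ``multiple factors for $\lambda=\infty$'' is wrong. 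Proposition \ref{proii} gives only a necessary condition, so it yields nothing here.

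\emph{At $(0,0)$.} The Newton diagram of $f-\lambda g$ has two edges, $[(0,N+1),(1,N)]$ and $[(1,N),(2N+1,0)]$. So statement (ii) does not apply the way you use it: the factor $z$ of the $(2,1)$-part does not give a branch $\{z=0\}$. Also, your claim that $\Gamma_\mu$ is smooth near $(0,0)$ is false: every $\Gamma_\mu$ with $\mu\neq0$ passes through this base point with $N+1\geq2$ branches.

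\emph{The missing idea.} Read the local branches off the parametrization (\ref{param2}), as the paper does.
\begin{itemize}
\item Over $(0,0)$ the parameter values are $s=0$, giving a regular branch tangent to $w=\lambda z$, and the $N$ distinct roots $s=1/(1-c_j)$, giving regular branches tangent to $w=c_jz^2$.
\item So for $\lambda\neq0$ the germ consists of $N+1$ regular branches depending analytically on $\lambda$. The same configuration persists at $\lambda=\infty$ (branches $z=0$ and $w=c_jz^2$), whereas at $\lambda=0$ the germ is the $(N+1)$-fold smooth conic.
\item Over $E$ only $s=\infty$ contributes. For $\lambda\neq0,\infty$ this gives one irreducible singular branch, $x\sim\alpha\sigma^{N+1}$, $y-x^2\sim\beta\sigma^{2N+3}$ with $\sigma=1/s$ and $\gcd(N+1,2N+3)=1$. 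Its type is constant in $\lambda$.
\item By contrast, at $E$ the curve $\Gamma_0$ is a multiple smooth germ and $\Gamma_\infty$ consists of $N+2$ smooth branches.
\end{itemize}
This comparison is what establishes that $(0,0)$ is critical exactly for $\lambda=0$ and $E$ exactly for $\lambda=0,\infty$.
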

\begin{theorem} \label{tas2}  For every $\la\neq0,\infty$ 

1) the curve $S_{\la}=p_M^{-1}(\Gamma_\la)$ is rational; 

2) the projection 
 $p_M:S_{\la}\to\Gamma_{\la}$ has two ramification points in $\Gamma_\la$ that coincide with 
 the origin and infinity in the parabola $\gamma$: they correspond to the parameter values $s=0,\infty$.
 
 3) the curve $S_\la$ is bijectively holomorphically parametrized by 
 $$\tau=\sqrt s: \ \ \ S_\la=\{(Q(\tau),P(\tau))\in M \ | \ \tau\in\oc\}, \ Q(\tau)=(z(\tau^2),w(\tau^2)),$$
 \begin{equation} z(\tau^2)=-\la \tau^2\prod_{j=1}^N(1+(c_j-1)\tau^2), \ \ w(\tau^2)=\la^2\tau^2(\tau^2-1)\prod_{j=1}^N(1+(c_j-1)\tau^2)^2,\label{zwa2}\end{equation}
 $$P(\tau)=(z_0(\tau),z_0^2(\tau)), \ \ z_0(\tau)=\frac{\tau+1}{\tau}z(\tau^2).$$
 \end{theorem}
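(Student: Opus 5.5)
The plan is to work directly with the explicit parametrization of $\Gamma_\la$ given by Theorem \ref{taga2} and to compute the fiber of $p_M$ over each of its points. A point $Q=(z,w)\notin\gamma$ has exactly two tangent lines to $\gamma=\{w=z^2\}$. The line tangent at $P=(z_0,z_0^2)$ is $w=2z_0z-z_0^2$. So $z_0$ solves $z_0^2-2z_0z+w=0$, that is,
$$z_0=z\pm\sqrt{z^2-w}=z\Bigl(1\pm\frac{\sqrt{z^2-w}}{z}\Bigr)=z\Bigl(1\pm\frac1{\sqrt s}\Bigr),$$
where $s=z^2/(z^2-w)$ is the coordinate (\ref{zws}). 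Setting $\tau=\sqrt s$ gives $z_0=\frac{\tau\pm1}{\tau}z$, and the sign change corresponds to $\tau\mapsto-\tau$. Therefore the map $\tau\mapsto (Q(\tau^2),P(\tau))$, with $Q$ given by (\ref{param2}) at $s=\tau^2$ and $z_0=\frac{\tau+1}{\tau}z(\tau^2)$, lands in $S_\la$. It hits both points over each $Q$, because $\tau$ and $-\tau$ give the two tangency points. This yields formula (\ref{zwa2}) together with the expression for $z_0$.

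Next I would check that this map is a holomorphic bijection from $\oc_\tau$ onto $S_\la$. The $\tau$-map factors the covering $p_M:S_\la\to\Gamma_\la$ through $\tau\mapsto s=\tau^2$ followed by the bijection $\oc_s\to\Gamma_\la$ of Theorem \ref{taga2}. Hence it is generically two-to-one onto $\Gamma_\la$, and it separates the two preimages, since distinct signs of $\tau$ give distinct tangency points $z_0$ whenever $z\neq0$ and $s\neq\infty$. The function $z(\tau^2)/\tau$ is a polynomial in $\tau$ (it equals $-\la\tau\prod(\cdots)$), so $z_0(\tau)$ is a polynomial in $\tau$ and the map is holomorphic on $\cc$. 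I would check the extension at $\tau=\infty$ in the trivialization $M\simeq\gamma\times\gamma$, using the pair of tangency points $(z_0(\tau),z_0(-\tau))$, both of which are polynomials in $\tau$. In these coordinates the map is $\tau\mapsto(z_0(\tau),z_0(-\tau))\in\oc\times\oc$. It is injective, because $z_0(\tau)$ and $z_0(-\tau)$ determine $z=(z_0(\tau)+z_0(-\tau))/2$, and $\tau$ is recovered from $\tau$ versus $-\tau$ together with injectivity of $s\mapsto Q$. A holomorphic map from $\oc$ that is injective and onto an irreducible curve is a normalization. Thus $S_\la$ is rational and irreducible, which proves Statements 1) and 3). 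Irreducibility of $\Gamma_\la$ follows from its bijective rational parametrization.

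For Statement 2), the ramification points of $p_M:S_\la\to\Gamma_\la$ are exactly the points where $\tau$ and $-\tau$ give the same point of $S_\la$, namely $\tau=0$ and $\tau=\infty$, that is, $s=0,\infty$. At $s=0$ we get $z=w=0$, the origin. At $s=\infty$ the point $Q(s)$ tends to $[0:1:0]=E$, since $w$ has degree $2N+2$ in $s$ while $z$ has degree $N+1$. Both points lie in $\gamma$, consistent with Proposition \ref{ctype}, 4). This is also consistent with Riemann--Hurwitz: a double cover of $\oc$ with two branch points has genus $0$.

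The main obstacle is the behaviour at $\tau=0$ and $\tau=\infty$, where $Q$ hits the base points $(0,0)$ and $E$ of $R$. There the naive formula $z_0=z\pm\sqrt{z^2-w}$ degenerates, and I must confirm that the map is genuinely holomorphic and injective rather than merely continuous. The first thing I would try is to use the trivialization $M\simeq\gamma\times\gamma$ via the two tangency points and verify that $(z_0(\tau),z_0(-\tau))$ has a nonzero derivative at $\tau=0$; indeed $z_0(\tau)=-\la\tau(\tau+1)\prod(\cdots)$ has derivative $-\la\neq0$ there. At $\tau=\infty$ I would pass to the local coordinate $1/\tau$ and check the leading terms in the chart near $E$.
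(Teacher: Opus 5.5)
Your route is the same as the paper's. The paper solves $2z(s)z_0-z_0^2=w(s)=z^2(s)(1-s^{-1})$ and notes that the two roots are $z_0=\frac{\tau+1}{\tau}z(\tau^2)$ for the two values of $\tau=\sqrt s$. It tacitly identifies $\Gamma_\lambda$ and $S_\lambda$ with their normalizations, as declared before Proposition \ref{ctype}, so $p_M$ becomes $\tau\mapsto\tau^2$.

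Your extra verification, however, contains a false step: the map $\tau\mapsto(z_0(\tau),z_0(-\tau))$ is \emph{not} injective. Since $z_0(\tau)=-\lambda\tau(\tau+1)\prod_j(1+(c_j-1)\tau^2)$, the values $\tau=0$ and $\tau=\pm(1-c_j)^{-1/2}$ are all sent to $(0,0)$. That is the single point $(O,O)$ of $M$ over the origin $O\in\gamma$. The reason is that $s\mapsto Q(s)$ is not injective either. The values $s=0$ and $s=(1-c_j)^{-1}$ all give $Q=(0,0)$, a singular point of $\Gamma_\lambda$ with several local branches, where $s=z^2/(z^2-w)$ is indeterminate.

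So your appeal to ``injectivity of $s\mapsto Q$'' fails exactly at the points you had yourself excluded (``whenever $z\neq0$''). For the same reason, your criterion ``ramification points are where $\tau$ and $-\tau$ give the same point of $S_\lambda$'', read in $M$, would wrongly also produce $\tau=\pm(1-c_j)^{-1/2}$. The obstacle you single out at the end is not the real one. Both $z_0(\tau)$ and $z_0(-\tau)$ are polynomials, so holomorphy at $\tau=0,\infty$ in the chart $\gamma\times\gamma\simeq\overline{\mathbb C}\times\overline{\mathbb C}$ is automatic. The genuine subtlety is the non-injectivity over the origin.

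The repair is to argue on normalizations, which is the sense of ``bijectively'' in the statement. Your computation gives injectivity of $\tau\mapsto(Q(\tau^2),P(\tau))$ off the finite set of $\tau$ lying over $(0,0)$ and $E$. The image also contains both preimages of a generic $Q\in\Gamma_\lambda$. Hence this is a degree-one holomorphic map from $\overline{\mathbb C}$ onto $S_\lambda$, so $S_\lambda$ is irreducible and the map lifts to a biholomorphism of $\overline{\mathbb C}_\tau$ with the normalization of $S_\lambda$.

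In this coordinate $p_M$ is $\tau\mapsto\tau^2$. Its ramification points are the fixed points $\tau=0,\infty$ of the deck involution $\tau\mapsto-\tau$, lying over $(0,0)$ and $E$. The points $\tau=\pm(1-c_j)^{-1/2}$ are distinct, unramified points of the normalization. This is consistent with Proposition \ref{ctype}, Statement 4): for distinct $c_j$ the corresponding branches of $\Gamma_\lambda$ are regular and tangent to $\{w=c_jz^2\}$ with $c_j\neq1$, hence quadratically tangent to $\gamma$.
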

 
 \begin{theorem} \label{taf2} Let now $R=R_{a2}$, i.e., the $c_j$ be as in (\ref{exot2}). 
 Then 
 
 1) each curve $S_{\la}$ is invariant under the dual billiard map $F=F_{a2}$; 
 
 2) in the  above coordinate $\tau$ on $S_{\la}$ the map $F:S_{\la}\to S_{\la}$ 
  acts as 
  \begin{equation} F: \tau\mapsto \tau+\frac{1}{N+1};\label{fa2s}\end{equation}
  \end{theorem}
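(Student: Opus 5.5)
The plan is to verify both statements by a direct computation in the coordinate $\zeta=z/z_0$ on the tangent line, using the parametrization of Theorem \ref{tas2}. This is possible because for type 2a) the involution $\sigma_P$ is given explicitly by $\eta_\rho$ with $\rho=2-\frac1{N+1}$, by Theorem \ref{tgerm}.

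\emph{Statement 1).} Invariance of $S_\la$ itself follows from Proposition \ref{ctype}, statement 2): $R_{a2}$ is $\sigma_P$-invariant on each tangent line, so $F$ preserves $p_M^{-1}(\Gamma_\la)$. By Theorem \ref{tas2}, $S_\la$ is an irreducible rational curve parametrized bijectively by $\tau\in\oc$. By Proposition \ref{prbir}, $F$ is birational, so it restricts to a birational self-map of $S_\la$, i.e. to a Möbius transformation in $\tau$. It therefore suffices to compute this transformation on a Zariski open set of $\tau$.

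\emph{Tangency check.} First I confirm that $(Q(\tau),P(\tau))\in M$. The tangent line to $\gamma$ at $P=(z_0,z_0^2)$ is $w=2z_0z-z_0^2=z^2-(z-z_0)^2$. With $z_0=\frac{\tau+1}{\tau}z$ we have $z-z_0=-z/\tau$, so the tangency condition reads $w=(1-\tau^{-2})z^2$, which is exactly the relation $s=z^2/(z^2-w)=\tau^2$ built into (\ref{zwa2}). The point $Q(\tau)$ depends only on $\tau^2$. Hence the two tangency points of the tangent lines from $Q(\tau)$ are $P(\tau)$ and $P(-\tau)$, with $z_0(-\tau)=\frac{\tau-1}{\tau}z$. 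In other words, $(Q(\tau),P(-\tau))$ is the point of $S_\la$ with parameter $-\tau$.

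\emph{Computing $F$.} At the point $\tau$ we have $\zeta(Q)=z(Q)/z_0(P(\tau))=\frac{\tau}{\tau+1}$. Set $h=\frac1{N+1}$, so that $\rho-1=1-h$ and $\rho-2=-h$. Then
$$\eta_\rho\Bigl(\frac{\tau}{\tau+1}\Bigr)=\frac{(1-h)\tau+h(\tau+1)}{(2-h)\tau-(1-h)(\tau+1)}=\frac{\tau+h}{\tau+h-1}.$$
Now put $\tau'=\tau+h$. By the tangency check, $Q(\tau')$ lies on the tangent line at $P(-\tau')$, and its $\zeta$-coordinate with respect to that point is
$$\frac{z(Q(\tau'))}{z_0(-\tau')}=\frac{\tau'}{\tau'-1}.$$
This value coincides with $\eta_\rho(\zeta(Q))$.

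To conclude that $\sigma_P(Q)=Q(\tau')$, I need $P(-\tau')=P(\tau)$, i.e. $z_0(-\tau')=z_0(\tau)$. This identity is the main obstacle. It is not formal: it encodes the specific values $c_j=-\frac{j(2N+2-j)}{(N+1-j)^2}$ from (\ref{exot2}), and one has to check that
$$\frac{\tau'-1}{\tau'}\,z(\tau'^2)=\frac{\tau+1}{\tau}\,z(\tau^2),$$
where $z(s)=-\la s\prod_j(1+(c_j-1)s)$.

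\emph{Cleaner route to this identity.} Rather than expanding the product, I would argue as follows. Since $R_{a2}$ is $\sigma_P$-invariant, $\sigma_P(Q(\tau))$ lies on $\Gamma_\la\cap L_{P(\tau)}$. This point is $Q(\sigma)$ for some $\sigma$ with $P(-\sigma)=P(\tau)$ (or $P(\sigma)=P(\tau)$). By the bijectivity in Theorem \ref{tas2}, its $\zeta$-coordinate $\frac{\sigma}{\sigma-1}$ is determined by $\sigma$. So equality of $\zeta$-coordinates forces $\sigma=\tau+h$, and the identity comes out as a consequence.

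\emph{Conclusion.} The new tangency point is the one different from $P(\tau)=P(-\tau')$, namely $P(\tau')$. Hence $F(Q(\tau),P(\tau))=(Q(\tau'),P(\tau'))$, i.e. $F:\tau\mapsto\tau+\frac1{N+1}$ on an open set. By the Möbius argument above, this formula holds on all of $S_\la$. In particular $F$ preserves $S_\la$, which also reproves statement 1).
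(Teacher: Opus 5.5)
Your proof is correct, and its core computation is the same as the paper's: $\zeta(Q(\tau))=\frac{\tau}{\tau+1}$ and $\eta_\rho(\zeta)=\frac{\tau+h}{\tau+h-1}$. The difference is in how the sign of the answer is fixed.

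\textbf{How the paper fixes the sign.} The paper only computes the sign-blind quantity $\tau_*^2=\left(\frac{\zeta_*}{\zeta_*-1}\right)^2=(\tau+h)^2$, so it obtains $\tau_*=\pm(\tau+h)$. It then discards the minus sign: $\tau\mapsto-(\tau+h)$ would make $F|_{S_\la}$ an involution, which contradicts Lemma \ref{palt}, Statement 3). That lemma is an asymptotic argument for small $\la$.

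\textbf{How you fix the sign.} You track which of the two lifts of $\sigma_P(Q)$ carries the old tangency point. Bijectivity of the $s$-parametrization together with $\sigma_P$-invariance of $R_{a2}$ gives $P(-\tau-h)=P(\tau)$, so the new tangency point must be $P(\tau+h)$. This is purely algebraic and works for each fixed $\la$. It avoids the analytic Lemma \ref{palt}, at the price of slightly more bookkeeping.

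\textbf{Two small remarks.} First, your closing claim that the computation ``also reproves statement 1)'' is circular if you use your cleaner route, since that route already assumes $\sigma_P(Q)\in\Gamma_\la$.

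Second, the identity you flagged as the main obstacle is easy to check directly. Since $c_j-1=-\frac{(N+1)^2}{(N+1-j)^2}$, one finds that $z_0(\tau)$ is a nonzero constant times $\prod_{m=-N}^{N+1}(T+m)$, where $T=(N+1)\tau$. This product is invariant under $T\mapsto-1-T$, i.e. under $\tau\mapsto-\tau-\frac1{N+1}$. With this check, your argument gives an independent proof of Statement 1) as well.
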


   \subsection{Type of fibers and dynamics in Cases b1), b2)}
   
      The dual billiard of type b2) is obtained from that of type b1) by a projective transformation $\Psi$ so that the integral 
   $R_{b2}$ is the $\Psi$-pushforward of the integral $R_{b1}$, see Addendum to Theorem \ref{tcompl}. In the homogeneous coordinates $[z:w:t]$ 
   the matrix of the transformation $\Psi$ is 
$$M=\left(\begin{matrix} 0 & \frac i2 & -\frac i2\\
1 & \frac12 & \frac12\\ 1 & -\frac12 & -\frac12\end{matrix}\right),$$
 see \cite[formula (8.1)]{grat}.   Thus, in the affine chart $(z,w)$  \  $\Psi$ takes the form 
 \begin{equation}\Psi(z,w)=\left(\frac{i(w-1)}{2z-w-1}, \frac{2z+w+1}{2z-w-1}\right), \ \ R_{b1}(z,w)=R_{b2}\circ\Psi(z,w).\label{psipr}\end{equation}
    \begin{theorem} \label{tbg1} 1) For  $\la\in\cc^*\setminus\{0,1,\frac43,\infty\}$ the level curve $\gbll=\{ R_{b1}=\la\}$ is rational of degree four and parametrized by
    \begin{equation} z=\frac{t((1-\la)t+\la)}{\la(t-1)(4-t)}, \ \ \  \ w=\frac{t^2((1-\la)t+\la)(3\la-t)}{\la^2(t-1)(4-t)^2}.\label{wb1}\end{equation}
Its only singular points are the indeterminacies $(0,0)$, $(1,1)$, $E$ of the function $R_{b1}$, which are double points of transversal self-intersection. 

2) The critical values of  $R_{b1}$, see Definition \ref{defcr}, are $0$,  $1$, $\frac43$, $\infty$.

3) The indeterminacies $(0,0)$, $E$ of the function $R_{b1}$ are critical for the value $\la=0$, and $(1,1)$ is critical for the values $\la=0,1$. 

4) Its critical points  lying outside its double zero  $\gamma$ are Morse. They are:  

4.1) the points $(1,-3)$, $(-\frac13, -\frac13)$ corresponding to the value $R_{b1}=\infty$;

4.2) the point $(0,-1)$ corresponding to the value $R_{b1}=1$;

4.3) the point $(\frac13,1)$ corresponding to the value $R_{b1}=\frac43$.

5) The critical fiber $\Gamma_{b1,1}$ is the union 
$\Gamma_{b1,1}=\{ z=0\}\cup \Gamma_{b1,1,*}$, 
$$\Gamma_{b1,1,*}:=\{2z^3-3z^2w-3z^2+6wz-w^2-w=0\}$$
\begin{equation}=\{(\frac t{(t-1)(4-t)}, \frac{t^2(3-t)}{(t-1)(4-t)^2}) \ | \ t\in\oc\}.
\label{gb1*}\end{equation}
The intersection $\{ z=0\}\cap \Gamma_{b1,1,*}$ is transverse, it is  $\{(0,0),  (0,-1), E\}$.  
The only singular point of the curve $\Gamma_{b1,1,*}$ is the point  $(1,1)$, which is a cusp. 

6) The critical fiber $\Gamma_{b1,\frac43}$ is a union of conics:  $\Gamma_{b1,\frac43}=\Gamma_{b1,\frac43,1}\cup\gblctd$,
 \begin{equation}\gblctl:=\{ w=\frac{3z^2}{4z-1}\}, \ \ \gblctd:=\{ w=z(4-3z)\}.\label{gb43}\end{equation}
 These conics are intersected transversally at  $(0,0)$, $(1,1)$, $E$ and  $(\frac13,1)$.
 \end{theorem}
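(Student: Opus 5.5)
The level curve $\Gamma_{b1,\la}$ is the zero set of
$$G_\la(z,w):=(w-z^2)^2-\la(w+3z^2)(z-1)(z-w),$$
a quartic that passes through the base points $(0,0)$, $(1,1)$, $E$. I would proceed in the following order: local analysis at the base points first, then the parametrization, then the critical points, and finally the two reducible fibers.

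\textbf{Local structure at the base points.} At $(0,0)$ the lowest homogeneous part of $G_\la$ is quadratic, namely $w^2+\la w^2=(1+\la)w^2$ plus terms from $\la\cdot 3z^2\cdot z$. I would compute the Newton diagram and apply Proposition \ref{proii} to see that $(0,0)$ is a double point for $\la\neq0,\infty$. I would do the same at $(1,1)$ after shifting coordinates. At $E$ I would pass to the chart $w=1$ in homogeneous coordinates $[z:w:t]$, writing the curve as $(t-z^2)^2=\la(1+3z^2)(z-t)(zt-1)\cdot(\ldots)$ after homogenizing. For generic $\la$ each of these points should come out as a transversal double point, i.e.\ $\delta=1$. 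A plane quartic has arithmetic genus $3$, so three nodes force geometric genus $0$, which gives rationality.

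\textbf{Parametrization.} Rather than derive (\ref{wb1}), I would verify it. I would substitute it into $G_\la$ and check that $G_\la$ vanishes identically in $t$. The substitution is easier after noting that $w-z^2=z^2(\ldots)$ factors nicely. I would also check that the map $t\mapsto(z,w)$ has degree one onto its image. One way is to see that the intersection with a generic line has exactly four parameter values. Another, which also explains the formula, is to view $t$ as the pencil of conics through the three base points, tangent to appropriate directions there, with one moving intersection point. Counting preimages of the nodes (two parameter values each: at $(0,0)$, $t=0$ and $t=\la/(\la-1)$; at $E$, $t=4$ and $t=\infty$; at $(1,1)$, $t=1$ and one more) confirms that there are no further singularities.

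\textbf{Critical points and values.} I would solve $R_z=R_w=0$ away from $\gamma$ and the polar curve. Equivalently, I would look for $\la$ at which $G_\la$ and its partial derivatives have a common zero off the base points. The candidate points $(1,-3)$, $(-\frac13,-\frac13)$, $(0,-1)$, $(\frac13,1)$ are checked directly, and the Hessian nondegeneracy is checked directly as well. Points on the polar quartic (the denominator) correspond to the value $\infty$: they are exactly the pairwise intersections of its components $w=-3z^2$, $z=1$, $w=z$ outside the base points. The critical indeterminacy values come from Proposition \ref{proii}: the quasihomogeneous leading part $\Phi_{p,q,\la}$ acquires a multiple factor exactly at the listed $\la$ (for instance $1+\la$-type coefficients degenerate only at $0$, or at $1$ for $(1,1)$). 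To exclude any other true critical points I would use Proposition \ref{procritic}, or a count via the Euler characteristic of the fibration after blowing up.

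\textbf{Reducible fibers.} For items 5 and 6 I would factor $G_1$ and $G_{4/3}$ explicitly: $G_1=z\cdot(\text{cubic})$ and $G_{4/3}$ as a product of the two conics. I would then verify the intersection points and transversality directly, and identify the cusp of the cubic at $(1,1)$ from its quadratic part being a perfect square. The parametrization (\ref{gb1*}) is (\ref{wb1}) with $\la=1$.

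The main obstacle is proving completeness of the list of critical points and values: finding the candidates is a direct computation, but excluding all others requires either a resultant computation or a careful topological Euler-characteristic count.
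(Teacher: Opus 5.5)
Your plan is workable, and in places more elementary than the paper's. However, it misplaces the difficulty and contains several wrong computations.

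\textbf{Comparison of routes.} The paper derives (\ref{wb1}) from the conic pencil $\{w-z^2+tz(z-1)=0\}$. It gets the nodes from the two parameter values over each base point together with $\sum\delta=3$. It finds all true critical points by combining Proposition \ref{procritic} (they lie only on reducible or multiple fibres) with a case-by-case analysis of when (\ref{wb1}) degenerates, and it obtains the second conic at $\la=\frac43$ from a rescaling limit. Your tangent-cone-plus-genus argument and the explicit factorizations $G_1=-z(2z^3-3z^2w-3z^2+6zw-w^2-w)$ and $3G_{4/3}=(4zw-w-3z^2)(w-4z+3z^2)$ are valid alternatives.

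\textbf{The obstacle you name is not the real one.} Off the base points, $dG_\la=D\,dR_{b1}$ along $\Gamma_{b1,\la}$, where $D$ is the denominator of $R_{b1}$. So the true critical points of finite value $\la$ are exactly the non-base singular points of $\Gamma_{b1,\la}$. For $\la\in\cc^*\setminus\{1,\frac43\}$ your item 1) already excludes these, so no resultant or Euler-characteristic count is needed.

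\textbf{What is actually missing} is item 1) for \emph{every} $\la\in\cc^*\setminus\{1,\frac43\}$. Your genus count presupposes irreducibility, and you only argue for generic $\la$. This uniform statement is where the exceptional values come from, and it is the content of the paper's cancellation analysis. A clean way to do it: consider the homogeneous triple $[\la t((1-\la)t+\la)(4-t):t^2((1-\la)t+\la)(3\la-t):\la^2(t-1)(4-t)^2]$.
\begin{itemize}
\item It has a common factor only for $\la=\frac43$ and drops degree only for $\la=1$. So $\la=\frac13$, where $t-1$ cancels in the affine $w$, is harmless.
\item $t=1$ is the unique preimage of the infinite point $[3\la:3\la-1:0]\ne E$, and the last coordinate has a simple zero there.
\item Hence the map is birational onto a quartic contained in $\Gamma_{b1,\la}$, hence equal to it.
\end{itemize}
Merely counting four parameter values on a generic line would not exclude a double cover of a conic.

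\textbf{Computations to correct.}
\begin{itemize}
\item The quadratic part of $G_\la$ at $(0,0)$ is $w((1-\la)w+\la z)$, not $(1+\la)w^2$. As you wrote it, it would be a double line and would contradict the node claim.
\item At $E$, in the chart $w=1$ with affine coordinates $(z,t)$ (here $t$ is the homogeneous coordinate, not the parameter), the equation is $(t-z^2)^2-\la(t+3z^2)(z-t)(z-1)=0$, with quadratic part $t((1-\la)t+\la z)$.
\item At $(1,1)$, in $x=z-1$, $y=w-1$, the quadratic part is $4(1-\la)x^2-4(1-\la)xy+y^2$, degenerate exactly for $\la=0,1$.
\item The parameter values over $(1,1)$ are the roots of $t^2-4\la t+4\la$; $t=1$ is sent to infinity.
\item Proposition \ref{proii} yields only non-criticality of an indeterminacy. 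Moreover, at $(0,0)$ and $E$ the Newton diagram of $(f,g)$ has the two edges $[(0,2),(1,1)]$ and $[(1,1),(3,0)]$, so there you should argue with the tangent cone directly.
\item That $(1,1)$ is critical for $\la=1$ needs a separate observation: the number of local branches drops from two to one.
\item A perfect-square quadratic part alone does not give an ordinary cusp. Combine it with the genus formula for the irreducible rational cubic $\Gamma_{b1,1,*}$, which forces $\delta=1$, as the paper does.
\end{itemize}
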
  
 
  \begin{theorem} \label{tbs1} 1) For every  $\la\neq0,1,\frac43,\infty$ the  projection $p_M:S_{b1,\la}\to\Gamma_{b1,\la}$ has four ramification points at the parameter values 
  \begin{equation}t=\frac{\la}{\la-1}, \ \infty  \ \text{ and roots of the polynomial } \ t^2-4\la t+4\la.\label{brb}\end{equation}
  The curve $S_{b1,\la}$ is elliptic and conformally equivalent to the cubic
  \begin{equation}y^2=((1-\la)t+\la)(t^2-4\la t+4\la)\label{eltb}\end{equation}
  which has holomorphic differential
  \begin{equation}\omega_b:=\frac{dt}{\sqrt{((1-\la)t+\la)(t^2-4\la t+4\la)}}.\label{diffb}\end{equation}
  
  2) The curve $S_{b1,1}$ consists of three rational  components $S_{b1,10}$, $S_{b1,2\pm}$:  
   the component $S_{b1,10}$ is  a double covering over the axis $\{ z=0\}$ branched over $(0,0)$ and $E$; 
   each  component $S_{b1,2\pm}$  is bijectively projected onto $\Gamma_{b1,1,*}$.

  3) The curve $S_{b1,\frac43}$ 
  corresponding to $\la=\frac43$ is singular and consists of two rational irreducible components, each being a double covering over a conical component of the curve $\Gamma_{b1,\frac43}$. 
    
  4) For $\la\neq0,1,\frac43,\infty$ the map $F:S_{b1,\la}\to S_{b1,\la}$ is a translation of the complex torus parametrizing $S_{b1,\la}$: the quotient of the line $\cc$ by the period lattice of the differential (\ref{eltb}). 
    \end{theorem}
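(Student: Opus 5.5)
Everything will be computed from the parametrization (\ref{wb1}) of $\Gamma_{b1,\la}$ given by Theorem \ref{tbg1}. The key quantity is
$$z^2-w=\frac{t^2((1-\la)t+\la)}{\la^2(t-1)(4-t)^2}\left(\frac{(1-\la)t+\la}{t-1}-(3\la-t)\right).$$
Simplifying the bracket gives
$$\frac{(1-\la)t+\la-(3\la-t)(t-1)}{t-1}=\frac{t^2-4\la t+4\la}{t-1}.$$
Hence
$$z^2-w=\frac{t^2((1-\la)t+\la)(t^2-4\la t+4\la)}{\la^2(t-1)^2(4-t)^2}.$$

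The double covering $p_M:S_{b1,\la}\to\Gamma_{b1,\la}$ is obtained by choosing a tangent line through $Q$, i.e.\ a point $P=(z_0,z_0^2)$ with $z_0$ a root of $z_0^2-2zz_0+w=0$. Thus $z_0=z\pm\sqrt{z^2-w}$, and $S_{b1,\la}$ is birational to the curve $y^2=((1-\la)t+\la)(t^2-4\la t+4\la)$, since the squared factors can be absorbed into $y$. For generic $\la$ the three roots are distinct, together with $t=\infty$, which comes from the odd degree. This yields four branch points, at the values listed in (\ref{brb}).

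The branch points should be matched with the normalization points of $\Gamma_{b1,\la}$. The covering of the normalization is given by $\sqrt{z^2-w}$ lifted to $\oc_t$. The singular points $(0,0)$, $(1,1)$ and $E$ have two preimages each ($t=0$ and $t=\la/(\la-1)$ map to $(0,0)$; $t=1$ and $t=3\la$ relate to $(1,1)$; $t=4$ and $t=\infty$ to $E$). The branching is therefore read directly off the parity of the order of $z^2-w$ in the local parameter $t$, which is legitimate because $t$ is a bijective parametrization by Theorem \ref{tbg1}. One should check that the discriminant $16\la^2-16\la$ and the compatibility of $t^2-4\la t+4\la$ with $(1-\la)t+\la$ vanish only at $\la=0,1$. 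The resultant gives $\la^2(1-\la)^2\cdot 4\la\cdot\ldots$, up to the value $\frac43$, where $t=4$ or a double root appears. This shows that exactly four distinct branch points occur for $\la\ne0,1,\frac43,\infty$. Riemann--Hurwitz then gives genus one, consistent with Proposition \ref{ctype}, and $\omega_b$ is the standard holomorphic differential on the cubic (\ref{eltb}).

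For statement 2), use the decomposition of Theorem \ref{tbg1}, 5). On $z=0$ we have $z^2-w=-w$, which gives a double cover branched at $w=0$ and $w=\infty$, i.e.\ at $(0,0)$ and $E$; this component is rational. On $\Gamma_{b1,1,*}$ we substitute $\la=1$. The factor $(1-\la)t+\la$ becomes $1$ and $t^2-4t+4=(t-2)^2$, so $z^2-w$ is a perfect square in $t$. The cover therefore splits into two sections $S_{b1,2\pm}$, each bijective onto $\Gamma_{b1,1,*}$.

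For statement 3), on each conic of (\ref{gb43}) the difference $z^2-w$ is computed explicitly. For instance, on $w=z(4-3z)$ one gets $4z^2-4z=4z(z-1)$. This has simple zeros at $z=0$ and $z=1$, so the cover is irreducible, branched at two points, and rational. The other conic gives $z^2(z-1)/(4z-1)$ times a constant, which again has two branch points; note that it has simple zero at $z=1$ and simple pole at $z=\frac14$. The two components meet over the intersection points, so the curve is singular there.

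Statement 4) follows from Proposition \ref{elltr}. Invariance of $S_{b1,\la}$ comes from Proposition \ref{ctype}, and irreducibility holds because there are branch points. A birational automorphism of an elliptic curve preserving the invariant differential of Corollary \ref{cinvfib} must be a translation. The main obstacle is the careful bookkeeping of branching at the preimages of the singular points, which is needed to confirm exactly four branch points and to exclude the exceptional values.
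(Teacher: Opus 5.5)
Your proof is correct, but for Statements 1)--3) it takes a different route from the paper. Statement 4) is handled the same way, via Proposition \ref{elltr}.

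\textbf{The paper's route.} The paper never computes $z^2-w$. From Theorem \ref{tbg1} it takes that $(0,0)$, $(1,1)$, $E$ are transversal double points of $\Gamma_{b1,\la}$. It then uses B\'ezout to see that the branches at $t=0,4$ are tangent to $\gamma$, while the four branches at the values (\ref{brb}) are transversal. Finally it applies the geometric branching criterion of Proposition \ref{ctype}, Statements 4) and 5): transversal branches branch, quadratic tangencies do not, and three or more branch points force ellipticity. Statement 2) is handled in the same spirit: $(1,1)$ is a cusp of $\Gamma_{b1,1,*}$, and a branched double cover of a sphere needs at least two branch points. Statement 3) is read off from the tangency and transversality of the two conics with $\gamma$.

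\textbf{Your route.} You replace all of this with one identity,
$$z^2-w=\frac{t^2((1-\la)t+\la)(t^2-4\la t+4\la)}{\la^2(t-1)^2(4-t)^2},$$
together with its analogues $-w$, $4z(z-1)$ and $\frac{4z^2(z-1)}{4z-1}$ on the line and the two conics. You then note that the function field of $S_\la$ is $\cc(t)(\sqrt{z^2-w})$, so branch points are exactly the points of odd order.

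\textbf{What each approach buys.} Yours is more self-contained in several ways:
\begin{itemize}
\item it avoids the asymptotic input behind Proposition \ref{ctype}, Statement 4);
\item it produces (\ref{eltb}) as an explicit birational model, rather than via uniqueness of a double cover with prescribed branch locus;
\item at $\la=1$ the splitting over $\Gamma_{b1,1,*}$ is immediate, since $z^2-w=\frac{t^2(t-2)^2}{(t-1)^2(4-t)^2}$ is a square, so no cusp analysis is needed;
\item the exceptional values of $\la$ fall out of the factorization.
\end{itemize}
The paper's approach is geometric and does not depend on explicit formulas. That is what lets it carry over to Case c), where no rational parametrization exists.

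\textbf{Two slips.} Neither affects your parity argument, but both should be fixed.
\begin{itemize}
\item The point $(1,1)$ does not correspond to $t=1$ and $t=3\la$. Since $z-1=\frac{t^2-4\la t+4\la}{\la(t-1)(4-t)}$, the preimages of $(1,1)$ are the two roots of $t^2-4\la t+4\la$. By contrast, $t=1$ is a pole of $z$, and $t=3\la$ maps to $(\frac{3\la}{3\la-1},0)\notin\gamma$. So the four branch points lie over $(0,0)$ (at $t=\frac{\la}{\la-1}$), over $(1,1)$ (the two roots), and over $E$ (at $t=\infty$).
\item The resultant of $(1-\la)t+\la$ and $t^2-4\la t+4\la$ is $\la(4-3\la)$. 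So these two factors share a root at $\la=0,\frac43$, not at $\la=0,1$. The correct bookkeeping for the exceptional set $\{0,1,\frac43\}$ is:
\begin{itemize}
\item the discriminant $16\la(\la-1)$ vanishes at $\la=0,1$;
\item the resultant vanishes at $\la=0,\frac43$;
\item the leading coefficient $1-\la$ vanishes at $\la=1$, and then $t=\infty$ is no longer a branch point;
\item an odd-order point meets an even-order point $t=0,1,4$ only for $\la=0$ (at $t=0$) or $\la=\frac43$ (at $t=4$).
\end{itemize}
\end{itemize}
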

   \begin{theorem} \label{tbg2} The statements of Theorems \ref{tbg1}, \ref{tbs1} hold for the dual billiard of type b2) with the following 
 modifications: 

- the parametrizations  (\ref{wb1}), (\ref{gb1*})  of level curves are replaced by their postcompositions with the transformation $\Psi$ given by (\ref{psipr}); 

- the above  points and curves in the ambient plane $\cp^2$ of the dual billiard are replaced by their $\Psi$-images.

For every $\la\neq0,1,\frac43,\infty$ the elliptic curve $S_{b2,\la}$ has the same conformal type and holomorphic differential, as above.
\end{theorem}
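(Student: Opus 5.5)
The plan is to transport everything from type b1) to type b2) by the projective transformation $\Psi$ of (\ref{psipr}), using that $\Psi$ conjugates the two dual billiards (Addendum to Theorem \ref{tcompl}).

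\emph{Step 1: the induced map on phase spaces.} $\Psi$ is a biholomorphism of $\cp^2$ sending the conic $\gamma$ to itself. This holds because it sends the b1) dual billiard on $\gamma$ to the b2) dual billiard on the same parabola. It maps tangent lines to tangent lines, $L_P\mapsto L_{\Psi(P)}$, and conjugates the involutions, $\sigma^{b2}_{\Psi(P)}\circ\Psi=\Psi\circ\sigma^{b1}_P$. Hence the map
$$\wt\Psi:M\to M,\qquad \wt\Psi(Q,P):=(\Psi(Q),\Psi(P)),$$
is a biholomorphism of the phase space. It satisfies $p_M\circ\wt\Psi=\Psi\circ p_M$ and $F_{b2}\circ\wt\Psi=\wt\Psi\circ F_{b1}$.

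\emph{Step 2: level curves and fibers.} Since $R_{b1}=R_{b2}\circ\Psi$, we have $\Psi(\Gamma_{b1,\la})=\Gamma_{b2,\la}$ for every $\la$. Consequently $\wt\Psi(S_{b1,\la})=S_{b2,\la}$, and $\wt\Psi$ commutes with the double coverings $p_M$.

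\emph{Step 3: transferring each statement of Theorems \ref{tbg1} and \ref{tbs1}.} All of them are invariant under biholomorphisms of $\cp^2$ and of $M$ that respect these structures:
\begin{itemize}
\item rationality, degree and parametrizations of level curves, where the parametrizations get postcomposed with $\Psi$;
\item singular points and their types (transversal double points, cusp), together with transversality of intersections;
\item critical values in the sense of Definition \ref{defcr}, critical indeterminacies, and Morse critical points, which correspond under $\Psi$ because $R_{b1}=R_{b2}\circ\Psi$ and $\Psi$ is a local biholomorphism;
\item decompositions of critical fibers into components;
\item ramification points of $p_M:S_\la\to\Gamma_\la$, located at the same parameter values $t$;
\item the conformal type of $S_\la$, so $S_{b2,\la}$ is conformally equivalent to the cubic (\ref{eltb}) and has the differential (\ref{diffb}) after pulling back by $\wt\Psi^{-1}$;
\item the fact that $F$ acts on $S_\la$ as a translation, by the conjugacy of Step 1.
\end{itemize}
The critical values $0,1,\frac43,\infty$ are unchanged.

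\emph{Main obstacle.} The one nontrivial point is justifying that $\Psi$ preserves $\gamma$ and conjugates the involutions, not merely the integrals. I would check directly from (\ref{psipr}) that $w=z^2$ is mapped to itself; for instance, $w-z^2$ transforms into a multiple of itself over $(2z-w-1)^2$. The conjugacy of the involutions I would take from \cite{grat} (Addendum to Theorem \ref{tcompl}). Alternatively, it follows because each involution $\sigma_P$ is uniquely determined by the integral: it must permute the intersection points of $L_P$ with the level curves.
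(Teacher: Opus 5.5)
Your proposal is correct and follows the paper's route. The paper obtains Theorem~\ref{tbg2} directly from Theorems~\ref{tbg1} and \ref{tbs1} by transporting everything through the projective equivalence $\Psi$ with $R_{b1}=R_{b2}\circ\Psi$; your Step~1 (the conjugacy $\wt\Psi$ of phase spaces, using $\Psi(\gamma)=\gamma$, which indeed holds since $W-Z^2=-4(w-z^2)/(2z-w-1)^2$) only spells out what the paper leaves implicit.
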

Theorem \ref{tbg2} follows from Theorems \ref{tbg1}, \ref{tbs1} and the discussion before Theorem \ref{tbg1}.

  \subsection{Type of fibers and dynamics  in Cases c1), c2)}
  Recall that the dual billiard of type 2c1) is obtained from that of type 2c2) by complex projective transformation 
  $M_c: \cp^2_{[z:w:t]}\to\cp^2_{[z:w:t]}$ with the matrix 
  $${\LARGE M_c=\left(\begin{matrix}-\frac12 & \frac12 & \frac12\\ 1 & \frac{\bar\var}2 & \frac\var2\\ 1 & \frac\var2 & 
 \frac{\bar\var}2\end{matrix}\right),} \ \ \ \var=e^{-\frac{2\pi i}3},$$
  which sends $R_{c1}$ to $-3R_{c2}$,   see  \cite[formula (8.3)]{grat}:  
 \begin{equation}-3R_{c2}=R_{c1}\circ M_c,\label{rc1c2m}\end{equation}
  \begin{theorem} \label{tc}  1) The critical values of the function $R_{c1}$ in the sense of Definition \ref{defcr} are 
 $0,\infty, \frac{27}{64}$.
 
 2) For every regular value $\la$ the curve $\Gamma_{c1,\la}=\{ R_{c1}(z,w)=\la\}$ 
 is elliptic. 
 
 3) The curve $\Gamma_{c1,\infty}=\{ R_{c1}=\infty\}=\{1+w^3-2zw=0\}$ is a rational cubic, regular everywhere except for 
 a unique double point, namely,  its only infinite point $V:=[1:0:0]\in\cp^2_{[z:w:t]}$. 
 
 4) The level curve $\Gamma_{c1,\frac{27}{64}}$  is the union of the three following conics, obtained one from the other 
 by the order 3 cyclic group action generated by the transformation $(z,w)\mapsto (\var z,\var^2 w)$, $\var=e^{-\frac{2\pi i}3}$:  
 $$\mcc_1:=\{ 4z^2+3(w^2+1)+6z(w+1)+5w=0\},$$
 $$\mcc_\var:=\{ 4\bar\var z^2+3(\var w^2+1)+6z(w+\var)+5\bar\var w=0\},$$
 \begin{equation}\mcc_{\bar\var}:=\{ 4\var z^2+3(\bar\var w^2+1)+6z(w+\bar\var)+5\var w=0\}.\label{3con}\end{equation}
 The conics $\mcc_1$, $\mcc_{\bar\var}$ are tangent to each other at the point $(\bar\var,\var)$ with cubic contact and intersect transversally at the point $(\frac{\bar\var}2,\var)$. The other tangencies and transversal intersections of conics are obtained from the latter ones by the above group action. The transversal intersections are the only critical points of the function $R_{c1}$  in the  curve $\Gamma_{c1,\frac{27}{64}}$ in the sense of Definition \ref{defcr}.  
 
 5) The indeterminacies $(1,1)$, $(\var,\bar\var)$, $(\bar\var,\var)$ of the integral are critical only for the critical values $\la=0,\infty$. For the other values $\la$ the germ at each of them of the corresponding level curve is a union of two regular germs of parametrized curves tangent with cubic contact.
 \end{theorem}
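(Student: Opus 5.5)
We work with $R_{c1}=\frac{(w-z^2)^3}{G^2}$, $G:=1+w^3-2zw$, and use the $\zz_3$-symmetry $(z,w)\mapsto(\var z,\var^2w)$, which preserves $R_{c1}$. Level curves are sextics $\Gamma_\la=\{(w-z^2)^3-\la G^2=0\}$. Statement 3) comes first and is a direct computation. In homogeneous coordinates $G=t^3+w^3-2zwt$. At infinity, $t=0$ forces $w=0$, so $V=[1:0:0]$ is the only infinite point. In the chart $z=1$ the equation reads $t^3+w^3-2wt=0$, whose quadratic part $-2wt$ has two distinct lines, so $V$ is a node. Solving $\partial_zG=\partial_wG=0$ in the affine part gives $w=0$ and then $G=1\neq0$, so there are no other singular points. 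A cubic with one node is rational.

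For 5), work at the base point $(1,1)$; the other two follow by symmetry. Shift coordinates and note that $w-z^2$ and $G$ are both tangent to $\gamma$ at $(1,1)$. Choose local coordinates $(u,v)$ with $v$ measured along the normal direction, $v= w-z^2$ locally. Expanding $G$ in these coordinates, its restriction to $\gamma$ vanishes to order two. Indeed, $1+z^6-2z^3=(z^3-1)^2$, so $G|_\gamma=(z^3-1)^2$ and $G=\alpha v+\beta u^2+\dots$ with $\alpha,\beta\neq0$. Then $(w-z^2)^3-\la G^2$ has Newton diagram with the single edge $[(0,3),(4,0)]$ of weights $(1,2)$; the weighted-degree-6 part is $v^3-\la(\alpha v+\beta u^2)^2$ restricted to weighted degree 4, which must be recomputed with the correct weights. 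I expect the lower quasihomogeneous part to be $-\la(\alpha v+\beta u^2)^2$ plus $v^3$ terms, which appears to force weights giving $(v-c_1u^2)(v-c_2u^2)$ with $c_1\ne c_2$ for $\la\ne0,\infty$. Proposition \ref{proii} then yields two regular branches with quadratic (i.e.\ cubic-contact in the paper's terminology) tangency, and non-criticality for $\la\neq0,\infty$. At $\la=0$ or $\infty$ there is a multiple factor, so those values are critical.

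Next compute the true critical points: solve $dR=0$ off $\gamma\cup\{G=0\}$, i.e.\ $3G\,d(w-z^2)=2(w-z^2)dG$. The symmetry reduces the work, and resultants should give the three orbits of points listed in 4) with value $\frac{27}{64}$. For the factorization in 4), verify directly that the product of the three conics (\ref{3con}) equals $c\big((w-z^2)^3-\frac{27}{64}G^2\big)$. A sanity check: the product is $\zz_3$-invariant of degree 6. Then check the tangencies and transversal intersections by substitution; the transversal intersections are the critical points, and the cubic contacts lie at the base points.

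Statement 2) follows from the genus formula. A sextic has arithmetic genus 10. Each of the three base points contributes $\delta=2$ (two branches with intersection multiplicity 2, a tacnode), giving 6. The point $V$ must also be examined: $\Gamma_\la$ at infinity is $\{z^6\cdot(\text{stuff})\}$, the top form is $-z^6$ up to sign, and $(w-z^2)^3$ dominates. So $\Gamma_\la$ meets the line at infinity only at $E$, where the curve has a singularity; its $\delta$-invariant there must be computed from the Newton diagram in the chart $w=1$. I expect $\delta=3$, giving genus $10-6-3=1$. This $\delta$ computation at $E$ is the main obstacle; note that $E$ is not a base point here. Combined with Proposition \ref{ctype} 1), this gives ellipticity for generic $\la$.

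For 1) and regular values: the critical values are $0,\infty$ (base points and multiple curves) and $\frac{27}{64}$ (true critical points). Every other level is smooth away from the base points, with the same local structure there by 5), so all regular levels are homeomorphic and hence elliptic.
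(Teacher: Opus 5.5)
Your argument for 3) is fine. The gap is the local analysis at the base points, on which both 5) and 2) depend: it fails as set up. In your coordinates, with $v=w-z^2$ and $G=\alpha v+\beta u^2+\dots$, the expression $v^3-\la G^2$ contains the monomial $v^2$ with coefficient $-\la\alpha^2$. So its Newton diagram is the edge $[(4,0),(0,2)]$, not $[(4,0),(0,3)]$. Its lower quasihomogeneous part (weights $1$ for $u$, $2$ for $v$) is $-\la(\alpha v+\beta u^2)^2$, which is a perfect square for every $\la$.

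Hence $c_1=c_2$ always, and Proposition \ref{proii} gives nothing. The factorization $(v-c_1u^2)(v-c_2u^2)$ with $c_1\neq c_2$ does not exist. Had it existed, the two branches would meet with intersection index $2$, which contradicts the cubic contact you are supposed to prove.

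The reason is that both branches osculate $\{G=0\}$, so they share their $2$-jet, and one more coordinate change is needed. Take $G$ itself as a coordinate, as the paper does with its $u=1+w^3-2zw$. Since $\{G=0\}$ is quadratically tangent to $\gamma$ at $(1,1)$ (your own $G|_\gamma=(z^3-1)^2$), one has $w-z^2=a\bigl(G+b(z-1)^2\bigr)+\dots$ with $a,b\neq0$. The lower quasihomogeneous part of $(w-z^2)^3-\la G^2$ (weights $3$ for $G$, $1$ for $z-1$) is then $(ab)^3(z-1)^6-\la G^2$. For $\la\neq0,\infty$ it splits into the two distinct factors $\sqrt\la\,G\pm(ab)^{3/2}(z-1)^3$. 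This gives two smooth branches with cubic contact with each other and with $\Gamma_{c1,\infty}$. Hence $\delta=3$, not $2$, at each base point, and the base points are non-critical for $\la\neq0,\infty$.

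Your genus count in 2) is wrong in both ingredients, and the two errors happen to cancel. First, $\delta=3$ at the base points, as above. Second, the point $E$ does not lie on $\Gamma_{c1,\la}$ for $\la\neq0$. Indeed $G^2$ also has degree $6$, with leading form $w^6$, so the top-degree form of $(w-z^2)^3-\la G^2$ is $-z^6-\la w^6$. The curve therefore meets the line at infinity in six distinct points, none equal to $E$ or $V$ (indeed $R_{c1}(E)=0$). There is no $\delta=3$ singularity at infinity to compute. For generic $\la$ the curve is irreducible (Lemma \ref{irred}) and singular only at the base points (Bertini), so the correct count is $g=10-3\cdot 3=1$. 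As the paper remarks, $g=10-3\delta\in\{0,1\}$ from Proposition \ref{ctype} already forces $\delta=3$.

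By contrast, your direct route to the true critical points in 1) is viable, and shorter than the paper's structural Proposition \ref{pifany}. Off $\Gamma_{c1,0}\cup\Gamma_{c1,\infty}$ the equations $3zG=2w(w-z^2)$ and $3G=2(w-z^2)(3w^2-2z)$ reduce to $2z^2-3w^2z+w=0$ and $z(1-w^3)=0$. Their solutions off $\gamma$ form the single orbit of $(\frac12,1)$ (one orbit of three points, not three orbits), where $R_{c1}=\frac{27}{64}$. On the line at infinity, $R_{c1}=-z^6/w^6$ has nonzero derivative away from $V$ and $E$. As written, though, this is only asserted, and the indeterminacy part of 1) still needs the corrected local analysis above.
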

 
 \begin{theorem} \label{tc'} 1) The critical values of the function $R_{c2}$ in the sense of Definition \ref{defcr} are 
 $0,\infty, -\frac{9}{64}$.
 
 2) The Statements 2)--5) of Theorem \ref{tc} hold for the function $R_{c2}$, with order three symmetry group generated by the projective transformation with the matrix 
 \begin{equation}Sym_{c2}=\left(\begin{matrix}-1 & 1 & 0\\ -2 & 1 & 1\\ 0 & 1 & 0\end{matrix}\right),\label{symc2}
 \end{equation} 
 and the points and the conics in question being the 
 $M_c$-preimages of those corresponding to the function $R_{c1}$.  Namely, 
 
 - the double point of the cubic $\Gamma_{c2,\infty}$ is $(\frac12,1)$; 
 
 - the conics forming the critical level curve $\Gamma_{c2,-\frac9{64}}$ are 
 \begin{equation}\{ w+8z^2=0\}, \ \ \{ 9(w-z)^2+w-z^2=0\}, \ \ \{ 9(z-1)^2+w-z^2=0\};\label{conic2}\end{equation} 
  
  - their cubic tangency points are  $(0,0)$, $(1,1)$ and $E=[0:1:0]$;  
 
 - their transversal intersection points are $(\frac54,1)$, $(-\frac14,-\frac12)$, $(\frac12,-2)$;
 
 - the indeterminacies of the function $R_{c2}$ are $(0,0)$, $(1,1)$, $E$; they are critical points only for the values $\la=0,\infty$. 
 \end{theorem}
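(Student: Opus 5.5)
Since $-3R_{c2}=R_{c1}\circ M_c$ by (\ref{rc1c2m}), the plan is to transport everything from Theorem \ref{tc} along the projective transformation $M_c$ rather than redo the analysis for $R_{c2}$.

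The level curve $\{R_{c2}=\mu\}$ is the $M_c$-preimage of $\{R_{c1}=-3\mu\}$. A projective transformation preserves the following:
\begin{itemize}
\item conformal type of curves and genus of normalizations;
\item irreducible components, double points and cusps;
\item transversality and orders of contact between components;
\item true critical points and criticality of indeterminacy points for corresponding values.
\end{itemize}
The last item holds because the local topological triviality in the sense of \cite{guss} is invariant under biholomorphic changes of chart and under the affine change of value $\la\mapsto-3\la$.

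Hence the critical values of $R_{c2}$ are $-\frac13$ times those of $R_{c1}$. These are $0$, $\infty$ and $-\frac13\cdot\frac{27}{64}=-\frac9{64}$, which gives Statement 1). Statements 2)--5) of Theorem \ref{tc} then transfer verbatim, with points and conics replaced by their $M_c$-preimages. The order three symmetry group transfers to the group generated by $M_c^{-1}\circ\mathrm{diag}(\var,\var^2,1)\circ M_c$, up to scalar.

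The remaining content is identifying the explicit objects listed in the statement, and this is direct verification.

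\begin{itemize}
\item \emph{Symmetry matrix.} Check that $Sym_{c2}$ has order three projectively and preserves $R_{c2}$. Equivalently, check that it is proportional to the conjugate above. The easiest route is to verify that it permutes the indeterminacies $(0,0)\mapsto(1,1)\mapsto E\mapsto(0,0)$ (or in some cyclic order), preserves the parabola, and preserves the denominator cubic up to a cube-root-of-unity factor.
\item \emph{Double point of the cubic.} Show that $(\frac12,1)$ is the double point of $\Gamma_{c2,\infty}$. Either compute $M_c$ applied to $[\frac12:1:1]$ and check that it is proportional to $V=[1:0:0]$, or check directly that the gradient of $8z^3-8z^2w-8z^2-w^2-w+10zw$ vanishes there together with the polynomial.
\item \emph{The three conics.} Verify that the product of the three conic polynomials in (\ref{conic2}) equals, up to a constant, $(w-z^2)^3+\frac9{64}(8z^3-\dots)^2$. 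A quicker alternative is to check that each conic is the $M_c$-preimage of one of $\mcc_1,\mcc_\var,\mcc_{\bar\var}$, or that $Sym_{c2}$ permutes them and one of them, e.g. $\{w+8z^2=0\}$, lies in the level set. For that one, substitute $w=-8z^2$: then $(w-z^2)^3=-729z^6$ and the cubic becomes $8z^3+64z^4-8z^2-64z^4+8z^2-80z^3=-72z^3$. So $R_{c2}=-729z^6/5184z^6=-\frac9{64}$, as required.
\item \emph{Tangency and intersection points.} The tangency points are the preimages of the cubic tangency points $(1,1),(\var,\bar\var),(\bar\var,\var)$ of Theorem \ref{tc}, which are the base points of $R_{c1}$. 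Since $M_c$ maps base points to base points, these are $(0,0),(1,1),E$. The transversal points follow by solving pairwise intersections of the conics in (\ref{conic2}), excluding these base points. For example, $\{w=-8z^2\}\cap\{9(z-1)^2+w-z^2=0\}$ gives $9(z-1)^2=9z^2$, so $z=\frac12$, $w=-2$. The other two points, $(\frac54,1)$ and $(-\frac14,-\frac12)$, come from the remaining pairs of conics, or by applying $Sym_{c2}$.
\end{itemize}

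The main obstacle is bookkeeping: confirming that the conjugated symmetry matches the given $Sym_{c2}$ and that the listed points are genuinely the $M_c$-preimages. This is routine matrix arithmetic, which I would cross-check through the independent direct verifications above.
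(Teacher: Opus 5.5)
Your proposal is correct and follows the paper's route: the paper also deduces Theorem \ref{tc'} from Theorem \ref{tc} via $-3R_{c2}=R_{c1}\circ M_c$ together with direct computation of the $M_c$-preimages. It identifies the conic $\{w+8z^2=0\}$ exactly as you do, substituting $w=-8z^2$ to obtain the value $-\frac9{64}$, and produces the other two conics by applying $Sym_{c2}$. Your explicit checks of the double point $(\frac12,1)$, the base-point correspondence and the three transversal intersection points all agree with the paper's data.
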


 \begin{theorem} \label{tc2} 1) For every $\la\neq0,\infty, \frac{27}{64}$ 
 the curve $S_{c1,\la}$ consists of two elliptic  components, each bijectively projected onto $\Gamma_{c1,\la}$ by 
$p_M$. Each component is $F$-invariant, and $F$ acts there as a complex torus translation. 
 
 2) The curve $S_{c1,\infty}$ consists of two rational components, each bijectively covering the curve 
 $\Gamma_{c1,\infty}$.
 
 3) The curve $S_{c1,\frac{27}{64}}$ consists of six rational component, each being bijectively projected onto 
 a conic from (\ref{3con}): two components above each conic. 
 
 4) The above statements also hold for the curves $S_{c2,\la}$ corresponding to the integral $R_{c2}$, where in Statements 1), 3) the value $\frac{27}{64}$ and conics 
 (\ref{3con}) are replaced respectively by $-\frac9{64}$ and conics (\ref{conic2}). 
  \end{theorem}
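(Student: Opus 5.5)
Statement 1) is the core; I would get it from Theorem \ref{tc}, which says the regular level curves $\Gamma_{c1,\la}$ are elliptic, combined with Theorem \ref{twoell}. The latter already gives that $S_{c1,\la}$ is a union of two elliptic curves, each mapped bijectively onto $\Gamma_{c1,\la}$. To keep the argument self-contained I would also recheck this through Proposition \ref{ctype}. Part 3) of that proposition says that if the double covering $p_M:S_\la\to\Gamma_\la$ had a branching point, then $\Gamma_\la$ would be rational. Since $\Gamma_\la$ is elliptic, the covering is unramified. By Riemann--Hurwitz, an unramified double cover of an elliptic curve is either a connected elliptic curve or two disjoint copies of the base. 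To exclude the connected case I would use the local structure at the indeterminacies. By Theorem \ref{tc}, 5), at each base point $\Gamma_\la$ consists of two regular germs, each tangent to $\gamma$ with cubic contact. At a point $P_0\in\gamma$ where a germ of $\Gamma_\la$ is tangent to $\gamma$, the preimage in $M$ splits into two branches: $(Q,P)$ with $P$ near $P_0$, and $(Q,\wt P)$ with the other tangency point. These two lifts are distinguished globally by the sign of the square root in $\omega_{par}$, that is, by $(z(Q)-z(P))^{-3}$. Away from $\gamma$ there is no monodromy, because $\Gamma_{c1,\la}\cap\gamma$ is contained in the base points: $R_{c1}=0$ on $\gamma$ outside them. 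Hence the double cover is trivial and $S_{c1,\la}$ has two components. Their $F$-invariance and the fact that $F$ acts on each as a translation are exactly Proposition \ref{elltr}.

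For 2), $\Gamma_{c1,\infty}$ is the rational nodal cubic from Theorem \ref{tc}, 3). I would check that it meets $\gamma$ only at the base points. The same local analysis there shows that its normalization carries no branching points of $p_M$. Using Proposition \ref{ctype}, 4), I would verify that every intersection with $\gamma$ happens along tangent branches of even contact order. The double cover of the normalization $\oc$ is then unramified, so it is trivial, which gives two rational components mapped bijectively onto $\Gamma_{c1,\infty}$.

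For 3), I would use that each conic $\mcc_j$ of (\ref{3con}) meets $\gamma$ only at base points, where it is tangent with even-order contact. Then no point is transversal or singular in the sense of Proposition \ref{ctype}, 4). The restricted double cover over each $\mcc_j\simeq\oc$ is unramified, hence splits, which gives six rational components.

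For 4), I would transport everything by $M_c$ using (\ref{rc1c2m}). This works because projective equivalence of dual billiards induces an isomorphism of the phase spaces $M$ that commutes with $F$ and $p_M$.

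The main obstacle is the contact-order bookkeeping at the base points. One must show that the intersection of $\Gamma_\la$ (and of the special conics and cubic) with $\gamma$ produces no branching of $p_M$. This needs the exact order of tangency: a branch with contact of order $k$ lifts to $p_M^{-1}$ branched precisely when $k$ is odd. For this I would first compute the Newton diagram of $R_{c1}-\la$ at $(1,1)$ in coordinates adapted to $\gamma$, and then the lower quasihomogeneous parts.
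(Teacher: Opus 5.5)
Your arguments for 2), 3) and 4) are essentially the paper's. In those parts, quadratic tangency to $\gamma$ at the base points means no branching, an unramified double cover of a rational curve splits, and $M_c$ transports everything to $R_{c2}$. The gap is in 1), which is the heart of the theorem. You cannot appeal to Theorem \ref{twoell}: the paper proves it by invoking Theorem \ref{tc2}, so that route is circular, and everything rests on your fallback. In the fallback, Proposition \ref{ctype}, 3) correctly shows that $p_M:S_{c1,\la}\to\Gamma_{c1,\la}$ is unramified. However, your argument for excluding the connected unramified double cover uses only local information: the germs at the base points, and the fact that $\Gamma_{c1,\la}\cap\gamma$ consists of base points. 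Every unramified cover is locally trivial. Over a torus, the connected double covers correspond to nontrivial homomorphisms $H_1(\Gamma_{c1,\la},\zz)\to\zz/2$. So ``no monodromy away from $\gamma$'' does not imply ``hence trivial''. Your sentence that the two lifts are ``distinguished globally by the sign of the square root'' is exactly the claim that $\sqrt{z^2-w}$ has a single-valued branch on the normalization of $\Gamma_{c1,\la}$; it is what has to be proved, not a reason. Also, the branches at the base points are tangent to $\gamma$ only quadratically. The cubic contact in Theorem \ref{tc}, 5) and Proposition \ref{locbr} is between the two branches and with $\Gamma_{c1,\infty}$. B\'ezout ($3\cdot2\cdot2=12$) rules out cubic contact with $\gamma$, and by your own parity rule such contact would produce branching.

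The missing step is global: you must show trivial monodromy along two generating cycles of the torus. The paper does this by degenerating, for large $\la$, to the nodal cubic $\Gamma_{c1,\infty}$. It writes $\Gamma_{c1,\la}$ as a cyclic chain of four cylinders $C_{+,\la},Y_{+,\la},C_{-,\la},Y_{-,\la}$ (Claims 1 and 2). The short cycle lies in the ball $B$, where $p_M^{-1}$ has two single-valued branches $g_1,g_2$. Along the long cycle, each passage through $Y_{\pm,\la}$ swaps $g_1$ and $g_2$ (Remark \ref{rperm}), so the two swaps cancel. A much shorter fix in your spirit is also available. On $\Gamma_{c1,\la}$ one has $(z^2-w)^3=-\la(1+w^3-2zw)^2$, hence
$$\sqrt{z^2-w}=\pm\sqrt{-\la}\,\frac{1+w^3-2zw}{z^2-w}$$
is a rational function on $\Gamma_{c1,\la}$. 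Then $z(P)=z(Q)\pm\sqrt{z^2-w}$ gives two global sections, and $S_{c1,\la}$ splits into two components, each mapped bijectively onto $\Gamma_{c1,\la}$. The same identity restricted to each conic also handles 3). Either argument supplies what is missing. The local contact-order bookkeeping that you flag as the main obstacle does not.
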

  
 \subsection{Type of fibers and dynamics in Case d)}
 
 \begin{theorem} \label{tdg} 1) For every $\la\neq0,-\frac13, -\frac9{32},\infty$ the level curve $\Gamma_{d,\la}=\{ R_d=\la\}$ is 
 a rational sextic  parametrized as $\Gamma_{d,\la}(t)=(z(t),w(t))$, 
      \begin{equation}z(t)=-\frac{(\la t-1)(\la t^2+12\la t-9)}{\la t(3+t)((8\la+1)t-5)},\label{forztd}\end{equation}
      \begin{equation}w(t)=-\frac{(\la t^2+12\la t-9)^2(\la t^2+4\la t-1)}{\la^2t^2(3+t)^2((8\la+1)t-5)(t+4)}.\label{wformd}\end{equation}
      
      2) For $\la=-\frac13$ the curve $\Gamma_{d,\la}$ is a union of two rational cubics: 
     \begin{equation}\Gamma_{d,-\frac13}=\Gamma_{d,-\frac13,1}\cup\Gamma_{d,-\frac13,2}, \ \ \Gamma_{d,-\frac13,1}=\{ w=-\frac{z^2(5z-8)}{4z-1}\},\label{gd131}
      \end{equation}
      \begin{equation} \Gamma_{d,-\frac13,2}=\{ \mcp=0\}, \ \mcp(z,w):=w+8z^2-7z^3+8z^2w+w^2-11zw=0, \label{gd213}\end{equation}
      $$\Gamma_{d,-\frac13,2} \ \ 
 \text{ is parametrized as } \ \Gamma_{d,-\frac13,2}(\tau)=(z_2(\tau),w_2(\tau)),$$
      \begin{equation} z_2(\tau)=\frac{(\tau+9)(2\tau+27)}{\tau(5\tau+72)}, \ \ w_2(\tau)=\frac{(2\tau+27)^2(2\tau-9)}{\tau^2(5\tau+72)}.\label{gd132}\end{equation}
      
     3) For $\la=-\frac9{32}$ the curve $\Gamma_{d,\la}$ is a union of rational quartic and conic:
 $$ \Gamma_{d,-\frac9{32}}=\Gamma_{d,-\frac9{32},1}\cup\Gamma_{d,-\frac9{32},2},$$
   $$\Gamma_{d,-\frac9{32},1}=\Gamma_{d,-\frac9{32},1}(t)=(z_3(t),w_3(t)), \ \ z_3(t)=-\frac{(9t+32)(t+8)}{40t(3+t)},$$
      \begin{equation} w_3(t)=-\frac{(t+8)^2(3t+8)(3t+4)}{40t^2(t+3)^2}=-z_3^2(t)\frac{40(3t+8)(3t+4)}{(9t+32)^2},
      \label{gd91}\end{equation}
      \begin{equation}\Gamma_{d,-\frac9{32},2}=\{ w+8z^2-9zw=0\}.\label{starcon}\end{equation}
      4) One has $\Gamma_{d,\infty}=C\cup\{ z=1\}\cup\mathcal S$,  $S$ is a rational cubic: 
      \begin{equation} C=\{ w+8z^2=0\}, \ \mathcal S:=\{ w+8z^2+4w^2+5wz^2-14zw-4z^3=0\}.\label{cands}\end{equation}
      \end{theorem}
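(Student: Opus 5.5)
The statement is Theorem \ref{tdg}. The plan is to verify the explicit parametrizations by direct substitution and then use degree and genus counts to show they are birational onto the components. This is also how the parametrization was presumably found, following the auxiliary pencil method described in the introduction.

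For Statement 1), I would construct the pencil of auxiliary curves $C_t$ through the base points $\Sigma_d=\{(0,0),(1,1),E\}$. Its members should have high intersection index with every $\Gamma_{d,\la}$ at $\Sigma_d$, leaving exactly one free intersection point $D_\la(t)$. The numerator of $R_d$ is $(w-z^2)^3$, and the factors of the denominator are $C$, $\{z=1\}$ and $\mathcal S$. A natural first try is the pencil of conics $\{w-z^2 = t\,z(z-1)\}$, or a suitable multiple of it: these osculate $\gamma$ at $(0,0)$ and $(1,1)$ and pass through $E$. The expressions $(8\la+1)t-5$ and $(3+t)$ in (\ref{forztd}) suggest this normalization after rescaling $t$.

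Rather than rederive the pencil, for the proof I would simply substitute (\ref{forztd}), (\ref{wformd}) into $R_d(z,w)-\la$. The key intermediate factorizations are the following.
\begin{itemize}
\item[(a)] $w(t)-z(t)^2$ should be a constant times $\frac{(\la t^2+12\la t-9)^2}{\la^2 t^2(3+t)^2((8\la+1)t-5)^2(t+4)}$.
\item[(b)] Each of $w+8z^2$, $z-1$ and the cubic $\mathcal S$, evaluated along the parametrization, factors into powers of the same linear and quadratic factors.
\end{itemize}
Checking that the ratio equals $\la$ identically is then a finite polynomial identity. Next I would show that the map $t\mapsto(z(t),w(t))$ has degree one onto its image. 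For this I would find one point with a unique preimage, or note that $z(t)$ has degree $3$ in $t$ while the level curve meets a generic vertical line in $3$ points; the latter follows because $R_d$ has degree $6$ and is cubic in $w$ with leading coefficient behaviour in $w$ of total degree $6$. Since a rational image curve contained in the irreducible curve $\Gamma_{d,\la}$ (Lemma \ref{irred}) must be all of it, the sextic is rational. The excluded values are where the parametrization degenerates: $\la=-\frac13$ and $\la=-\frac9{32}$ are where numerator and denominator factors collide. For example, $8\la+1=-\frac53$ at $\la=-\frac13$ gives $(8\la+1)t-5=-\frac53(t+3)$, cancelling against $(3+t)$. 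At these values the degree drops and the curve splits.

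For Statements 2) and 3), I would verify the claimed factorizations directly. The difference $(w-z^2)^3-\la\cdot(\text{denominator})$ at $\la=-\frac13$ and at $\la=-\frac9{32}$ should equal, up to a constant, the product of the listed cubics, respectively the quartic (implicitized from (\ref{gd91})) and the conic $w+8z^2-9zw$. For the cubic (\ref{gd131}) and the conic (\ref{starcon}) this is a polynomial division. For $\Gamma_{d,-\frac13,2}$ and $\Gamma_{d,-\frac9{32},1}$, I would check that the given parametrizations satisfy the cubic $\mcp$, respectively the level equation. Rationality of a cubic follows from its having a double point, which I expect at a base point such as $(1,1)$ or $E$. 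Statement 4) is immediate: the denominator of $R_d$ is the product of the three listed factors. The cubic $\mathcal S$ is rational because it is singular; I would locate its double point by solving $\nabla\mathcal S=0$, expecting it at a base point, e.g.\ $E$ in homogeneous coordinates.

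The main obstacle is the algebraic bookkeeping in Statement 1). Verifying the identity $R_d(z(t),w(t))\equiv\la$ and the degree-one claim requires factoring sextic-degree expressions in $t$ with parameter $\la$. I would organize this by first computing $w-z^2$ and $w+8z^2$ along the curve, since these are the factors whose powers dominate, and only then the cubic $\mathcal S$.
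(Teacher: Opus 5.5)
\textbf{The gap is in Statement 1).} The statement is claimed for \emph{every} $\la\neq0,-\frac13,-\frac9{32},\infty$, but your proof does not reach all of these values.

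Your argument that the image of $t\mapsto(z(t),w(t))$ is all of $\Gamma_{d,\la}$ rests on Lemma~\ref{irred}. That lemma gives irreducibility only for all but finitely many $\la$. Your degree-one argument via the $z$-projection also presupposes that the image is the whole curve. Otherwise the image could be a proper component of $z$-degree $1$ covered three times. So as written you prove Statement 1) only for generic $\la$.

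What is missing is a degree count valid for each individual $\la$, in two parts:
\begin{itemize}
\item Show that the parametrization has degree $6$. This means analysing all possible cancellations between $t$, $t+3$, $t+4$, $(8\la+1)t-5$ and $\la t-1$, $\la t^2+12\la t-9$, $\la t^2+4\la t-1$.
\item Show that it is generically injective. For example, $t=-4$ is the only parameter sent to the local branch at $E$ transversal to the infinity line, and $w$ has a simple pole there.
\end{itemize}
Then the image is an irreducible sextic contained in the sextic $\Gamma_{d,\la}$, hence equal to it. This yields irreducibility for each such $\la$ without Lemma~\ref{irred}, and it is what the paper does.

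Your heuristic that the excluded values are exactly where factors collide is false. At $\la=\frac1{32}$ one has $(8\la+1)t-5=\frac54(t-4)$ and $\la t^2+4\la t-1=\frac1{32}(t+8)(t-4)$, so $w(t)$ loses a factor. One must then observe that the pole at $t=4$ survives in $z(t)$, so the degree is still $6$.

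\textbf{Secondary errors.} Your predicted intermediate factorizations are wrong, and you planned to organize the computation around them. Along the curve, $z-1=-\frac{K(t)}{\la t(3+t)((8\la+1)t-5)}$, where
$K(t)=(9\la^2+\la)t^3+(36\la^2-3\la)t^2-36\la t+9$ is the cubic whose roots are the branch points in Theorem~\ref{td}. The numerator of $w-z^2$ carries this same factor $K(t)$, so neither (a) nor (b) holds as stated.

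The motivating conic pencil $w-z^2=tz(z-1)$ does not osculate $\gamma$ at $(0,0)$ and $(1,1)$; it crosses $\gamma$ transversally there. It would absorb only $10$ of the $12$ intersections with the sextic at the base points, leaving two free points. The paper instead uses the cubic pencil $\mcs+t(w-z^2)(w-z)$, which absorbs at least $17$ of the $18$.

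The point $E$ is a regular point of $\mcs$; its singular point is the cusp at $(1,1)$. To call $\mcs$ a rational cubic you also need its irreducibility.

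Once these points are fixed, your brute-force strategy is a legitimate alternative to the paper's derivation and rescaling-limit arguments. That strategy is direct substitution for Statement 1) and explicit factorization of $(w-z^2)^3-\la\cdot(\text{denominator})$ for Statements 2) and 3).
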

      \begin{theorem} \label{tdcr} 1) The critical values of the function $R_d$ in the general sense of Definition \ref{defcr} 
      are $\la=0,-\frac13, -\frac14, -\frac9{32},\infty$. There are no true critical points with value $\la=-\frac14$. 
      
       \noindent 2) The only true critical point  with  value $\la=-\frac13$ is  $(\frac25, \frac85)\in\Gamma_{d,-\frac13,1}\cap\Gamma_{d,-\frac13,2}$. 
      
     \noindent 3) The only true critical point  with  $\la=-\frac9{32}$ is  
      $(\frac1{10}, -\frac45)\in\Gamma_{d,-\frac9{32},1}\cap\Gamma_{d,-\frac9{32},2}$. 
      
      \noindent 4) The only true critical points  with  $\la=\infty$ are:
      
    $$(1,-8)\in C\cap\{ z=1\}, \ \  \ \ (-\frac12,-2)\in C\cap\mathcal S;$$
    the line $\{ z=1\}$ and  $\mcs$ intersect only at the indeterminacies $(1,1)$ and $E$. 
       
      \noindent      5) The true critical points are Morse, i.e., transversal intersections. 
    
     \noindent  6) The indeterminacies $(0,0)$, $(1,1)$, $E$ are critical points for the values $\la=0, -\frac14$, $\la=0, -\frac9{32}, \infty$ and 
      $\la=0$ respectively.  
      \end{theorem}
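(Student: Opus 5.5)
The statement to prove is Theorem \ref{tdcr}, on the critical values and critical points of $R_d=\frac{f}{g}$, with $f=(w-z^2)^3$ and $g=(w+8z^2)(z-1)\mcp_3$, where $\mcp_3:=w+8z^2+4w^2+5wz^2-14zw-4z^3$. The plan splits the work into true critical points and critical indeterminacies, and uses the decompositions of the special level curves from Theorem \ref{tdg}.

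\emph{True critical points.} These are the solutions of $f_zg-fg_z=0$ and $f_wg-fg_w=0$ off the base points. We do not eliminate blindly. For every $\la\notin\{0,-\frac13,-\frac9{32},\infty\}$, Theorem \ref{tdg} gives an explicit rational parametrization of $\Gamma_{d,\la}$. From it we check, via the degree and genus count, that the only singularities of $\Gamma_{d,\la}$ are at the base points; this is the step where the value $-\frac14$ must be examined separately. A rational sextic has arithmetic genus $10$, so all $10$ units of $\delta$-invariant must sit at $(0,0)$, $(1,1)$, $E$. We compute these contributions from the Newton diagrams of $f-\la g$ at the three points. Then Proposition \ref{procritic}, parts 2)--3), shows that a true critical point must lie on a reducible level curve or on a multiple component. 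The only reducible level curves besides $\Gamma_0=3\gamma$ are $\Gamma_{d,-\frac13}$, $\Gamma_{d,-\frac9{32}}$ and $\Gamma_{d,\infty}$.

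For each of these we intersect the components explicitly and discard intersections at base points:
\begin{itemize}
\item $\la=-\frac13$: the cubic $w(4z-1)=-z^2(5z-8)$ against $\mcp=0$; after substitution this gives a polynomial in $z$ whose non-base roots are expected to be only $z=\frac25$.
\item $\la=-\frac9{32}$: the conic $w+8z^2-9zw=0$, i.e.\ $w=\frac{8z^2}{9z-1}$, against the quartic via its parametrization $(z_3(t),w_3(t))$, giving the point $(\frac1{10},-\frac45)$.
\item $\la=\infty$: intersect $C$, $\{z=1\}$ and $\mcs$ pairwise. For instance $C\cap\{z=1\}=(1,-8)$, and substituting $w=-8z^2$ into $\mcs$ yields $(-\frac12,-2)$ besides the base points. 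Also $\{z=1\}\cap\mcs$ reduces to $4w^2-8w+4=0$, so $w=1$ double, plus $E$; these are base points only.
\end{itemize}

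\emph{Morse property.} At each intersection point above, transversality follows by comparing the gradients of the two components. By Definition \ref{defcr} this is exactly the Morse condition. It remains to check that no point is a singular point of a single component, which amounts to verifying that the cubics and the quartic are smooth away from the base points.

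\emph{Critical indeterminacies.} At $(0,0)$, $(1,1)$ and $E$ we use local charts; at $E$ we take $w=1/v$, $z=u/v$. There we compute the Newton diagram of $(f,g)$ and the lower quasihomogeneous parts $\Phi_{p,q,\la}$ of $f-\la g$. By Proposition \ref{proii}, the critical values of an indeterminacy are among those $\la$ for which $\Phi_{p,q,\la}$ acquires a multiple factor; we find these by a discriminant in $\la$. For example, at $(0,0)$ with $\gamma$ tangent to $w=0$, the weights $(1,2)$ give $f\sim(w-z^2)^3$ of degree $6$ and $g\sim -(w+8z^2)w$ of degree $4$. The lowest part therefore comes from $g$ alone, and further edges appear after subtracting multiples. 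If the Newton diagram has several edges, we apply Proposition \ref{proii} edge by edge, or blow up once and reapply. This should produce the lists $\{0,-\frac14\}$, $\{0,-\frac9{32},\infty\}$ and $\{0\}$, where $0$ always occurs because $f$ is a cube.

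\emph{Converse checks.} We confirm that these $\la$ really are atypical. At $(1,1)$ for $\la=-\frac9{32},\infty$ this is automatic, since the reducible fiber changes the local branch structure. At $(0,0)$ for $\la=-\frac14$ the multiple factor in $\Phi$ signals a change of topology, which we verify via the $\delta$-invariant or the count of local branches.

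\emph{Main obstacle.} The hardest part is the local analysis at the indeterminacies with multi-edge Newton diagrams, especially $(1,1)$ after translation and rotation so that $\gamma$ is tangent to an axis. I would first try a weighted change of coordinates adapted to the cubic contact of $\gamma$ with $\mcs$ and with the line, to reduce to a single-edge case.
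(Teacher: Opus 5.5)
Your treatment of the true critical points and of the Morse property follows the paper's own argument. Proposition \ref{procritic} applies because generic $\Gamma_{d,\la}$ are rational by Theorem \ref{tdg}. It confines isolated true critical points to intersections of components of $\Gamma_{d,-\frac13}$, $\Gamma_{d,-\frac9{32}}$ and $\Gamma_{d,\infty}$, and your intersection computations are right. The $\delta$-count ($3+4+3=10$ for generic $\la$) is consistent but unnecessary: Proposition \ref{procritic} already excludes true critical points on every irreducible reduced fibre, $\Gamma_{d,-\frac14}$ included.

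The gap is in the part that actually produces the value $-\frac14$ and Statement 6). The local analysis at the base points is only announced (``this should produce the lists''), and the one computation you give points the wrong way. At $(0,0)$ the $(1,2)$-lower part of $g$ is $-(w+8z^2)^2$, not $-(w+8z^2)w$, because the term $8z^2$ in the equation of $\mcs$ has the same weight as $w$.
\begin{itemize}
\item With your formula, the leading part of $f-\la g$ would be $\la w(w+8z^2)$ for every $\la\neq0$, and nothing would single out $-\frac14$.
\item With the correct one, it is $\la(w+8z^2)^2$, a square for every $\la$, so in this chart Proposition \ref{proii} gives no information at all.
\end{itemize}
The missing step is the coordinate change $u=w+8z^2$; it is the conic $C$, not $\gamma$, that has cubic contact with $\mcs$ at $(0,0)$. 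In $(z,u)$ the Newton diagram of $(f,g)$ is the single edge $[(6,0),(0,2)]$. With weights $1$ for $z$ and $3$ for $u$, the lower part is $\la u^2+108\la uz^3-729z^6$, whose discriminant is $2916\la(4\la+1)$. At $\la=-\frac14$ the germ indeed becomes irreducible, since $t=-6$ is a double root of $\la t^2+12\la t-9$; this is your converse check.

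At $E$ your criterion is not enough as stated. In the chart $(\tilde z,\tilde w)=(z/w,1/w)$ the diagram of $(f,g)$ has two edges, with vertices $(5,0),(1,2),(0,3)$. The coefficient of $\tilde w^3$ in $f-\la g$ is $1+4\la$, so at $\la=-\frac14$ the diagram itself jumps to $(5,0),(1,2),(0,4)$. Proposition \ref{proii} says nothing about such a jump. To conclude that $E$ is critical only for $\la=0$, you must check directly that for every $\la\neq0$, including $-\frac14$, the germ is still one smooth branch transversal to the line at infinity plus two smooth branches tangent to $C$ and to $\{4w+5z^2=0\}$, as in Proposition \ref{45inf}.

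The point $(1,1)$, which you expect to be the hardest, is actually the easy one. In $x=z-1$, $y=w-1$, both $f$ and $g$ have multiplicity $3$, with tangent cones $(y-2x)^3$ and $9x(x-2y)^2$. So the diagram is the single homogeneous edge $[(3,0),(0,3)]$ and Proposition \ref{proii}(3) applies. The binary cubic $(y-2x)^3-9\la x(x-2y)^2$ has a multiple line exactly for $\la=0,-\frac9{32},\infty$. For $\la=-\frac9{32}$ the double line is $y=-\frac52x$ and the simple line is $y=\frac78x$, the tangent to $\{w+8z^2-9zw=0\}$.

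Finally, criticality of $(1,1)$ for $-\frac9{32}$ and $\infty$ is not ``automatic from reducibility''. $\Gamma_{d,-\frac13}$ is reducible yet still has three pairwise transversal smooth branches at $(1,1)$. The criticality comes from the double tangent line, i.e.\ from the cusps of $\Gamma_{d,-\frac9{32},1}$ and $\mcs$.
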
 
      \begin{theorem} \label{locind} 1) For every $\la\neq0,-\frac13, -\frac9{32},\infty$ the germs of the curve $\Gamma_{d,\la}$ at the indeterminacies $E$, $(1,1)$ have  regular local branches, namely, 
      
           at $E$: one  branch transversal to the infinity line and two  branches tangent respectively to the conics $C$ and  
      $\{ w=-\frac54z^2\}$ with at least cubic contact;
      
       at $(1,1)$: three pairwise transversal local branches.
       
       2) For $\la\neq0, -\frac14$ its germ at $(0,0)$ consists of two regular local branches tangent to $C$ with at least cubic contact and having cubic contact 
 between themselves. For $\la=-\frac14$ its germ at $(0,0)$ is singular and irreducible: a degenerate cusp. 
       
       3) The quartic $\Gamma_{d,-\frac9{32},1}$ and the cubic $\mcs$ both have  cusps at $(1,1)$. The  tangent lines to $\Gamma_{d,-\frac9{32},1}$ and $\Gamma_{d,-\frac9{32},2}$ at $(1,1)$ are transversal. The tangent lines to 
    $\Gamma_{d,-\frac9{32},1}$,  and to $\mcs$ at $(1,1)$ are transversal to the line $\{ z=1\}$ and to the conic $\gamma$. 
      \end{theorem}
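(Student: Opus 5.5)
We must prove Theorem \ref{locind}, which describes the local branches of $\Gamma_{d,\la}$ at the indeterminacies $E$, $(1,1)$, $(0,0)$, and the behaviour of two special components at $(1,1)$. The plan is a direct local analysis at each base point using Newton diagrams and Proposition \ref{proii}, supplemented by the explicit parametrizations of Theorem \ref{tdg}. Write $R_d=f/g$ with $f=(w-z^2)^3$ and $g=(w+8z^2)(z-1)\,\mcp_3$, where $\mcp_3:=w+8z^2+4w^2+5wz^2-14zw-4z^3$. For each base point we pass to a local chart centered there, expand $f-\la g$ in local coordinates, and read off the lower quasihomogeneous part $\Phi_{p,q,\la}$.

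\textbf{Point $(1,1)$.} Set $z=1+x$ and $w=1+y$. The factor $(w-z^2)^3$ has order $3$, while $g$ is a product of the order-one factor $z-1=x$ with $w+8z^2$ and $\mcp_3$ evaluated at $(1,1)$. Since $w+8z^2=9\neq0$ there, we will check that $\mcp_3$ has order exactly $2$ at $(1,1)$. Then $f-\la g$ has order $3$, its lower homogeneous part is $(y-2x)^3-9\la x\,h_2(x,y)$, and $h_2$ is the quadratic part of $\mcp_3$, which we expect to be a square because $\mcs$ has a cusp there.

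By Proposition \ref{proii}, 3), it then suffices to show that the cubic $(y-2x)^3-9\la x h_2$ has three distinct zero lines for $\la\neq0,-\frac9{32},\infty$. This amounts to a discriminant computation in one variable whose roots should be exactly $\la=0,-\frac9{32}$. The same expansion also gives Statement 3): the cusp tangent of $\mcs$ is the zero line of $h_2$. For $\Gamma_{d,-\frac9{32},1}$ we would take the tangent direction from the parametrization (\ref{gd91}) at the parameter value where it passes through $(1,1)$. We then compare these directions with $x=0$, with $y=2x$ (the tangent of $\gamma$), and with the tangent of the conic (\ref{starcon}).

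\textbf{Point $E$.} Use the chart $u=z/w$, $v=1/w$, so that $\gamma=\{v=u^2\}$, $C=\{v=-8u^2\}$, and the infinity line is $\{v=0\}$. Multiply $f$ and $g$ by $v^6$ to homogenize. The Newton diagram of $(f,g)$ should be a single edge of weight $(1,2)$, i.e. $v\sim u^2$, together with a factor $v$ coming from $(z-1)$ and one factor of $\mcp_3$. This factor $v$ yields the branch transversal to the line at infinity. On the remaining quasihomogeneous part we expect $\Phi_{1,2,\la}=(v-u^2)^3-\la\,c\,v(v+8u^2)(v+\frac54u^2)\cdot(\dots)$, up to a factor $v$. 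Its prime factors $v-cu^2$ should have $c$ independent of $\la$ and equal to $-8$ and $-\frac54$, which explains the stated tangencies with $C$ and with $\{w=-\frac54z^2\}$.

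Regularity and at least cubic contact will be shown by computing the next quasihomogeneous layer via Statement (ii). Alternatively, we will use the parametrization (\ref{forztd}), (\ref{wformd}). The parameter values $t=0$, $t=-3$ and $t=\frac5{8\la+1}$, together with $t=-4$ from $w$, give the points over $E$. Expanding at each of these values directly yields regularity and the order of contact. This parametric route is the one I will actually use to confirm all the branch statements, since it covers every generic $\la$ at once.

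\textbf{Point $(0,0)$, the main obstacle.} The lower part in weight $(1,2)$ is presumably $\Phi=(w-z^2)^3-\la(-1)(w+8z^2)^2(\dots)$. The intended structure is two branches of the form $w=-8z^2+\dots$, so the leading terms degenerate and a second Newton-diagram step (a blow-up along $w=-8z^2$) is needed. Concretely, I will substitute $w=-8z^2+\xi$ and recompute the Newton diagram in $(z,\xi)$. The expected new edge has weight $(1,3)$ with part $\xi^2-a(\la)z^6$, where $a(\la)$ vanishes exactly at $\la=-\frac14$. For $a\neq0$ this gives two regular branches with cubic contact; for $a=0$ the next edge produces an irreducible degenerate cusp.

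Tracking the exact coefficients to isolate the value $-\frac14$ is the main obstacle. As a cross-check, I will use the parametrization: the two branches at the origin correspond to the two roots of $\la t^2+12\la t-9=0$. These roots coincide precisely when $144\la^2+36\la=0$, i.e. $\la=-\frac14$, which matches the claimed degeneration.
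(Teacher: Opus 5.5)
Your strategy is essentially the paper's: read off lower quasihomogeneous parts at each base point and cross-check with the parametrization of Theorem \ref{tdg}. At $(0,0)$ it coincides with Proposition \ref{2-cubic}. At $(1,1)$ it is more direct than Proposition \ref{lgen}. Indeed $h_2=(x-2y)^2$, so the lower cubic is $(y-2x)^3-9\la x(x-2y)^2$, and its discriminant vanishes only for $\la=0,-\frac9{32}$. This spares you the paper's detour of identifying the degenerate value through the cusp of the quartic.

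Your treatment of $E$ contains errors that break the branch count. The value $t=\frac5{8\la+1}$ does not lie over $E$. There
$z/w=\frac{\la t(t+3)(t+4)(\la t-1)}{(\la t^2+12\la t-9)(\la t^2+4\la t-1)}$
has a finite non-zero limit. Indeed the top-degree part of $\la\mcp_\la$ is $z^5\bigl(40\la w+(1-32\la)z\bigr)$. So the infinity line meets $\Gamma_{d,\la}$ with index $5=1+2+2$ at $E$ (from $t=-4,0,-3$) and with index $1$ at $[40\la:32\la-1:0]$. Listing four parameter values over $E$ would contradict the three branches you are asked to prove. With only $t=0,-3,-4$ the parametric route works:
\begin{itemize}
\item In your chart $(u,v)=(z/w,1/w)$ one has $v/u^2=w/z^2$. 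This tends to $-\frac54$ at $t=0$ and to $-8$ at $t=-3$.
\item At both points $u$ has a simple zero for $\la\neq0,-\frac13$, so it is a local parameter. This gives regularity and at least cubic contact with the respective conics.
\item At $t=-4$, $v$ has a simple zero, so that branch is regular and transversal to $\{v=0\}$.
\end{itemize}

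Your Newton sketch at $E$ is also wrong in three places:
\begin{itemize}
\item The factor produced by $z-1$ is $u-v$, not $v$; a branch along $\{v=0\}$ would be the infinity line itself.
\item The diagram of $v^6(f-\la g)$ has two edges, $[(0,3),(1,2)]$ and $[(1,2),(5,0)]$. The first has part $v^2\bigl((1+4\la)v-4\la u\bigr)$ and gives the transversal branch.
\item The second edge has part $-\la u(v+8u^2)(4v+5u^2)$. The term $(v-u^2)^3$ does not enter it.
\end{itemize}

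Statement 3 asserts cusps, but your plan only extracts tangent directions, and for $\mcs$ it presupposes the cusp.

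For $\mcs$, note that the cubic part $5x^2y-4x^3$ of its equation at $(1,1)$ equals $-12y^3\neq0$ on the double tangent $x=2y$, so $(1,1)$ is an ordinary cusp.

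For the quartic, the paper argues as follows. The equation $z(t)=1$ reduces to $(7t+16)^2=0$, so $t=-\frac{16}7$ is the only parameter over $(1,1)$, and that branch is singular. At $E$ the quartic has two tangent regular branches ($t=0,-3$), so $\delta(E)\ge2$. Hironaka's formula (\ref{hiron}) for this rational quartic gives $\sum\delta=3$, which forces $\delta=1$ at $(1,1)$, i.e.\ a cusp. A third-order expansion of (\ref{gd91}) at $t=-\frac{16}7$ would also do.

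Once the cusps are established, your discriminant computation finishes Statement 3. At $\la=-\frac9{32}$ the double zero line $y=-\frac52x$ is the cusp tangent. The simple line $7x=8y$ is the tangent of the conic $w+8z^2-9zw=0$. Both differ from $x=0$ and from $y=2x$.
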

 \begin{theorem} \label{td} 1) For every $\la\neq0, -\frac13, -\frac9{32}, \infty$ the projection $p_M:S_{d,\la}\to\Gamma_{d,\la}$ 
 has four ramification points corresponding to the parameter values 
 \begin{equation} t=-4, \ \text{  the roots of the polynomial } \ (9\la^2+\la)t^3+(36\la^2-3\la)t^2-36\la t+9.\label{brd}
 \end{equation}
The curve $S_{d,\la}$ is elliptic and conformally equivalent to the curve 
\begin{equation}y^2=(t+4)((9\la^2+\la)t^3+(36\la^2-3\la)t^2-36\la t+9).\label{ctyped}\end{equation}
with holomorphic differential
\begin{equation}\omega_d=\frac{dt}{\sqrt{(t+4)((9\la^2+\la)t^3+(36\la^2-3\la)t^2-36\la t+9)}}.\label{diffd}\end{equation}
 The map $F:S_{d,\la}\to S_{d,\la}$ is a translation of the complex torus parametrizing $S_{d,\la}$: the quotient of the line $\cc$ 
 by the period lattice of the latter differential. 
 
 2) For $\la=\infty$ the curve $S_{d,\infty}$ consists of 5 rational components: 
 
 - two components, each bijectively projected by $p_M$ onto the parabola $C$;
 
 - one rational component that is a double covering over the line $\{ z=1\}$;
 
 - two components, each bijectively projected onto the rational cubic $\mcs$.
 
 3) For $\la=-\frac13$ the curve $S_{d,-\frac13}$ consists of two rational components, which are double coverings 
 over the cubics $\Gamma_{d,-\frac13,1}$ and $\Gamma_{d,-\frac13,2}$. 
 
 4) For $\la=-\frac9{32}$ the curve $S_{d,-\frac9{32}}$ consists of three rational components: 
 
 - two components, each bijectively projected onto   the quartic $\Gamma_{d,-\frac9{32},1}$; 
 
 - one component being a double covering over the conic $\Gamma_{d,-\frac9{32},2}$.
 \end{theorem}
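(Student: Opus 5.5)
The plan is to treat each statement about $S_{d,\la}=p_M^{-1}(\Gamma_{d,\la})$ through the parametrizations of Theorem \ref{tdg} and the local branch description of Theorem \ref{locind}. Over a point $Q=(z,w)$ of $\Gamma_{d,\la}$, the fiber of $p_M$ consists of the two tangency points $P=(z_0,z_0^2)$ with $z_0$ a root of $z_0^2-2zz_0+w=0$. So $S_{d,\la}$ is birational to the curve $y^2=z^2-w$ over the normalization $\oc_t$. The double covering is therefore determined by the parity of the zero/pole orders of the rational function $z(t)^2-w(t)$ on $\oc_t$.

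For Statement 1), substitute (\ref{forztd}) and (\ref{wformd}). Both have the common denominator $\la^2t^2(3+t)^2((8\la+1)t-5)^2(t+4)$, so
$$z^2-w=\frac{G_\la(t)}{\la^2t^2(3+t)^2((8\la+1)t-5)^2(t+4)}$$
with an explicit numerator $G_\la$. I expect $G_\la$ to factor as
$$(\la t^2+12\la t-9)^2\,\bigl((9\la^2+\la)t^3+(36\la^2-3\la)t^2-36\la t+9\bigr)$$
times a constant. This is suggested by (\ref{brd}) and by $w=z^2\cdot(\dots)$-type identities, as in the $b1$ case. The factor $(\la t^2+12\la t-9)^2$ is an even-order factor: it corresponds to the points where $Q$ passes through the indeterminacy $(0,0)$ along branches tangent to $\gamma$, and these are not branch points, consistent with Proposition \ref{ctype}, 4).

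Verifying this factorization is the main computational obstacle. I would first check degrees and evaluate at a few values of $t$, then do the polynomial division directly.

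Granting the factorization, the odd-order points are $t=-4$ and the three roots of the cubic. For generic $\la$ these roots are distinct and differ from $-4$, and the discriminant vanishes only for $\la\in\{0,-\frac13,-\frac9{32},\infty\}$ and possibly $-\frac14$. For $\la=-\frac14$ a separate check is needed, using Theorem \ref{locind}, 2): the degenerate cusp at $(0,0)$ should make the cubic's root structure still give four simple branch points.

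The order at $t=\infty$ is even, so $\infty$ is not a branch point. Hence $S_{d,\la}$ is birational to $y^2=(t+4)(\text{cubic})$, which is elliptic, with holomorphic differential $\omega_d$. Since $p_M$ is bijective over $\Gamma_{d,\la}$ away from these points, $S_{d,\la}$ is irreducible. Proposition \ref{elltr} then gives that $F$ is a torus translation. Alternatively, Corollary \ref{cinvfib}, 2) identifies the invariant differential, which must be a multiple of $\omega_d$.

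For Statements 2)--4), apply the same parity criterion to each component listed in Theorem \ref{tdg}, 2)--4).

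For the components $C=\{w=-8z^2\}$, $\mcs$ and $\Gamma_{d,-\frac9{32},1}$:
1. On $C$ we have $z^2-w=9z^2$, a perfect square, so $C$ splits into two components, each projected bijectively.
2. For $\mcs$, use a rational parametrization (e.g. from its node/cusp at $(1,1)$) and show that $z^2-w$ is a square times a constant.
3. For $\Gamma_{d,-\frac9{32},1}$, formula (\ref{gd91}) gives $z_3^2-w_3=z_3^2\bigl(1+40(3t+8)(3t+4)/(9t+32)^2\bigr)$, and the bracket should be a perfect square over the denominator.

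For the components that become double coverings:
1. On $\{z=1\}$, $z^2-w=1-w$ has odd order at $w=1$ and $w=\infty$, giving a rational double cover.
2. On the conic $\Gamma_{d,-\frac9{32},2}$, two simple branch points give a rational double cover.
3. On the two cubics of $\Gamma_{d,-\frac13}$, use (\ref{gd131}) and (\ref{gd132}) and count odd-order zeros of $z^2-w$. The claim to verify is exactly two such zeros on each, yielding rational double covers.

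Each component's genus then follows from Riemann--Hurwitz.
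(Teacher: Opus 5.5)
Your proof is correct, and it takes a genuinely different route from the paper.

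\textbf{Your route versus the paper's.} You use a single global criterion. Since $z(P)=z\pm\sqrt{z^2-w}$, over the normalization of each component of $\Gamma_{d,\la}$ the fiber $S_{d,\la}$ is birational to $y^2=z^2-w$, so the branch points are exactly the odd-order points of $z^2-w$.

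The paper instead argues locally through Proposition \ref{ctype}, 4). In 1) it exhibits four branches of $\Gamma_{d,\la}$ transversal to $\gamma$: the transversal branch at $E$ (Proposition \ref{45inf}, giving $t=-4$) and the three pairwise transversal branches at $(1,1)$ (Proposition \ref{lgen}, giving the roots of the cubic via (\ref{z-1form})). The a priori genus bound of Proposition \ref{ctype}, which rests on Lemma \ref{palt}, together with Riemann--Hurwitz then forces exactly four branch points and ellipticity. In 2)--4) it certifies quadratic tangencies by B\'ezout. It disposes of the cusps by noting that a double cover of a sphere has either no branch points or at least two.

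\textbf{What each approach buys.} Your route needs neither the genus bound nor any classification of branches: cusps and coincidences of special parameter values are absorbed by the parity count. It also yields (\ref{ctyped}) directly by multiplying by a square. The factorization you flag as the main obstacle is already formula (\ref{forut}), since $u=w-z^2$.

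What you pay is that distinctness of the four roots, which the paper reads off from Proposition \ref{lgen}, must be checked by hand. It holds: for the cubic $K$ in (\ref{brd}) one has $\operatorname{disc}K=-243\la^2(3\la+1)^2(32\la+9)$ and $K(-4)=32\la+9$. So $\la=-\frac14$ needs no separate treatment; there the double root $t=-6$ of $\la t^2+12\la t-9$ enters squared.

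\textbf{Your pending checks.} These also work out:
$z^2-w=\frac{9z^2(z-1)}{4z-1}$ on $\Gamma_{d,-\frac13,1}$;
$z^2-w=-\frac{9z_2^2(\tau^2+9\tau-81)}{(\tau+9)^2}$ on $\Gamma_{d,-\frac13,2}$;
$z^2-w=\frac{9z^2(z-1)}{9z-1}$ on $\Gamma_{d,-\frac9{32},2}$;
and your bracket on $\Gamma_{d,-\frac9{32},1}$ equals $\frac{9(7t+16)^2}{(9t+32)^2}$.

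For $\mcs$ you need no parametrization; note it has a cusp, not a node, at $(1,1)$. Its intersection indices with $\gamma$ are $2,2,2$ at $(0,0)$, $(1,1)$, $E$. With the infinity line they are $2$ at $E$ and $1$ at $[5:4:0]$. So the divisor of $z^2-w$ on its normalization is $2p_0+2p_1-2p_E-2p_\infty$, all even.

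\textbf{Two small corrections.} First, at $\la=-\frac19$ the leading coefficient $9\la^2+\la$ of $K$ vanishes. There $z^2-w$ has order $1$ at $t=\infty$, so $t=\infty$ is the fourth branch point (the ``root at infinity'' of $K$). Your assertion that this order is always even is false at this value. The conclusion is unaffected, because the model $y^2=(t+4)K(t)$ is obtained by a birational change of variable.

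Second, $p_M$ is unramified, not bijective, away from the four points. Irreducibility of $S_{d,\la}$ follows because $z^2-w$ is not a square in $\cc(t)$.
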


\subsection{Invariant holomorphic 1-forms on elliptic level curves}
 
 \begin{theorem} \label{telld}  1) For the dual billiards of types 2b), 2c), 2d) the $p_M$-pullbacks to $M$ of the differential 1-forms 
 \begin{equation}\omega_{\alpha}=g_{\alpha}(z,w)dw, \ \ g_{\alpha}=(z^2-w)^{-\frac32}\left(\frac{\partial R_{\alpha}}{\partial z}(z,w)\right)^{-1}, 
  \ \ \alpha=b1,\dots,d,\label{oma}\end{equation}
are holomorphic differentials on those fibers $S_\la$ that are either elliptic in Cases b), d), or  unions of two elliptic curves in Case c). The above differentials on (each component of) the above $S_\la$ are equal to  (\ref{diffb}) in Case b) and (\ref{diffd}) in Case d) up to constant factor depending on $\la$. In Case 2c)  forms (\ref{oma}) are holomorphic differentials on non-critical  level curves $\Gamma_\la$.

2) The above functions $R_{\alpha}$ are as in the Addendum to Theorem \ref{tgerm}:
\begin{equation*}R_{b1}(z,w)=\frac{(w-z^2)^2}{(w+3z^2)(z-1)(z-w)}, 
 \end{equation*}
   \begin{equation*}R_{b2}(z,w)=\frac{(w-z^2)^2}{(z^2+w^2+w+1)(z^2+1)}, \end{equation*}
  \begin{equation*}R_{c1}(z,w)=\frac{(w-z^2)^3}{(1+w^3-2zw)^2},\end{equation*} 
  \begin{equation*}R_{c2}(z,w)=\frac{(w-z^2)^3}{(8z^3-8z^2w-8z^2-w^2-w+10zw)^2},\end{equation*}
 \begin{equation*}R_{d}(z,w)=\frac{(w-z^2)^3}{(w+8z^2)(z-1)(w+8z^2+4w^2+5wz^2-14zw-4z^3)}.\end{equation*}
 \end{theorem}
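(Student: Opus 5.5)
The strategy is to deduce Theorem \ref{telld} from Corollary \ref{cinvfib} and Theorem \ref{twoell}, using the explicit descriptions of the fibers given in Theorems \ref{tbs1}, \ref{tbg2}, \ref{tc2} and \ref{td}. The starting point is the construction of $\omega_{fib}$ in (\ref{oleav}). On $M$ the form $(z^2-w)^{-\frac32}$ becomes single-valued up to sign, equal to $\pm(z(Q)-z(P))^{-3}$, because $z^2-w=(z-z(P))^2$ for a point $Q=(z,w)$ on the tangent line $L_P$ to the parabola. Hence the pullback $p_M^*\omega_\alpha$ coincides with $\omega_{fib}$ from (\ref{oleav}) for $R=R_\alpha$. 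I will verify directly that $\omega_{par}=dR_\alpha\wedge\omega_{fib}$, since $dR\wedge g\,dw=\frac{\partial R}{\partial z}g\,dz\wedge dw$. Corollary \ref{cinvfib} then shows two things: the restriction of $p_M^*\omega_\alpha$ to each invariant fiber is a single-valued $F$-invariant meromorphic differential, and it is a global holomorphic differential on every elliptic irreducible component.

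The next step is to identify, case by case, the fibers that have elliptic components. In Case b1) (and b2) via the projective equivalence $\Psi$ of Theorem \ref{tbg2}), Theorem \ref{tbs1} shows that $S_{b1,\la}$ is elliptic for $\la\neq0,1,\frac43,\infty$. In Case d), Theorem \ref{td} gives the same conclusion for $\la\neq0,-\frac13,-\frac9{32},\infty$. In Case c), Theorem \ref{tc2} shows that the fiber is a union of two elliptic components for noncritical $\la$. Part 2) of Corollary \ref{cinvfib} then yields holomorphicity in every case.

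For Case c), Theorem \ref{twoell} (with Theorem \ref{tc}, 2)) shows that each elliptic component of $S_{c,\la}$ projects bijectively onto the elliptic curve $\Gamma_{c,\la}$. The $p_M$-pushforward of the restricted form is therefore a holomorphic differential on $\Gamma_{c,\la}$, and it is exactly $\omega_\alpha$ restricted there, with sign depending on the chosen component. This proves the last claim of part 1).

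It remains to compare the forms with (\ref{diffb}) and (\ref{diffd}). An elliptic curve has a one-dimensional space of holomorphic differentials. Theorems \ref{tbs1} and \ref{td} identify $S_\la$ conformally with the curves (\ref{eltb}) and (\ref{ctyped}), on which $\omega_b$ and $\omega_d$ are holomorphic. Consequently $p_M^*\omega_\alpha|_{S_\la}$ equals $\omega_b$ (respectively $\omega_d$) times a constant, which depends only on $\la$ and is nonzero because the form is not identically zero on the fiber. The main point requiring care is the nonvanishing of this restriction, that is, that $\partial R/\partial z\not\equiv\infty$ and $dw\not\equiv0$ along the fiber. This holds since generic level curves are not vertical lines and $R$ is nonconstant along $w$-lines. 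If one wants the constant explicitly, the fallback is to substitute the parametrizations (\ref{wb1}) and (\ref{forztd})--(\ref{wformd}) together with $y=\pm(z-z(P))\cdot(\text{rational in }t)$ and simplify; this calculation is not needed for the theorem as stated.
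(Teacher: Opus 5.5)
Your proposal is correct and is essentially the paper's argument. The paper only states that Theorem \ref{telld} follows from Corollary \ref{cinvfib}, whose form (\ref{oleav}) is exactly $p_M^*\omega_\alpha$; your details fill that in as intended, using the fiber descriptions of Theorems \ref{tbs1}, \ref{tbg2}, \ref{tc2}, \ref{td}, Theorem \ref{twoell} for Case c), and uniqueness of the holomorphic differential on an elliptic curve. (Nonvanishing of the restriction follows more directly from $d(R\circ p_M)\wedge\omega_{fib}=\omega_{par}\neq0$ at fiber points off $p_M^{-1}(\gamma)$ where $dR\neq0$, rather than from your remarks about lines.)
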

 Theorem \ref{telld} follows from Corollary \ref{cinvfib}.

 \section{Proofs of main results}
 
 \subsection{Background material: $\delta$-invariant of plane curves}
 In the proof of some of main results  we  use the following background material contained in \cite{GLS}. Let $S$ be a compact two-dimensional complex  manifold. Let $\Gamma\subset S$ be a compact holomorphic curve, i.e., a purely one-dimensional complex analytic subset. Let us recall the definition of $\delta$-invariant $\delta(Q)$ of a point $Q\in\Gamma$, see, e.g.,  \cite[section I.3]{GLS}.  Fix a local biholomorphic chart $(z,w)$ on $S$ centered at $Q$. The intersection of the curve $\Gamma$ with a small ball $B$ centered at $Q$ is a union of topological disks identified with 
 the local branches of the germ at $Q$ of the curve $\Gamma$. Each two of them intersect each other only at $Q$. Let $f(z,w)$ be a holomorphic function on $B$ defining $\Gamma$:  $\Gamma\cap B=\{ f=0\}$ and 
$f$ has no  curves of multiple zeros. The intersection $\Gamma\cap\partial B$ is a finite disjoint union of topological circles: the intersections of  $\partial B$ with the  local branches of the germ of the curve $\Gamma$ at $Q$. 
Their number, denoted by $k\in\nn$, is equal to the number of local branches. 
  For every $\var\neq0$ small enough the level curve $\gamma_\var:=\{ f=\var\}\cap B$ is regular and 
intersects the boundary $\partial B$ at $k$ disjoint topological circles close to the above ones. Let us take a sphere with 
$k$ disks deleted and paste it to $\gamma_\var$. 

\begin{definition} \cite[section I.3]{GLS}. The genus of the compact surface thus obtained is called the {\it $\delta$-invariant} of the singular point $Q$ of the curve $\Gamma$.
\end{definition}
\begin{remark} \label{rdelta} The $\delta$-invariant of a singular point  is known to be always positive. The $\delta$-invariants of a double point of transversal self-intersection of regular branches and of  a cusp  are equal to one. Conversely, if the $\delta$-invariant is equal to one, then the singular point is either a double point, or a cusp. The $\delta$-invariant is superadditive, or in other terms, upper semicontinuous, in the following sense:  if a singular germ of analytic curve at a point $p$  is analytically deformed to a curve with several singularities close to $p$, then the sum of their $\delta$-invariants is no greater than $\delta(p)$. 
The equality takes place under the condition that the deformation is  {\it equinormalizable}: this means that the curves of the family can be analytically simultaneously parametrized, bijectively up to self-intersections, by a family of Riemann surfaces of constant topological type \cite[subsection 3.3]{teiss}.\end{remark}
Let $\Gamma$ be an irreducible algebraic curve  in 
$\cp^2$, $d$ be its degree, $g$ be the genus of its normalization, i.e., parametrizing compact Riemann surface. Let  $Sing(\Gamma)$ be the set of its singular points.  Then 
\begin{equation} g=\frac{(d-1)(d-2)}2-\sum_{p\in Sing(\Gamma)}\delta(p), \ \ \ \ \ \delta(p) \text{ is the } \delta-\text{invariant},
\label{hiron}\end{equation} 
by Hironaka Genus Formula \cite{hir}.

 \subsection{A priori possible types of curves and dynamics. Proof of Propositions \ref{ctype}, \ref{elltr}} 
 
 The proof of Proposition \ref{ctype} is based on the following lemma.

\begin{lemma} \label{palt} 1) Each curve $S_\la$ is $F$-invariant. 

2) If $S_\la$ consists of two components for a generic $\la$, then each its component is $F$-invariant for a generic $\la$. 

3)  For every $N\in\nn$  there exists a  $\delta>0$ such that for every $\la\neq0$ with  $|\la|<\delta$  the restriction of the iterate $F^k$ to each invariant component  of the fiber $S_\la$ is not the identity for all $k=1,\dots,N$. 
\end{lemma}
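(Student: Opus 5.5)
For Statement 1) I will argue directly from the definition of the integral. Let $(Q,P)\in S_\la$ be generic, so $R(Q)=\la$, $P\in\gamma^o$ and $Q\in L_P$. Since $R|_{L_P}$ is $\sigma_P$-invariant, the point $Q'=\sigma_P(Q)$ satisfies $R(Q')=R(Q)=\la$. Hence $F(Q,P)=(Q',P')$ lies in $p_M^{-1}(\Gamma_\la)=S_\la$. The set of such generic points is Zariski dense in each irreducible component of $S_\la$, and $F$ is birational by Proposition \ref{prbir}. Passing to closures, and to normalizations where $F$ extends holomorphically, we get $F(S_\la)\subset S_\la$.

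For Statement 2) I will use a monodromy and continuity argument. Suppose $S_\la=S_\la^1\cup S_\la^2$ for generic $\la$. By Lemma \ref{irred}, $\Gamma_\la$ is irreducible for generic $\la$, so each component projects onto $\Gamma_\la$ bijectively, and $S_\la^2$ is the image of $S_\la^1$ under the deck involution $(Q,P)\mapsto(Q,\wt P)$ of $p_M$. By Statement 1), $F$ permutes the two components. It either preserves them or swaps them, and which one happens is locally constant in $\la$ on the complement of a finite set, hence constant. To exclude swapping, I will degenerate $\la\to0$, where $\Gamma_0$ is the conic $\gamma$ (with multiplicity). Take $Q$ close to $\gamma$ and $P$ the tangency point close to $Q$, i.e. the pair with small $z(Q)-z(P)$. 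The involution $\sigma_P$ fixes $P$ and is close to the identity near $P$ in the parabolic coordinate $u$, by the form (\ref{sigmaef}). So $Q'$ is close to $Q$, and $P'$ is again the nearby tangency point, lying on the same branch of the double cover. Pairs with $Q$ near $\gamma$ and $P$ near $Q$ form a neighborhood that, for small $\la$, meets only the branch of $S_\la$ whose $P$-coordinate tends to $Q$. Since $F$ maps this branch into itself, no swapping occurs.

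Statement 3) is the delicate one, and I expect it to be the main obstacle. Suppose, for contradiction, that for some $k\le N$ there is a sequence $\la_n\to0$ with $F^k=\mathrm{id}$ on an invariant component of $S_{\la_n}$. Identity on a component is a closed algebraic condition, so by the finiteness of exceptional values it would then hold for all $\la$ in a Zariski open set, and by continuity on the whole of $M$. That is, $F^k\equiv \mathrm{id}$ on the corresponding component of $M$, or on all of $M$. This is impossible. Near $\gamma$ the map $F$ is a small shift along $\gamma$: in the coordinate $u=z-z(P)$ with $|u|$ small, (\ref{sigmaef}) gives $Q'\approx Q$ reflected through $P$, so $z(P')-z(P)\approx -2u\ne0$. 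Hence the $k$-th iterate moves $P$ by approximately $-2ku\neq0$ for $k\geq1$, which contradicts $F^k=\mathrm{id}$. I would make this precise by writing $F$ in coordinates $(z(P),u)$ on $M$ near $p_M^{-1}(\gamma)$, where $F(z_0,u)=(z_0+2u+O(u^2),\,u+O(u^2))$ after an appropriate sign convention. The iterates $F^k$ for $k\le N$ are then visibly nonidentical near $u=0$. This yields, for each $k\le N$, only finitely many bad $\la$, so a $\delta>0$ as required exists.
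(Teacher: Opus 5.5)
\textbf{Gap in Statement 2).} Your proof of 1) matches the paper. Your proof of 3) is valid by a different route: the fixed locus of $F^k$ is an algebraic subset of the irreducible surface $M$, and it is proper because $F(z_0,u)=(z_0\pm 2u+O(u^2),\,u+O(u^2))$ near $p_M^{-1}(\gamma)$. Hence it contains only finitely many curves, so only finitely many $\la$ are bad. State it in exactly these terms rather than via ``Zariski open set \dots continuity''.

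The gap is in 2). Your claim that, for small $\la$, pairs with $Q$ near $\gamma$ and $P$ near $Q$ meet only one component of $S_\la$ is false. For $Q=(z_1,w_1)$ near $\gamma$, \emph{both} tangency points $z_0=z_1\pm\sqrt{z_1^2-w_1}$ are close to $Q$, so ``small $z(Q)-z(P)$'' selects neither. As $\la\to0$, both components of $S_\la$ collapse onto the same curve $p_M^{-1}(\gamma)$. That curve is exactly the branch curve of $p_M$, where its two sheets meet, so no neighborhood of it separates the components. Also, $\sigma_P$ near $P$ is close to $u\mapsto -u$, not to the identity. From $Q'=\sigma_P(Q)$ the two tangency points are $P'$ and $P$ itself, at comparable distance $\approx |z(Q)-z(P)|$ from $Q'$. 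So ``$P'$ is again the nearby tangency point, on the same branch'' is an unproved assertion, not a consequence.

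\textbf{What is missing.} You need an invariant that actually distinguishes the two lifts near $\gamma$. One works: the choice of sign in $z(P)-z(Q)=\pm\sqrt{z(Q)^2-w(Q)}$, relative to a continuous branch of the root on a local sheet of $\Gamma_\la$ near a point $P_0\in\gamma^o$. Using it requires two checks.

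\begin{itemize}
\item[(i)] $Q$ and $Q'$ lie on the same local sheet $\{\wt R=\wt\la\}$, where $\wt R=R^{1/k}$. This holds because $\wt R|_{L_P}\approx\xi(P)\wt x^2$ and $\sigma_P$ is approximately $\wt x\mapsto-\wt x$. That excludes a nontrivial root-of-unity factor between $\wt R(Q')$ and $\wt R(Q)$.
\item[(ii)] $z(P')-z(Q')\approx z(P)-z(Q)$. Indeed $z(P')=2z(Q')-z(P)$, so $z(P')-z(Q')=z(Q')-z(P)=\sigma_P(u)\approx -u$ with $u=z(Q)-z(P)$. By contrast, the deck involution reverses this sign.
\end{itemize}

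Together, (i) and (ii) put $(Q,P)$ and $F(Q,P)$ on the same lifted local sheet, hence in the same component. This is precisely the paper's path-lifting argument, using the analytic roots $\wt x_P$ and (\ref{aseq01}). Without it, your ``locally constant in $\la$'' reduction has no base case.
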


\begin{proof} Invariance of the curve $S_\la$ follows from invariance of the integral. Let us prove Statement 3). The foliation $R=const$ is regular outside 
indeterminacies of the function $R$ and its critical points lying outside its multiple zero curve $\gamma=\{ R=0\}$. 
In particular  $\gamma^o$, see (\ref{gammao}),  is its regular leaf. Fix an arbitrary point $P_0\in\gamma^o\cap\cc^2$ and affine coordinates 
$(x,y)$ centered at $P_0$ so that the $x$-axis be tangent to $\gamma$ at $P_0$. Fix an arbitrary small $\var>0$. 
Let $D_\var$ denote the $\var$-disk in $L_{P_0}$ centered at $P_0$ in the coordinate $x$.  
 Fix a $\delta>0$ small enough so that for every  $\la\in\cc$, $|\la|<\delta$, 
the level curve $\Gamma_\la$ is irreducible and intersects $D_\var$. Take an intersection point 
$Q_0\in\Gamma_\la\cap D_\var$. Set 
$$(Q_1,P_1)=F(Q_0,P_0), \ \ (Q_2,P_2)=F(Q_1,P_1), \ \dots$$
The differences $x(Q_1)-x(P_0)$ and $x(P_0)-x(Q_0)$ are nonzero and asymptotically equivalent, as $\var\to0$, since 
$Q_1=\sigma_{P_0}(Q_0)$ and $\sigma_{P_0}$ is a projective involution of the line $L_{P_0}$ fixing $P_0$. 
The differences $x(P_1)-x(Q_1)$ and $x(Q_1)-x(P_0)$ are also asymptotically equivalent. Indeed, the two lines 
through $Q_1$ tangent to $\gamma$ touch it at $P_0$ and $P_1$. Hence, as $\var\to0$, the point 
$Q_1$ tends to $P_0$, and then the ratio of the above differences tends to one, see \cite[proposition 2.10]{alg}. Finally, 
\begin{equation} x(P_1)-x(Q_1)\simeq x(P_0)-x(Q_0)=o(1),\label{aseq01}\end{equation}
$x(Q_1)-x(Q_0)\simeq 2(x(P_0)-x(Q_0))$. Similarly we get 
$$x(Q_2)-x(Q_1)\simeq 2(x(P_1)-x(Q_1))\simeq 2(x(P_0)-x(Q_0))\simeq x(Q_1)-x(Q_0).$$  
Thus, the $x$-coordinates of the points $Q_j$ form an asymptotic arithmetic progression, as $\var\to0$. Hence, $N+1$ successive points $Q_0,\dots, Q_N$ are distinct, whenever $\delta$ is small enough. 

Let now $S_\la$ consist of two components $S_{\la\pm}$ for a generic $\la$. Then each of them is bijectively projected onto 
$\Gamma_\la$. It suffices to prove Statement 2), i.e., invariance of each 
component,  for an open subset of values $\la\in\oc$. We will prove it for $\la\neq0$, $|\la|<\delta$, where $\delta>0$ is small enough. In the above arguments let $\Gamma_\la$ be irreducible. We claim that the above pairs $(Q_0,P_0)$ and  $(Q_1,P_1)$ lie in the same component of the fiber $S_\la$. Indeed,  consider the path $\beta$ in $\Gamma_\la$ going from $P_0$ to $P_1$ 
whose projection to the $x$-axis is the straightline segment $[x(P_0),x(P_1)]=[0,x(P_1)]$. Let us show that it lifts 
to a path in $S_\la$ connecting $(Q_0,P_0)$ and $(Q_1,P_1)$. Let the integral $R$ be normalized to vanish on $\gamma$, as in 
the Addendum to Theorem \ref{tgerm}, by taking its post-composition with appropriate M\"obius transformation. Let $k$ denote its order of zero along the curve $\gamma$. Fix a holomorphic branch of the function $\wt R=R^{\frac1k}$ on a neighborhood 
of the point $P_0$ in $\cc^2$. It is locally constant on level curves of the function $R$. Set $\wt\la:=\wt R(Q_0)$. For every 
$P\in\gamma$ close enough to $P_0$ the restriction $\wt R|_{L_P}$ written in the coordinate $\wt x:=x-x(P)$ on $L_P$ 
has the asymptotics $\xi(P)\wt x^2(1+o(1))$, as $\wt x\to0$; here $\xi(P)$ is a non-zero analytic function. Thus, if $\wt\la$ 
is small enough, then the equation $\wt R|_{L_P}=\wt\la$ has two asymptotically opposite solutions $\wt x_P=\sqrt{\frac{\wt\la}{\xi(P)}}(1+o(1))$ analytic in $P$ on a neighborhood of the point $P_0$. Fix an analytic solution $\wt x_P$, $P\in\beta$, 
such that $\wt x_{P_0}=\wt x(Q_0)$. It corresponds to an analytic family of intersection points $Q(P)\in L_P\cap\Gamma_\la$ with 
$Q(P_0)=Q_0$. The ratios of the values $\wt x_P$ of the same holomorphic branch are close to one, since $\xi(P)\to\xi(P_0)\neq0$, as $P\to P_0$. One has $Q(P_1)=Q_1$, by (\ref{aseq01}) and since $\wt x_{P_0}\simeq\wt x_{P_1}$, as $\wt\la\to0$, 
by the above asymptotics. The path 
$(Q(P),P)_{P\in\beta}\subset S_\la$ connecting $(Q_0,P_0)$ to $(Q_1,P_1)$ is constructed. 
This  proves the lemma.
\end{proof} 

\begin{proof} {\bf of Proposition \ref{ctype}.} Its Statement 2)  implies Statement 1). Indeed, for every branched covering 
of compact surfaces the genus of covering surface is no less than that of the base. This follows by Riemann--Hurwitz Theorem. 
Therefore if we prove  that some component of the curve $S_\la$ is either elliptic, or rational, this will imply that  $\Gamma_\la$ 
has one of these types as well. Note that  if $S_\la$ is elliptic, then a priori the curve 
$\Gamma_\la$ may be rational. 

Recall that each $S_\la$ is $F$-invariant. Let us suppose that $S_\la$ is irreducible for a generic $\la$. 
Let $g$ denote its genus. It cannot be greater than one. Indeed, the cardinality of 
the group 
of conformal automorphisms of a compact Riemann surface of a genus $g\geq2$ is bounded by a constant depending only on $g$. On the other hand,  for small $\la$ 
the cyclic group of conformal automorphisms $S_\la\to S_\la$ generated by $F$ may contain arbitrarily big number of elements, 
by Lemma \ref{palt}, Statement 3). Hence, $g\in\{0,1\}$.  The curve $\Gamma_\la$ being irreducible for a generic $\la$, 
see Lemma \ref{irred}, its double covering curve 
$S_\la$ consists of at most two components. If it consists of two components, each of them is bijectively projected onto 
 $\Gamma_\la$. Therefore, the conformal type of components is the same.  Each component being 
$F$-invariant for generic $\la$ (Lemma \ref{palt}), this implies that it is either rational, or  elliptic, as in the above argument. One of these two cases takes place for generic $\la$. This proves Statement 2) of Proposition \ref{ctype}.

Let us prove Statement 3). For every double covering over a connected Riemann surface that has at least one branching point 
the covering surface is obviously connected. 
If $\Gamma_\la$ is irreducible, $S_\la$ is elliptic and the covering $p_M:S_\la\to\Gamma_\la$ has at least one branching point, 
then $\Gamma_\la$ is rational, by Riemann--Hurwitz Theorem. This proves Statement 3). 

Let us prove Statement 4). Let us first prove that each branching point $A$ of the covering $p_M:S_\la\to\Gamma_\la$ 
lies in the intersection $\Gamma_\la\cap\gamma$.  
Given any point $A\in\cp^2\setminus\gamma$, there are two tangent lines to $\gamma$ through $A$. The corresponding tangency points depend locally holomorphically on $A$. This follows from quadraticity of  tangency order by the Implicit Function Theorem. This implies that there are no branching points in the complement $\Gamma_\la\setminus\gamma$. 
Let now $A$ be a point of the normalization of the curve $\Gamma_\la$ that lies in the intersection $\Gamma_\la\cap\gamma$. 
Let $b$ denote the corresponding local branch of the curve $\Gamma_\la$,  locally biholomorphically parametrized by a parameter $\tau$ as $\tau\mapsto b(\tau)$, $b(0)=A$. Consider the double-valued function $b\to\gamma$ sending 
$\tau$ to a point $P(\tau)\in\gamma$ such that the line $P(\tau)b(\tau)$ is tangent to $\gamma$ at $P(\tau)$. 
One has $p_M(b(\tau),P(\tau))=b(\tau)$. If $b$ is regular and transversal to $\gamma$, then $\dist(b(\tau),\gamma)\simeq c_1|\tau|$, and then $\dist(P(\tau),A)\simeq c_2\sqrt{|\tau|}$, as 
$\tau\to0$; here $c_1,c_2\neq0$. The latter asymptotics follows from \cite[proposition 3.8]{gs}. Therefore, $A$ is a branching point.
 
Conversely, let a point $A$ of the normalization of the curve $\Gamma_\la$ be a branching point; thus, $A\in\Gamma_\la\cap\gamma$. Suppose the contrary to the second part of of Statement 4): the corresponding 
  local branch $b$ is regular and quadratically tangent to $\gamma$. Then in the above notations the function $P(\tau)$ is a 
  multivalued holomorphic function on $b\setminus\{ A\}$ that has two holomorphic branches with asymptotics
  $P_j(\tau)=(c_j+o(1))\tau$, as $\tau\to0$; here $c_1\neq c_2$. This follows from quadraticity of tangence and 
\cite[proposition 2.50]{alg}. Therefore, $P_{1,2}(\tau)$ are not analytic extensions one of the other along a circuit around the origin $A$. Hence, they are holomorphic single-valued and $A$ is not a branching point. Statement 4) of Proposition \ref{ctype} is proved. 

Every double covering $\oc\to\oc$ has two branching points, by Riemann--Hurwitz Formula. This together with Statement 3) implies that if the curve $\Gamma_\la$ is irreducible and the covering $S_\la\to\Gamma_\la$ has at least three branching points, 
then $\Gamma_\la$ is rational, $S_\la$ is irreducible and cannot be rational. Hence, in this case it is elliptic. Proposition \ref{ctype} is proved. 
\end{proof}

\begin{proof} {\bf of Proposition \ref{elltr}.} Let now $S_\la$ be either elliptic, or a union of two elliptic curves for generic $\la$. 
Then each its component is $F$-invariant for a generic $\la$, and the restriction $F|_{S_\la}$ has 
non-trivial iterates $F^k|_{S_\la}$, $k=1,\dots,N$, for arbitrarily big $N$, as $\la$ is small enough, by Lemma \ref{palt}. A conformal automorphism of complex torus that is not a 
translation, if it exists, it is an automorphism of order at most six. Therefore, for  $\la\neq0$ small enough, and hence, for generic $\la$ the restriction of the map $F$ to each component of the curve $S_\la$ is a translation. Proposition \ref{elltr} is proved.
\end{proof}

   \subsection{Case a1). Proofs of Theorems \ref{taga1}--\ref{taf1}}

\begin{proof} {\bf of Theorem \ref{taga1}.} 
 Consider the family of parabolas  
 $$X_u:=\{ w=uz^2\}, \ \ u\in\cc^*,$$ 
  equipped with the coordinate $z$. For every $u$ the restriction $R|_{X_u}$ is the quadratic monomial 
 \begin{equation}R_{u}(z):=\frac{(u-1)^{2N+1}}{\prod_{j=1}^N(u-c_j)^2}z^2.\label{ra1}\end{equation}
 It takes a given value $\la\neq0,\infty$ at  two points, which we will denote by $z_{\pm}(u)$. Therefore, the projection 
 $$\psi:\Gamma_\la\to\oc_u, \ \ (z,w)\mapsto u=\frac w{z^2}$$ 
 is a double covering.  
It is branched exactly over $0$ and  $\infty$. Indeed, for a given $\la\neq0,\infty$ 
one has $z_{+}(u)=z_{-}(u)$, if and only if
 the coefficient at $z^2$ in (\ref{ra1}) is equal to either zero, or infinity. The latter coefficient, as a function of $u$, 
 has zero of odd degree $2N+1$ at $u=1$, pole of odd degree one at $u=\infty$ 
 and  poles of even degree 2 at $c_j$, or poles of even degrees bigger than two at some $c_j$, if some $c_j$ coincide. The latter poles of even degree do not contribute to branching, since extending 
 the root functions $z_{\pm}(u)$ along a circuit around an even degree pole does not permute them. On the other hand, extension along a circuit around a zero or pole of odd degree permutes $z_{\pm}(u)$. 
 Therefore, $\Gamma_\la$ is 
 the Riemann sphere bijectively parametrized by the parameter $\sqrt{1-u}=\frac1{\tau}$.  Parametrization (\ref{param1}) is found immediately by solving equation $R_{u}(z)=\la$ in $z$. 
 
 Let us show that $R_u$ has no isolated true critical points. Indeed, a priori such points may exist only for values 
 $\la\neq0,\infty$, since the curves $\Gamma_0$, $\Gamma_{\infty}$  entirely consist of critical points. 
 The curve $\Gamma_\la$  with $\la\neq0,\infty$ contains no true critical points, by Proposition \ref{procritic}, Statement 3) and since it is not a multiple level curve, being a parametrized curve of degree $4N+2=\deg R$. The statement saying that 
 the indeterminacies $(0,0)$ and $E$ are critical exactly for the values $\la=0,\infty$ follows directly from the 
 parametrization (\ref{param1}) of each curve $\Gamma_\la$, $\la\neq0,\infty$. Indeed, its germ at infinity is one germ of parametrized curve taken once,  
  the germ of its parametrization at $\tau=\infty$, while this is not the case for $\la=0,\infty$. In the case, when $c_j$ are distinct, its germ at $(0,0)$ consists of 
 $2N$ regular germs depending locally analytically on $\la$, while this is obviously not the case for $\la=0,\infty$. The latter germs 
  are given by germs of parametrization (\ref{param1}) of the curve $\Gamma_\la$ at roots of the polynomials $(1-c_j)\tau^2-1$. The case, when some of $c_j$ coincide, is treated analogously. 
 Theorem \ref{taga1} is proved.
 \end{proof}
 
 \begin{proof} {\bf of Theorem \ref{tas1}.} Fix a $Q=(z,w)=(z(\tau),w(\tau))\in\Gamma_\la$. A point $P=(z_0,z_0^2)\in\gamma$ 
 such that $Q\in L_P$ is found from the equation
\begin{equation}w=2zz_0-z_0^2: \ \ \ z_{0\pm}= z\pm\sqrt{z^2-w}=\frac{\tau\pm 1}{\tau}z(\tau),\label{zpm0}\end{equation}
see (\ref{tauzw}). This implies the statements of Theorem \ref{tas1}.
\end{proof}

\begin{proof} {\bf of Theorem \ref{taf1}.} Let now $R=R_{a1}$. Then each fiber $S_{\la}$ is $F$-invariant, since $R$ is 
a rational integral of the dual billiard. The fact that for every $\la\neq0$ small enough 
each its component $S_{\la\pm}$ is $F$-invariant follows from formula 
(\ref{parap1s}) and  formula (\ref{aseq01}) from the proof of Lemma \ref{palt}. This implies invariance of $S_{\la\pm}$ for all 
$\la\in\cc^*$, by uniqueness of analytic extension. Let us now express the restriction $F:S_{\la\pm}\to S_{\la\pm}$ in the coordinate $\tau$. In our case $\rho=2-\frac2{2N+1}$. In the coordinate $\zeta=\frac z{z_0}$ on $L_P$ one has 
\begin{equation}\sigma_P(\zeta)=\eta_{\rho}(\zeta)=\frac{(\rho-1)\zeta-(\rho-2)}{\rho\zeta-(\rho-1)}=\frac{(2N-1)\zeta+2}{4N\zeta-(2N-1)}.\label{sigma1}\end{equation}
Set 
$$(z_{*\pm}(\tau),w_{*\pm}(\tau)):=F(z(\tau),w(\tau)), \ \ \zeta_{\pm}(\tau):=\frac{z(\tau)}{z_{0\pm}(\tau)}=\frac{\tau}{\tau\pm 1},$$
$$\zeta_{*\pm}(\tau):=\frac{z_{*\pm}(\tau)}{z_{0\pm}(\tau)}; \ \ \ \zeta_{*\pm}(\tau)=\eta_{\rho}(\zeta_{\pm}(\tau))=
\frac{(2N+1)\tau\pm2}{(2N+1)\tau\mp(2N-1)},$$
by (\ref{zpm0}) and (\ref{sigma1}). Let us now find the point $(z_{*\pm},w_{*\pm})$ and its $\tau$-coordinate on the curve 
$S_\la$, which will be denoted by $\tau_{*\pm}$. One has 
$$z_{*\pm}(\tau)=z_{0\pm}(\tau)\zeta_{*\pm}(\tau)=z_{0\pm}(\tau)\frac{(2N+1)\tau\pm2}{(2N+1)\tau\mp(2N-1)},$$
$$w_{*\pm}=2z_{*\pm}z_{0\pm}-z_{0\pm}^2,  \ \ \ w_{*\pm}-z_{*\pm}^2=-z_{0\pm}^2(\zeta_{*\pm}-1)^2,$$
$$\zeta_{*\pm}(\tau)-1=\pm\frac{(2N+1)}{(2N+1)\tau\mp(2N-1)},$$
$$\tau_{*\pm}=\frac{z_{*\pm}}{\sqrt{z_{*\pm}^2-w_{*\pm}}}=(\pm) \frac{\zeta_{*\pm}}{(\zeta_{*\pm}-1)},$$
$$\tau_{*\pm}(\tau)=(\pm) \frac{(2N+1)\tau\pm2}{(2N+1)}.$$
Here the sign $(\pm)$ a priori may be different from the other sign $\pm$. In fact $(\pm)=+$. Indeed, otherwise 
the map $F:S_{\la\pm}\to S_{\la\pm}$, $\tau\mapsto\tau_{*\pm}$  would be an involution. This is impossible, since 
for small $\la\neq0$ this is not an involution, see Lemma \ref{palt}. Hence, $\tau_{*\pm}(\tau)=\tau\pm\frac{2}{2N+1}$. 
 This proves Theorem \ref{taf1}.
\end{proof}

 \subsection{Case a2). Proofs of Theorems \ref{taga2}--\ref{taf2}}
  
\begin{proof} {\bf of Theorem \ref{taga2}.} 
The restriction of the function $R(z,w)$ to the parabola  
$$C_s=\{\frac{z^2}{z^2-w}=s\}=\{ w-z^2=-s^{-1}z^2\}$$
is equal to 
  \begin{equation}\mcr_{s}(z):=-s^{-1}\prod_{j=1}^N(1+(c_j-1)s)^{-1}z.\label{ra1}\end{equation}
  Therefore, the equation $\mcr_s(z)=\la$ has unique solution $z=z(s)$ given by (\ref{param2}), and then 
  $w=\frac{s-1}sz^2$ is also given by (\ref{param2}). Bijectivity of parametrization (\ref{param2}) follows from (\ref{zws}). 
  Statement A is proved. 
  
  Let us prove Statement B. Let $c_j$ be distinct. 
  The fact that the critical point set coincides with the conic $\Gamma_0$ is proved analogously to the proof of 
  Statement 2) of Theorem \ref{taga1}; now the curve $\Gamma_\infty$ is a union of $N$ distinct parabolas and one line, each 
  with multiplicity one. The indeterminacy $(0,0)$ is 
  critical only for the value $\la=0$, since for all $\la\neq0$, including 
  $\la=\infty$, the germ at $(0,0)$ of the curve $\Gamma_\la$ consists of $N+1$ regular local branches;  
  $N$ of them are quadratically tangent to each other and transversal to the remaining $N+1$-st branch. This follows immediately from (\ref{param2}), since $c_j$ are distinct. 
  The infinite point $E$ is critical for the values $\la=0,\infty$, as in the previous subsection. Indeed, for 
  $\la\neq0,\infty$ the germ at $E$ of the parametrized curve $\Gamma_\la$ is an irreducible singular germ 
  taken with multiplicity one: it corresponds to the unique 
  parameter value $s=\infty$. And this is obviously not the case for $\la=0,\infty$. Theorem \ref{taga2} is proved.
  \end{proof}
   
  \begin{proof} {\bf of Theorem \ref{tas2}.} Fix a point $(z(s),w(s))\in\Gamma_\la$. A 
  point $P=(z_0,z_0^2)\in\gamma$ such that  
  $(z(s),w(s))\in L_P$ is found from the quadratic equation 
  $$2z(s)z_0-z_0^2=w(s)=z^2(s)(1-s^{-1}).$$
  It has solutions $z_0(\tau)=\frac{\tau+1}{\tau}z(\tau^2)$ with  two different root values $\tau=\sqrt s$.
\end{proof}
  
  \begin{proof} {\bf of Theorem \ref{taf2}.} Each curve $S_\la$ is $F$-invariant, since $R_{a2}$ is an integral of the dual billiard 
  in question. In the coordinate $\zeta=\frac z{z_0}$ the involution $\sigma_P$, $P=(z_0,z_0^2)$, defining the dual billiard 
  takes the form $\sigma_P(\zeta)=\eta_\rho(\zeta)=\frac{(\rho-1)\zeta-(\rho-2)}{\rho\zeta-(\rho-1)}$. In our case $\rho=2-\frac1{N+1}$. Therefore,
  $$\sigma_P(\zeta)=\frac{N\zeta+1}{(2N+1)\zeta-N}.$$
  The $\zeta$-coordinate of the point $Q(\tau)=(z(\tau^2),w(\tau^2))\in L_{P(\tau)}$ is equal to 
  $$\zeta(\tau):=\frac{z(\tau^2)}{z_0(\tau)}= \frac{\tau}{\tau+1}.$$
  Therefore, the $\zeta$-coordinate of its image under the involution $\sigma_{P(\tau)}$ is 
  $$\zeta_*(\tau):=\frac{(N+1)\tau+1}{(N+1)\tau-N},$$
  $$\sigma_P(Q(\tau))=Q_*(\tau)\in S_{\la}, \ \ Q_*(\tau)=(z_*(\tau),w_*(\tau)), \ z_*(\tau)=\zeta_*(\tau)z_0(\tau),$$
  $$ w_*(\tau)=2z_*(\tau)z_0(\tau)-z_0^2(\tau)=\frac{2\zeta_*(\tau)-1}{\zeta_*^2(\tau)}z_*^2(\tau).$$
  Let us now find the $\tau$-parameter value of  $F(Q(\tau),P(\tau))=(Q_*(\tau),P_*(\tau))$, 
  which will be denoted by $\tau_*$. One has 
  $$\tau_*^2=\frac{z_*^2(\tau)}{z_*^2(\tau)-w_*(\tau)}=\left(\frac{\zeta_*(\tau)}{\zeta_*(\tau)-1}\right)^2=\left(\tau+\frac1{N+1}\right)^2.$$
  Therefore, $\tau_*=\pm(\tau+\frac1{N+1})$. Here the sign is "$+$". Indeed,  otherwise, the map $F:S_\la\to S_\la$ would be an involution, and we get a contradiction, as in the proof of Theorem \ref{taf1}.  This proves Theorem \ref{taf2}.
  \end{proof}
  
  \subsection{Case b). Proof of Theorems \ref{tbg1}, \ref{tbs1}}
  \begin{proof} {\bf of Theorem \ref{tbg1}.} 
  Recall, see (\ref{exo2bnew}), that 
$$R_{b1}(z,w)=\frac{(w-z^2)^2}{(w+3z^2)(z-1)(z-w)}, \ \ \Sigma_{b1}=\{(0,0), (1,1), E\}.$$
Set 
\begin{equation}C_t:=\{ G_t(z,w)=0\}, \ \ G_t(z,w):=w-z^2+tz(z-1).\label{ctb1}
\end{equation}
The conics $C_t$ form a pencil of those conics passing through the points of the set $\Sigma_{b1}$ that are tangent to the infinity line at $E$. This implies that $C_t$ intersects a generic fiber $\gbll$ at points of $\Sigma_{b1}$ with indices 2, 2, 3 respectively. Thus, 
the intersection $C_t\cap\gdll$ contains only one extra point $D_{b1}(t)$, by B\'ezout Theorem. 
Instead of proving the latter intersection index statement we prove uniqueness of the extra point directly and find it explicitly. All the points of intersection $C_t\cap\gbll$ are solutions of the system of equations 
\begin{equation}\begin{cases} (w-z^2)^2-\la(w+3z^2)(z-1)(z-w)=0\\
w-z^2+tz(z-1)=0.\end{cases}\label{sysb1}\end{equation}
Substituting the second equation to the first one and cancelling $z-1$ transforms the first equation to
\begin{equation} t^2z^2(z-1)-\la(w+3z^2)(z-w)=0.\label{t2z}\end{equation}
We express $w$ and terms in (\ref{t2z}) via $z$ from the second equation in (\ref{sysb1}):  
$$w=z((1-t)z+t), \ \ w+3z^2=z((4-t)z+t), \ \ z-w=z(t-1)(z-1).$$
Substituting these formulas to (\ref{t2z}) and cancelling $z^2(z-1)$ yields (\ref{wb1}):  
$$t^2-\la((4-t)z+t)(t-1)=0,$$
$$ z=z(t)=\left(\frac{t^2}{\la(t-1)}-t\right)\frac1{4-t}=\frac{t((1-\la)t+\la)}{\la(t-1)(4-t)},$$
$$w=w(t)=z((1-t)z+t)=\frac{t((1-\la)t+\la)}{\la(t-1)(4-t)}\left(-\frac{t((1-\la)t+\la)}{\la(4-t)}+t\right)$$
$$=\frac{t^2((1-\la)t+\la)(3\la-t)}{\la^2(t-1)(4-t)^2}.$$
In the case, when $\la\neq0,\infty$ and  there is no cancellation in the above fractional expressions for $z$ and $w$, 
the above formulas yield a parametrized rational quartic $\Gamma_{b1,\la}$. 
It contains no true critical points by Proposition 
\ref{procritic}.  The indeterminacies $(0,0)$, $(1,1)$ and $E$  are its double points of transversal self-intersection, which follows from the parametrization: they correspond respectively to $t=0,\frac\la{\la-1}$, to two roots of the quadratic polynomial 
$t^2-4\la t+4\la$, and to $t=4,\infty$. The value $\la=1$ corresponds to a cancellation, see the argument below, and hence, 
is excluded, and the latter quadratic polynomial indeed has two distinct roots. 

Let us now describe those critical fibers that contain at least one true critical point. Each of them contains either a multiple irreducible component, or  at least two  components, see Proposition \ref{procritic}. Therefore, they correspond to those $\la$ for which some of the fractions in the above 
formulas for $z(t)$ and $w(t)$ become fractions of smaller degrees, i.e., some multipliers there cancel out, so that the resulting 
parametrized curve be of degree less than four. Let us treat the a priori possible cancellation cases. 

Case 1):  the multiplier $t-1$ cancels out with  $(1-\la)t+\la$. This is impossible, since $\la\neq\la-1$. 

Case 2):  the multiplier $t-1$ cancels out with $3\la-t$ in $w(t)$. This happens exactly when $\la=\frac13$. For this $\la$ after cancellation we get 
\begin{equation}(z(t),w(t))= (\frac{t(2t+1)}{(t-1)(4-t)}, -\frac{3t^2(2t+1)}{(4-t)^2}).\label{gb13f}\end{equation}
The curve thus parametrized is contained in $\gbllt$. We claim that it coincides with $\gbllt$. To this end it suffices to show that it is a quartic. Indeed, its intersection points with the infinity line correspond to $t=1, 4,\infty$. The points $t=1, \infty$ 
correspond to transversal intersections, and $t=4$ to a quadratic tangency point. Indeed, as $t\to4$ one has $w\simeq cz^2$, $c\neq0$, by (\ref{gb13f}). Thus, the total intersection index with the 
infinity line is equal to four.  The curve $\Gamma_{b1,\frac13}$ contains no true critical point, by Proposition \ref{procritic}, and hence, no singularities except maybe for the indeterminacies  $(0,0)$, $(1,1)$, $E$. They are transversal self-intersections, as in the above argument. Thus, they are not critical for  $\la=\frac13$ and $\Gamma_{b1,\frac13}$ is a non-critical level curve.
 
Case 3): the degree 1 polynomial $(1-\la)t+\la$ becomes a constant. This happens exactly when $\la=1$. Then 
$$\Gamma_{b1,1}=\{(w-z^2)^2-(w+3z^2)(z-1)(z-w)=0\} =\{ zH(z,w)=0\},$$ 
$$H(z,w)=2z^3-3z^2w-3z^2+6wz-w^2-w.$$
For $\la=1$  parametrization (\ref{wb1}) defines an irreducible cubic 
\begin{equation}\Gamma_{b1,1,*}=\{(\frac t{(t-1)(4-t)}, \frac{t^2(3-t)}{(t-1)(4-t)^2}) \ | \ t\in\oc\}.\label{parcub}\end{equation} 
This together with the above formula implies that $\Gamma_{b1,1,*}=\{H=0\}$. The intersection $\{ z=0\}\cap \Gamma_{b1,1,*}$ 
consists of the points $(0,0)$, $(0,-1)$, $E$, which correspond to $t=0,\infty,-4$ respectively. They are their transversal intersections, and they are regular points of the cubic 
$\Gamma_{b1,1,*}$.  Therefore, $(0,-1)$  is a Morse 
critical point of the function $R_{b1}$. The parameter  value $t$ of the curve $\Gamma_{b1,1,*}$ corresponding to the 
point $(1,1)$ is found from the equation $\frac wz=\frac{t(3-t)}{4-t}=1$: it is its  double root $t=2$. Therefore, $(1,1)$ is a singularity of the curve $\Gamma_{b1,1,*}$ and not a self-intersection. 
The genus   of the rational cubic $\Gamma_{b1,1,*}$ is equal to  $\frac{(3-1)(3-2)}2=1$ minus the sum of $\delta$-invariants of its singularities, and it should vanish. Therefore, $(1,1)$ is the unique singularity, not self-intersection, and its $\delta$-invariant is equal to one. Hence, it is a cusp.

Case 4): the multiplier $4-t$ cancels with either $(1-\la)t+\la$, or $3\la-t$. Each of the latter cancellations 
happens exactly when $\la=\frac43$. Then it cancels out in both $z(t)$ and $w(t)$. After cancellations we get the parametrized curve 
\begin{equation}\gblctl:=\{(z_1(t),w_1(t))=(\frac{t}{4(t-1)}, \frac{3t^2}{16(t-1)}) \ | \ t\in\oc\}\subset\gbll.\label{gb11}\end{equation}
It is a regular conic, and  direct calculation shows that 
\begin{equation}\gblctl=\{ w=\frac{3z^2}{4z-1}\}.\label{gb11f}\end{equation}
The ambient level curve $\gbll$ being a quartic, it contains an additional conic, which we will denote $\gblctd$. Let us find it. 
To this end, we take a small $\delta\in\cc$, set 
$$\la=\la_\delta:=\frac43+\delta,$$
 and for this $\la$ we pass to a new variable $\tau$ so that 
 $$4-t=\delta\tau.$$
 Writing the parametrization of the curve $\gbll$ with $\la=\la_\delta$ by the new parameter $\tau$ and passing to limit, as 
 $\delta\to0$ we get 
 $$(1-\la)t+\la=(1-\frac43-\delta)t+\frac43+\delta=\frac{4-t}3+\delta(1-t)=\delta\frac{\tau-9}3+O(\delta^2),$$
 $$z=\frac{t((1-\la)t+\la)}{\la(t-1)(4-t)}\to z_2(\tau):=\frac{\tau-9}{3\tau},$$
 $$w=zt\frac{3\la-t}{\la(4-t)}\simeq 4z\frac{4-t+3\delta}{\frac43(4-t)}\to w_2(\tau):=3\frac{\tau+3}{\tau}z_2(\tau)=\frac{(\tau-9)(\tau+3)}{\tau^2}.$$
 The above limit curve parametrized by $(z(\tau),w(\tau))$ will be denoted by $\gblctd$. It is contained in $\gblct$, by construction. Let us represent it as a graph. Inverting the function $z=z_2(\tau)$ yields 
 $$\tau=-\frac{9}{3z-1}, \ \ w_2(\tau)=3z\frac{-9+9z-3}{-9}=z(4-3z), \ \ \gblctd=\{ w=z(4-3z)\}.$$
 The union of two distinct conics $\gblctl$ and $\gblctd$ coincides with their ambient quartic $\gblct$, by degree coincidence. 
 Their intersection points are the base points $(0,0)$, $(1,1)$, $E$ and the point $(\frac13, 1)$, which is found from the equation $\frac{3z^2}{4z-1}=z(4-3z)$. All these intersections are transversal, by B\'ezout Theorem. 
 
 To finish the proof of Statement 1) of Theorem \ref{tbg1}, it remains to show that for $\la\neq0,1,\frac43,\infty$ the indeterminacies 
 $(0,0)$, $(1,1)$, $E$ are double points of transverse self-intersection of the curve $\Gamma_\la$. They are self-intersections:  the point $(0,0)$ corresponds to $t=0,\frac\la{\la-1}$, $E$ corresponds to $t=4,\infty$, and $(1,1)$ to roots of the quadratic 
 polynomial $t^2-4\la t+4\la$, and the latter roots are distinct for $\la\neq0,1$. The latter self-intersections are transverse, by 
 $\delta$-invariant argument, as in Case 3): in our case the sum of their $\delta$-invariants is equal to 
 $3=\frac{(4-1)(4-2)}2$, and hence, each $\delta$-invariant is equal to one and the corresponding self-intersection is 
 transversal. Statement 1) is proved. 
 
 Statements 5) and 6) are also proved. Together they imply Statement 4)  for finite critical values $\la$. For $\la=\infty$ 
 Statement 4) follows immediately from (\ref{exo2bnew}) by direct calculation of intersection points of the conic $\{ w+3z^2=0\}$ with 
 the lines $\{z=1\}$, $\{ z=w\}$.

Now for the proof of Theorem \ref{tbg1} it remains to prove its critical indeterminacy point statement 3). Together with Statement 4), it will imply Statement 2). The indeterminacies 
$(0,0)$ and $E$ are both double points of transversal self-intersection for each level curve $\Gamma_\la$ with $\la\neq0,\infty$, which was shown above. For $\la=\infty$ this statement is obvious. The point $(1,1)$ is a double point of transversal self-intersection of each level curve with $\la\neq0,1,\frac43,\infty$, by already proved Statement 1), and hence, is not critical for these $\la$. It is clearly critical for the values $\la=0,1$: it is not a double point of 
transversal self-intersection for each one of the corresponding level curves. For $\la=0$ this is obvious. For $\la=1$ 
this follows from already proved Statement 5): it is a cusp of the component $\Gamma_{b1,1,*}$. It remains to exclude the other a priori potential critical values $\la=\frac43,\infty$ and prove that $(1,1)$ is a double point of transversal self-intersection for the critical corresponding level curves.   For $\la=\infty$ this is obvious. For $\la=\frac43$ this was proved above. 
Theorem \ref{tbg1} is proved.
\end{proof}

\begin{proof} {\bf of Theorem \ref{tbs1}.} For $\la\neq0,1,\frac43,\infty$ the curve $\Gamma_\la$ has two regular local branches at each indeterminacy $(0,0)$, $(1,1)$, $E$, see Theorem \ref{tbg1}, Statement 1); six branches in total. Two branches, namely one branch at $(0,0)$ and one at $E$, with base points $t=0,4$ in (\ref{wb1}), are both tangent to $\gamma$. The other four  local branches are transversal to $\gamma$, by B\'ezout Theorem. Their base points are the values $t$ given by 
(\ref{brb}).   Therefore, $S_\la$ is elliptic and branched over $\Gamma_\la$ 
at  points (\ref{brb}), by Proposition \ref{ctype}, Statement 5). Hence, (\ref{diffb}) is a holomorphic differential on 
$S_\la$. Statement 4) of Theorem \ref{tbs1} follows 
by ellipticity and Proposition \ref{elltr}. 

The preimage $S_{b1,10}:=p_M^{-1}(\{ z=0\})$ is projected to the axis $\{ z=0\}$ as a double covering with 
ramification points $(0,0)$ and $E$, by Proposition \ref{ctype}, Statement 4). Set $S=p_M^{-1}(\Gamma_{b1,1,*})$. 
The only potential branching points of the Riemann surface double covering $p_M:S\to\Gamma_{b1,1,*}$ are the points of 
intersection $\Gamma_{b1,1,*}\cap\gamma$, by the same proposition. These points are $(0,0)$, $E$ and $(1,1)$. 
The two former ones are regular points of the curve $\Gamma_{b1,1,*}$ where it is quadratically tangent to $\gamma$: 
they correspond to the parameter values $t=0,4$, and the latter statements follow immediately by elementary calculation. Therefore, $(0,0)$ and $E$ are  not branching points, by Proposition \ref{ctype},   
Statement 4). The point $(1,1)$ is a cusp of the curve $\Gamma_{b1,1,*}$, thus,  not a self-intersection. Hence, the 
projection $p_M:S\to\Gamma_{b1,1,*}$ has at most one branching point. But if a double covering 
over a Riemann sphere is branched, then it has at least two distinct branching points. Therefore, the covering in question is not branched. The covering surface $S$ is thus a union of two rational curves, since the base $\Gamma_{b1,1,*}$ is rational. 
Statement 2) of Theorem \ref{tbs1} is proved.

Each conical component of the curve $\Gamma_{b1,\frac43}$ is tangent to 
$\gamma$ at $(0,0)$, respectively $E$, and intersects $\gamma$ transversally at $E$, $(1,1)$, 
respectively at $(0,0)$, $(1,1)$. Therefore, its preimage in $M$ is its double covering with two 
branching points being the above transversal intersection points, see Statement 4) of Proposition \ref{ctype}. Statement 3) of 
Theorem \ref{tbs1} is proved. 
\end{proof}

\subsection{Case c). Proof of Theorems \ref{tc}, \ref{tc'}, \ref{tc2}} 
We use the next proposition.
\begin{proposition} \label{locbr} Each curve $\Gamma_{c1,\la}$ with $\la\neq0,\infty$ has two regular local branches at each 
base point, and each of them is tangent to the cubic $\Gamma_{c1,\infty}=\{ 1+w^3-2zw=0\}$ with cubic contact. 
The local branches also have cubic contact between them. The curve $\Gamma_{c1,\la}$ considered as  zero 
divisor of the function $R_{c1}(z,w)-\la$ has no multiple component. 
\end{proposition}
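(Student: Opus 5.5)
By the order three symmetry it suffices to work at the single base point $(1,1)$. The transformation $(z,w)\mapsto(\var z,\var^2w)$, $\var=e^{-\frac{2\pi i}3}$, multiplies $w-z^2$ by $\var^2$, hence leaves $(w-z^2)^3$ unchanged, and it preserves $1+w^3-2zw$ because $\var^3=1$. So it preserves $R_{c1}$ and $\Gamma_{c1,\infty}$, and it permutes the base points $(1,1)$, $(\var,\bar\var)$, $(\bar\var,\var)$ cyclically. At $(1,1)$ I will compute the Newton diagram of the pair $(f,g)=((w-z^2)^3,h^2)$, where $h:=1+w^3-2zw$, in a well-chosen chart, and then apply Proposition \ref{proii} together with fact (ii).

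\textbf{Choice of chart.} At $(1,1)$ we have $dh=3w^2dw-2w\,dz-2z\,dw=dw-2dz\neq0$, so the cubic $\Gamma_{c1,\infty}$ is regular there. Its tangent line coincides with that of $\gamma$, since $d(w-z^2)=dw-2dz$ as well. I take local coordinates $(x,y)$ centered at $(1,1)$ with $y=h$ and $x$ transversal, for instance $x=z-1$. Then $u:=w-z^2=a y+\phi(x)+y(\dots)$ with $a\neq0$, because $du$ and $dh$ are proportional and both nonzero. To find $\phi$, I restrict $h$ to $\gamma$ via $z=1+s$, $w=(1+s)^2$, which gives $h=((1+s)^3-1)^2\sim9s^2$. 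Thus $\gamma$ and the cubic have exactly quadratic contact, and by symmetry of the intersection index $u|_{\{y=0\}}=c x^2(1+O(x))$ with $c\neq0$.

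\textbf{Newton diagram.} Expanding $f-\la g=(ay+cx^2+\dots)^3-\la y^2$ with weights $\deg x=1$, $\deg y=3$, the lowest weighted part is
$$\Phi=c^3x^6-\la y^2=(c^{3/2}x^3-\sqrt\la\, y)(c^{3/2}x^3+\sqrt\la\, y).$$
The other terms $3ac^2yx^4$, $3a^2cy^2x^2$, $a^3y^3$ and the higher corrections all have weight at least $7$. So the Newton diagram of $(f,g)$ is the single edge $[(6,0),(0,2)]$ (equivalently an edge of slope type $(1,3)$). For $\la\neq0,\infty$ the two factors of $\Phi$ are distinct and prime. Hence by (ii) and Proposition \ref{proii} the germ of $\Gamma_{c1,\la}$ at $(1,1)$ splits into two germs $y=\pm\sqrt{c^3/\la}\,x^3+\dots$. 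Each is regular, each has cubic contact with $\{y=0\}=\Gamma_{c1,\infty}$, and their difference is of order $x^3$, so they have cubic contact with each other. The one point I will double-check is that the higher-order corrections in $u$, such as $x^3$ and $xy$, really lie above this edge. This holds since they have weight at least $4$ before cubing.

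\textbf{No multiple component.} Suppose $K$ is a component of multiplicity at least two in the zero divisor of $(w-z^2)^3-\la h^2$, with $\la\neq0,\infty$. On $\gamma$ the function $R_{c1}$ vanishes except at the zeros of $h$, and these are exactly the base points. So $\Gamma_{c1,\la}\cap\gamma$ is contained in the base point set, and by Bézout $K$ meets $\gamma$, hence passes through some base point. But we showed that the local germ there is reduced: $\Phi$ has no multiple factor, so $f-\la g$ is locally a product of two distinct irreducible factors. A multiple global component would give a multiple local factor, a contradiction.

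I expect the main obstacle to be the careful justification that the chart is valid and that $c\neq0$, i.e.\ that the contact of $\gamma$ with the cubic is exactly quadratic; the computation $h|_\gamma=((1+s)^3-1)^2$ settles this.
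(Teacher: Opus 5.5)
Your proof is correct and follows the paper's argument: the same chart ($h=1+w^3-2zw$ together with $z-1$) at $(1,1)$, the same lowest quasihomogeneous part $c^3x^6-\la y^2$ splitting into two distinct prime factors via fact (ii), and essentially the same reducedness argument (you intersect a putative multiple component with $\gamma$ rather than with $\Gamma_{c1,\infty}$, which works equally well since both meet $\Gamma_{c1,\la}$ only at the base points). One harmless slip: $x^3$ has weight $3$, not at least $4$, but this does not matter because the leading term $cx^2$ of $w-z^2$ has weight $2$, so any correction of weight at least $3$ contributes only terms of weight at least $7$ after cubing.
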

\begin{proof} It suffices to prove the first statement of the proposition only at the base point $(1,1)$, due to order three symmetry. 
Take the local coordinates $u=1+w^3-2zw$, $v=z-1$ centered at $(1,1)$. Recall that the cubic  $\Gamma_{c1,\infty}$ 
is quadratically tangent to $\gamma$ at the base points, by regularity (being a graph $\{ z=\frac{1+w^3}{2w}$), order three symmetry and B\'ezout Theorem. 
Therefore, 
$$w-z^2=a(u+bv^2)+\text{h.o.t.}, \ \ \ a,b\neq0,$$
$$(w-z^2)^3-\la(1+w^3-2zw)^2=a^3(u+bv^2)^3-\la u^2+\text{h.o.t.}$$
\begin{equation}=-Q(u,v)+ \text{h.o.t.,} \ \ Q(u,v)=\la u^2-(ab)^3v^6.\label{quasih}
\end{equation}
Here the polynomial $Q(u,v)$ is (3,1)-quasihomogeneous of quasihomogeneous degree $2\times 3=1\times 6=6$, 
and  "h.o.t." means a sum of monomials lying above the Newton diagram of the rest of the expression in question. 
The polynomial $Q$ is a product of two 
prime $(3,1)$-quasihomogeneous polynomials $\sqrt{\la} u\pm(ab)^{\frac32}v^3$ for $\la\neq0,\infty$. Each 
factor corresponds to a regular local branch of the curve $\Gamma_{c1,\la}$, by Statement (ii) in Subsection 1.4. 
The local branches 
in question are clearly tangent to the line $\{ u=0\}$, i.e., to the cubic $\Gamma_{c1,\infty}$,  with cubic contact. The first statement of  Proposition \ref{locbr} is proved. Its second statement follows  by the same argument, since the 
above prime polynomials differ by cubic terms. 
Each irreducible component $C$ of the curve $\Gamma_{c1,\la}$ intersects 
$\Gamma_{c1,\infty}$. Each intersection point is a base point, and hence, one of the above local branches is contained in $C$. The local branches are not multiple, and hence,  $C$ also isn't. The proposition is proved.
\end{proof}

\begin{proof} {\bf of Theorems \ref{tc} and \ref{tc'}.} Theorem \ref{tc} implies Theorem \ref{tc'}, by (\ref{rc1c2m}) and straightforward calculations to find the $M_c$- preimages of the conics from  Theorem \ref{tc} and of their intersections. 
Thus, it suffices to prove Theorem \ref{tc}. 
 The curve $\Gamma_{c1,\la}$ is a sextic. For a generic $\la$ it is  irreducible  (Lemma \ref{irred}) and its only singular points  (if any) are the base points  (Bertini Theorem \cite[vol. 1, chapter 1, section 1]{grh}). Its genus $g$ is equal to 
 $\frac{(6-1)(6-2)}2=10$  minus the sum of $\delta$-invariants  of its germs at the base points, by (\ref{hiron}). The 
 $\delta$-invariants are equal to one and the same number denoted $\delta$, by 
order three symmetry $(z,w)\mapsto(\var z,\bar\var w)$, which fixes $R_{c1}$ and cyclically permutes the base points. 
The genus $g=10-3\delta$ should be equal to either one, or zero, by Proposition \ref{ctype}. Hence, $g=1$, and $\delta=3$, since
   $10$ is not divisible by three. The fact that $\delta=3$ also follows from Proposition \ref{locbr}. 
    Thus, $\Gamma_{c1,\la}$ is an elliptic curve. 
   
\begin{proposition} \label{pifany} A critical level curve $\Gamma_{c1,\nu}$ with $\nu\neq0,\infty$, if any, is a union of three conics permuted by the order three symmetry.  Each of them is tangent to 
the cubic $\Gamma_{c1,\infty}$  at some two base points with cubic contact.
\end{proposition}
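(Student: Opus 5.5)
We take a critical value $\nu\neq0,\infty$ and first locate the critical points on $\Gamma_{c1,\nu}$. By Proposition \ref{locbr}, at each base point the germ of $\Gamma_{c1,\nu}$ is a pair of regular branches with cubic contact. Its quasihomogeneous part $Q(u,v)=\nu u^2-(ab)^3v^6$ in (\ref{quasih}) has no multiple prime factor for $\nu\neq0,\infty$. By Proposition \ref{proii}, Statement 2), the base points are therefore not critical indeterminacies for $\nu$. So $\Gamma_{c1,\nu}$ must contain a true critical point. Generic level curves are elliptic, so Proposition \ref{procritic} (which concerns rational fibrations) does not apply directly. Instead we argue with the genus formula (\ref{hiron}) and the $\delta$-invariant. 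By Proposition \ref{locbr}, $\Gamma_{c1,\nu}$ has no multiple components. The three base points contribute total $\delta$-invariant $9$, the same as for generic $\la$, since their germs are of the same type. A true critical point contributes an extra positive $\delta$-invariant. If $\Gamma_{c1,\nu}$ were irreducible, its genus would be $10-9-\delta_{extra}<1$, hence $0$. By the order three symmetry, non-base singular points come in orbits of three, each contributing at least $1$, unless a singular point is fixed by the symmetry. The only fixed points of $(z,w)\mapsto(\var z,\bar\var w)$ are $(0,0)$, $[1:0:0]$ and $E$. We check directly that these do not lie on $\Gamma_{c1,\nu}$ for $\nu\neq0,\infty$: at $(0,0)$ and $E$ the value of $R_{c1}$ is $0$ or $\infty$, and $[1:0:0]$ is the double point of $\Gamma_{c1,\infty}$. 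Hence $\delta_{extra}\geq3$, and irreducibility gives negative genus, a contradiction. So $\Gamma_{c1,\nu}$ is reducible.

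Next we determine the components. Each component $C$ of the sextic $\Gamma_{c1,\nu}$ passes through base points, and near each base point it contains one or both of the two local branches. Each branch has cubic contact with the cubic $\Gamma_{c1,\infty}$. Let $C$ have degree $k$ and contain $m$ of the six branches. Intersection with $\Gamma_{c1,\infty}$ is concentrated at the base points, so B\'ezout gives $3k=3m$, i.e.\ $k=m$, with each branch contributing exactly $3$; we justify this from the contact order of the branches. The symmetry group $\zz_3$ permutes the components. Suppose first that some component is not $\zz_3$-invariant. Its orbit then has three members of equal degree summing to at most $6$, so each has degree $1$ or $2$. A line has intersection index at most $2$ with the branch-tangent directions and cannot contain a branch with cubic contact to $\Gamma_{c1,\infty}$ where $\Gamma_{c1,\infty}$ is not flexed. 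So the degree is $2$, and the three conics exhaust the sextic. Each conic has degree $2$, hence contains two branches. Two branches at the same base point would force two conics with cubic contact there, giving intersection index at least $3$, which is still allowed. We exclude this instead by symmetry, since the orbit must distribute the six branches evenly over the three base points. So each conic contains one branch at each of two base points, which is exactly the stated tangency configuration.

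Suppose instead that all components are $\zz_3$-invariant. Each component's degree equals its number of branches, and invariance forces this number to be a multiple of $3$. So the only possible splitting is into two invariant cubics, each containing one branch at each base point. The main obstacle is excluding this case. We plan to use the genus count on a generic nearby fibre. Two cubics meet in $9$ points, and at the base points their branches meet with cubic contact, giving $3\cdot3=9$. So the cubics meet nowhere else, and there is no true critical point. This contradicts criticality of $\nu$, given that the base points are non-critical. We should still check that each invariant cubic is smooth away from the base points; if it had a node, its orbit under the symmetry would consist of three singular points, too many for a cubic. With this, the only remaining possibility is three conics, as claimed.
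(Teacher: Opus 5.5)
Your argument follows the paper's route. The irreducible case is ruled out by the genus formula: total $\delta=9$ at the base points, and any further singular point must be one of the three symmetry-fixed points, where $R_{c1}\in\{0,\infty\}$. Lines are excluded by contact order. A pair of invariant cubics is excluded because they meet only at the base points and carry no singular point. What remains is an orbit of three conics. Your bookkeeping ``degree of a component $=$ number of base-point branches it contains'' (B\'ezout with $\Gamma_{c1,\infty}$) organizes the case split a bit more cleanly than the paper's Subcase 3.1.

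Three steps need repair, though none affects the strategy.

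\textbf{One branch per base point on each conic.} Your symmetry argument does not show that each conic contains one branch at each of two base points. If one conic contained both branches at a base point $p$, its two images would contain both branches at the other two base points, which is an equally even distribution. The correct reason is simpler: a degree-two component is an irreducible, hence smooth, conic, so it has only one local branch at each point.

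\textbf{Excluding lines.} This requires that the base points are not inflection points of $\Gamma_{c1,\infty}$, which you assert without checking. The paper verifies it by restricting $1+w^3-2zw$ to the common tangent line $w=2z-1$ at $(1,1)$. This gives $(w-1)^2(w+1)$, i.e.\ only quadratic contact, and the symmetry handles the other two base points.

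\textbf{The invariant-cubic case.} In your last paragraph, a singular point of an invariant cubic could a priori be fixed by the symmetry. Dismiss this by referring back to your check at $[0:0:1]$, $[0:1:0]$, $[1:0:0]$. Also, the bound ``at most one singular point'' comes from the genus formula for an irreducible cubic; it applies to any singular point, not only a node.
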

\begin{proof} Case 1): the curve $\Gamma_{c1,\nu}$ is irreducible. It cannot be multiple, by Proposition \ref{locbr}. 
Thus, it is an irreducible sextic. The base points are not critical for  values $\la\neq0,\infty$, by the same proposition. 
Hence, if  $\nu$ is critical, then $\Gamma_{c1,\nu}$ contains some  true critical points. Their number cannot be greater than one: otherwise the genus $g(\Gamma_{c1,\nu})$ found by (\ref{hiron}) would  be negative, since the  $\delta$-invariants at the three base points are equal to three, while $\frac{(d-1)(d-2)}2=10$. Thus, the critical point is unique and fixed by the symmetry. Therefore, in homogeneous coordinates $[z:w:t]$ it is one of the points $[0:0:1]$, $[0:1:0]$, $[1:0:0]$. But the function $R_{c1}$ takes values 
 0, 0, $\infty$ respectively at the latter points. Hence, $\nu\in\{0,\infty\}$ -- a contradiction. 
 
 Case 2): the curve $\Gamma_{c1,\nu}$ contains at least one line $L$. Then $L$ 
 intersects the cubic $\Gamma_{c1,\infty}$, and only at some base points. At each base point  $p\in L$ it   should contain some of 
 the corresponding local branches in $\Gamma_{c1,\nu}$ from Proposition \ref{locbr}. Hence, $L$ is tangent to $\Gamma_{c1,\infty}$ at $p$ with cubic contact.  But their tangency order is quadratic. Indeed,  it suffices to check this at the base point $(1,1)$, where 
 $L=\{ w=2z-1\}$.  One has $(1+w^3-2zw)_L=1+w^3-w(w+1)=(w-1)^2(w+1)\simeq2(w-1)^2$, as $w\to1$. Hence, the tangency 
 is quadratic, -- a contradiction. 
  
  Case 3):  the sextic $\Gamma_{c1,\nu}$ contains at least two irreducible components, and each of them is non-linear. Hence, 
  their number is at most three. 
  
  Subcase 3.1): some component $C$ is fixed by the order three symmetry, and hence, intersects $\gamma$ by the three base points. Then at each base point $C$ contains exactly one 
  local branch from Proposition \ref{locbr}: presence in $C$ of two local branches at some base point would imply that 
  $\Gamma_{c1,\nu}=C$, and hence, $\Gamma_{c1,\nu}$ is irreducible. The union of the other  components is  symmetry-invariant. Their number, which is equal to either one, or two, should be a divisor of 3. Hence, it is one, invariant component, which we will denote by $C'$. The components $C$ and $C'$ form 
  a sextic. They are cubics intersected only at the three base points, by B\'ezout Theorem and since their intersection index at each base point is equal to three, by Proposition \ref{locbr}. Neither $C$, nor $C'$ contains true critical points, by the same $\delta$-invariant argument, as in Case 1), now applied to $C$, $C'$. In more detail, each component 
  $C$, $C'$ has at most one singular point, by (\ref{hiron}), and hence, it is invariant under the symmetry if it exists. 
  In this case $\nu\in\{0,\infty\}$, as in Case 1), which is not the case by assumption. 
  Therefore, the curve $\Gamma_{c1,\nu}=C\cup C'$ is non-critical, as in Case 1), -- a contradiction.  
  
  Subcase 3.2): the sextic $\Gamma_{c1,\nu}$ is a union of three conics  permuted by the symmetry. Then each conic   
  is tangent to the cubic $\Gamma_{c1,\infty}$ at two base points with cubic contact, by Proposition \ref{locbr},  
  B\'ezout Theorem and since there are no intersections outside the base points.  Proposition  \ref{pifany} is proved.
  \end{proof}
  
  Let us now find the above conics for the functions $R_{c1}$, $R_{c2}$. 
  To simplify calculations, we first find them for the function $R_{c2}$. We use the fact that $R_{c2}$ has order three symmetry given by transformation (\ref{symc2}). The latter transformation is the conjugate of the order three symmetry of the 
  function $R_{c1}$ by the projective transformation $M_c$. It can be also found geometrically from the conditions that 
  it permutes cyclically the base points $(0,0)=[0:0:1]$, $E=[0:1:0]$, $(1,1)=[1:1:1]$ and the intersection points of the tangent lines to $\gamma$ at them. That is, it cyclically permutes the points $[1:0:0]$, $[1:2:0]$, $[\frac12:0:1]$. 
  
  The cubic polynomial in the denominator of the function $R_{c2}$  is equal to 
  $-(w+8z^2)(1+w)+10zw+8z^3$. Its lower $(2,1)$-quasihomogeneous part at $(0,0)$ is 
  $-(w+8z^2)$. Hence, the conic $\mcc_1':=\{ w+8z^2=0\}$ is tangent to $\Gamma_{c2,\infty}$ 
  at $(0,0)$ with at least cubic contact. The same statement holds at the infinite point $E=[0:1:0]$ 
   and is proved analogously by passing to the affine coordinates $(\wt z,\wt w)=(\frac zw, \frac1w)$. 
   The tangency contacts are exactly cubic, by B\'ezout Theorem. The other conics tangent to 
   $\Gamma_{c2,\infty}$ with cubic contact at the other base point pairs are obtained from the conic $\mcc_1'$ by order three symmetry, and 
   we get formulas (\ref{conic2}) for them. 
   
   Let us now check that the union of conics (\ref{conic2}) is a level curve $\Gamma_{c2,\la}$ and find the corresponding 
   value $\la$. It suffices to prove that $R_{c2}\equiv const$ on $\mcc_1'$, by symmetry. Indeed, substituting $w=-8z^2$ to the equation $R_{c2}=\la$ yields 
   $$(w-z^2)^3=-9^3z^6=\la(10zw+8z^3)^2=\la(-80+8)^2z^6=72^2\la z^6,$$
   which holds exactly for $\la=-\frac9{64}$. Thus, the conic $\mcc_1'$ lies in the curve $\Gamma_{c2,-\frac9{64}}$. 
   The intersection of any two conics (\ref{conic2}) consists of one their common tangency base point, where their contact is 
   cubic, and one more point of transversal intersection. The latter transversal intersection points are found by straightforward 
   calculation, and we get formulas for them given by Theorem \ref{tc'}. 
   
   The critical value  of the function $R_{c1}$ different from $0,\infty$ is $\frac{27}{64}$: it is obtained from the above critical value by multiplication by $-3$, see (\ref{rc1c2m}). The equations for the corresponding conics are obtained from 
   the quadratic polynomials defining conics (\ref{conic2}) by the inverse to the projective transformation $M_c$, and we get 
   (\ref{3con}). The indeterminacy point statements of Theorems \ref{tc} and \ref{tc'} follow from Proposition \ref{locbr}. Theorems \ref{tc} and \ref{tc'} are proved.
   \end{proof}
   
   \begin{proof} {\bf of Theorem \ref{tc2}.} Fix a small ball $B\subset\cp^2$ centered at $V=[1:0:0]$, the double point of the curve 
   $\Gamma_{c1,\infty}$,  in the  affine chart $(\wt z,\wt w)=(\frac1z,\frac wz)$ centered at $V$.   We consider that it is disjoint from the conic $\gamma$ and has radius  less than $\frac12$.  The inverse function $p_M^{-1}$ has two holomorphic branches on $B$, 
   since $B$ is disjoint from $\gamma$ and simply connected. Let us denote the latter holomorphic branches by 
   $g_1$ and $g_2$. The normalization, i.e., the parametrizing Riemann surface of the curve $\Gamma_{c1,\infty}$ is 
   the Riemann sphere. The complement $\Gamma^o_{\infty}:=\Gamma_{c1,\infty}\setminus B$ in the normalization 
   is a cylinder with two boundary components lying in $\partial B$, let us denote them by $X_1$, $X_2$. 
    A holomorphic branch of the function $p_M^{-1}$ on $X_1$ extends analytically to the whole normalization of the curve $\Gamma^o_{\infty}$ along paths 
    avoiding the base points. The extension is well-defined and single-valued, including the base points. Indeed, they are  quadratic tangency points of the curve $\Gamma_{\infty}$ with $\gamma$ (B\'ezout Theorem and symmetry) and hence, $p_M^{-1}|_{\Gamma_{\infty}^o}$ has trivial monodromy along circuits around each base point, 
    by Proposition \ref{ctype}, Statement 4). This proves Statement 2) of Theorem \ref{tc2}.

    \begin{remark} \label{rperm} The above extension from $X_1$ to $X_2$ along the cylinder $\Gamma^o_{\infty}$ permutes the holomorphic branches  $g_{1,2}$. Indeed, the cubic $\Gamma_{c1,\infty}$ is the graph of the function $z=\frac{1+w^3}{2w}$. 
     The equation $w=2z_0z-z_0^2$ on a point $P=(z_0,z_0^2)\in\gamma$ for which the tangent line $L_P$ passes 
     through a given point $(z,w)$ of the above graph yields 
     $$z_0^2-\frac{z_0(1+w^3)}{w}+w=0, \ \ z_0\in\{ w^2, \ \frac1w\}.$$
     As $w$ goes along a path from zero to infinity, the point $(z,w)$ of the graph goes from $V$ to $V$. The corresponding 
     values $z_0$ at the boundary points of the path are distinct and equal to $0$ and $\infty$. Therefore, the extension permutes the branches $g_{1,2}$. 
     \end{remark}
     
     {\bf Claim 1.} {\it For every $\la$ big enough the intersection $\Gamma_{c1,\la}\cap B$ consists of two topological cylinders 
     $C_{\pm,\la}$ with boundary curves being closed curves $X_{j,\pm,\la}\subset\partial B$, $j=1,2$, $X_{j,\pm,\la}\to X_{j}$, 
     as $\la\to\infty$. We numerate the boundary components so that $C_{\pm,\la}$ is adjacent to the curves $X_{1,\pm,\la}$ and 
     $X_{2,\mp,\la}$.}
     
     \begin{proof} In the affine chart  $(\wt z,\wt w)$  one has
          $$\Gamma_{c1,\la}=\{(\wt w\wt z-1)^{-\frac32}(\wt z^3+\wt w^3-2\wt w\wt z)=\la^{-\frac12}\}.$$
   The left-hand side of the latter equation has two holomorphic branches on $B$ that differ by sign. Each of them has Morse critical point at $V=(0,0)$. 
   Therefore the preimage of a small number $\la^{-\frac12}$ under each branch is a topological cylinder, by Morse Lemma. 
   This proves the claim. 
   \end{proof}
    
    {\bf Claim 2.} {\it Let $\overline B$ contain no base points. Then for every $\la$ big enough the complement 
    $\Gamma^o_\la:=\Gamma_{c1,\la}\setminus B$ is a union of two topological cylinders $Y_{\pm,\la}$ 
    touching each other at the three base points. The boundary components of each cylinder $Y_{\pm,\la}$ are the above curves $X_{j,\pm,\la}$, $j=1,2$.} 

     \begin{proof} Let $B'\Subset B$ be a smaller ball centered at $V$, let $\Gamma'_{c1,\infty}=\Gamma_{c1,\infty}\setminus B'$: this is a topological cylinder taken with boundary.  Set 
     $$u:=-\frac{1+w^3-2zw}{2w}=z-\frac{1+w^3}{2w}: \ \ \ \Gamma_{c1,\infty}=\{ u=0\}.$$
     The coordinates $(u,w)$ are global holomorphic coordinates on a neighborhood  of the curve  
     $\Gamma_{c1,\infty}'$, and the projection $p_w:(u,w)\mapsto w$ sends $\Gamma_{c1,\infty}'$ 
 bijectively onto a closed bounded topological annulus  $\mca\subset\cc\setminus\{0\}$ as the graph of function $z=\frac{1+w^3}{2w}$. For every $\la$ big enough the intersection $\Gamma_{\la,\mca}:=\Gamma_{c1,\la}\cap p_w^{-1}(\mca)$ is a  double covering over $\mca=\Gamma'_{c1,\infty}$ under the projection $p_w$. This follows from construction, since the equation $R_{c1}(z,w)=\la$ 
 can be rewritten as 
 $$u^2=\frac{f(u,w)}\la, \ \ \ f(u,w)=\frac{(w-z^2)^3}{4w^2}.$$
 We claim that the normalization of the curve $\Gamma_{\la,\mca}$ consists of two components, each bijectively projected onto 
 $\mca$. Indeed, the only a priori possible branching points are the base points in $\mca$, i.e., points where $u=w-z^2=0$. 
  But 
 the covering is not branched there, which follows from Proposition \ref{locbr}: it implies that 
 there are two local branches of the covering over a small neighborhood of each base point.  Finally, the above double covering projection sends $\Gamma_{\la,\mca}$ onto the topological cylinder $\Gamma'_{c1,\infty}=\mca$ without ramification, and the preimage 
 of each boundary component consists of two closed curves, by Claim 1. Therefore, the covering surface $\Gamma_{\la,\mca}$ 
 is a disjoint union of two cylinders, each 
 bijectively projected onto the base. For every $\la$ big enough the complement $\Gamma_{c1,\la}\setminus B$ is obtained from 
  the curve $\Gamma_{\la,\mca}$ by cutting off four small topological cylinders  adjacent to its four boundary curves and to the curves $X_{j,\pm,\la}$.  This implies that the complement is itself also a disjoint union of two topological cylinders with boundary 
  components $X_{j,\pm,\la}$. The claim is proved. 
  \end{proof}

     For the proof of the statement of Theorem \ref{tc2} saying that for each regular $\la$ the curve 
     $S_\la$ consists of two components bijectively projected to $\Gamma_{c1,\la}$, we have to show that the inverse $p_M^{-1}$ has two holomorphic branches on the normalization of the curve $\Gamma_{c1,\la}$. Let us construct the latter branches. 
     Take an initial holomorphic branch $g$ of the inverse $p_M^{-1}$ on $\overline B$, say $g=g_1$, and its restriction 
      to the cylinder $C_{+,\la}\subset B$. It has analytic extension through $X_{1,+,\la}$ to the cylinder $Y_{+,\la}$ from Claim 2, since its only a priori possible singularities are the base points. They are points of quadratic tangency of the curve 
      $\Gamma_{c1,\la}$ with $\gamma$, since this is true for $\la=\infty$ and each its local branch  has cubic contact 
      with $\Gamma_{c1,\infty}$ there. Hence, they are not singularities, as in the above proof of Statement 2) of Theorem \ref{tc2}. The restriction to $X_{2,+}$ of thus extended holomorphic branch of $p_M^{-1}|_{Y_{+,\la}}$ is $g_2\neq g_1$, as in Remark \ref{rperm}.  Let us  extend the new branch $g_2$ through $X_{2,+}$ to the cylinder $C_{-,\la}$, 
      which is adjacent to $X_{2,+}$ and $X_{1,-}$. Then we extend it to  $Y_{-,\la}$, and the restriction to 
      $X_{2,-}$ of thus extended branch becomes $g_1$. Then we extend it to the initial adjacent cylinder $C_{+,\la}$, Thus, we returned back and get the initial branch $g_1$. This yields a global holomorphic branch of the inverse $p_M^{-1}$ on $\Gamma_{c1,\la}$. 
      Starting from the other local branch $g_2$ yields another holomorphic branch. Thus, $S_\la$ consists of two elliptic components biholomorphically projected to $\Gamma_\la$. Each of them is $F$-invariant, and $F$ acts as a torus translation there, by Proposition \ref{elltr}. Statement 1) of Theorem \ref{tc2} is proved.

     Each conic from (\ref{3con}) is quadratically tangent to $\gamma$ at two base points. It intersects 
     $\gamma$ at no other point, by B\'ezout Theorem. Therefore, the inverse 
    $p_M^{-1}$ has two global holomorphic branches on the conic in question, by triviality of monodromy of its extension around a 
    quadratic tangency point. This implies Statement 3) of Theorem \ref{tc2}. Statement 4) then follows immediately 
    by (\ref{rc1c2m}). 
    \end{proof}

   \subsection{Case d). Proof of Theorems \ref{tdg}, \ref{tdcr} and \ref{td}}
   
   \begin{proof} {\bf of Theorem \ref{tdg}.} 
   Recall that 
$$R_{d}(z,w)=\frac{(w-z^2)^3}{(w+8z^2)(z-1)(w+8z^2+4w^2+5wz^2-14zw-4z^3)},$$
$$\gdll=\{ R_d=\lambda\}=\{ \mcp_\la(z,w)=0\},$$
 \begin{equation} \mcp_\la(z,w):= (w+8z^2)(z-1)(w+8z^2+4w^2+5wz^2-14zw-4z^3)-\la^{-1}(w-z^2)^3.\label{mcpl}\end{equation}
 {\bf Motivation of construction of rational parametrization of the curve $\gdll$.} Consider the set $\Sigma$ of three common points of the curves $\gdll$: 
 $$\Sigma=\{(0,0), (1,1), E\}, \ \ E=[0:1:0] \text{ is the infinite point of   } \gamma=\{ w=z^2\}.$$ 
 Recall that the curves $\gdll$ have degree 6. 
 Our goal is to construct a one-parametric family of algebraic curves $\mcq_t$ of a given degree, also containing $\Sigma$, and having big enough intersection index with $\gdll$ at $\Sigma$ so that there exists exactly one extra intersection point $D(t)\in \mcq_t\cap\gdll$ lying outside $\Sigma$. Then the correspondence $t\mapsto D(t)$ will be a rational parametrization of the curve $\gdll$.  The curves $\mcq_t$ will be the cubics 
$$\mcq_t:=\{ G_t(z,w)=0\},$$ 
\begin{equation} G_t(z,w)=w+8z^2+4w^2+5wz^2-14zw-4z^3+t(w-z^2)(w-z).\label{cubp}\end{equation}
 Let us describe how the above cubics were found. Using Newton diagrams one can show that for a generic $\la\in\cc^*$ 
 the germs of the  curve $\Gamma_{d,\la}$ at the base points $(0,0)$, $(1,1)$, $E$ satisfy the statements of Theorem 
 \ref{locind}:  
 
 - the germ  at $(0,0)$ consists of two regular local branches tangent 
 to the conic $C:=\{ w+8z^2=0\}$ with at least cubic contact, see Proposition \ref{2-cubic};
  
 - the germ at  $E$  is a union of three 
 regular local branches: one transversal to the infinity line; two tangent to the 
 conics $C$, respectively $\{4w+5z^2=0\}$, with at least cubic contact, see Proposition \ref{45inf} below;
 
 - the germ at $(1,1)$ is a union of three regular pairwise transversal  branches, see Proposition \ref{lgen} below. 
 
 We construct a pencil of cubics $\mcq_t\subset\cp^2$, $t\in\oc$, satisfying the following conditions; here by $G_t(z,w)$ we denote their defining polynomials:
 
(i) Each cubic $\mcq_t$ passes through $(0,0)$ and has at least cubic contact with the conic $C$ in the following sense: the restriction  $G_t|_C$ of its defining polynomial has trivial 2-jet, i.e., 
$(0,0)$ is its zero of order at least three. 

(ii) Each $\mcq_t$ passes through $(1,1)$ and the corresponding polynomial $G_t(z,w)$ has trivial 1-jet there.

(iii) Each $\mcq_t$ intersects the infinity line at $E$ with index at least two.

Each of conditions (i), (ii) is given by three linear equations on the coefficients of a homogeneous polynomial defining $\mcq_t$, and (iii) is given by two equations. The homogeneization of the cubic polynomial in the denominator of the integral $R_d$, which defines the cubic $\mcs$, see (\ref{cands}), 
satisfies them. It appears that the homogeneization of the polynomial $(w-z^2)(w-z)$ also satisfies them. Therefore, so do all the cubics from the pencil 
(\ref{cubp}) generated by them. Statements (i)--(iii) and the above properties of germs of the curves $\gdll$ at $\Sigma$ together imply that the intersection indices of generic curves $\gdll$ and $\mcq_t$ are equal to at least 6 at $(0,0)$,  6 at $(1,1)$ and 
5 at $E$. Thus, their intersection index at $\Sigma$ is equal to at least 17.  On the other hand, their total intersection index is equal to $3\times6=18$, by B\'ezout Theorem, and 18-17=1. Therefore, there is at most one extra intersection point $D(t)\in (\mcq_t\cap\gdll)\setminus\Sigma$ 
for generic $\la$ and $t$.  Let  us find it explicitly as a function 
of $t\in\oc$. This will yield a  rational parametrization of the curve $\gdll$. 

Let $(z,w)$ denote coordinates of a point of intersection $\mcq_t\cap\gdll$. They satisfy the system of equations 
\begin{equation} \begin{cases} w+8z^2+4w^2+5wz^2-14zw-4z^3+t(w-z^2)(w-z)=0\\
(w-z^2)^2+\la t(w+8z^2)(z-1)(w-z)=0.\end{cases}\label{cases1}\end{equation}
The first equation in (\ref{cases1}) is the equation $G_t(z,w)=0$. The second one is the equation obtained from the equation $R_d(z,w)-\la=0$ as follows. First we replace the cubic polynomial in the denominator of $R_d$ by $-t(w-z^2)(w-z)$, 
see the first equation. Then we cancel $w-z^2$. Multiplying the equality thus obtained by its denominator we get the second equation in (\ref{cases1}). 

We now solve system (\ref{cases1}) and get a rational parametrization of the curve $\gdll$. 
To do this, we rewrite system (\ref{cases1}) in the new variables 
$$(z,u), \ \ \ u:=w-z^2:$$
\begin{equation}\begin{cases} u+9z^2+4(u+z^2)^2+5(u+z^2)z^2-14z(u+z^2)\\ 
-4z^3+tu(u+z(z-1))=0\\ \\ 
u^2+\la t(u+9z^2)(z-1)(u+z(z-1))=0.\end{cases}\label{cases2}\end{equation}
Rewriting the left-hand sides in (\ref{cases2}) as quadratic polynomials in $u$ yields
\begin{equation} \begin{cases}(4+t)u^2+(z-1)((13+t)z-1)u+9z^2(z-1)^2=0\\
(1+\la t(z-1))u^2+\la tz(z-1)(10z-1)u+9\la t z^3(z-1)^2=0.\end{cases}\label{cases3}\end{equation}
Subtracting the first equation in (\ref{cases3}) multiplied by $\la t z$ from the second one cancels the $u$-free term and 
after dividing by $u$ yields 
$$(-\la t(3+t)z+1-\la t)u-\la t(3+t)z^2(z-1)=0,$$
\begin{equation} u=\frac{\la t(3+t)z^2(z-1)}{-\la t(3+t)z+1-\la t}.\label{uform}\end{equation}
Substituting (\ref{uform}) to the second equation in (\ref{cases3}) yields
$$(1+\la t(z-1))\frac{\la^2 t^2(3+t)^2z^4(z-1)^2}{(-\la t(3+t)z+1-\la t)^2}+\frac{\la^2t^2(3+t)z^3(z-1)^2(10z-1)}{-\la t(3+t)z+1-\la t}$$
$$+9\la t z^3(z-1)^2=0.$$
Cancelling $\la t z^3(z-1)^2$ and multiplying by $(-\la t(3+t)z+1-\la t)^2$ yields 
$$\la t(1+\la t(z-1))(3+t)^2z+\la t(3+t)(-\la t(3+t)z+1-\la t)(10z-1)$$
$$+9(-\la t(3+t)z+1-\la t)^2=0.$$
The $z^2$-term in the above equality cancels out. The remaining terms are
$$z\left(-(3+t)^2\la^2t^2+\la t(3+t)^2+\la t(3+t)(\la t(3+t)-10(\la t-1))+18\la t(3+t)(\la t-1)\right)$$
$$+\la t(3+t)(\la t-1)+9(\la t-1)^2=0.$$
Simplifying this equality yields
$$z(\la t(3+t)^2-10\la t(3+t)(\la t-1)+18\la t(3+t)(\la t-1))+(\la t-1)(\la t^2+12\la t-9)=0.$$
This yields  formula (\ref{forztd}) for $z$:
\begin{equation}z=-\frac{(\la t-1)(\la t^2+12\la t-9)}{\la t(3+t)((8\la+1)t-5)}.\label{forztdbis}\end{equation}
Now let us find $u=u(t,\la)$ and $w$ by substituting (\ref{forztdbis}) to (\ref{uform}). One has 
$$z-1=-\frac{(\la t-1)(\la t^2+12\la t-9)+\la t(3+t)((8\la+1)t-5)}{\la t(3+t)((8\la+1)t-5)}$$
\begin{equation}=-\frac{(9\la^2+\la)t^3+(36\la^2-3\la)t^2-36\la t+9}{\la t(3+t)((8\la+1)t-5)},\label{z-1form}\end{equation}
$$-\la t(3+t)z+1-\la t=(\la t-1)\left(\frac{\la t^2+12\la t-9}{(8\la+1)t-5}-1\right)$$
\begin{equation}=\frac{(\la t-1)(\la t^2+(4\la-1)t-4)}{(8\la+1)t-5}=\frac{(\la t-1)^2(t+4)}{(8\la+1)t-5}.\label{fordenu}\end{equation}
Substituting formulas (\ref{forztdbis})--(\ref{fordenu}) to (\ref{uform}) we get (\ref{wformd}): 
$$u=-\frac{(\la t-1)^2(\la t^2+12\la t-9)^2((9\la^2+\la)t^3+(36\la^2-3\la)t^2-36\la t+9)}{\la^2t^2(3+t)^2((8\la+1)t-5)^2(\la t-1)^2(t+4)}$$
\begin{equation}=-\frac{(\la t^2+12\la t-9)^2((9\la^2+\la)t^3+(36\la^2-3\la)t^2-36\la t+9)}{\la^2t^2(3+t)^2((8\la+1)t-5)^2(t+4)},\label{forut}\end{equation}
$$w=u+z^2=\frac{(\la t^2+12\la t-9)^2}{\la^2t^2(3+t)^2((8\la+1)t-5)^2(t+4)}\left((\la t-1)^2(t+4)\right.$$
$$\left.-((9\la^2+\la)t^3+(36\la^2-3\la)t^2-36\la t+9)\right)$$
$$=-\frac{(\la t^2+12\la t-9)^2((8\la^2+\la)t^3+(32\la^2-\la)t^2-(28\la+1)t+5)}{\la^2t^2(3+t)^2((8\la+1)t-5)^2(t+4)}$$
$$=-\frac{(\la t^2+12\la t-9)^2(\la t^2+4\la t-1)}{\la^2t^2(3+t)^2((8\la+1)t-5)(t+4)}.$$
The parametrization (\ref{forztd}), (\ref{wformd}) has degree six in the sense that the intersection of its image with 
a complex line corresponds to six values of the parameter $t$ taken with multiplicities; this holds if and only if there are no cancellations in fractions (\ref{forztd}), (\ref{wformd}) reducing the degree. If this holds and in addition the sextic $\Gamma_{d,\la}$ is irreducible, this yields a one-to-one parametrization of its normalization. Below we show that this is always the case if there are no cancellations. 
 
 In the opposite case, when there are cancellations reducing the degree,  the curve $\Gamma_{d,\la}$ is reducible and the  image of the above parametrization is its irreducible component.  Let us describe the cancellation cases and prove  Statements 2)--4) of Theorem 
 \ref{tdg}. 
 
 For $\la=0$ the  curve $\Gamma_{d,\la}$ is the triple conic $\gamma$. 
For $\la=\infty$ it is the zero locus of the denominator in $R_d$, which consists of three irreducible components: 
the conic $\{ w+8z^2=0\}$, the line $\{ z=1\}$ and the rational cubic $\{ w+8z^2+4w^2+5wz^2-14zw-4z^3=0\}$.  
Its rational parametrization is given in \cite[formula (7.14)]{grat}.  This proves Statement 4) of Theorem \ref{tdg}.

Case 1): the factor $t+3$ in the denominator in (\ref{forztd}), (\ref{wformd}) cancels with a factor in the numerator. 

Subcase 1.1): it cancels with $\la t-1$. This happens exactly for $\la=-\frac13$. 
Let us calculate all the factors for this $\la$. One has 
$$\la t-1=-\frac13(t+3), \ \la t^2+12\la t-9=-\frac13(t^2+12t+27)=-\frac13(t+3)(t+9),$$ 
$$(8\la+1)t-5=-\frac53(t+3), \ \la t^2+4\la t-1=-\frac13(t^2+4t+3)=-\frac13(t+3)(t+1).$$
Substituting the above formulas to (\ref{forztd}), (\ref{wformd}) yields the parametrization
\begin{equation}z=z_1(t)=-\frac{t+9}{5t}, \ \ w=w_1(t)=-\frac{(t+9)^2(t+1)}{5t^2(t+4)}=-5z_1^2(t)\frac{t+1}{t+4}\label{pargd2}
\end{equation}
of an irreducible component $\Gamma_{d,-\frac13,1}$ of the curve $\Gamma_{d,-\frac13}$. Substituting the inverse function $t=-\frac{9}{5z+1}$ to 
the above formula for $w_1$ yields the equation $w=-\frac{z^2(5z-8)}{4z-1}\}$ of the latter component.

Let us show that the curve $\Gamma_{d,-\frac13}$ consists of the rational cubic $\Gamma_{d,-\frac13,1}$ and another 
rational cubic $\Gamma_{d,-\frac13,2}$ and find the second cubic. This is done analogously to the rescaling arguments in Case b). 
Let us choose a small $\delta\in\cc$ and introduce a rescaled parameter $\tau$, set 
$$\la=-\frac13+\delta, \ \ t+3=\delta\tau.$$
As $\delta\to0$, one has $t=-3+\delta\tau\to -3$, $\la t\to1$, $t+4\to1$, 
$$\la t-1=-\frac{\delta}3(\tau+9+O(\delta)), \ \ \la t^2+12\la t-9=-\delta(2\tau+27+O(\delta)),$$
$$(8\la+1)t-5=-\frac53(t+3)+8\delta t=-\frac{\delta}3(5\tau+72).$$
Substituting the latter formulas to (\ref{forztd}) and (\ref{wformd}) and taking limit yields a curve denoted $\Gamma_{d,-\frac13,2}$ rationally parametrized by (\ref{gd132}): 
$$z\to z_2(\tau):=\frac{(\tau+9)(2\tau+27)}{\tau(5\tau+72)},$$
\begin{equation}w\to w_2(\tau):=\frac{(2\tau+27)^2(2\tau-9)}{\tau^2(5\tau+72)}=z_2^2(\tau)\frac{(2\tau-9)(5\tau+72)}{(\tau+9)^2}.\label{wstar}\end{equation}
Thus parametrized limit curve is clearly a cubic. Let $\mathcal P(z,w)$ denote a cubic polynomial vanishing on $\Gamma_{d,-\frac13,2}$. Let us show that it takes the form (\ref{gd213}) up to constant factor. To this end let us analyze the germs of 
the cubic $\Gamma_{d,-\frac13,2}$ at $(0,0)$, $(1,1)$ and the infinite point $E=[0:1:0]$. The only parameter value corresponding to $(0,0)$ is $\tau=\tau_0:=-\frac{27}2$. The corresponding germ of parametrized cubic   $\Gamma_{d,-\frac13,2}$ is clearly regular and 
tangent to the conic $\{ w+8z^2=0\}$ with at least triple contact. This follows by straightforward calculation of its 2-jet at 
$\tau_0$, see (\ref{wstar}).  Therefore the polynomial $\mcp$ can be normalized by constant factor to have the lower $(2,1)$-quasihomogeneous part equal to $w+8z^2$.  The cubic $\Gamma_{d,-\frac13,2}$ passes through the point $E$, and the corresponding parameter value  is clearly $\tau=0$. Its germ at $E$ is clearly regular and tangent to the conic $\{ w+8z^2=0\}$ with at least triple contact. 
This follows again by (\ref{wstar}), passing to the affine coordinates $(\wt z,\wt w)=(\frac zw, \frac1w)$ centered at $E$. This implies that rewriting the polynomial $\mcp$ in the  coordinates 
$(\wt z,\wt w)$ and multiplying by $\wt w^3$ yields a polynomial in 
$(\wt z,\wt w)$ with lower (2,1)-quasihomogeneous part proportional to $\wt w+8\wt z^2$. Returning back to 
the old coordinates $(z,w)$ this yields that $\mcp$ takes the form
\begin{equation}\mcp(z,w)=w+8z^2+aw(w+8z^2)+bz^3+cwz,  \ \ \ a,b,c\in\cc.\label{mcpd2}\end{equation}
Let us find the unknown coefficients. The cubic $\Gamma_{d,-\frac13,2}$ intersects the infinity line at the point $E$ and at the point $[8:7:0]$. The latter point corresponds to the parameter $\tau=-\frac{72}5$ and is found by substituting this value to 
$$\frac{z_2(\tau)}{w_2(\tau)}=\frac{\tau(\tau+9)}{(2\tau+27)(2\tau-9)}|_{\tau=-\frac{72}5}=\frac{(-72)(-72+45)}{(-144+5*27)(-144-5*9)}=\frac87.$$
Therefore, the cubic homogeneous part $8awz^2+bz^3$ vanishes along the line $\{ 8w=7z\}$ and $b=-7a$. 

The union $\Gamma_{d,-\frac13,1}\cup\Gamma_{d,-\frac13,2}$ of distinct parametrized cubics coincides with the ambient 
level curve $\Gamma_{d,-\frac13}$, since it is a sextic. Its germ at $(1,1)$ should have multiplicity at least three at $(1,1)$, i.e., its intersection index at $(1,1)$ with a generic line should be equal to at least three. This follows from the fact that this is true for 
the zero and pole divisors of the integral $R_d$. Indeed, for the zero divisor, the triple conic $\{ w-z^2=0\}$, this is obvious. 
The pole divisor consists of the cubic $\mcs$ having a cusp at $(1,1)$, see \cite[subsection 7.6]{grat},  the line $\{ z=1\}$ 
and the conic $C$. The cubic $\Gamma_{d,-\frac13,1}$ 
has regular germ at $(1,1)$. Therefore, the cubic $\Gamma_{d,-\frac13,2}$ should have multiplicity at least two at $(1,1)$, 
i.e., the polynomial $\mcp$ should vanish there together with its both derivatives. This yields the following system of equations on the coefficients $a$, $c$, $d$: 
\begin{equation}\begin{cases} 1+8+9a-7a+c=0\\
1+(2+8)a+c=0\\
16+16a-21a+c=0.\end{cases}\end{equation}
The solution is $a=1$, $c=-11$. This yields (\ref{gd213}). 

Subcase 1.2): the factor $t+3$ cancels with a factor in the numerator of some of $z(t)$, $w(t)$ that is different from $\la t-1$. The polynomial $\la t^2+12\la t-9$ vanishes at $t=-3$ for only one $\la$; this holds for $\la=-\frac13$, as was shown above. 
Similarly the polynomial $\la t^2+4\la t-1$ vanishes at $t=-3$ only for $\la=-\frac13$. Therefore, any cancellation of the factor 
$t+3$ happens only for $\la=-\frac13$. 

Case 2): the factor $t+4$ cancels with a factor in the numerator of $w(t)$. 

Subcase 2.1): it cancels with a factor of the quadratic polynomial $\la t^2+12\la t-9$. This holds if and only if  the latter polynomial vanishes at $t=-4$, i.e., $\la=-\frac9{32}$. Let us calculate the  factors in the denominators and numerators for this $\la$. One has 
$$\la t-1=-\frac1{32}(9t+32), \ \  \la t^2+12\la t-9=-\frac9{32}(t^2+12 t+32)=-\frac9{32}(t+4)(t+8),$$
$$(8\la+1)t-5=-\frac54(t+4), \ \la t^2+4\la t-1=-\frac1{32}(9t^2+36t+32)=-\frac1{32}(3t+4)(3t+8).$$
Substituting these formulas to (\ref{forztd}) and (\ref{wformd}) yields the parametrized rational quartic $\Gamma_{d,-\frac9{32},1}$ given by (\ref{gd91}). Thus, the sextic $\Gamma_{d,-\frac9{32}}$ is a union of the quartic 
$\Gamma_{d,-\frac9{32},1}$ and a conic $\Gamma_{d,-\frac9{32},2}$ that a priori needs not be regular. The latter conic 
$\Gamma_{d,-\frac9{32},2}$ can be found as in Subcase 1.1) by taking a small $\delta\neq0$, setting 
$\la=\la_{\delta}=-\frac9{32}+\delta$ and introducing a new parameter $\tau$, $t+4=\delta\tau$. A calculation similar to that done in Subcase 1.1) shows that after the latter substitutions to the expressions $z(t)$, $w(t)$, thus obtained parametrization 
of the curve $\Gamma_{d,\la_{\delta}}$ by the parameter $\tau$ converges to a parametrization of the conic 
$\Gamma_{d,-\frac9{32},2}$ from Theorem \ref{tdg}, see (\ref{starcon}). 

Let us present another argument, which allows to find  $\Gamma_{d,-\frac9{32},2}$ via intersection indices.  
A generic  curve $\Gamma_{d,\la}$  should have: 

-  intersection index at least 6 with the conic $C=\{ w+8z^2=0\}$ at $(0,0)$, since it has two regular local branches at $(0,0)$ 
tangent to $C$ with contact of order at least three, see Proposition \ref{2-cubic} below;

- intersection index at least three with every line through $E$, see Proposition \ref{45inf} below;

- intersection index at least three with every line through the point  $(1,1)$, since it has three regular local branches there, see Proposition \ref{lgen} below. 

Passing to limit, we get that the critical level curve $\Gamma_{d,-\frac9{32}}$ also satisfies the above intersection index inequalities. The quartic $\Gamma_{d,-\frac9{32},1}$ has 

- intersection index at least 3 with $C$ at $(0,0)$; 
 
- two regular  branches at $E$ tangent to $C$, one with at least cubic contact; 

- intersection index two with a generic line through $(1,1)$. 

This follows from its parametrization (\ref{gd91}): the parameter value corresponding 
to $(0,0)$ is $t=-8$; the ones corresponding to $E$ are $t=0 ,-3$, and there are two 
parameter values corresponding to $(1,1)$ with multiplicities; below we show that the latter values coincide. 

The above intersection index and contact inequalities on $\Gamma_{d,-\frac9{32},1}$ are equalities, by B\'ezout Theorem applied to its  intersection with $C$. 
Together  with the previous inequalities, they imply 
 that the conic  $\Gamma_{d,-\frac9{32},2}$ should 
 
 - have intersection index at least three with the conic $C$ at $(0,0)$; 
  
  - pass through the point $(1,1)$; 
  
  - pass through the point $E$. 
 
 These conditions define  the conic  $\Gamma_{d,-\frac9{32},2}$ uniquely, by B\'ezout Theorem, and it is given by the 
 equation $\{ w+8z^2-9zw=0\}$. 
 
 Subcase 2.2): $t+4$ cancels with a factor of the polynomial $\la t^2+4\la t-1$, which means that $-4$ is its root. This is impossible, since its value at $-4$ is equal to  $\la 4^2-4^2\la-1=-1$. 
 
 Case 3): $(8\la+1)t-5$ cancels with some factor of the numerator in some of the fractions (\ref{forztd}), (\ref{wformd}). 
 
 Subcase 3.1): it cancels with $\la t-1$. This happens exactly when $8\la+1=5\la$, i.e., $\la=-\frac13$. This subcase is covered by Case 1).
 
 Subcase 3.2): it cancels with a factor of  the polynomial $\la t^2+12\la t-9$. This means that $\frac5{8\la+1}$ is its root. This condition is a quadratic equation in $\la$. We already know its two solutions: $\la=-\frac13, -\frac9{32}$, see  Cases 1) and 2). Therefore, this subcase is covered by Cases 1) and 2). 
 
 Subcase 3.3): it cancels with a factor of the polynomial $\la t^2+4\la t-1$, i.e.,  $\frac5{8\la+1}$ is its root, that is, 
 $$\la\frac{25}{(8\la+1)^2}+\frac{20\la}{8\la+1}-1=0, \ \ \text{i.e.,} \ \ 25\la+20\la(8\la+1)-(8\la+1)^2=0.$$
 The latter quadratic equation can be rewritten as $96\la^2+29\la-1=(3\la+1)(32\la-1)=0$, and hence, has two roots:    $\la=-\frac13$, which covered by Case 1), and $\la=\frac1{32}$, which is not covered by Cases 1), 2).  For $\la=\frac1{32}$  the  parametrization (\ref{forztd}), (\ref{wformd}) still has degree six. Indeed, the fraction $w(t)$, see  (\ref{wformd})  has the only cancellation described above and thus, has degree 5. 
 The fraction $z(t)$, see (\ref{forztd}), has no cancellations: all the possible cancellations, which are described above, correspond to $\la=-\frac13, -\frac9{32}$. But the denominator in (\ref{forztd}) contains the factor $(8\la+1)t-5$, 
 which does not enter the denominator of the degree 5 fraction (\ref{wformd}), being cancelled there. Thus, a generic linear combination of  
 fractions (\ref{forztd}), (\ref{wformd}) has degree six. 
 
 Parametrization (\ref{forztd}), (\ref{wformd}) is proved. For the proof of Statement 1) of Theorem \ref{tdg} it remains to show that if it has no cancellation  reducing degree, i.e., $\la\neq0,-\frac13, -\frac9{32}, \infty$, then it is a one-to-one parametrization of the sextic $\Gamma_{d,\la}$ up to self-intersections. Namely, we have to forbid the case, when it is a covering of degree $k\geq2$  
 over a curve of degree $\frac6k$. The  parameter value $t=-4$ is the unique value that 
 corresponds to a  local branch of its image at $E$ that is transversal to the infinity line: the other values $t=0,-3$, which are sent to $E$,  correspond to   branches tangent to it. The parametrization has nonzero derivative at $t=-4$, since $w(t)$ has a simple pole there. Hence, it is a covering of degree one over the normalization of the  sextic $\Gamma_{d,\la}$.  Statements 2)--4) of Theorem \ref{tdg} are proved above.  Theorem \ref{tdg} is proved.
 \end{proof}
 
 \begin{proof} {\bf of Theorems \ref{tdcr} and \ref{locind}.}  First we study local behavior of level curves at indeterminacies and prove Theorem \ref{locind} and 
 Statement 6) of Theorem \ref{tdcr}.  Then we find the true critical points and prove Statements 1)--5) of Theorem \ref{tdcr}. 
 
 \begin{proposition} \label{procusp} The quartic $\Gamma_{d,-\frac9{32},1}$ has a cusp at $(1,1)$. It corresponds to the 
 parameter value $t=-\frac{16}7$.
 \end{proposition}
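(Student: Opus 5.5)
The plan is to work directly with the rational parametrization (\ref{gd91}) of the quartic $\Gamma_{d,-\frac9{32},1}$ and analyse the germ of the parametrized curve at the parameter values sent to $(1,1)$.

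First I will find all parameter values $t$ with $z_3(t)=1$. The equation $-(9t+32)(t+8)=40t(t+3)$ reduces to $49t^2+224t+256=0$, that is, $(7t+16)^2=0$. So there is a unique parameter value $t_0=-\frac{16}7$, and it is a double root. Substituting $t_0$ into the second form of $w_3$ in (\ref{gd91}) gives $3t_0+8=\frac87$, $3t_0+4=-\frac{20}7$ and $9t_0+32=\frac{80}7$. Hence $w_3(t_0)=-1\cdot\frac{40\cdot(8/7)(-20/7)}{(80/7)^2}=1$. Thus $t_0$ is the only parameter value mapped to $(1,1)$. This also confirms the earlier remark in the proof of Theorem \ref{tdg} that the two parameter values over $(1,1)$ coincide.

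Consequently the germ of $\Gamma_{d,-\frac9{32},1}$ at $(1,1)$ is irreducible: it is the image of one parameter germ, and the parametrization is generically injective because the quartic has degree four. Hence $(1,1)$ is not a self-intersection. To show it is singular, I will check that $w_3-1$ also vanishes to order at least two at $t_0$. Concretely, the numerator $-(t+8)^2(3t+8)(3t+4)-40t^2(t+3)^2$ should be divisible by $(7t+16)^2$; this is a routine polynomial division. Then the derivative of the parametrization vanishes at $t_0$, so the germ is singular.

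The main step is to prove that this singular irreducible germ is exactly an ordinary cusp. I will use the local parameter $s=t-t_0$ and expand
\[z_3-1=as^2+bs^3+O(s^4), \qquad w_3-1=cs^2+ds^3+O(s^4).\]
The germ is a cusp, with Puiseux type $(2,3)$, if and only if $ad-bc\neq0$. Computing these four coefficients is a finite calculation with the explicit rational functions. I expect it to be the only genuinely computational point, and the place where an arithmetic slip could occur.

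As an independent cross-check I will use the $\delta$-invariant count (\ref{hiron}). A rational quartic has $\sum\delta(p)=3$. The point $E$ is the image of the two values $t=0,-3$, so it carries at least two branches. Counting its contribution together with that of $(1,1)$ bounds $\delta(1,1)$ from above. If $\delta(E)\ge2$ is established (the two branches are tangent there, see Theorem \ref{tdg}), then $\delta(1,1)=1$, and by Remark \ref{rdelta} a unibranch point with $\delta=1$ is a cusp. If this bound is not sharp enough, the jet computation above decides.
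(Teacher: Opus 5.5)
Your proposal is correct. Your fallback argument is exactly the paper's proof. The paper finds the unique preimage $t_0=-\frac{16}7$ as a double root; it notes that equating the factor $-\frac{40(3t+8)(3t+4)}{(9t+32)^2}$ to $1$ gives the same equation $(7t+16)^2=0$, which is the quickest way to see that $w_3-1$ also vanishes doubly. It then applies (\ref{hiron}) with $\delta(E)\ge2$ to get $\delta(1,1)=1$. The bound $\delta(E)\ge2$ comes from the two regular branches at $t=0,-3$ being tangent to each other. That tangency is not part of the statement of Theorem \ref{tdg}. It can be read off (\ref{gd91}): $w_3\sim-\frac54 z_3^2$ as $t\to0$ and $w_3\sim-8z_3^2$ as $t\to-3$, so both branches are tangent to the infinity line.

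Your primary route, the $3$-jet criterion, is genuinely different, and it goes through. The computation you defer is short, because
\[
z_3-1=-\frac{(7t+16)^2}{40t(t+3)},\qquad w_3-1=r(t)\,(z_3-1),\qquad r(t)=\frac{t^2+4t+8}{t(t+3)} .
\]
In your notation this gives $c=a\,r(t_0)$ and $ad-bc=a^2r'(t_0)$, with $a=\frac{2401}{3200}\neq0$ and $r'(t_0)=\frac{-t_0^2-16t_0-24}{t_0^2(t_0+3)^2}=\frac{360/49}{(80/49)^2}\neq0$. So the germ is an ordinary cusp.

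Your local computation is self-contained. It needs neither the genus formula nor any information about the branches at $E$, and it does not require ruling out other singular points. It also yields the cusp's tangent slope $r(t_0)=-\frac52$. That immediately gives transversality to $\gamma$ (slope $2$) and to $\{z=1\}$, facts the paper later obtains by B\'ezout arguments in Proposition \ref{lgen}. The paper's route avoids any Taylor expansion, but at the price of importing global information about the quartic at $E$.
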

 \begin{proof}  The quartic $\Gamma_{d,-\frac9{32},1}$ is parametrized as  
$$(z(t),w(t)), \ \ z(t)=-\frac{(9t+32)(t+8)}{40t(3+t)}, \ w(t)=-z^2(t)\frac{40(3t+8)(3t+4)}{(9t+32)^2}.$$
The equation $z(t)=1$ is equivalent to $49t^2+224t+16^2=(7t+16)^2=0$, which has unique solution $t=-\frac{16}7$. 
Equating to 1 the multiplier at $z^2$ in the expression for $w(t)$ yields the same quadratic equation. 
Therefore,  the quartic $\Gamma_{d,-\frac9{32},1}$ has unique local branch at $(1,1)$: it corresponds to the parameter 
value $t=-\frac{16}7$. It is a singular local branch, since the latter $t$ is a double root, and hence the curve has intersection index at least two with a generic line through $(1,1)$. Let us show that it is a cusp. The quartic $\Gamma=\Gamma_{d,-\frac9{32},1}$ has zero genus, being rational. 
Therefore,
\begin{equation}\frac{(4-1)(4-2)}2-\sum_{p\in Sing(\Gamma)}\delta(p)=0, \ \ \sum_p\delta(p)=3,\label{hir2}\end{equation}
by Hironaka Genus Formula (\ref{hiron}). The point $E$ is a non-transversal self-intersection, being tangency point of two regular  branches corresponding to $t=0,-3$. Hence,  
its $\delta$-invariant is no less than  two, by Remark \ref{rdelta}. 
Therefore, the $\delta$-invariant of the singular point $(1,1)$ is 
no greater than one, by (\ref{hir2}). Hence, it is equal to one, and thus, it is a cusp, by Remark \ref{rdelta} and since it is obviously not a self-intersection.  Proposition \ref{procusp} is proved.
\end{proof}
 \begin{proposition} \label{lgen} The germ of the curve $\Gamma_{d,\la}$ at $(1,1)$ consists of three regular local branches that are transversal to the conic $\gamma$, if and only if $\la\neq0,-\frac9{32},\infty$. In this case they are transversal to each other. 
  For $\la=-\frac9{32}$ the germ at $(1,1)$ of the curve $\Gamma_{d,\la}$ consists of a regular branch lying in the 
  conic $\Gamma_{d,-\frac9{32},2}$  and of a branch with cusp lying in the quartic $\Gamma_{d,-\frac9{32},2}$. Their tangent lines at $(1,1)$ are transversal. For $\la=\infty$ the germ at $(1,1)$ consists of the line $\{ z=1\}$ and the cusp of 
  the cubic $\mcs$.  The tangent line to the both above cusps are transversal to the conic $\gamma$ and to the line 
  $\{ z=1\}$.
 \end{proposition}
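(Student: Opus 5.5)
Work in local coordinates centred at $(1,1)$ and use the Newton diagram / lower quasihomogeneous part of the pencil $f-\la g$, where $f=(w-z^2)^3$ and $g=(w+8z^2)(z-1)\mcs(z,w)$ with $\mcs$ the cubic from (\ref{cands}). Set $x=z-1$, $y=w-1$.

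First I compute the lowest-order jets. The factor $w-z^2$ has linear part $y-2x$, which defines the tangent line to $\gamma$. So $f$ has lowest homogeneous part $(y-2x)^3$, of degree 3. In $g$, the factor $w+8z^2$ equals $9$ at $(1,1)$, and $z-1=x$. The cubic $\mcs$ has a cusp at $(1,1)$ (\cite[subsection 7.6]{grat}), so its $2$-jet is a perfect square $\ell(x,y)^2$. Here $\ell$ is the cusp tangent line, which I will compute explicitly and check is transversal to $y=2x$ and to $x=0$. Hence $g$ has lowest homogeneous part $9x\,\ell^2$, of degree 3. The Newton diagram of $(f,g)$ is therefore a single edge with $p=q=3$, and
$$\Phi_{3,3,\la}=(y-2x)^3-9\la\, x\,\ell(x,y)^2 .$$

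By Proposition \ref{proii}, Statement 3), the germ of $\Gamma_{d,\la}$ at $(1,1)$ consists of three pairwise transversal regular branches exactly when this binary cubic has three distinct root lines. In that case each branch is tangent to a root line. No root line is $y=2x$ when $\la\neq0$, because $x\ell^2$ does not vanish on $y=2x$, so the branches are transversal to $\gamma$. I then compute the discriminant of $\Phi_{3,3,\la}$ as a polynomial in $\la$. It contains the factor $\la^2$ from the triple root at $\la=0$. The remaining factor is linear in $\la$ because the double line $\ell$ persists in the pencil. I expect its root to be $\la=-\frac9{32}$, and verifying this is the main computational step.

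Next I handle $\la=\infty$ directly: the germ is the line $z=1$ together with the cusp of $\mcs$, with transversality as computed above. At $\la=0$ the germ is the triple $\gamma$. At $\la=-\frac9{32}$ the cubic $\Phi$ has a double root line, so the germ is no longer three transversal regular branches. Proposition \ref{procusp} identifies the quartic's branch as a cusp. The conic $\Gamma_{d,-\frac9{32},2}=\{w+8z^2-9zw=0\}$ passes through $(1,1)$, and its tangent there is $-2x+\dots$, i.e. the line $17x-8y=0$. I check that this line differs from the quartic's cusp tangent, which I read off from the parametrization (\ref{gd91}) at $t=-\frac{16}7$ via the second derivative. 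The cusp tangent should coincide with the double root line of $\Phi_{3,3,-9/32}$, and it must be transversal to $\gamma$ and to $z=1$. Since the single-edge Newton diagram persists for $\la=-\frac9{32}$, the double root line accounts for the cusp and the simple root for the conic, which is consistent.

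The main obstacle I expect is computing the $2$-jet of $\mcs$ at $(1,1)$ correctly and checking that the discriminant of $\Phi_{3,3,\la}$ has no root other than $0$ and $-\frac9{32}$, in particular no root at the cancellation value $\la=\frac1{32}$. If the linear factor turns out wrong, I would fall back on a direct check. One applies the genus bound (\ref{hiron}) to the rational sextic for generic $\la$: the contributions at $E$ and $(0,0)$ from Theorem \ref{locind} must leave $\delta=3$ at $(1,1)$. This forces three transversal branches for all non-exceptional $\la$.
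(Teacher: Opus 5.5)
Your plan follows the paper's argument. Both take the lowest cubic part $\Phi_{3,3,\la}$ of $f-\la g$ at $(1,1)$, apply Proposition \ref{proii}, Statement 3), and find a single nonzero $\la$ at which $\Phi_{3,3,\la}$ acquires a double line. The difference is in execution. You compute everything explicitly: $\ell=x-2y$, the discriminant vanishes only at $\la=0$ and $\la=-\frac9{32}$, and the double line there is $5x+2y=0$. The paper instead normalizes coordinates so that $\Phi$ becomes $v^3-c(u-v)u^2$, finds the unique value $c=-\frac{27}4$, and identifies it with $\la=-\frac9{32}$ through the already known splitting of $\Gamma_{d,-\frac9{32}}$. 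It also gets the transversality of the cusp tangents from B\'ezout's theorem rather than from jets.

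One arithmetic slip: the tangent to $\{w+8z^2-9zw=0\}$ at $(1,1)$ is $7x-8y=0$, not $17x-8y=0$. With the correct line, it is exactly the simple root line $y=\frac78x$ of $\Phi_{3,3,-\frac9{32}}$, so your consistency check does go through.
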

 \begin{proof} Let us first prove the latter statements on cusps. 
 The curve 
 $$\Gamma_{d,\infty}=\{(w+8z^2)(z-1)(w+8z^2+4w^2+5wz^2-14zw-4z^3)=0\}$$
 intersects $\gamma$ at three points: $(0,0)$, $(1,1)$  and its point $E$ at infinity. The irreducible components 
 of its germ at $(1,1)$ are the line $\{ z=1\}$ transversal to $\gamma$ and a cusp germ of the rational cubic 
 $$\mathcal S:=\{ w+8z^2+4w^2+5wz^2-14zw-4z^3=0\}.$$
 The fact that this cubic is rational and has cusp at $(1,1)$ was proved in \cite[subsection 7.6]{grat}. The cubic $\mathcal S$ cannot be tangent to $\gamma$ at its cusp $(1,1)$. Indeed, otherwise their intersection index at $(1,1)$ would be three, 
 while their intersection indices at $(0,0)$ and $E$ are at least two each \cite[subsection  7.6]{grat}, which  yields in total at least 
 $3+2+2=7>2\times 3$, -- a contradiction to B\'ezout Theorem. Similarly $\mathcal S$ cannot be tangent to the line 
 $\{ z=1\}$ at its cusp $(1,1)$, since then they would have intersection index at least three there while they also intersect at $E$, -- a contradiction to B\'ezout Theorem. Analogously the  quartic $\Gamma_{d,-\frac9{32},1}$ is also transversal to $\gamma$ and 
 $\{ z=1\}$ at its cusp $(1,1)$. Indeed, it has a local branch  at $(0,0)$ and two ones at $E$, all the three tangent to $\gamma$. 
 Hence, its intersection index with $\gamma$ at $(1,1)$ cannot be greater than two, by B\'ezout Theorem. Its transversality with the line $\{ z=1\}$ follows analogously: its two  local branches  at $E$ intersect the line with total index at least two, and thus, their  intersection index at $(1,1)$ cannot be bigger than two, since $\Gamma_{d,-\frac9{32},1}$ is a quartic. 
 
   Let us prove the first statement of the proposition. To this end, we choose local coordinates 
 $(u,v)$ centered at $(1,1)$ so that $v=w-z^2$, i.e., $\gamma=\{ v=0\}$, and  the line tangent to the cusp of the cubic 
 $\mcs\subset\Gamma_{d,\infty}$ be the $v$-axis. 
 In addition we can achieve that   the line $\{ z=1\}$ be the  line $\{ u=v\}$. Indeed, in the above coordinates $(u,v)$ 
 the line $\{ z=1\}$ is locally the graph $\{ v=h(u)\}$ of a function $h(u)$ with non-zero derivative at 0, by transversality. 
 Making further coordinate change $(u,v)\mapsto (h(u), v)$ transforms the above graph to the diagonal. 
Then for $\la\neq0$ the germ at $(1,1)$ of the curve $\Gamma_{d,\la}$ is given in the new coordinates by an equation 
 $$\Psi(u,v)=v^3-\la g(u,v)(u-v)(a u^2-b v^3+\dots)=0, \ \ a,b\neq0, \ g(0,0)\neq0.$$
Here the dots denote a term containing only monomials of bidegrees lying above the edge $[(2,0), (0,3)]$. 

The lower homogeneous part of the above expression is $G(u,v)=v^3-c(u-v)u^2$, $c=\la a g(0,0)$. We consider that 
$\la\neq0,\infty$, which is equivalent to the inequality $c\neq0,\infty$. Then each zero line of the polynomial $G$ is transversal to $\gamma=\{ v=0\}$. Let us consider two possible cases.

Case 1): the homogeneous cubic polynomial $G(u,v)$ has three distinct zero lines. Then they are simple zero lines. Hence, 
the germ at $(0,0)$ of the holomorphic function $\Psi(u,v)$ is a product of three holomorphic functions $\Psi_1$, $\Psi_2$, $\Psi_3$ vanishing at $(0,0)$, their differentials at $(0,0)$ are non-zero and their kernels are the above zero lines, by Proposition \ref{proii}, Statement 3). Thus, their zero loci are pairwise transversal.

Case 2): the polynomial $G(u,v)$ has a multiple zero line. This is equivalent to the statement that the polynomial $W(x)=x^3+c(x-1)$ has a multiple zero, i.e., $W$ and $W'$ have a common zero. This happens if and only if $c=-\frac{27}4$. 
Indeed,  their common zero  is also a zero of the polynomial $3W(x)-xW'(x)=2cx-3c=c(2x-3)$. Hence, the common zero is $x=\frac32$, and thus, $W'(x)=3x^2+c=0$,  $c=-3x^2=-\frac{27}4$. The above arguments imply that for this $c$ the latter $x$ is indeed a common zero of the polynomials $W$ and $W'$. The value $\la\neq0,\infty$ for which this takes place is unique, since $c$ is a linear function of $\la$. Let us show that it is equal to $-\frac9{32}$. Indeed, we have already shown that for $\la=-\frac9{32}$ the germ of the function $\Psi(u,v)$ 
has two local branches at $(1,1)$: the cusp of the quartic $\Gamma_{d,-\frac9{32},1}$ and a regular branch of the conic 
$\Gamma_{d,-\frac9{32},2}$, see Proposition \ref{procusp}. Hence,  the corresponding polynomial $G(u,v)$ has a multiple zero line tangent to the cusp. It has also a zero line tangent to the conic $\Gamma_{d,-\frac9{32},2}$. The latter line  
correspond to the remaining and simple root of the polynomial $W(x)=x^3-\frac{27}4(x-1)$ different from $\frac32$. 
Therefore, it does not coincide with the above multiple zero line tangent to the cusp. 
 Proposition   \ref{lgen} is proved.
 \end{proof}
\begin{proposition} \label{45inf} For $\la\in\oc\setminus\{0\}$ the germ at $E$ of the curve $\Gamma_{d,\la}$ consists of three regular local branches: 
one transversal to the infinity line; two branches tangent respectively to the conics  $C$ and $\mcc_{4,5}:=\{4w+5z^2=0\}$, both with at least cubic contact. 
\end{proposition}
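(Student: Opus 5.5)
We work in the affine chart $(\tilde z,\tilde w)=(\frac zw,\frac1w)$ centered at $E=[0:1:0]$, in which the infinity line is $\{\tilde w=0\}$. We then read off the local branches of $\Gamma_{d,\la}$ at $E$ from the lower quasihomogeneous part of the defining function, using the background of Subsection 1.4 and Proposition \ref{proii}.

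\emph{Step 1: the lower part.} Homogenize numerator and denominator of $R_d$ and set $w=1$. This gives $R_d=f/g$ near $E$ with $f=(\tilde w-\tilde z^2)^3$ and
$$g=(\tilde w+8\tilde z^2)(\tilde z-\tilde w)(\tilde w^2+8\tilde z^2\tilde w+4\tilde w+5\tilde z^2-14\tilde z\tilde w-4\tilde z^3).$$
Use the $(2,1)$-quasihomogeneous grading, with $\deg\tilde z=1$ and $\deg\tilde w=2$. Then $f$ is quasihomogeneous of degree $6$. The lower part of $g$ has degree $2+1+2=5$ and equals $\tilde z(\tilde w+8\tilde z^2)(4\tilde w+5\tilde z^2)$. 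Hence for every $\la\neq0$, including $\la=\infty$, the lower part of $f-\la g$ (respectively of $g$) is, up to a nonzero constant, $\tilde z(\tilde w+8\tilde z^2)(4\tilde w+5\tilde z^2)$. This is a product of three distinct prime quasihomogeneous factors, and it does not depend on $\la$.

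\emph{Step 2: branches.} By Statement (ii) of Subsection 1.4 and Proposition \ref{proii}, Statement 1), the germ $f-\la g$ factors into three germs whose lower parts are these three prime factors. One caveat: the factor $\tilde z$ is not of the form $\tilde w-c\tilde z^2$. Here I would argue directly that a germ whose lower part is $\tilde z$ is $\tilde z-\varphi(\tilde w)$ with $\varphi(\tilde w)=O(\tilde w)$; alternatively, one can use the standard Newton nondegeneracy statement that each simple prime factor of the lower part yields one branch. This caveat is the only place requiring care. The factor $\tilde z$ gives a regular branch tangent to $\{\tilde z=0\}$, hence transversal to the infinity line. The factors $\tilde w+8\tilde z^2$ and $4\tilde w+5\tilde z^2$ give regular branches tangent to the parabolas $\{\tilde w+8\tilde z^2=0\}$ and $\{4\tilde w+5\tilde z^2=0\}$. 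In homogeneous coordinates these are exactly the conics $C=\{wt+8z^2=0\}$ and $\mcc_{4,5}=\{4wt+5z^2=0\}$ written in the chart at $E$.

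\emph{Step 3: cubic contact.} Compute intersection indices. Parametrize $C$ by $\tilde z=s$, $\tilde w=-8s^2$. The restriction of $g$ to $C$ vanishes identically, since $C$ is a component of $\{g=0\}$. The restriction of $f$ is $(-9s^2)^3$, of order $6$. So for $\la\neq0,\infty$ the restriction of $f-\la g$ to $C$ has order exactly $6$. Now compare with the branches:
\begin{itemize}
\item the transversal branch meets $C$ with index $1$;
\item the branch tangent to $\mcc_{4,5}$ meets $C$ with index $2$, since it is a regular branch quadratically separated from $C$ because $8\neq\frac54$.
\end{itemize}
Hence the branch tangent to $C$ has intersection index $\geq6-3=3$ with $C$, which is at least cubic contact. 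The same computation on $\mcc_{4,5}$ applies: $f$ restricts to order $6$, and the restriction of $g$ has order $\geq6$ because its lower part vanishes there. So the total order is $\geq6$, and the branch tangent to $\mcc_{4,5}$ has contact $\geq3$.

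For $\la=\infty$ the statement is checked directly on the components of $\Gamma_{d,\infty}$. The conic $C$ itself passes through $E$. The line $\{z=1\}$ is transversal to the infinity line. The germ of the cubic $\mcs$ at $E$ has lower part proportional to $4\tilde w+5\tilde z^2$, so its contact with $\mcc_{4,5}$ follows by the same restriction argument applied to the cubic factor alone.
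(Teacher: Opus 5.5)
Your proposal is essentially the paper's second, Newton-diagram proof in the chart $(\tilde z,\tilde w)$ at $E$, and the caveat you flag is exactly where the paper uses the full Newton diagram of $f-\la g$ rather than Statement (ii). Besides the edge $[(5,0),(1,2)]$ carrying $\tilde z(\tilde w+8\tilde z^2)(4\tilde w+5\tilde z^2)$, that diagram has a second edge from $(1,2)$ to $(0,3)$ (to $(0,4)$ when $\la=-\frac14$, since the coefficient of $\tilde w^3$ is $1+4\la$), of horizontal length one. This edge produces the factor with nonzero $\tilde z$-derivative and tangent line $4\la\tilde z=(1+4\la)\tilde w$, which is transversal to $\{\tilde w=0\}$ as needed but tangent to $\{\tilde z=0\}$ only for $\la=-\frac14$, contrary to what you state. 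Your Step 3 is correct but unnecessary: a regular branch $\tilde w+8\tilde z^2+(\text{terms of }(2,1)\text{-degree}\geq3)=0$ is already the graph $\tilde w=-8\tilde z^2+O(\tilde z^3)$, and likewise for $\mcc_{4,5}$.
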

\begin{proof} The curve $\Gamma_{d,\infty}$ consists of the line $\{ z=1\}$, the conic $C$ and the cubic $\mathcal S$, which  
is tangent at $E$ to the conic $\mcc_{4,5}$ with at least cubic contact, see \cite[subsection 7.6, claim 8]{grat}. This proves the 
statement of the proposition for $\la=\infty$. For $\la\neq0,\infty$ its statement can be deduced directly from parametrization 
 (\ref{forztd}), (\ref{wformd}) and description of reducible curves $\Gamma_{d,\la}$ given by Theorem \ref{tdg}. 
Another way of proof is to write down the  equation $R_d(z,w)=0$ as a polynomial equation in the affine coordinates $(\wt z,\wt w)=(\frac zw, \frac1w)$ centered at $E$. The latter polynomial equation takes the form $(\wt w-\wt z^2)^3-\la(\wt z-\wt w)(\wt w+8\wt z^2)(4\wt w+5\wt z^2+\wt w^2-4\wt z^3+O(\wt z\wt w))=0$.  The Newton diagram of the left-hand side  has vertices $(5,0)$, $(1,2)$, $(0,3)$ for $\la\neq0,-\frac14,\infty$ and $(5,0)$, $(1,2)$, $(0,4)$ for $\la=-\frac14$. In both cases   its germ at $E$  is a product of an analytic function $g(\wt z,\wt w)$, $g(0,0)=0$, with non-zero derivative in $\wt z$ at  $(0,0)$, 
and two analytic functions of the type $\wt w+8\wt z^2+h.o.t$, $4\wt w+5\wt z^2+h.o.t$, where h.o.t. means a linear combination of monomials of $(2,1)$-quasihomogeneous degrees greater than the quasihomogeneous degree 2 of the monomial $\wt w$. Their zero loci satisfy the statements of  Proposition \ref{45inf}: transversality follows from non-vanishing of the  
derivative $\frac{\partial g}{\partial\wt z}(0,0)$, since the $\wt z$-axis is the infinity line. The proposition is proved. 
\end{proof}

\begin{proposition} \label{2-cubic} 1) For every $\la\in\oc\setminus\{0,-\frac14\}$ the germ of the curve $\Gamma_{d,\la}$ at $(0,0)$ consists of two regular local branches tangent to  the conic $C=\{ w+8z^2=0\}$ with at least cubic contact and having cubic  contact between themselves.

2) For $\la=-\frac14$ the above germ is irreducible, singular and tangent to $C$. 
\end{proposition}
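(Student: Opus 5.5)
Near $(0,0)$ I would work in the local coordinates $(z,v)$ with $v:=w+8z^2$, so that $C=\{ v=0\}$ and $w-z^2=v-9z^2$. The level curve $\Gamma_{d,\la}$, $\la\neq0,\infty$, is the zero locus of
$$\Phi_\la:=(w-z^2)^3-\la(w+8z^2)(z-1)\mcs(z,w),$$
where $\mcs$ denotes the cubic polynomial defining the cubic $\mcs$ from (\ref{cands}). As observed in the proof of Theorem \ref{tdg}, the lower $(2,1)$-quasihomogeneous part of $\mcs(z,w)$ at $(0,0)$ is $w+8z^2$. Hence
$$\mcs(z,w)=v+\alpha z^3+(\text{terms of } (2,1)\text{-degree}\geq3 \text{ involving } v),$$
and I would read off $\alpha$ directly. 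Substituting $w=v-8z^2$ gives $-4z^3-5\cdot8z^4\ldots$, so the only pure cubic term is $-4z^3$, together with $-14z(-8z^2)=112z^3$; thus $\alpha=108$ up to checking. The factor $(z-1)$ equals $-1+z$, and $(w-z^2)^3=(v-9z^2)^3$ has $(2,1)$-degree $6$. So the relevant part of $\Phi_\la$ is
$$\Phi_\la=(v-9z^2)^3+\la v(1-z)(v+\alpha z^3+\dots).$$

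Next I would compute the Newton diagram in $(z,v)$. The term $\la v^2$ gives the vertex $(0,2)$. The term $-729z^6$ gives the vertex $(6,0)$. The term $\la\alpha vz^3$ gives $(3,1)$. Also $v\cdot z^4$-type terms come from $3\cdot81z^4v$ in the cube and $-\la\alpha z^4 v$ from the $(1-z)$ factor. Since $(3,1)$ lies exactly on the segment $[(6,0),(0,2)]$, the diagram is a single edge of $(3,1)$-type. Its quasihomogeneous polynomial is
$$\la v^2+\la\alpha vz^3-729z^6.$$
Here I must carefully include every monomial of $(3,1)$-degree $6$: namely $v^2$, $vz^3$ and $z^6$. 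Contributions such as $v\cdot z^3$ may arise from several products, including $3\cdot81 z^4 v$, which has degree $7$ and is therefore excluded. Collecting all such contributions correctly is the main computational obstacle, and I would double-check $\alpha$ by a direct expansion of $\mcs(v-8z^2,z)$.

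The quadratic in $x=v/z^3$, namely $\la x^2+\la\alpha x-729$, has discriminant $\la^2\alpha^2+4\cdot729\la$. This vanishes exactly when $\la=-4\cdot729/\alpha^2$, which for $\alpha=108$ gives $-\frac{2916}{11664}=-\frac14$. This consistency check confirms $\alpha$.

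For $\la\neq0,-\frac14,\infty$ the two roots $x_1\neq x_2$ are distinct and nonzero. By statement (ii) of Subsection 1.4, $\Phi_\la$ then factors into two germs with lower parts $v-x_jz^3$. Each of these is a regular branch $v=x_jz^3+\dots$, so it has at least cubic contact with $C=\{v=0\}$, and the two branches have exactly cubic contact with each other since $x_1\neq x_2$. For $\la=\infty$ the germ is $\{v\cdot\mcs=0\}$, and the same conclusion holds directly from the fact that $\mcs$ is regular at $(0,0)$ with cubic contact with $C$, because $\alpha\neq0$.

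For $\la=-\frac14$ the edge polynomial becomes $\la(v+\frac{\alpha}{2}z^3)^2$, a square. In this case I would pass to the coordinate $\tilde v=v+54z^3$ and recompute the Newton diagram. I expect the new diagram to be a single edge from $(0,2)$ to $(k,0)$ with $k$ odd, probably $(7,0)$, coming from the degree-$7$ terms above. That makes the germ irreducible, with a singular branch of Puiseux type $\tilde v^2\sim z^7$. Such a germ is tangent to $v=0$, i.e. to $C$, and this is the degenerate cusp. Verifying that the coefficient of $z^7$ in $\Phi$ after this substitution is nonzero is the remaining check. Alternatively, irreducibility follows from Theorem \ref{tdg}: $\Gamma_{d,-\frac14}$ is the image of the parametrization (\ref{forztd}), (\ref{wformd}), and one can check that only one parameter value maps to $(0,0)$.
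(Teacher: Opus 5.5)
Your proposal is correct. For part 1 it is the paper's own Newton-diagram argument: the paper also passes to $u=w+8z^2$, finds the lower $(3,1)$-quasihomogeneous part $\la u^2+108\la uz^3-(3z)^6$, and uses its discriminant together with Statement (ii).

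The difference is in how branches are counted. The paper counts them through the parametrization of Theorem \ref{tdg}, via the roots of $\la t^2+12\la t-9$. It treats the reducible values $\la=-\frac13,-\frac9{32}$ separately by B\'ezout, and that step uses a forward reference to the critical point $(\frac25,\frac85)$. Your single local computation covers every $\la\neq0,-\frac14,\infty$ at once, which is cleaner.

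For part 2 the paper argues globally. At $\la=-\frac14$ one has $\la t^2+12\la t-9=-\frac14(t+6)^2$, and $t=-6$ is the only parameter sent to $(0,0)$, so the germ is irreducible. This is your fallback route.

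Your primary route, a second Newton polygon after the shift $\tilde v=v+54z^3$, also works, and the check you left open goes through. At $\la=-\frac14$ the $(3,1)$-degree-$6$ part of $\Phi$ becomes $-\frac14\tilde v^2$. The degree-$7$ part $216z^4v+\frac{15}4zv^2$ becomes $-189z^4\tilde v+\frac{15}4z\tilde v^2-729z^7$. Hence $\Phi=-\frac14(\tilde v^2+2916z^7)+\dots$, whose Newton diagram is the single edge $[(7,0),(0,2)]$ with coprime endpoints. So the germ is an irreducible $(2,7)$-cusp of type $A_6$, which is the paper's ``degenerate cusp''. It has multiplicity two, hence is singular, and its tangent cone $\{\tilde v=0\}$ is tangent to $C$.

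The local route gives the exact singularity type. The parametrization route needs no further computation once Theorem \ref{tdg} is available.
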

\begin{proof} The germ at $(0,0)$ of the curve 
$\Gamma_{d,\infty}$ is the union of the conic $C$ and the cubic 
$$\{ G_0(z,w)=0\}, \ \ G_0(z,w)=w+8z^2+4w^2+5wz^2-14zw-4z^3.$$
Substituting $w=-8z^2$ to $G_0$ yields that along the conic $C$ one has $G_0=108z^3(1+o(1))$. This proves the statement of 
the proposition for $\la=\infty$. 

Let $\la=-\frac13$. Then the cubics from Theorem \ref{tdg} forming $\Gamma_{d,\la}$ both pass through $(0,0)$ and are tangent to $C$ there with 
cubic contact, thus with at least cubic contact between themselves. They have exactly cubic contact between themselves, since higher contact is impossible by B\'ezout Theorem. 
Indeed, their 
 intersection indices  at the two other indeterminacies $(1,1)$, $E$  are  equal  respectively to two and three, by Proposition \ref{lgen} and parametrization. But they have yet another intersection point: the true critical point $(\frac25, \frac85)$ found below. Thus, their total intersection index at the points diffferent from $(0,0)$ is no less than six.  Therefore their intersection index at $(0,0)$ is at most three, and thus, their contact 
 at $(0,0)$ is at most cubic, and hence, exactly cubic. 
  The case, when $\la=-\frac9{32}$ is treated analogously. 

Let now $\la\neq0,-\frac13,-\frac9{32},\infty$. 
Consider the parametrization of the curve $\Gamma_{d,\la}$ from Theorem \ref{tdg}. The point $(0,0)$ corresponds to those  parameter values $t$ that are roots of the polynomial $Q_\la(t)=\la t^2+12\la t-9$. The exceptional case, when $t$ is a common root 
of the polynomials $\la t-1$, $\la t^2+4\la t-1$ in the numerators of the parametrization, is forbidden, since then 
$t=3$ and $\la=-\frac13$. The curve $\Gamma_{d,\la}$ has two local branches at $(0,0)$, if and only if the polynomial 
$Q_\la(t)$ has two distinct roots. This happens if and only if $\la\neq-\frac14$. For $\la=-\frac14$ only one parameter value, 
namely the double root $t=-6$ of the polynomial $Q_\la(t)$ corresponds to $(0,0)$, and hence, $\Gamma_{d,\la}$ has 
irreducible germ at $(0,0)$.  
 
The fact that for $\la\neq-\frac14$ the two above branches have exactly cubic contact with the conic $C$ and between themselves  
can be proved by a direct calculation. It also follows by the next Newton diagram argument. 
Writing the equation $R_d(z,w)=\la$ in the new variables $(z,u)$, $u:=w+8z^2$, yields 
$$(u-9z^2)^3-\la u(z-1)(u-14 z(u-8z^2)+4(u-8z^2)^2+5(u-8z^2)z^2-4z^3)$$
$$=\la u^2+108\la uz^3-(3z)^6+h.o.t.$$
Here h.o.t means monomials of higher $(3,1)$-quasihomogeneous degrees. The lower quasihomogeneous 
part of the latter polynomial is $\la u^2+108\la uz^3-(3z)^6$. It is $\la$ times a product of two prime quasihomogeneous polynomials of the type $u+c_jz^3$, $j=1,2$, $c_j\neq0$. They coincide, i.e., $c_1=c_2$, if and only if the discriminant $108^2\la^2+4*3^6\la$ vanishes. 
This happens exactly for $\la=-\frac14$, since we have already excluded $\la=0$. 
This together with Statement (ii) from 
Subsection 1.4 implies that in the former case, when $\la\neq-\frac14$, i.e., $c_1\neq c_2$, 
 the curve $\Gamma_{d,\la}$ has two local branches at $(0,0)$ that are zeros of functions of type $u+c_jz^3+h.o.t$, 
 hence, having exactly cubic contact between themselves and with $C=\{ u=0\}$.
\end{proof}

Proposition \ref{2-cubic} immediately implies that the indeterminacy point $(0,0)$ is critical exactly for 
$\la=0, -\frac14$. 

Theorem \ref{locind} and Statement 6) of Theorem \ref{tdcr} follow from Propositions \ref{procusp}--\ref{2-cubic}.
 
 Let us now find true critical points. Each of them either lies in a multiple level curve, or is an intersections of irreducible components of one and the same 
 level curve, by Proposition \ref{procritic}. Therefore, the corresponding values may be only  $\la=0,-\frac13, -\frac9{32}, \infty$, 
 by Theorem \ref{tdg}, Statement 1). Each reducible level curve except for $\Gamma_{d,\infty}$ may contain only one true critical point, since it consists of two components, see Theorem \ref{tdg}, and  by Proposition \ref{procritic}, Statement 5). 
 
 Let $\la=\infty$. Then the corresponding level curve consists of the conic $C=\{ w+8z^2=0\}$, the line $\{ z=1\}$ and 
 the cubic $\mathcal S=\{ w+8z^2+4w^2+5wz^2-14zw-4z^3=0\}$. The only point of intersection $C\cap\{ z=1\}$ different form $E$ 
 is $(1,-8)$. One has $\mathcal S\cap\{ z=1\}=\{(1,1),E\}$, by B\'ezout Theorem and since the intersection point $(1,1)$ has 
 multiplicity two being a cusp of $\mcs$. The  intersection $C\cap\mcs$ contains only one point different from the indeterminacies $(0,0)$ and $E$, by B\'ezout Theorem and since their  intersection indices are equal to 3 at $(0,0)$ and 2 at 
 $E$, see Propositions \ref{2-cubic} and \ref{45inf}. The latter intersection point is found by substituting $w=-8z^2$ to the 
 equation of $\mathcal S$, which yields
 $$4*64z^4-40z^4+8*14z^3-4z^3=4*27z^3(2z+1)=0, \ \ \ z=-\frac12, \ w=-8z^2=-2.$$ 
The critical points found above are transversal intersections of the curves in question, by B\'ezout Theorem. 
 
Let $\la=-\frac13$.  Substituting the equation $w=-\frac{z^2(5z-8)}{4z-1}$ of the cubic $\Gamma_{d,-\frac13,1}$ instead of $w$ to the equation $\mcp=0$, see (\ref{gd213}), of the cubic $\Gamma_{d,-\frac13,2}$ and cancelling $z^2$ yields
  $$-\frac{5z-8}{4z-1}\left(1+8z^2-11z-\frac{z^2(5z-8)}{4z-1}\right)+8-7z=0,$$
  \begin{equation}(5z-8)((1+8z^2-11z)(4z-1)-z^2(5z-8))+(7z-8)(4z-1)^2=0.\label{pol58}\end{equation}
  The latter polynomial is divizible by $z(z-1)^2$. Indeed, it should vanish at $0$: the polynomial $\mcp$ with 
  $w$ replaced by $-\frac{z^2(5z-8)}{4z-1}$ should have a triple zero at $0$, since the cubics $\Gamma_{d,-\frac13,1}$, 
  $\Gamma_{d,-\frac13,2}$ are tangent with order three at 0. It should have a double zero $z=1$, since the polynomial 
  $\mcp$ 
  vanishes at $(1,1)$ and has a critical point there. Indeed, the cubic $\Gamma_{d,-\frac13,2}=\{\mcp=0\}$ has two local branches at $(1,1)$. This follows by Proposition \ref{lgen}, since the other component $\Gamma_{d,-\frac13,1}$ has only one branch there. 
  Dividing polynomial (\ref{pol58}) 
  by $z(z-1)^2$ yields the polynomial $27(5z-2)$. In more detail, grouping the terms  in (\ref{pol58}) with common factor $4z-1$ transforms (\ref{pol58}) to 
  $$(4z-1)\left((5z-8)(1+8z^2-11z)+(7z-8)(4z-1)\right)-z^2(5z-8)^2.$$ 
  The multiplier at $4z-1$ given by the  big brackets is equal to 
  $$40z^3-119z^2+93z-8+28z^2-39z+8=40z^3-91z^2+54z.$$
  Thus, (\ref{pol58})  is equal to 
  $$z\left((4z-1)(40z^2-91z+54)-25z^3+80z^2-64z\right)$$
  $$=z(135z^3-324z^2+243z-54)=27z(z-1)^2(5z-2).$$
  Hence, its non-trivial root is $z=\frac25$, and the corresponding value $w$ 
  is equal to $w=-\frac{z^2(5z-8)}{4z-1}|_{z=\frac25}=\frac85$. Thus,  $(\frac25, \frac85)$ is the unique true critical point with 
  value $\la=-\frac13$. It is again Morse, by B\'ezout Theorem: the intersection indices of the cubics $\Gamma_{d,-\frac13,1}$ and $\Gamma_{d,-\frac13,2}$ at $(0,0)$, $E$ and $(1,1)$ are at least three, three and two respectively, by the above discussion, and hence, their intersection point found above has index one.

 \begin{proposition} \label{procr32} The only intersection point of the curves $\Gamma_{d,-\frac9{32},1}$ and $\Gamma_{d,-\frac9{32},2}$ different from the base points $(0,0)$, $(1,1)$, $E$ is the point $(\frac1{10},-\frac45)$ of their transversal intersection, i.e., with index 1. It corresponds to the parameter 
$t=-4$ of the quartic $\Gamma_{d,-\frac9{32},1}$. 
\end{proposition}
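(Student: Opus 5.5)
The plan is to substitute the parametrization (\ref{gd91}) of the quartic $\Gamma_{d,-\frac9{32},1}$ into the equation $w+8z^2-9zw=0$ of the conic $\Gamma_{d,-\frac9{32},2}$. This gives a rational function of $t$. Its numerator has degree $2\times4=8$, counted with multiplicities, in accordance with B\'ezout's Theorem. I will then identify the roots coming from the base points and show that exactly one simple root is left over.

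\textbf{Step 1: the substitution.} Write $w=-z^2\cdot\frac{40(3t+8)(3t+4)}{(9t+32)^2}$. Then
$$w+8z^2-9zw=z^2\left(8-\frac{40(3t+8)(3t+4)}{(9t+32)^2}\right)+9z^3\frac{40(3t+8)(3t+4)}{(9t+32)^2}.$$
Factoring out $z^2/(9t+32)^2$ leaves
$$8(9t+32)^2-40(3t+8)(3t+4)+360\,z(3t+8)(3t+4),$$
where $z=-\frac{(9t+32)(t+8)}{40t(t+3)}$. Multiplying through by $t(t+3)$ turns everything into a polynomial in $t$.

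\textbf{Step 2: remove the base-point roots.} The factors to expect are the following.
\begin{itemize}
\item $(t+8)$ with multiplicity at least 3. The point $(0,0)$ is at $t=-8$, and there the contact index with $C$ is 3; the conic $\Gamma_{d,-\frac9{32},2}$ is also tangent to $C$ to third order, as shown above.
\item $(7t+16)$. This is the cusp $(1,1)$, which Proposition \ref{procusp} places at $t=-\frac{16}7$. Its intersection index with a regular branch transversal to the cusp tangent is 2, by Proposition \ref{lgen}, and the $t$-multiplicity should match.
\item Roots at $t=0$ and $t=-3$, corresponding to $E$. These should be handled in the homogeneous coordinates $(\wt z,\wt w)$, or simply by tracking the degree of the cleared polynomial.
\end{itemize}
The total index is $3+2+(\text{contribution at }E)\le8$, and the contribution at $E$ should be at least 2. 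One branch of the quartic at $E$ is tangent to $C$, and the conic passes through $E$ tangentially to the infinity line, as all these conics do, so each of the two branches meets it with index at least 1. That gives $3+2+2=7$, leaving exactly one further intersection of index 1.

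\textbf{Step 3: locate the remaining point.} Check directly that $t=-4$ is a root. There $z_3(-4)=-\frac{(-4)(4)}{40\cdot(-4)(-1)}=\frac1{10}$. For $w$, compute $-z^2\cdot40\cdot(-4)(-8)/16=-\frac{1}{100}\cdot80=-\frac45$. Then verify that $w+8z^2-9zw=-\frac45+\frac8{100}+\frac{36}{50}=0$.

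The main obstacle is the bookkeeping at $E$. The parameter values $t=0$ and $t=-3$ go to infinity, so the cleared polynomial has degree lower than 8, and the index there must be read off from the degree drop. To avoid errors I would first compute the cleared polynomial explicitly and divide it by $(t+8)^3(7t+16)^2$; the quotient should be a constant times $(t+4)$. Once that holds, the simple root $t=-4$ gives index 1, that is, a transversal intersection, which is the claim.
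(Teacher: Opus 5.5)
Your proposal is correct and is essentially the paper's proof. Both use the same B\'ezout count at the base points ($3$ at $(0,0)$, $\ge 2$ at the cusp $(1,1)$, $\ge 2$ at $E$), which leaves room for exactly one index-one point, and substituting the parametrization locates it at $t=-4$; the full numerator over $40^2t^3(t+3)^3$ is $-9(t+8)^3(7t+16)^2(t+4)$, as you predict. One correction: the conic $\{w+8z^2-9zw=0\}$ is transversal to the infinity line at $E$, not tangent, since tangency would force index $\ge 4$ at $E$ and contradict B\'ezout; your argument only uses index $\ge 1$ for each of the two branches there, so it is unaffected.
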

\begin{proof} The total intersection index of the quartic $\Gamma_{d,-\frac9{32},1}$ and the conic $\Gamma_{d,-\frac9{32},2}$ 
is equal to 8, by B\'ezout Theorem. Their intersection index at $(0,0)$ is  three, since their germs there are regular and 
tangent to each other with cubic contact, see Proposition \ref{2-cubic}. The index at $E$ is two, since the corresponding germ of the quartic 
consists of two regular branches tangent to the infinity line while the germ of the conic is regular and transversal to it. The index 
at $(1,1)$ is at least two, since $(1,1)$ lies in both curves and $\Gamma_{d,-\frac9{32},1}$ has a cusp there. Hence, 
there is at most one extra intersection point, and then it has intersection index one. Let us find it. To this end, let us substitute the parametrization (\ref{gd91}) 
of the quartic to the equation $w+8z^2-9zw=0$ of the conic. 
Cancelling the common multiplier $(t+8)^2$ and multiplying by 
the common denominator $40^2t^3(t+3)^3$ of the expression thus obtained yields the quartic equation
$$t(3+t)(-40(3t+8)(3t+4)+8(9t+32)^2)-9(t+8)(9t+32)(3t+8)(3t+4)=0$$
Opening the brackets,  dividing by 9  and changing sign yields
\begin{equation}49t^4+28*29t^3+32*3*47t^2+32*8*40t+2^{13}=0.\label{49t}\end{equation}
The polynomial (\ref{49t}) should have the root $t=-8$. Indeed, it corresponds to the point $(0,0)$ where the 
quartic is tangent to the conic with at least cubic contact, and we have already divided our equation by $(t+8)^2$. 
It has the double root 
$t=-\frac{16}7$ corresponding to the cusp $(1,1)$ of the quartic, by  Proposition \ref{procusp}. Thus, the polynomial (\ref{49t}) 
is divisible by the polynomial 
$$(t+8)(7t+16)^2=49t^3+8*77t^2+2^{11}t+2^{11}.$$
Doing the division yields that their quotient is $t+4$. Thus, $t=-4$ is a root. Substituting $t=-4$ to the parametrization (\ref{gd91}) yields $(z,w)=(\frac1{10}, -\frac45)$. This together with the above discussion imply  that the 
latter  is a  transversal intersection point of regular germs of the quartic and the conic. 
 Proposition \ref{procr32} is proved.
 \end{proof}

 Statements 1)--5) of Theorem \ref{tdcr} follow from Proposition \ref{procr32} and the above discussion. Theorem \ref{tdcr} is proved.
\end{proof}

 \begin{proof} {\bf of Theorem \ref{td}.} 
 For $\la\neq0,-\frac13, -\frac9{32},\infty$ consider the base points in the normalization of the curve $\Gamma_{d,\la}$ 
  of its following local branches: the  transversal local branch  at $E$ from Proposition \ref{45inf} and 
  the three local branches at $(1,1)$ from Proposition \ref{lgen}. The corresponding parameter values are given by (\ref{brd}), 
  which follows from (\ref{forztd}) and (\ref{z-1form}).  They are branching points of the projection $p_M:S_{d,\la}\to\Gamma_{d,\la}$, by transversality to $\gamma$ and 
  Proposition \ref{ctype}. 
 Thus, the projection has at least four branching points. This together with 
 Proposition \ref{ctype}, Statement 5) and Riemann--Hurwitz Theorem implies that  $S_{d,\la}$ is elliptic,  (\ref{brd}) are the only branching points, $S_{d,\la}$ is conformally equivalent to the curve (\ref{ctyped}), and (\ref{diffd}) is its holomorphic differential. The map 
 $F|_{S_{d,\la}}$ is a complex torus translation, by Proposition \ref{elltr}. Statement 1) of Theorem \ref{td} is proved. 
 
 Let now $\la=-\frac13$. The cubic $\Gamma_{d,-\frac13,1}$ intersects $\gamma$ at $(0,0)$, $(1,1)$ and $E$. 
 It is quadratically tangent  to $\gamma$ at its  point $(0,0)$, which is regular. It has two regular local branches at $E$: 
 one quadratically tangent to $\gamma$, one transversal to $\gamma$.  Its germ at $(1,1)$ is regular and transversal to $\gamma$ at $(1,1)$, by B\'ezout Theorem and since its intersection indices with $\gamma$ at $(0,0)$ and $E$ are equal to 
 2 and 3 respectively.  
  The only branching points of the projection $p_M^{-1}(\Gamma_{d,-\frac13,1})\to\Gamma_{d,-\frac13,1}$ correspond to the two above transversal local branches, and 
 thus, the projection has two branching points, by Proposition \ref{ctype}. Therefore the projection preimage is a double covering 
 over our rational cubic $\Gamma_{d,-\frac13,1}$ with two branching points and thus, is a rational curve. 
 
 Similarly the cubic $\Gamma_{d,-\frac13,2}$ is quadratically tangent to $\gamma$ at $(0,0)$ and $E$, which are its regular points. 
 It has two regular local branches at $(1,1)$ that are transversal to $\gamma$, by Proposition \ref{lgen}. Thus, they correspond to the only branching points of the projection, by Proposition \ref{ctype}. Analogously we get that its preimage is 
 a rational curve that is its double covering. Statement 3) of Theorem \ref{td} is proved.
 
 Let now $\la=-\frac9{32}$. The quartic $\Gamma_{d,-\frac9{32},1}$ has one local branch 
 at $(0,0)$ corresponding to $t=-8$ and  two local branches at $E$ corresponding to $t=0,-3$. All the three latter local branches are regular and quadratically  tangent to $\gamma$. It has a unique local branch at $(1,1)$, and it is a cusp. Therefore, each one of the four above local branches has intersection index two with $\gamma$, by B\'ezout Theorem. This implies that the double covering $p_M:S_{-\frac9{32}}\to\Gamma_{d,-\frac9{32},1}$ is not branched at the 
 base points of the three former local branches, by Proposition \ref{ctype}, Statement 4), and its only a priori potential branching point is $(1,1)$. But a double covering is either non-ramified, or has at least two distinct branching points. Thus, the above  covering is a regular double covering over the Riemann sphere parametrizing the quartic $\Gamma_{d,-\frac9{32},1}$. Hence, the preimage $p_M^{-1}(\Gamma_{d,-\frac9{32},1})$ 
consists of two rational components, each bijectively projected onto the above Riemann sphere. 

 The conic $\Gamma_{d,-\frac13,2}$ is clearly 
 tangent to $\gamma$ at $(0,0)$ and their other intersection points $E$ and $(1,1)$ are transversal, by B\'ezout Theorem. 
 Therefore the tangency is quadratic, and the preimage $p_M^{-1}(\Gamma_{d,-\frac13,2})$ is its double covering, branched over 
 $E$ and $(1,1)$, by Proposition \ref{ctype}, Statement 4).  Thus, the latter preimage is a rational curve. 
 
 Let  $\la=\infty$. The curve $\Gamma_{d,\infty}$ consists of the three following components:
 
 a) The conic $\{ w+8z^2=0\}$. It intersects $\gamma$ only at $(0,0)$ and $E$, and it is quadratically 
 tangent to $\gamma$ there. Hence, its $p_M$-preimage is not branched over the latter points, by Proposition 
 \ref{ctype}, Statement 4). Hence, it is a 
 union of two rational curves, each being bijectively projected onto the conic. 
 
 b) The line $L=\{ z=1\}$. It intersects $\gamma$ transversally at the points $(1,1)$ and $E$. Hence, its 
 $p_M$-preimage is a double covering over $L$ branched at the latter points, by Proposition \ref{ctype}, 
 Statement 4). Thus, it is a rational curve. 
 
 c) The rational cubic  $\mathcal S:=\{ w+8z^2+4w^2+5wz^2-14zw-4z^3=0\}$. It intersects $\gamma$ only 
 at $(0,0)$, $E$ and $(1,1)$. It has regular local branches at $(0,0)$, $E$, which are  tangent to 
 $\gamma$: in its parametrization given by \cite[formula (7.14)]{grat} they correspond to  
  $t=0,\infty$. Its local branch at $(1,1)$ is unique,  it is a cusp, see \cite[subsection 7.6]{grat}. 
 Hence, the above tangencies are quadratic, by B\'ezout Theorem. This together with Proposition \ref{ctype}, Statement 4) implies that the preimage $p_M^{-1}(\mathcal S)$ is a double covering over the Riemann sphere parametrizing $\mathcal S$ 
 that is not branched over $(0,0)$, $E$. Hence, it may have at most one branching point, which may be only the 
 cusp $(1,1)$. Thus, the  covering is not branched, as in the item a) above. Hence, the preimage is a union of two rational curves, each being bijectively projected onto  $\mathcal S$ up to self-intersections. 
 Theorem \ref{td} is proved.
 \end{proof}

 \subsection{Invariant forms.  Proofs of Theorems \ref{parea},  \ref{parea2}, \ref{twoell}  and Corollary \ref{cinvfib}}
 Theorem \ref{parea} follows from Theorem \ref{parea2}, since projective transformations between conics respect their   canonical 2-forms, see Remark \ref{remforms}.  
 
 \begin{proof} {\bf of Theorem \ref{parea2}.} We use the following three observations. 
 \begin{proposition} Fix an arbitrary point $P_0=(z_0,z_0^2)\in\gamma=\{ w=z^2\}$ and an arbitrary point $Q_0\in L_{P_0}$, set 
 $z_1:=z(Q_0)$. Set 
 $$\mcp_2(z,w):=z^2-w: \ \ \ \gamma=\{\mcp_2(z,w)=0\}.$$
 Then 
 \begin{equation}\mcp_2(Q_0)=(z_1-z_0)^2.\label{mcpz}\end{equation}
 \end{proposition}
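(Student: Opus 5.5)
The identity is a direct computation using the equation of the tangent line, and the plan is to substitute that equation into $\mcp_2$ and complete the square.

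First I write down the tangent line $L_{P_0}$ to the parabola $\gamma=\{w=z^2\}$ at $P_0=(z_0,z_0^2)$. Differentiating $w=z^2$ gives slope $2z_0$ at $P_0$, so
$$L_{P_0}=\{w=2z_0z-z_0^2\}.$$
This is the same relation $w=2zz_0-z_0^2$ already used in the proofs of Theorems \ref{tas1} and \ref{tas2}.

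Next, since $Q_0\in L_{P_0}$ and $z(Q_0)=z_1$, its $w$-coordinate is $w(Q_0)=2z_0z_1-z_0^2$. Substituting into $\mcp_2(z,w)=z^2-w$ gives
$$\mcp_2(Q_0)=z_1^2-2z_0z_1+z_0^2=(z_1-z_0)^2,$$
which is (\ref{mcpz}).

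The only point needing care is that $Q_0$ must be a finite point of $L_{P_0}$, so that $z_1$ is defined. If $Q_0$ is the infinite point of $L_{P_0}$, both sides are infinite and the identity holds trivially. There is no real obstacle here. This identity will later explain why the double-valued factor $\mcp_2^{-3/2}$ in $\omega_\gamma$ becomes the single-valued factor $\pm(z(Q)-z(P))^{-3}$ on $M$, which is formula (\ref{omm}).
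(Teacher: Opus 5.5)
Your proof is correct and is exactly the paper's argument: from the tangent line equation you get $w(Q_0)=2z_0z_1-z_0^2$, and substituting this into $z_1^2-w(Q_0)$ gives the perfect square $(z_1-z_0)^2$. Your extra remark about $Q_0$ being the infinite point of $L_{P_0}$ is harmless; the paper does not address that case.
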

 \begin{proof} One has 
 $$w_1:=w(Q_0)=z_0^2+2z_0(z_1-z_0)=2z_0z_1-z_0^2.$$
 Therefore, $\mcp_2(Q_0)=z_1^2-w_1=z_1^2+z_0^2-2z_0z_1=(z_1-z_0)^2$. 
 \end{proof}
 \begin{proposition} Let $\sigma:\oc_z\to\oc_z$ be a projective involution that fixes the point $z_0$ and permutes the points 
 $z_1$ and $z_1^*$. In the chart $z$ one has 
 \begin{equation}\sigma'(z_1)=-\left(\frac{z_1^*-z_0}{z_1-z_0}\right)^2.\label{sigma'}\end{equation}
 \end{proposition}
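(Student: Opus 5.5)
We need the derivative at $z_1$ of a projective involution $\sigma$ of $\oc_z$ that fixes $z_0$ and swaps $z_1$ and $z_1^*$. The plan is to normalize coordinates so that $z_0$ goes to infinity, where the involution becomes a point reflection. Concretely, introduce the coordinate
$$v=\frac1{z-z_0}$$
on $\oc_z$. In this chart the fixed point $z_0$ is sent to $v=\infty$.

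Step 1: determine the involution in the $v$-chart. A projective involution fixing $\infty$ is affine, $v\mapsto av+b$. Being an involution gives $a^2=1$ and $(a+1)b=0$. The case $a=1$, $b=0$ is the identity, which is excluded, since it would force $z_1=z_1^*$ (or else the formula must be checked in that degenerate case separately). Hence $a=-1$, and $\sigma$ reads $v\mapsto c-v$ for some constant $c$. In particular, $\sigma$ has derivative $-1$ in the $v$-chart at every finite point.

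Step 2: convert the derivative back to the $z$-chart by the chain rule. We have $dv/dz=-(z-z_0)^{-2}$, and $\sigma$ maps $z_1$ to $z_1^*$. Therefore
$$\sigma'(z_1)=\frac{(-1)\cdot\frac{dv}{dz}(z_1)}{\frac{dv}{dz}(z_1^*)}=-\frac{(z_1-z_0)^{-2}}{(z_1^*-z_0)^{-2}}=-\left(\frac{z_1^*-z_0}{z_1-z_0}\right)^2.$$

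Step 3: handle the degenerate configurations. These are the cases $z_1=z_0$ or $z_1^*=\infty$. They follow by continuity or are excluded by the meaning of the formula, and one should note that the formula is meant for $z_1\neq z_0$ with $z_1,z_1^*$ finite.

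There is no real obstacle here. The only point needing care is justifying that a non-identity involution with a fixed point becomes $v\mapsto c-v$. This uses the fact that a nontrivial Möbius involution has exactly two fixed points; in the affine form it is simply the computation $a=-1$ above.
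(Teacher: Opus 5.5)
Your proof is correct, but it takes a different route from the paper's. The paper sets $z_0=0$ and writes the involution explicitly as $\sigma(z)=-\frac{z}{1+uz}$. It solves $\sigma(z_1)=z_1^*$ for the parameter, $u=-\frac{z_1+z_1^*}{z_1z_1^*}$, then differentiates to get $\sigma'(z)=-(1+uz)^{-2}$, and substitutes to obtain $\sigma'(z_1)=-(z_1^*/z_1)^2$.

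You instead move $z_0$ to infinity with $v=1/(z-z_0)$, where $\sigma$ becomes the point reflection $v\mapsto c-v$. Its derivative there is $-1$ whatever $c$ is, so you never have to recover the involution from the data $z_1,z_1^*$. The square in the formula then appears transparently as the ratio of the chain-rule factors $dv/dz=-(z-z_0)^{-2}$ at $z_1$ and at $z_1^*$.

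The two computations are closely linked. In your chart the paper's normal form $-\frac{z}{1+uz}$ is exactly $v\mapsto -v-u$, which is why your version needs no parameter fitting. The paper's version stays in the coordinate centered at the tangency point that it uses throughout for the dual billiard involutions, and it produces the explicit involution as a by-product.

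One small correction concerns the identity. It must be excluded by the meaning of ``involution'', not ``checked separately'': for the identity the formula is simply false, giving $-1$ instead of $1$. Nor is excluding it the same as assuming $z_1\neq z_1^*$. A nontrivial involution also has $z_1=z_1^*$ at its second fixed point, and there the formula correctly gives $-1$.
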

 \begin{proof} Without loss of generality we consider that $z_0=0$. An involution $\sigma$ fixes 0, if and only if it takes the form 
 $$\sigma(z)=-\frac z{1+uz}, \ \ u\in\cc.$$
 Writing the equation $\sigma(z_1)=z_1^*$ we find $u$: 
 $$u=-\frac{z_1+z_1^*}{z_1z_1^*}.$$
 One has $\sigma(z)=\frac1u(\frac1{1+uz}-1)$. Hence, 
 $$\sigma'(z)=-\frac1{(1+uz)^2}, \ \ \sigma'(z_1)=-\frac1{(1-\frac{z_1+z_1^*}{z_1^*})^2}=-\left(\frac{z_1^*}{z_1}\right)^2.$$
 This proves (\ref{sigma'}).
 \end{proof}
 \begin{proposition} \label{pjac} Consider the map $F:M\to M$ written in the local coordinates $(z,w)=(z(Q),w(Q))\in\cc^2\subset\cp^2$ given by the projection $p_M:(Q,P)\to Q$. For every $(Q,P)\in M$ set 
 \begin{equation}(Q^*,P^*)=F(Q,P),  \ \ \ z_0:=z(P), \ \ z_1:=z(Q), \ \ z_1^*:=z(Q^*).\label{q*p*}\end{equation}
  The Jacobian of the map $J:(Q,P)\mapsto(Q^*,P)$ at $(Q,P)$, also written in the coordinates $(z,w)$, 
  is equal to $-\left(\frac{z_1^*-z_0}{z_1-z_0}\right)^3$.
  \end{proposition}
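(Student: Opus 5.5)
We compute the Jacobian of $J:(z_1,w_1)\mapsto(z_1^*,w_1^*)$, where $P$ is held fixed as a point of $\gamma$ but is itself a function of $Q$. The idea is to use coordinates adapted to the fibration by tangent lines. Near a generic point $(Q,P)$ with $Q\notin\gamma$, we take $(z_0,z_1)$ as local coordinates on $M$. Here $z_0=z(P)$ and $z_1=z(Q)$. The point $Q$ is recovered by $w_1=2z_0z_1-z_0^2$. In these coordinates $J$ keeps $z_0$ and sends $z_1\mapsto z_1^*=\sigma_{P}(z_1)$, where $\sigma_P$ is written in the affine coordinate $z$ on $L_P$. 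So $J(z_0,z_1)=(z_0,\sigma_{z_0}(z_1))$ is a triangular map. Its Jacobian in the $(z_0,z_1)$ coordinates is $\partial z_1^*/\partial z_1=\sigma'(z_1)=-\left(\frac{z_1^*-z_0}{z_1-z_0}\right)^2$, by (\ref{sigma'}).

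Next we convert this to the coordinates $(z,w)$ via the chain rule. Write $\Phi(z_0,z_1)=(z_1,2z_0z_1-z_0^2)$. Its Jacobian determinant is
$$\det\begin{pmatrix}0&1\\ 2z_1-2z_0& 2z_0\end{pmatrix}=-2(z_1-z_0).$$
The map written in $(z,w)$ is $\Phi\circ J\circ\Phi^{-1}$. Its Jacobian at $Q$ is therefore
$$\frac{\det D\Phi(z_0,z_1^*)}{\det D\Phi(z_0,z_1)}\cdot\sigma'(z_1)=\frac{z_1^*-z_0}{z_1-z_0}\cdot\left(-\left(\frac{z_1^*-z_0}{z_1-z_0}\right)^2\right).$$
This product is $-\left(\frac{z_1^*-z_0}{z_1-z_0}\right)^3$, as claimed.

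One point needs care: $P$ is constant along $J$, and the $z_0$-coordinate of the image under $J$ is the same $z_0$. Hence the first factor involves $\det D\Phi$ at $(z_0,z_1^*)$, which is legitimate because $Q^*\in L_P$. We must also note that $(z_0,z_1)$ are genuine local coordinates, that is, $\Phi$ is a local biholomorphism. This holds exactly when $z_1\ne z_0$, i.e. $Q\notin\gamma$, which is consistent with (\ref{mcpz}). The identity then extends to the rest of $M$ by continuity and meromorphy. For a merely $C^1$ real dual billiard the same computation applies verbatim with real coordinates. There is no real obstacle; the only subtlety is the bookkeeping of which branch $P$ is used, and this is fixed by the definition of $J$.
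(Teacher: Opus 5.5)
Your proof is correct. It rests on the same structure as the paper's argument. $J$ preserves the tangent line $L_P$ and acts on it by $\sigma_P$, whose derivative is given by (\ref{sigma'}); this fiber factor is then multiplied by a transverse factor. You obtain the transverse factor by a different, more explicit route.

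The paper stays in the $(z,w)$ chart and treats the direction of $QP$ as an eigenline of the differential. It gets the quotient factor $\frac{z_1^*-z_0}{z_1-z_0}$ from a geometric limiting argument: a nearby tangent line $L_{P'}$ meets $QP$ at $P''$, and the relevant small segments are asymptotically homothetic with center $P''$. You instead work in the coordinates $(z_0,z_1)$ on $M$, where $J$ is triangular with trivial transverse action. The whole transverse factor then becomes the ratio $\det Dp_M(z_0,z_1^*)/\det Dp_M(z_0,z_1)$, which you compute exactly from $\det Dp_M=-2(z_1-z_0)$.

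Your route avoids any asymptotic estimate and is fully rigorous. As a bonus it gives $p_M^*(dz\wedge dw)=-2(z_1-z_0)\,dz_0\wedge dz_1$, i.e. $\omega_{par}=-2(z_1-z_0)^{-2}\,dz_0\wedge dz_1$. This turns the invariance check in Theorem \ref{parea2} into a two-line computation, since the other factor $I$ is just $(z_0,z_1)\mapsto(2z_1-z_0,z_1)$ in these coordinates. The paper's version, by contrast, explains geometrically why the transverse factor is a homothety ratio.

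One small imprecision: $(z_0,z_1)$ are coordinates on the whole affine part of $M$, including where $z_1=z_0$. What needs $z_1\neq z_0$ is that $p_M$ be a local biholomorphism. You need this at both $(z_0,z_1)$ and $(z_0,z_1^*)$; the latter holds automatically because $\sigma_P$ is a bijection fixing $z_0$. The remark about extending ``by continuity'' is unnecessary, since the $(z,w)$-Jacobian only makes sense off $p_M^{-1}(\gamma)$.
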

 \begin{proof} In the coordinates $(z,w)$ the differential $dF(z_1,w_1)$ has eigenline $QP$ with eigenvalue 
 $-\left(\frac{z_1^*-z_0}{z_1-z_0}\right)^2$, see (\ref{sigma'}). Its action on the quotient of the latter eigenline 
 is multiplication by $\frac{z_1^*-z_0}{z_1-z_0}$. In more detail, a line $L'$ tangent to $\gamma$ at a point 
 $P'$ close to $P$ intersects the line $QP$ at a point $P''$ close to $P$ and $P'$. Fix a line $\La$ through $Q$ transversal to the line $QP$, and let $\La^*$ denote the line through $Q^*$ parallel to $\La$. The involution $\sigma_{P'}$ acts on the line $L'$, fixes the point $P'$ close to $P''$ and sends the point $Q'=L'\cap\La$ close to $Q$  to  a point in $L'$ close to
 $\wt Q:=L'\cap\La^*$. The segments $Q'Q$ and $\wt Q Q^*$ are  homothetic with
 center $P''$ and coefficient tending to $\frac{z_1^*-z_0}{z_1-z_0}$, as $P'\to P$. This implies the above statement on the 
 differential eigenvalue in the quotient, which in its turn implies the Jacobian formula from 
 Proposition \ref{pjac}. 
 \end{proof} 
 
 Let us now return back to the proof of Theorem \ref{parea2}. One has $\omega_\gamma=\omega_{par}$ up to sign, 
 by (\ref{mcpz}). It remains to prove $F$-invariance of the form $\omega_{par}$. Fix a $(Q,P)\in M$ with $Q\notin\gamma$.  Let 
 $Q^*$, $P^*$, $z_0$, $z_1$, $z_1^*$ be the same, as in (\ref{q*p*}). Set $z_0^*=z(P^*)$. Consider the restriction of the form  $(z_1^*-z_0^*)^{-3}p_M^*(dz\wedge dw)$ to $T_{(Q^*,P^*)}M$. We have to show that its pullback under the differential 
 $dF(Q,P)$ is the form $(z_1-z_0)^{-3}p_M^*(dz\wedge dw)$ on $T_{(Q,P)}M$. Indeed, one has $F=I\circ J$, where $I$ is the map $I:(Q^*,P)\mapsto(Q^*,P^*)$ and $J$ is the same, as in the above proposition. One has $(z_0^*-z_1^*)^2=(z_0-z_1^*)^2$, by (\ref{mcpz}). Therefore, $z_0^*-z_1^*=-(z_0-z_1^*)$, since $z_0^*\neq z_0$. 
 The map $I$  fixes the form $p_M^*(dz\wedge dw)$. Hence, it changes the sign of the form 
 $\omega_{par}$, by the above equality. The pullback of the form $\omega_{par}$ under the differential $dJ(Q,P)$ is 
 equal to -$\omega_{par}$, by Proposition \ref{pjac}. Finally, the differential of the map $F=I\circ J$ fixes the form 
 $\omega_{par}$. Theorem \ref{parea2}, and hence Theorem \ref{parea} is proved.
 \end{proof}

 \begin{proof} {\bf of Corollary \ref{cinvfib}.} Let $\Sigma_{ic}$ denote the union of critical and indeterminacy points of the integral  $R$, set $\wt\Sigma_{ic}:=p_M^{-1}(\Sigma_{ic})$.   The equality  
 $$d(R\circ p_M)\wedge\omega_{fib}=\omega_{par}$$
 defines the form $\omega_{fib}$ at each point $x\in M\setminus\wt\Sigma_{ic}$ uniquely up to a multiple 
 of the differential $d(R\circ p_M)(x)$. Therefore, its restriction to the tangent space at $x$ of the invariant fiber 
 through $x$ is 
 uniquely defined. Finally, $\omega_{fib}$ is a well-defined meromorphic 1-form on each fiber. 
 It is $F$-invariant, as  are its defining 2-form $\omega_{par}$ and the function $R\circ p_M$. 
 
 Let now at least one fiber $S_\la$ contain an elliptic component. Then we are in some of Cases b)--d), and $\la$ is a regular value, since each critical fiber consists of rational curves. Therefore, either we are in Cases b) or d) and $S_\la$ is one elliptic component, or we are in Case c) and $S_\la$ is a union of two elliptic curves.  Let us show that then $\omega_{fib}$ is 
 a global holomorphic differential on each component of $S_\la$.  Indeed, suppose the contrary. Then for a generic $\la$ 
  its restriction to some component $S_\la'$ of $S_\la$ has a non-empty divisor. Its degree is bounded by some number $m$, since $\omega_{fib}$ can be taken to be a global meromorphic 1-form. The divisor is $F$-invariant, as is $\omega_{fib}$. Hence, the map 
  $F:S_\la'\to S_\la'$ is a translation of elliptic curve that permutes a non-empty collection of at most $m$ points. 
Thus, $F^k=Id$ on $S_\la'$, $k=m!$.  But for every $N\in\nn$, in particular, for $N>k$,  and every $\la\neq0$ 
 small enough depending on $N$, some orbit of the map $F:S_\la'\to S_\la'$ contains at least $N$ distinct points (Lemma \ref{palt}). 
 The contradiction thus obtained shows that   the restriction 
 of the form $\omega_{fib}$ to each component of a generic fiber is a global holomorphic differential. 
 It is thus invariant under translations of 
 each component of the fiber considered as a complex torus. Corollary \ref{cinvfib} is proved.
 \end{proof}
 
 \begin{proof} {\bf of Theorem \ref{twoell}.} Let  $\Gamma_\la$ be elliptic. This happens only in Case c), and then $\la$ is a regular value. 
 Then $S_\la$ is a union of  two elliptic curves, each 
  covers $\Gamma_\la$ biholomorphically, by Theorem \ref{tc2}. Thus, on each component the projection pushforward  of the form $\omega_{fib}$ is  a well-defined holomorphic differential on  $\Gamma_\la$. Theorem \ref{twoell}  is proved. \end{proof}

 \section{Acknowledgements}
 
 I am grateful to D.V.Treschev for statement of the problem on dynamics and conservation laws in exotic rationally integrable dual  billiards. I with to thank him and  S.M.Gussein-Zade and S.K.Lando for helpful discussions.

 \end{document}